\definecolor{dark-red}{rgb}{0.4,0.15,0.15}
\definecolor{dark-blue}{rgb}{0.15,0.15,0.4}
\definecolor{medium-blue}{rgb}{0,0,0.5}
\newcommand{\A}{\mathbb{A}}
\renewcommand{\aa}{\mathfrak{a}}
\newcommand{\bb}{\mathfrak{b}}
\newcommand{\BB}{\mathcal{B}}
\newcommand{\C}{\mathbb{C}}
\newcommand{\CC}{\mathcal{C}}
\newcommand{\Cl}{\mathrm{Cl}}
\newcommand{\Cscr}{\mathscr{C}}
\newcommand{\dee}{\partial}
\newcommand{\df}{\mathfrak{d}}
\newcommand{\EE}{\mathcal{E}}
\newcommand{\e}{\varepsilon}
\newcommand{\FF}{\mathcal{F}}
\newcommand{\GG}{\mathcal{G}}
\newcommand{\Hb}{\mathbb{H}}
\newcommand{\hol}{\mathrm{hol}}
\newcommand{\LL}{\mathcal{L}}
\newcommand{\lf}{\mathfrak{l}}
\newcommand{\N}{\mathbb{N}}
\newcommand{\NN}{\mathcal{N}}
\newcommand{\OO}{\mathcal{O}}
\newcommand{\Pb}{\mathbb{P}}
\newcommand{\PP}{\mathcal{P}}
\newcommand{\Pscr}{\mathscr{P}}
\newcommand{\Q}{\mathbb{Q}}
\newcommand{\QQ}{\mathcal{Q}}
\newcommand{\R}{\mathbb{R}}
\newcommand{\T}{\mathbb{T}}
\newcommand{\WW}{\mathcal{W}}
\newcommand{\Z}{\mathbb{Z}}
\DeclareMathOperator{\ad}{ad}
\DeclareMathOperator{\arc}{arc}
\DeclareMathOperator{\den}{den}
\DeclareMathOperator{\Dgp}{D}
\DeclareMathOperator{\Gen}{Gen}
\DeclareMathOperator{\GL}{GL}
\DeclareMathOperator{\Ht}{ht}
\DeclareMathOperator{\Mat}{Mat}
\DeclareMathOperator{\Norm}{N}
\DeclareMathOperator{\Ogp}{O}
\DeclareMathOperator{\PGL}{PGL}
\DeclareMathOperator{\PSL}{PSL}
\DeclareMathOperator{\Ram}{Ram}
\DeclareMathOperator{\sgn}{sgn}
\DeclareMathOperator{\SL}{SL}
\DeclareMathOperator{\SO}{SO}
\DeclareMathOperator{\St}{St}
\DeclareMathOperator{\Tr}{Tr}
\DeclareMathOperator{\vol}{vol}
\DeclareMathOperator{\wl}{wl}
\DeclareMathOperator{\Zgp}{Z}
\numberwithin{equation}{section}
\newtheorem{theorem}[equation]{Theorem}
\newtheorem{corollary}[equation]{Corollary}
\newtheorem{lemma}[equation]{Lemma}
\newtheorem{proposition}[equation]{Proposition}
\newtheorem{question}[equation]{Question}
\theoremstyle{remark}
\newtheorem{remark}[equation]{Remark}
\newtheorem{example}[equation]{Example}
\theoremstyle{definition}
\newtheorem{definition}[equation]{Definition}
\begin{document}

\title[Sparse Equidistribution of Geometric Invariants of Real Quadratic Fields]{Sparse Equidistribution of Geometric Invariants of Real Quadratic Fields}

\author{Peter Humphries}

\address{Department of Mathematics, University of Virginia, Charlottesville, VA 22904, USA}

\email{\href{mailto:pclhumphries@gmail.com}{pclhumphries@gmail.com}}

\urladdr{\href{https://sites.google.com/view/peterhumphries/}{https://sites.google.com/view/peterhumphries/}}

\author{Asbj\o{}rn Christian Nordentoft}

\address{Department of Mathematical Sciences, University of Copenhagen, Universitetsparken 5, 2100 Copenhagen \O{}, Denmark}

\email{\href{mailto:acnordentoft@outlook.com}{acnordentoft@outlook.com}, \href{mailto:nordentoft@math.ku.dk}{nordentoft@math.ku.dk}}

\urladdr{\href{https://sites.google.com/view/asbjornnordentoft/}{https://sites.google.com/view/asbjornnordentoft/}}

\keywords{equidistribution, thin subgroup, hyperbolic orbifold, real quadratic field}

\subjclass[2010]{11F12 (primary); 11E16, 11F67, 11R11 (secondary)}

\thanks{The first author was supported by the National Science Foundation grant DMS-2302079 and the Simons Foundation (award 965056). The second author was supported by a grant from the Independent Research Fund Denmark DFF (1025-00020B) and by a public grant from Fondation Math\'{e}matique Jacques Hadamard.}

\begin{abstract}
Duke, Imamo\={g}lu, and T\'{o}th have recently constructed a new geometric invariant, a hyperbolic orbifold, associated to each narrow ideal class of a real quadratic field. Furthermore, they have shown that the projection of these hyperbolic orbifolds onto the modular surface $\Gamma \backslash \Hb$ equidistributes on average over a genus of the narrow class group as the fundamental discriminant $D$ of the real quadratic field tends to infinity.

We extend this construction of hyperbolic orbifolds to allow for a level structure, akin to Heegner points and closed geodesics of level $q$. Additionally, we refine this equidistribution result in several directions. First, we investigate sparse equidistribution in the level aspect, where we prove the equidistribution of level $q$ hyperbolic orbifolds when restricted to a translate of $\Gamma \backslash \Hb$ in $\Gamma_0(q) \backslash \Hb$, which presents some new interesting features. Second, we explore sparse equidistribution in the subgroup aspect, namely equidistribution on average over small subgroups of the narrow class group. Third, we prove small scale equidistribution and give upper bounds for the discrepancy.

Behind these refinements is a new interpretation of the Weyl sums arising in these equidistribution problems in terms of ad\`{e}lic period integrals, which in turn are related to Rankin--Selberg $L$-functions via Waldspurger's formula. The key remaining inputs are hybrid subconvex bounds for these $L$-functions and a certain homological version of the sup-norm problem.
\end{abstract}

\maketitle

\section{Introduction}

\subsection{Equidistribution of Hyperbolic Orbifolds}

Let $E \coloneqq \Q(\sqrt{D})$ be a real quadratic number field, where $D > 1$ is a positive fundamental discriminant. Associated to each narrow ideal class $A$ of the narrow class group $\Cl_D^+$ of $E$ is a closed geodesic $\CC_A$ on the modular surface $\Gamma \backslash \Hb$, where $\Gamma \coloneqq \SL_2(\Z)$ denotes the modular group. In \cite{DIT16}, Duke, Imamo\={g}lu, and T\'{o}th introduced a new geometric invariant associated to each narrow ideal class $A$, a hyperbolic orbifold $\Gamma_A \backslash \NN_A$ with boundary given by the closed geodesic $\CC_A$. The group $\Gamma_A \subset \PSL_2(\Z)$ is a Fuchsian group of the second kind whose construction is given in terms of certain invariants of $A$, while $\NN_A \subset \Hb$ is the Nielsen region of $\Gamma_A$, namely the smallest nonempty $\Gamma_A$-invariant open convex subset of $\Hb$.

Duke, Imamo\={g}lu, and T\'{o}th additionally showed that these hyperbolic orbifolds equidistribute as $D$ tends to infinity when projected onto the modular surface. For each positive fundamental discriminant $D$, one chooses a genus $G_D$ in the group of genera $\Gen_D \coloneqq \Cl_D^+ / (\Cl_D^+)^2$, so that $G_D$ is a coset $C (\Cl_D^+)^2$ of narrow ideal classes for some $C \in \Cl_D^+$; then for every continuity set $B \subset \Gamma \backslash \Hb$,
\begin{equation}
\label{eqn:equidistribution}
\frac{\sum_{A \in G_D} \vol(\FF_A \cap \Gamma B)}{\sum_{A \in G_D} \vol(\FF_A)} = \frac{\vol(B)}{\vol(\Gamma \backslash \Hb)} + o_B(1)
\end{equation}
as $D$ tends to infinity through fundamental discriminants \cite[Theorem 2]{DIT16}. Here $\FF_A$ denotes a canonical fundamental domain for $\Gamma_A \backslash \NN_A$, while the volume measure on the upper half-plane $\Hb \ni z = x + iy$ is $d\mu(z) = y^{-2} \, dx \, dy$, so that $\vol(\Gamma \backslash \Hb) = \pi/3$. This equidistribution theorem can be viewed as an analogue of Duke's celebrated result on the equidistribution of closed geodesics and of Heegner points on the modular surface and additionally of lattice points on the sphere \cite{Duk88}.

\subsection{Sparse Equidistribution in the Level Aspect}

We generalise Duke, Imamo\={g}lu, and T\'{o}th's construction of hyperbolic orbifolds in the level aspect. Let $q$ be an odd prime that splits in $E$. In \hyperref[sect:hypq]{Section \ref*{sect:hypq}}, we construct hyperbolic orbifolds of level $q$, denoted by $\Gamma_A(q) \backslash \NN_A(q)$, and a canonical fundamental domain $\FF_A(q)\subset \Hb$. This extends the level $1$ construction of the hyperbolic orbifolds $\Gamma_A \backslash \NN_A$ and their canonical fundamental domains $\FF_A$ introduced in \cite{DIT16}. These level $q$ hyperbolic orbifolds are analogous to Heegner points of level $q$ \cite[p.~499]{GKZ87} and closed geodesics of level $q$ \cite[p.~500]{GKZ87} (cf.\ \cite[Section 1]{Dar94}), but with several key new features. We refer to \hyperref[fig:F_A1]{Figures \ref*{fig:F_A1}}, \ref{fig:F_A2}, and \ref{fig:1} in \hyperref[sect:hypq]{Section \ref*{sect:hypq}} for some illustrative examples.

The level $q$ modular surface $\Gamma_0(q) \backslash \Hb$ may be written as
\[\Gamma_0(q) \backslash \Hb = \bigcup_{\omega_q \in \Gamma / \Gamma_0(q)} \omega_q^{-1} \Gamma \backslash \Hb.\]
This partitions $\Gamma_0(q) \backslash \Hb$ into $q + 1$ translates of $\Gamma \backslash \Hb$. We are interested in the \emph{hybrid} problem concerning the asymptotic behaviour of the volume of $\FF_A(q) \cap \Gamma_0(q) \omega_q^{-1} \Gamma \backslash \Hb$ on average over a genus $G_D$ as \emph{both} $q$ and $D$ tend to infinity; this is motivated by the work of Liu, Masri, and Young on the analogous problem for Heegner points \cite[Theorem 1.4]{LMY13}. Since the volume of $\omega_q^{-1} \Gamma \backslash \Hb$ is equal to that of $\Gamma \backslash \Hb$, these translates are small in comparison to the total volume of $\Gamma_0(q) \backslash \Hb$, so that as $q$ grows, we are studying the equidistribution of hyperbolic orbifolds in sets of shrinking volume.

\begin{theorem}
\label{thm:level}
Fix $\delta \in [0,\frac{1}{36})$. For each positive squarefree fundamental discriminant $D$, choose a genus $G_D$ in the group of genera $\Gen_D$. For each odd prime $q$ that splits in $E$ with $q \leq D^{\delta}$, choose $\omega_q \in \Gamma / \Gamma_0(q)$. Then
\[\frac{\vol(\Gamma_0(q) \backslash \Hb)}{\vol(\Gamma \backslash \Hb)} \frac{\sum_{A \in G_D} \vol(\FF_A(q) \cap \Gamma_0(q) \omega_q^{-1} \Gamma \backslash \Hb)}{\sum_{A \in G_D} \vol(\FF_A(q))} = 1 + o_{\delta}(1)\]
as $qD$ tends to infinity. Assuming the generalised Lindel\"{o}f hypothesis, the same result holds for $q \leq D^{\delta}$ for some fixed $\delta \in [0,\frac{1}{12})$.
\end{theorem}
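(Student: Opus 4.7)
By Weyl's equidistribution criterion, the theorem reduces to showing that for each nonconstant element $\psi$ of a spectral basis of $L^2(\Gamma_0(q)\backslash\Hb)$, the Weyl sum
\[W_\psi(D,q) := \sum_{A \in G_D} \int_{\FF_A(q)} \psi(z)\, d\mu(z)\]
is of smaller order than the total volume $\sum_{A \in G_D}\vol(\FF_A(q))$ as $qD \to \infty$ in the claimed range. First I would smoothly approximate the characteristic function of $\Gamma_0(q) \omega_q^{-1} \Gamma \backslash \Hb$ by a test function $\Psi$ of level $q$ and decompose it spectrally; the constant coefficient of $\Psi$ provides the main term, while standard smoothing and truncation estimates reduce the error to bounds on $W_\varphi(D,q)$ as $\varphi$ ranges over Hecke--Maass newforms and Eisenstein series of level dividing $q$.

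The key new input -- flagged in the abstract -- is the ad\`{e}lic reinterpretation of $W_\varphi(D,q)$. Applying Green's theorem to the eigenequation $\Delta\varphi = \lambda_\varphi\varphi$ on $\FF_A(q)$, and using that the non-cuspidal part of $\partial \NN_A(q)$ is built from arcs on the level $q$ closed geodesic $\CC_A(q)$, reduces $W_\varphi(D,q)$ to a combination of classical geodesic periods $\int_{\CC_A(q)} \varphi\,ds$; these are toric periods for the split torus attached to $E$ inside $\GL_2$. Summing over $A \in G_D$ and using orthogonality on $\Gen_D$ rewrites this sum as a linear combination of periods twisted by genus characters $\chi$, and Waldspurger's formula then identifies $|W_\varphi(D,q)|^2$, up to local factors at the archimedean place and at $q$, with central values of Rankin--Selberg $L$-functions $L(1/2,\varphi \times \theta_\chi)$, with an analogous identity involving Dirichlet $L$-values in the Eisenstein case.

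Hybrid subconvex bounds for these $L$-functions in the joint $(q,D)$ aspect -- available through the Michel--Venkatesh machinery and its refinements -- then yield a uniform power saving over convexity. Balancing this saving against the $D$-dependent conductor and the smoothing losses incurred by $\Psi$ fixes the unconditional threshold $\delta < 1/36$. Under the generalised Lindel\"{o}f hypothesis the full $(qD)^{1/4}$ saving is available, and the same balance produces $\delta < 1/12$.

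The principal difficulty lies not in any single $L$-value bound but in the spectral summation. Since $\Psi$ is supported in a set of relative volume $1/(q+1)$, its spectral coefficients decay only slowly in the spectral parameter and in the level, so the individual estimates for $W_\varphi$ must be coupled with uniform control on these coefficients. Making the latter explicit forces one to bound the geodesic integrals $\int_{\CC_A(q)}\varphi\,ds$ uniformly in $\varphi$: a \emph{homological} refinement of the classical sup-norm problem on $\Gamma_0(q)\backslash\Hb$, where one needs sup-norm estimates for linear functionals on the first homology rather than for the automorphic forms themselves. Establishing the appropriate hybrid bound for this homological sup norm, and verifying that it interacts cleanly with the subconvex input, is where I expect the technical core of the argument to lie and where the precise exponent $1/36$ will ultimately be pinned down.
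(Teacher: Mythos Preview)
Your broad strategy is sound, but there are two substantive gaps that would derail the argument as written.

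First, your description of $\partial\FF_A(q)$ is incorrect. You write that Green's theorem reduces the orbifold integral to periods over $\CC_A(q)$ alone, but when $q>1$ the boundary of $\FF_A(q)$ consists of $\CC_A(q)$ \emph{together with} certain $\Gamma_0(q)$-translates of hyperbolic sides $\CC_1,\ldots,\CC_{2g}$ of the special polygon $\PP(q)$, with multiplicities $m_j(A,q)=-\langle[\CC_A(q)],\omega_j\rangle_{\mathrm{cap}}$. Stokes' theorem therefore produces a \emph{second} contribution
\[
\frac{1}{\lambda_f}\sum_{j=1}^{2g}\int_{\CC_j}(R_0 f)(z)\,\frac{dz}{\Im(z)}\,\langle[\CC_A(q)],\omega_j\rangle_{\mathrm{cap}},
\]
and expanding $[\CC_A(q)]$ in the Hecke basis brings in \emph{holomorphic} weight-$2$ cusp forms $h\in\BB_2^{\hol}(\Gamma_0(q))$ and the central values $L(\tfrac12,h\otimes\Theta_\chi)$. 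The ``homological sup-norm'' you allude to is not a bound for $\int_{\CC_A(q)}\varphi\,ds$; it is a bound for the cap-product matrix $\sum_{j,h,\pm}|\langle v_h^\pm,\omega_j\rangle_{\mathrm{cap}}|^2$. One also needs a separate averaged bound for the side integrals $\int_{\CC_j}(R_0 f)(\ell z)\,dz/\Im(z)$, obtained via an approximate functional equation for Vorono\u{\i}-type additive twists and the spectral large sieve. None of these three ingredients appears in your outline.

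Second, the $L$-function input is not pointwise hybrid subconvexity of Michel--Venkatesh type. The paper instead applies H\"older with exponents $(3,3,3)$ and invokes the Petrow--Young hybrid \emph{cubic moment} bounds
\[
\sum_{\substack{f\in\BB_0^\ast(\Gamma_0(q_1))\\ T-1\le|t_f|\le T}}\frac{L(\tfrac12,f\otimes\chi_D)^3}{L(1,\ad f)}\ll_\e D^{1+\e}q_1^{1+\e}T^A,
\]
together with the analogous bounds for Eisenstein series and for holomorphic forms. This is what yields the error $O_\e(D^{-1/12+\e}q^{3+\e})$ and hence the threshold $\delta<1/36$; pointwise subconvexity would give a strictly worse exponent (compare the $1/20$ in Liu--Masri--Young, which the paper notes is improved precisely by switching to cubic moments).
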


The second author \cite{Nor23} recently studied the related problem of determining the distribution of the homology classes of closed geodesics
\[[\CC_A(q)]\in H_1(Y_0(q),\Z),\]
where $Y_0(q) \coloneqq \Gamma_0(q)\backslash \Hb$ is the noncompact modular surface of level $q$ and $\CC_A(q)$ is the oriented closed geodesic associated to $A\in \Cl_D^+$. As discussed in \cite{LMY13}, these results can be viewed as an analogue of Linnik's theorem. Linnik's theorem, in its effective form, states that there exists an absolute constant $L > 1$ such that given a positive integer $q$ and a reduced residue class $a$ modulo $q$, there exists the expected number of primes, $(1 + o(1)) \frac{q^L}{\varphi(q)}$, that are less than $q^L$ and are congruent to $a$ modulo $q$. \hyperref[thm:level]{Theorem \ref*{thm:level}} can be viewed in a similar light: there exists an absolute constant $\delta > 0$ such that given a prime $q$ and a translate $\omega_q^{-1} \Gamma \backslash \Hb$ of $\Gamma \backslash \Hb$ in $\Gamma_0(q) \backslash \Hb$, there exists the expected proportion of mass of projections of hyperbolic orbifolds of discriminant $D \leq q^{\frac{1}{\delta}}$ onto $\Gamma_0(q) \backslash \Hb$ that lie in the translate $\omega_q^{-1} \Gamma \backslash \Hb$.

\subsection{Sparse Equidistribution in the Subgroup Aspect}

Our second refinement is to study the equidistribution of hyperbolic orbifolds averaged over \emph{sparse} subsets of $\Cl_D^+$. Previously, we averaged over a genus $G_D$, which has cardinality $2^{1 - \omega(D)} h_D^+$, where $h_D^+ \coloneqq |\Cl_D^+|$ denotes the narrow class number and $\omega(D)$ denotes the number of distinct prime divisors of $D$. We instead consider an \emph{arbitrary} subgroup $H = H_D$ of the narrow class group $\Cl_D^+$ in place of the subgroup $(\Cl_D^+)^2$ and an \emph{arbitrary} coset $CH$ in place of a genus $G_D = C (\Cl_D^+)^2$. Whereas a genus $G_D$ satisfies $|G_D| \gg_{\e} D^{-\e} h_D^+$ for every $\e > 0$, we allow for the possibility that the cardinality of a coset $CH$ may be significantly smaller than $h_D^+$.

\begin{theorem}
\label{thm:subgroup}
Fix $\delta \in [0,\frac{1}{2826})$ and fix either $q = 1$ or $q$ an odd prime. For each positive fundamental discriminant $D$ for which $q$ splits in $E$ if $q > 1$, choose a coset $CH$ with $C \in \Cl_D^+$ and $H = H_D$ a subgroup of $\Cl_D^+$ satisfying $|H| \gg D^{-\delta} h_D^+$. Then for each fixed continuity set $B \subset \Gamma_0(q) \backslash \Hb$,
\begin{equation}
\label{eqn:subgroupequidistribution}
\frac{\sum_{A \in CH} \vol(\FF_A(q) \cap \Gamma_0(q) B)}{\sum_{A \in CH} \vol(\FF_A(q))} = \frac{\vol(B)}{\vol(\Gamma_0(q) \backslash \Hb)} + o_{q,B,\delta}(1)
\end{equation}
as $D$ tends to infinity. Assuming the generalised Lindel\"{o}f hypothesis, the same result holds for $|H| \gg D^{-\delta} h_D^+$ for some fixed $\delta \in [0,\frac{1}{4})$.
\end{theorem}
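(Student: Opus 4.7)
The plan is to reduce \hyperref[thm:subgroup]{Theorem \ref*{thm:subgroup}} to a Weyl-type equidistribution problem on $\Gamma_0(q)\backslash\Hb$. By Weyl's criterion together with a standard continuity-set approximation, \eqref{eqn:subgroupequidistribution} will follow once one establishes, for every fixed $L^2$-normalised Hecke--Maass cusp form or incomplete Eisenstein series $f$ on $\Gamma_0(q)\backslash\Hb$, the bound
\[
W_f(CH)\coloneqq \sum_{A\in CH}\int_{\FF_A(q)} f(z)\,d\mu(z) = o\!\left(\sum_{A\in CH}\vol(\FF_A(q))\right).
\]
The total mass on the right is $\asymp |H|R_D\gg D^{1/2-\delta-\e}$ via the class number formula and Siegel's lower bound for $L(1,\chi_D)$, so it suffices to prove $W_f(CH)=o(D^{1/2-\delta})$.

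To exploit the subgroup structure, I expand the indicator of $CH$ via orthogonality of characters of the finite abelian group $\Cl_D^+/H$:
\[
\mathbf{1}_{CH}(A) = \frac{|H|}{h_D^+}\sum_{\substack{\chi\in\widehat{\Cl_D^+}\\ \chi|_H=1}}\overline{\chi(C)}\,\chi(A).
\]
The triangle inequality then gives
\[
\lvert W_f(CH)\rvert \leq \frac{|H|}{h_D^+}\cdot [\Cl_D^+:H]\cdot \max_{\chi|_H=1}\lvert W_f(\chi)\rvert = \max_{\chi|_H=1}\lvert W_f(\chi)\rvert,
\]
where $W_f(\chi)\coloneqq\sum_{A\in\Cl_D^+}\chi(A)\int_{\FF_A(q)}f(z)\,d\mu(z)$ denotes the $\chi$-twisted Weyl sum over the full narrow class group. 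Strikingly, $|H|$ cancels at this step: the role of the subgroup parameter $\delta$ is solely to set the target threshold $D^{1/2-\delta}$ against which each individual twisted sum must be tested.

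Following the strategy advertised in the abstract (and in the spirit of \cite{Nor22, DIT16}), the next step is to realise each $W_f(\chi)$ as a global toroidal period integral on $\PGL_2(\A_\Q)$ associated with the real split torus coming from $E$, paired with a Hecke character $\Omega=\Omega_\chi$ of $E$ lying above $\chi$. Since $\FF_A(q)$ is a $2$-dimensional region rather than a $1$-dimensional geodesic, a Stokes' theorem reduction in the manner of \cite{DIT16} must be applied first to rewrite $\int_{\FF_A(q)}f\,d\mu$ as a closed geodesic period along $\CC_A(q)$ of a suitable primitive of $f$. Waldspurger's formula (in a form such as Popa's or Martin--Whitehouse's for real split tori) then yields an identity of the shape
\[
\lvert W_f(\chi)\rvert^{2} \ll_{q,f,\e} D^{1/2+\e}\,L\!\left(\tfrac{1}{2},f\otimes\theta_\Omega\right),
\]
where $\theta_\Omega$ is the dihedral cuspidal automorphic form on $\GL_2/\Q$ of level $\asymp D$ automorphically induced from $\Omega$, and the $D^{1/2}$ factor reflects the regulator of the real quadratic torus.

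Inserting a hybrid subconvex bound $L(1/2,f\otimes\theta_\Omega)\ll_{q,f,\e} D^{1/2-\eta}$ in the conductor aspect gives $\lvert W_f(\chi)\rvert\ll D^{1/2-\eta/2+\e}$, and the required inequality $W_f(CH)=o(D^{1/2-\delta})$ then holds whenever $\delta<\eta/2$. Under the generalised Lindel\"{o}f hypothesis, $\eta$ may be taken arbitrarily close to $1/2$, producing $\delta<1/4$; unconditionally, the best available Burgess-type subconvex exponent for Rankin--Selberg $\GL_2\times\GL_2$ $L$-functions in the conductor aspect --- due to Blomer--Harcos and refined by Michel--Venkatesh --- furnishes $\eta=1/1413$, yielding the stated threshold $\delta<1/2826$. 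The main obstacle is expected to be the precise adelic interpretation of $W_f(\chi)$ in the presence of level structure: the level-$q$ hyperbolic orbifolds have a more intricate boundary than in the level-$1$ setting, the Stokes-type reduction and the local Waldspurger computations at $q$ must be carried out uniformly in $\chi$, and the Eisenstein contribution on $\Re(s)=1/2$ must be shown to reproduce exactly the main term $\vol(B)/\vol(\Gamma_0(q)\backslash\Hb)$.
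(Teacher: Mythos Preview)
Your outline is the same strategy the paper uses: Weyl criterion, character orthogonality over $H^{\perp}$, a Stokes-type reduction from $\FF_A(q)$ to the closed geodesic $\CC_A(q)$ (\hyperref[lem:cycletoWeyl]{Lemma \ref*{lem:cycletoWeyl}}), the ad\`{e}lic torus period interpretation together with Waldspurger (\hyperref[prop:MaassnewformWeyl]{Propositions \ref*{prop:MaassnewformWeyl}} and \ref{prop:EisnewformWeyl}), and then subconvexity for $L(\tfrac12,f\otimes\Theta_\chi)$ to conclude.

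There is one genuine omission in your displayed bound $|W_f(\chi)|^2 \ll_{q,f,\e} D^{1/2+\e} L(\tfrac12,f\otimes\theta_\Omega)$. For $q>1$ the Stokes step does \emph{not} give a pure closed-geodesic period: the boundary $\partial\FF_A(q)$ also contains the hyperbolic sides $\CC_1,\dots,\CC_{2g}$ of the special polygon $\PP(q)$, with multiplicities $-\langle[\CC_A(q)],\omega_j\rangle_{\mathrm{cap}}$. After summing over $A$ with the twist $\chi$, these produce an additional ``topological'' contribution to $W_{\chi,f}$ controlled by the central values $L(\tfrac12,h\otimes\Theta_\chi)$ for the weight-$2$ holomorphic newforms $h\in\BB_2^{\hol}(\Gamma_0(q))$; see \hyperref[prop:MaassnewformWeyl]{Proposition \ref*{prop:MaassnewformWeyl}} and the proof in \hyperref[sect:sparseproof]{Section \ref*{sect:sparseproof}}. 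Since $q$ is fixed here, the line integrals along $\CC_j$ and the cap-product pairings are harmless constants, but you still need a separate subconvexity input for these holomorphic Rankin--Selberg values --- this is Michel's bound $L(\tfrac12,h\otimes\Theta_\chi)\ll_q D^{1/2-1/1057}$ \cite{Mic04}, which is sharper than the bottleneck and so does not affect the final threshold. Two small corrections: the exponent $\eta=1/1413$ comes from Harcos--Michel \cite{HM06} (the Eisenstein case uses Blomer--Harcos--Michel \cite{BHM07}), and your total-mass estimate should read $\gg |H|\log\epsilon_D/\log D$ via \eqref{eqn:volbound} rather than $\asymp |H|R_D$, though your conclusion $\gg D^{1/2-\delta-\e}$ is correct.
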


Note that by taking $H$ to be the trivial subgroup, \hyperref[thm:subgroup]{Theorem \ref*{thm:subgroup}} implies the equidistribution of \emph{individual} hyperbolic orbifolds as $D$ tends to infinity along fundamental discriminants for which $h_D^+ \ll D^{\delta}$ for some fixed $\delta < \frac{1}{2826} \approx 0.00035$ (cf.\ \cite[Theorem 6.5.1]{Pop06}).

At the other extreme, we may take $H$ to be $\Cl_D^+$, so that \eqref{eqn:subgroupequidistribution} gives the equidistribution of hyperbolic orbifolds averaged over the whole narrow class group. For $q = 1$, such a result is \emph{trivial}, as observed by Duke, Imamo\={g}lu, and T\'{o}th \cite[Section 4]{DIT16}; as we discuss in \hyperref[sect:sparseproof]{Section \ref*{sect:sparseproof}}, however, this result is no longer trivial for $q$ an odd prime.

\hyperref[thm:subgroup]{Theorem \ref*{thm:subgroup}} is motivated by a conjecture of Michel and Venkatesh \cite[Conjecture 1]{MV06}, where the analogous statement for Heegner points is conjectured to hold for any fixed $\delta \in [0,\frac{1}{2})$; it is noted that the generalised Lindel\"{o}f hypothesis implies such a conjecture in the range $\delta \in [0,\frac{1}{4})$. Harcos and Michel have proven this conjecture in the range $\delta \in [0,\frac{1}{2826})$ \cite[Theorem 1.2]{HM06} (see additionally \cite[Corollary 1.4]{Har11}), while Venkatesh has studied this problem in more general settings \cite[Theorem 7.2]{Ven10}.

More recently, a toy model of this problem was resolved by the first author \cite[Theorem 1.5]{Hum22}, namely the sparse equidistribution as $q$ tends to infinity of the points
\[\left\{\left(\frac{d}{q},\frac{d'}{q}\right) \in \T^2 : d \in CH, \ dd' \equiv 1 \hspace{-.25cm} \pmod{q}\right\}\]
in the torus $\T^2 = (\R/\Z)^2$ indexed by a coset $CH$ of the group $(\Z/q\Z)^{\times}$ with $q$ a prime and $|H| \gg q^{\delta}$ for some fixed $\delta > 0$. In this setting, it is shown that this sparse equidistribution result is a simple consequence of a deep result of Bourgain on cancellation in certain exponential sums \cite{Bou05}.

\subsection{Small Scale Equidistribution and Discrepancy Bounds}

The first author investigated a refinement of Duke, Imamo\={g}lu, and T\'{o}th's equidistribution result in \cite{Hum18}, namely small scale equidistribution, in which the continuity set $B \subset \Gamma \backslash \Hb$ in \eqref{eqn:equidistribution} is chosen to be a ball $B_R(w)$ whose radius $R$ shrinks as $D$ grows. One can think of the fastest rate at which this radius can shrink with respect to the growth of $D$ for which equidistribution still holds as being the smallest \emph{scale} of equidistribution. We prove the following unconditional result in this regard.

\begin{theorem}
\label{thm:smallscale}
Fix $\delta \in [0,\frac{1}{2})$ and $w \in \Gamma \backslash \Hb$. For each positive squarefree fundamental discriminant $D$, choose a genus $G_D$ in the group of genera $\Gen_D$. Then for all $R \in [D^{-\delta},1]$,
we have that
\begin{equation}
\label{eqn:smallscale}
\frac{\vol(\Gamma \backslash \Hb)}{\vol(B_R(w))} \frac{\sum_{A \in G_D} \vol(\FF_A \cap \Gamma B_R(w))}{\sum_{A \in G_D} \vol(\FF_A)} = 1 + o_{\delta}(1)
\end{equation}
as $D$ tends to infinity.
\end{theorem}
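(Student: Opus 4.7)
The plan is to follow the spectral approach to small scale equidistribution developed by the first author in \cite{Hum18} for Heegner points and closed geodesics, and to transfer it to the hyperbolic orbifold setting. First I would sandwich $\mathbf{1}_{B_R(w)}$ between smooth radial test functions $\psi_R^{\pm}$ supported on balls of radii $R(1 \pm D^{-\eta})$ for a small auxiliary parameter $\eta > 0$, so that $\int(\psi_R^+ - \psi_R^-)\,d\mu \ll R^2 D^{-\eta}$. Writing $\Pi_R^{\pm}$ for the $\Gamma$-periodisation on $\Gamma \backslash \Hb$, Selberg's spectral decomposition produces a constant-term contribution yielding the expected main term $\frac{\vol(B_R(w))}{\vol(\Gamma \backslash \Hb)} \sum_{A \in G_D} \vol(\FF_A)$, together with a cuspidal error of shape
\[\sum_j \langle \Pi_R^{\pm}, \varphi_j \rangle W_j, \qquad W_j := \sum_{A \in G_D} \int_{\FF_A} \varphi_j(z) \, d\mu(z),\]
and an analogous Eisenstein integral.

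To control the cuspidal error I would combine spectral and arithmetic inputs. On the spectral side, the Harish--Chandra/Selberg transform of a radial ball function of radius $R$ localises to spectral parameters $t_j \ll R^{-1}$, with $|\langle \Pi_R^{\pm}, \varphi_j \rangle| \ll R^2 |\varphi_j(w)|$ in this range and rapid decay beyond; coupled with Weyl's law $\#\{j : t_j \leq T\} \asymp T^2$ and a fixed-point sup-norm bound $|\varphi_j(w)| \ll t_j^{1/2 + \e}$ (with the Iwaniec--Sarnak improvement available if needed), this effectively truncates the sum at $t_j \ll R^{-1}$. On the arithmetic side, $W_j$ is reinterpreted as an ad\`{e}lic toric period, and Waldspurger's formula (adapted to the genus $G_D$ in the body of the paper) expresses $|W_j|^2$ in terms of Rankin--Selberg central $L$-values twisted by genus characters. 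The hybrid subconvex bounds announced in the abstract then yield a power saving of the form $|W_j| / \sum_{A \in G_D} \vol(\FF_A) \ll t_j^A D^{-\theta_0}$ for some absolute $\theta_0 > 0$ and $A \geq 0$, while the Eisenstein contribution is handled similarly via hybrid subconvex bounds for $L(\tfrac{1}{2} + it, \varphi \otimes \chi_D)$ uniform in both $t$ and $D$.

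The main obstacle will be reaching the stated threshold $\delta < \tfrac{1}{2}$. Summing the above bounds naively would yield equidistribution only for $\delta$ below some ratio $\theta_0/(A + C)$ coming from Weyl's law, well short of $\tfrac{1}{2}$; attaining the full range requires exploiting both the polynomial savings of the hybrid subconvex bound in the spectral aspect and the rapid decay beyond $t_j \asymp R^{-1}$ of the Selberg transform, carefully balanced against the smoothing error via the choice of $\eta$. Heuristically, the threshold $\delta = \tfrac{1}{2}$ is the regime where the effective number of spectral terms $\asymp R^{-2} = D^{2\delta}$ reaches the natural square-root barrier for mean-square cancellation in the $W_j$, which marks the sharp unconditional limit of the method.
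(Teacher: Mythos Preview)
Your strategy---spectral expansion, Weyl sums via Waldspurger, Selberg--Harish-Chandra transform bounds---matches the paper's, and your smoothing by convolving ball indicators is essentially what the paper does (it uses $k_{R\pm\rho} \ast k_\rho$, yielding the factorised transform $h_{R\pm\rho}(t)h_\rho(t)$). However, there is a real gap in the arithmetic input, and you have essentially identified it yourself in your final paragraph without resolving it.

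Pointwise hybrid subconvexity, even of Weyl strength $L(\tfrac12,f\otimes\chi_{D_j}) \ll_{\e} (D(1+|t_f|))^{1/3+\e}$, does \emph{not} reach $\delta<\tfrac12$; it gives at best $\delta<\tfrac16$ (this is the range achieved in the earlier work \cite{Hum18} that the present paper improves). The decisive new input is not a pointwise bound but a \emph{first moment} bound over the spectrum, namely \cite[Proposition~2.14]{HR22}, which in the relevant ranges gives
\[
\sum_{\substack{f\in\BB_0(\Gamma)\\ T\le t_f\le 2T}} \frac{L(\tfrac12,f\otimes\chi_{D_1})L(\tfrac12,f\otimes\chi_{D_2})}{L(1,\ad f)} \ll_{\e}
\begin{cases}
T^2 D^{1/3+\e} & T\le D^{1/12},\\
D^{1/2+\e} & D^{1/12}\le T\le D^{1/4},\\
T^{2+\e} & T\ge D^{1/4},
\end{cases}
\]
and analogously for Eisenstein series. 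This is Lindel\"of on average once $T\ge D^{1/4}$, far stronger than any available pointwise bound. The paper then applies Cauchy--Schwarz to separate the $|f(w)|$ factor (controlled by the \emph{local} Weyl law $\sum_{t_f\le T}|f(w)|^2\ll T^2+T\Ht(w)$, not a sup-norm bound) from the $L$-function factor (controlled by the moment bound above). Your sketch should replace ``hybrid subconvex bounds'' by this moment estimate; without it the threshold $\delta<\tfrac12$ is out of reach.
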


This improves upon \cite[Theorem 1.24]{Hum18}, where this result was proven under the assumption of the generalised Lindel\"{o}f hypothesis\footnote{As stated, \cite[Theorem 1.24]{Hum18} claims that the asymptotic formula \eqref{eqn:smallscale} holds for $R \asymp D^{-\delta}$ for some fixed $\delta \in [0,\frac{1}{12})$ unconditionally and for some fixed $\delta \in [0,\frac{1}{4})$ under the assumption of the generalised Lindel\"{o}f hypothesis, though these conditions should in fact be the stronger conditions $\delta \in [0,\frac{1}{6})$ and $\delta \in [0,\frac{1}{2})$ respectively.}. We use \hyperref[thm:smallscale]{Theorem \ref*{thm:smallscale}} to bound the discrepancy associated to this equidistribution result, which may be thought of as a quantitative way of measuring \emph{uniformly} the \emph{rate} of equidistribution.

\begin{theorem}
\label{thm:discrepancy}
For each positive squarefree fundamental discriminant $D$, choose a genus $G_D$ in the group of genera $\Gen_D$. Then as $D$ tends to infinity, we have that
\[\sup_{B_R(w) \subset \Gamma \backslash \Hb} \left|\frac{\sum_{A \in G_D} \vol(\FF_A \cap \Gamma B_R(w))}{\sum_{A \in G_D} \vol(\FF_A)} - \frac{\vol(B_R(w))}{\vol(\Gamma \backslash \Hb)}\right| \ll_{\e} D^{-\frac{1}{12} + \e},\]
where the supremum is over all injective geodesic balls in $\Gamma \backslash \Hb$. Assuming the generalised Lindel\"{o}f hypothesis, the stronger bound $O_{\e}(D^{-1/4 + \e})$ holds for the discrepancy.
\end{theorem}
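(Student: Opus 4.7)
My plan is to deduce the discrepancy bound from a power-saving version of the Weyl sums that underlie \hyperref[thm:smallscale]{Theorem \ref*{thm:smallscale}}. Write $\nu_D$ for the normalised probability measure
\[\nu_D \coloneqq \Bigl(\sum_{A\in G_D} \vol(\FF_A)\Bigr)^{-1}\sum_{A \in G_D} \mathbf{1}_{\FF_A}\, d\mu\]
on $\Gamma\backslash\Hb$ and $\mu_{\Gamma\backslash\Hb}$ for the normalised hyperbolic probability measure on the same surface. The goal is to bound $|\nu_D(B_R(w)) - \mu_{\Gamma\backslash\Hb}(B_R(w))|$ uniformly in $(R,w)$. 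The first step is the usual sandwich: choose smooth $\Gamma$-invariant test functions $\phi^{\pm}$ with $\phi^{-}\leq \mathbf{1}_{B_R(w)}\leq \phi^+$ that agree with $\mathbf{1}_{B_R(w)}$ outside a hyperbolic annulus of width $\eta$ about $\partial B_R(w)$, where $\eta>0$ is a smoothing parameter to be optimised at the end.

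\textbf{Spectral decomposition and Weyl sum bounds.} Decomposing $\phi^{\pm}$ spectrally into Maass cusp forms, unitary Eisenstein series, and the constant function (and pairing the contribution from the oriented geodesic boundaries of the orbifolds $\FF_A$ with holomorphic cusp forms), the constant-function piece yields the main term $\mu_{\Gamma\backslash\Hb}(\phi^{\pm})$, while each remaining component is paired against a Weyl sum of the form bounded in the proof of \hyperref[thm:smallscale]{Theorem \ref*{thm:smallscale}}. These Weyl sums are, via Waldspurger's formula and the ad\`{e}lic reinterpretation flagged in the abstract, governed by central values of Rankin--Selberg $L$-functions, and the hybrid subconvex bounds provide a Weyl-strength saving of $D^{-1/6+\e}$ unconditionally, and $D^{-1/2+\e}$ under the generalised Lindel\"{o}f hypothesis, with polynomial dependence on the archimedean spectral parameters. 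Using standard integration-by-parts estimates on the spectral coefficients of $\phi^{\pm}$, one obtains
\[\left|\nu_D(\phi^{\pm}) - \mu_{\Gamma\backslash\Hb}(\phi^{\pm})\right| \ll_{\e} \eta^{-A} D^{-\beta + \e}\]
for an explicit $A > 0$, with $\beta \in \{1/6, 1/2\}$ in the two regimes.

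\textbf{Optimisation and uniformity in $(R,w)$.} The annular support of $\phi^+ - \phi^-$ gives the crude bound $\nu_D(\phi^+ - \phi^-) + \mu_{\Gamma\backslash\Hb}(\phi^+ - \phi^-) \ll R\eta$, so the total error is $O(R\eta + \eta^{-A}D^{-\beta+\e})$. Tracking the precise value of $A$ furnished by the spectral expansion and optimising $\eta$ yields a bound of $D^{-\beta/2 + \e}$, giving $D^{-1/12+\e}$ unconditionally and $D^{-1/4+\e}$ under the generalised Lindel\"{o}f hypothesis. The supremum over all injective geodesic balls is then obtained by a routine net argument: the map $(R,w)\mapsto \nu_D(B_R(w))$ has a modulus of continuity that is at worst polynomial in $D$, so verifying the bound on a polynomially dense net of $(R,w)$-pairs and absorbing the logarithmic loss into $D^{\e}$ finishes the argument.

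\textbf{Main obstacle.} The essential difficulty is ensuring that the Weyl sum estimate decays at the advertised rate $D^{-\beta}$ \emph{uniformly} in the archimedean conductor of the dual form, so that the smoothing penalty $\eta^{-A}$ coming from the spectral expansion of $\phi^{\pm}$ does not erode the $D$-aspect saving below $D^{-\beta/2}$. This hybrid uniformity is precisely what the hybrid subconvex bounds for Rankin--Selberg $L$-functions alluded to in the abstract are designed to deliver, and it is what dictates the factor of $1/2$ appearing in the final exponent.
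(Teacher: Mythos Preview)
Your sandwich-plus-spectral-expansion strategy is the right shape and is essentially what the paper does, but several steps are only gestured at and one is circular. First, the bound $\nu_D(\phi^+ - \phi^-) \ll R\eta$ is not available to you: this is a statement about the very measure you are trying to control. The standard sandwich only needs $\mu_{\Gamma\backslash\Hb}(\phi^{\pm}-\mathbf{1}_{B_R})\ll R\eta$, so this is reparable, but as written it is a genuine gap. Second, in the level $1$ setting of this theorem there are no holomorphic cusp forms of weight $2$, so the ``topological'' contribution you invoke simply does not occur; the Weyl sums reduce to the geodesic term governed by $L(1/2,f\otimes\chi_{D_1})L(1/2,f\otimes\chi_{D_2})$ alone.

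The more serious problem is the black-box step $|\nu_D(\phi^{\pm})-\mu(\phi^{\pm})|\ll \eta^{-A}D^{-\beta+\e}$ with $\beta=1/6$ and the subsequent optimisation to $D^{-\beta/2}$. The input that actually delivers this is \emph{not} a pointwise hybrid subconvex bound but a first-moment bound over the spectral family, namely \eqref{eqn:HRbounds} (proved in \cite{HR22}), which after Cauchy--Schwarz with the local Weyl law gives \hyperref[lem:fractionalmoment]{Lemma \ref*{lem:fractionalmoment}}. Pointwise Weyl-strength subconvexity for $L(1/2,f\otimes\chi_D)$ with controlled $t_f$-dependence is not what is available here, and without it your unspecified exponent $A$ and the optimisation producing exactly $D^{-\beta/2}$ are unjustified. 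In the paper the test functions are taken to be the convolved point-pair invariants $k_{R\pm\rho}\ast k_\rho$, whose Selberg--Harish-Chandra transforms have the explicit polynomial decay $h_R(t)\ll (R|t|)^{-3/2}$ of \hyperref[lem:hRupperbounds]{Lemma \ref*{lem:hRupperbounds}}; combining this with \hyperref[lem:fractionalmoment]{Lemma \ref*{lem:fractionalmoment}} and a regime-dependent choice of $\rho$ yields the quantitative bound \eqref{eqn:smallscalebound}, which one then multiplies by $\vol(B_R)\asymp R^2$ and maximises over $R\le 1$ to get $D^{-1/12+\e}$.

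Finally, the net argument is unnecessary: once the spectral bound is established for arbitrary $(R,w)$ (with $\Ht(w)$ bounded since the ball is injective), it is already uniform, and no discretisation is needed.
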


\begin{remark}
Recently, the second named author and Ser Peow Tan \cite{NT25} gave a geometric construction of the \emph{partial coverings} studied in this paper and in \cite{DIT16} (i.e.\ the image of the projection $\FF_A(q) \to \Gamma_0(q)\backslash \Hb$) applicable to any Fuchsian group of the first kind. This construction circumvents any reference to thin subgroups. Furthermore, \cite[Theorem 1.1]{NT25} shows that equidistribution of a sequence of collections of closed geodesics implies equidistribution of the associated partial coverings, where in both settings equidistribution is meant in the sense of smooth compactly supported functions. The methods are quite  different to the present paper and in particular do not yield effective error terms (outside of the co-compact case, which is not treated in the present paper) and do not cover the case where the Fuchsian group is varying, as in \hyperref[thm:level]{Theorem \ref*{thm:level}}.
\end{remark}

\subsection{A Discussion on the Proofs}

\subsubsection{Sparse Equidistribution in the Level Aspect}

\hyperref[thm:level]{Theorem \ref*{thm:level}} is proven via approximating the indicator function of $\omega_q^{-1} \Gamma \backslash \Hb$ by a smooth function and spectrally expanding this on $\Gamma_0(q) \backslash \Hb$. One then integrates this spectral expansion over the projection of $\FF_A(q) $ onto $\Gamma_0(q) \backslash \Hb$ and sums over $A \in G_D$. The main contribution comes from the constant function. It remains to bound the contributions from the cuspidal and continuous spectrum, which involve Weyl sums. As is standard for equidistribution results of this form, these Weyl sums can be expressed in terms of the twisted $L$-functions $L\left(\frac{1}{2},f \otimes \chi_{D_1}\right)$, where $D_1 \mid D$ is a fundamental discriminant and $\chi_{D_1}$ denotes the primitive quadratic Dirichlet character modulo $|D_1|$. Unusually, there is additionally a new \emph{topological contribution} involving 
\begin{enumerate}[leftmargin=*,label=\textup{(\arabic*)}]
\item the twisted $L$-functions $L\left(\frac{1}{2},h \otimes \chi_{D_1}\right)$ for $h$ a holomorphic Hecke cusp form of weight $2$ and level $q$;
\item line integrals of $f$ along sides of a fundamental polygon $\PP(q)$ of $\Gamma_0(q)$ (which are independent of $D$); and
\item certain cap product pairings between homology and cohomology of the compactification $X_0(q)$ of the modular surface $\Gamma_0(q) \backslash \Hb$ (which are also independent of $D$).
\end{enumerate}

A key step is to obtain strong bounds for certain moments of $L$-functions of the form 
\[\sum_{D_1 D_2 = D} \sum_{\substack{f \in \BB_0(\Gamma_0(q)) \\ T \leq t_f \leq 2T}} \frac{L\left(\frac{1}{2},f \otimes \chi_{D_1}\right) L\left(\frac{1}{2},f \otimes \chi_{D_2}\right)}{L(1,\ad f)},\]
where $\BB_0(\Gamma_0(q))$ denotes an orthonormal basis of Hecke--Maa\ss{} cusp forms in $L^2(\Gamma_0(q) \backslash \Hb)$. Due the hybrid nature of \hyperref[thm:level]{Theorem \ref*{thm:level}}, we require bounds for this moment that are uniform in $q$ and $D$, and similarly hybrid bounds for related moments of $L$-functions associated to \emph{holomorphic} Hecke cusp forms. Here we proceed via H\"{o}lder's inequality together with hybrid bounds for the third moments of $L(\frac{1}{2},f \otimes \chi_{D_1})$ and $L(\frac{1}{2},f \otimes \chi_{D_2})$ due to Petrow and Young \cite{PY19}.

What remains is to bound the contribution from the topological term. By a detailed analysis of the structure of $\partial\FF_A(q)$, the line integrals of $f$ can be expressed in terms of Vorono\u{\i} $L$-series (i.e.\ $L$-series of additive twists of $f$), which can in turn be bounded on average via the spectral large sieve. The bounding of the cap product pairing can be seen as a homological instance of the sup-norm problem and has been bounded recently by the second author \cite{Nor23} using a theta correspondence approach combined with techniques from geometric coding to treat the counting problem that arises.

\subsubsection{Sparse Equidistribution in the Subgroup Aspect}

We reduce the proof of \hyperref[thm:subgroup]{Theorem \ref*{thm:subgroup}} to showing that for each fixed Hecke--Maa\ss{} cusp form $f \in \BB_0(\Gamma_0(q))$, the quantity
\[\frac{|H|}{h_D^+} \frac{1}{\sum_{A \in CH} \vol(\FF_A(q))} \sum_{\chi \in H^{\perp}} \overline{\chi}(C) W_{\chi,f}\]
tends to zero as $D$ tends to infinity, as well as an analogous result for $f$ replaced by an Eisenstein series. Here $H^{\perp}$ denotes the annihilator of $H$, namely the set of characters $\chi$ of $\Cl_D^+$ that satisfy $\chi(A) = 1$ for all $A \in H$, while the Weyl sum $W_{\chi,f}$ is
\begin{equation}
\label{eqn:Weylchif}
W_{\chi,f} \coloneqq \sum_{A \in \Cl_D^+} \chi(A) \int_{\FF_A(q)} f(z) \, d\mu(z).
\end{equation}

Following the earlier work of Duke, Imamo\={g}lu, and T\'{o}th \cite[Proposition 1]{DIT16}, we prove a lower bound for the sums of volumes of the hyperbolic orbifolds of the form
\[\sum_{A \in CH} \vol(\FF_A(q)) \gg_{q,\e} \frac{|H|}{h_D^+} D^{\frac{1}{2} - \e}.\]
Since $|H^{\perp}| = \frac{h_D^+}{|H|}$, equidistribution follows provided that we can show that there exists an absolute constant $\alpha > 0$ such that
\[W_{\chi,f} \ll_{q,f} \frac{|H|}{h_D^+} D^{\frac{1}{2} - \alpha}.\]
In this way, the proof of \hyperref[thm:subgroup]{Theorem \ref*{thm:subgroup}} is reduced to proving such bounds for Weyl sums.

To bound these Weyl sums, we make explicit a version of Waldspurger's formula \cite{Wal85} in order to show that the square of the absolute value of a Weyl sum is essentially equal to a special value of a Rankin--Selberg $L$-function. The proof is then completed upon invoking subconvex bounds for Rankin--Selberg $L$-functions due to Harcos and Michel \cite{HM06}.

\subsubsection{Small Scale Equidistribution and Discrepancy Bounds}

The proof of \hyperref[thm:smallscale]{Theorem \ref*{thm:smallscale}} concerning small scale equidistribution follows in a similar manner to the previous work of Young \cite[Theorem 2.1]{You17} on small scale equidistribution of Heegner points on $\Gamma \backslash \Hb$ and of the first author and Radziwi\l{}\l{} \cite[Theorem 1.5]{HR22} on small scale equidistribution of lattice points on the sphere. The chief idea is to approximate the indicator function of a ball via a smooth function and spectrally expand this on $\Gamma \backslash \Hb$. One then integrates this spectral expansion over the projection of $\Gamma_A \backslash \NN_A$ onto $\Gamma \backslash \Hb$ and sums over $A \in G_D$. The main contribution comes from the constant function, and our goal becomes adequately bounding the ensuing sum over the cuspidal spectrum and integral over the continuous spectrum.

We approach this via a dyadic subdivision together with an application of pre-existing identities of Duke, Imamo\={g}lu, and T\'{o}th relating Weyl sums to $L$-functions \cite[Theorems 3 and 4]{DIT16}. In this way, the problem is reduced to deducing strong bounds for the quantities
\begin{gather*}
\sum_{D_1 D_2 = D} \sum_{\substack{f \in \BB_0(\Gamma) \\ T \leq t_f \leq 2T}} \frac{L\left(\frac{1}{2},f \otimes \chi_{D_1}\right) L\left(\frac{1}{2},f \otimes \chi_{D_2}\right)}{L(1,\ad f)},	\\
\sum_{D_1 D_2 = D} \int\limits_{T \leq |t| \leq 2T} \left|\frac{L\left(\frac{1}{2} + it,\chi_{D_1}\right)^2 L\left(\frac{1}{2} + it,\chi_{D_2}\right)^2}{\zeta(1 + 2it)}\right|^2 \, dt,
\end{gather*}
that are uniform in both $T$ and $D$. For this, we can appeal to the recent work of the first author and Radziwi\l{}\l{} \cite[Proposition 2.14]{HR22}. The bounds for the discrepancy in \hyperref[thm:discrepancy]{Theorem \ref*{thm:discrepancy}} are then readily deduced from \hyperref[thm:smallscale]{Theorem \ref*{thm:smallscale}}.

\subsection{Complications}

The above discussion of the proofs of \hyperref[thm:level]{Theorems \ref*{thm:level}}, \ref{thm:subgroup}, \ref{thm:smallscale}, and \ref{thm:discrepancy} glosses over two key obstacles that we must overcome.

\subsubsection{Hyperbolic Orbifolds of Level $q$}

The first obstacle is defining the hyperbolic orbifolds of level $q$. For $q = 1$, Duke, Imamo\={g}lu, and T\'{o}th \cite{DIT16} associate to a narrow ideal class $A$ of a real quadratic field a Fuchsian group of the second kind $\Gamma_A \leq \PSL_2(\Z)$ whose unique boundary component projects to the closed geodesic $\CC_A(1) \subset X_0(1)$ associated to $A$. In the higher level setting, however, it is not possible to define a subgroup of $\Gamma_0(q)$ that is a Fuchsian group of the second kind whose boundary component projects to $\CC_A(q) \subset X_0(q)$. This is caused by the possibility of a topological obstruction in the case of nontrivial genus. One can observe this intuitively by considering a loop around one of the holes of a torus, as in \hyperref[fig:torus]{Figure \ref*{fig:torus}}. This loop is not the boundary of a submanifold of the torus, which is precisely due to the fact that the loop is nontrivial in the homology of the torus. In other words, we must compensate for the nontrivial homology of $\CC_A(q)$ in a systematic way. 

\begin{figure}
\begin{tikzpicture}[scale=2] 
 \begin{scope}\draw[yscale=cos(70), double distance=2*5mm] (0:1) arc (0:180:1);
    \draw[yscale=cos(70),  double distance=2*5mm] (180:1) arc (180:360:1);
     \fill[white](-12.5 mm,-0.07mm) rectangle  (-7 mm ,0.09mm);
     \fill[white](12.5 mm,-0.07mm) rectangle  (7 mm ,0.09mm);
    \draw [ xscale=cos(70), yshift=-3.4 mm, densely dashed]  circle(0.255);

    \end{scope}

\end{tikzpicture}
\caption{A loop around one of the holes of the torus} \label{fig:torus}
\end{figure}
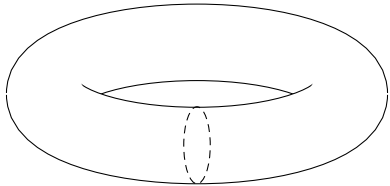

In \hyperref[sect:hypq]{Section \ref*{sect:hypq}}, we define a Fuchsian group of the second kind $\Gamma_A(q) \leq \PSL_2(\Z)$ (which in general is not contained in $\Gamma_0(q)$) as well as an associated canonical fundamental domain $\FF_A(q) \subset \Hb$ of $\Gamma_A(q) \backslash \NN_A(q)$, where $\NN_A(q)$ is the associated Nielsen region. The equidistribution problem that we study concerns the image under the map $\FF_A(q) \to X_0(q)$ given by restricting the projection $\Hb \twoheadrightarrow X_0(q)$. The starting point for this construction is a special fundamental polygon $\PP(q)$ of $\Gamma_0(q)$ introduced by Kulkarni \cite{Kul91} and studied recently by Doan, Kim, Lang, and Tan \cite{DKLT25}. Pictorially, the boundary of the fundamental polygon $\partial\PP(q)$ defines a curve inside $X_0(q)$, and we compensate for the nontrivial homology of $\CC_A(q)$ by subsegments of $\partial\PP(q)$. The boundary $\partial\PP(q)$ contributes to the topological terms in the Weyl sums alluded to above, which has not previously been observed before in the context of Duke's theorem. Notably, bounding these topological contributions requires several new ideas that go beyond the work of Duke, Imamo\={g}lu, and T\'{o}th \cite{DIT16}, namely averaged bounds for Vorono\u{\i} $L$-series and a homological version of the sup-norm problem.

\subsubsection{Weyl Sums}

The second obstacle is relating the Weyl sum \eqref{eqn:Weylchif} to a special value of a Rankin--Selberg $L$-function. Let us first recall how this is done by Duke, Imamo\={g}lu, and T\'{o}th when $q = 1$ and $H = (\Cl_D^+)^2$. In this setting, a character $\chi \in H^{\perp}$ is a genus character associated to a pair of primitive quadratic Dirichlet characters $\chi_{D_1},\chi_{D_2}$ of conductors $|D_1|,|D_2|$, where $D_1,D_2 \in \Z$ are fundamental discriminants for which $D_1 D_2 = D$. In \cite[Theorem 4]{DIT16}, it is shown that the Weyl sum $W_{\chi,f}$ for this genus character $\chi$ is essentially equal to $b(D_1) b(D_2)$, where $b(n)$ denotes the $n$-th Fourier coefficient of the Shintani lift of the Hecke--Maa\ss{} form $f$ of weight zero, namely the Maa\ss{} form of weight $\frac{1}{2}$ associated to $f$ via the Shimura--Shintani correspondence, as elucidated by Katok and Sarnak \cite{KS93}. One then uses Waldspurger's formula \cite[Th\'{e}or\`{e}me 1]{Wal81}, in an explicit form due to Baruch and Mao \cite[Theorem 1.4]{BM10}, to relate $|b(D_1)|^2$ and $|b(D_2)|^2$ to the values at $s = \frac{1}{2}$ of the $L$-functions $L(s,f \otimes \chi_{D_1})$ and $L(s, f \otimes \chi_{D_2})$.

This method breaks down, however, when $\chi$ is not a genus character, for then there is no explicit relation between the Weyl sum and the Fourier coefficients of the Shintani lift. We show that nonetheless there is a relation between the square of the absolute value of the Weyl sum and the special value of an $L$-function. The key observation is that \cite[Lemma 1]{DIT16} identifies the Weyl sum with a weighted sum of cycle integrals, namely integrals of an automorphic form along a closed geodesic. With a careful classical-to-ad\`{e}lic correspondence, we show that this weighted sum of cycle integrals can in turn be viewed as an ad\`{e}lic period integral $\Pscr_{\Omega}(\phi)$, where the integrand is the product of a distinguished automorphic form $\phi : \GL_2(\A_{\Q}) \to \C$ lying in the automorphic representation $\pi_f$ associated to $f$ and the Hecke character $\Omega : E^{\times} \backslash \A_E^1 \to \C^{\times}$ associated to $\chi$.

It is known, going back to the work of Waldspurger \cite{Wal85}, that $|\Pscr_{\Omega}(\phi)|^2$ is essentially equal to the value at $s = \frac{1}{2}$ of the Rankin--Selberg $L$-function $L(s, \pi_f \otimes \pi_{\Omega})$, where $\pi_{\Omega}$ denotes the automorphic induction of $\Omega$ to an automorphic representation of $\GL_2(\A_{\Q})$. When $\Omega$ is the id\`{e}lic lift of a narrow class character $\chi$, this $L$-function may in turn be written as the Rankin--Selberg $L$-function $L(s,f \otimes \Theta_{\chi})$, where $\Theta_{\chi}$ denotes the theta series on $\Gamma_0(D) \backslash \Hb$ associated to $\chi$.

It is instructive to consider the case when the Hecke character $\Omega : E^{\times} \backslash \A_E^1 \to \C^{\times}$ factors through the norm map from $\A_E^{\times}$ to $\A_{\Q}^{\times}$. When this occurs, the automorphic representation $\pi_{\Omega}$ is noncuspidal; it is the isobaric sum of a pair of Hecke characters $\omega_1,\omega_2 : \Q^{\times} \backslash \A_{\Q}^1 \to \C^{\times}$ and the vector space of automorphic forms associated to $\pi_{\Omega}$ consists of Eisenstein series, so that the Rankin--Selberg $L$-function $L(s, \pi_f \otimes \pi_{\Omega})$ factorises as $L(s, \pi_f \otimes \omega_1) L(s, \pi_f \otimes \omega_2)$. When $\Omega$ is additionally the ad\`{e}lic lift of a narrow class character $\chi$, then necessarily $\chi$ is a genus character associated to a pair of quadratic characters $\chi_{D_1},\chi_{D_2}$ whose id\`{e}lic lifts are $\omega_1,\omega_2$, so that $L(s,\pi_f \otimes \omega_1) = L(s,f \otimes \chi_{D_1})$ and $L(s,\pi_f \otimes \omega_2) = L(s,f \otimes \chi_{D_2})$. In this way, our result encompasses that of Duke, Imamo\={g}lu, and T\'{o}th.

There is a caveat to our claim that $|\Pscr_{\Omega}(\phi)|^2$ is essentially equal to $L(\frac{1}{2}, \pi_f \otimes \pi_{\Omega})$; this is true only up to multiplication by a constant that depends sensitively on the automorphic form $\phi$. Thankfully, this constant has been explicitly evaluated for certain choices of automorphic forms $\phi$ by Martin and Whitehouse \cite{MW09}, and while the cases of interest required in the proofs of our two main theorems do not quite fall under the umbrella of the cases considered in \cite{MW09}, only mild modifications are needed.

\section{Oriented Closed Geodesics of Level \texorpdfstring{$q$}{q}}
\label{sect:levelq}

We begin by recording several details relating oriented ideals and narrow ideal classes of real quadratic fields, integral binary quadratic forms, embeddings of real quadratic fields into spaces of matrices, and closed geodesics on the level $q$ modular surface $\Gamma_0(q) \backslash \Hb$. Useful references for this material include \cite[Section 1]{GKZ87}, \cite[Section 1]{Dar94}, and \cite[Section 6]{Pop06}. We work throughout with a positive fundamental discriminant $D > 1$ and a squarefree integer $q$ for which every prime dividing $q$ splits in $E \coloneqq \Q(\sqrt{D})$. We also fix once and for all a residue class $r$ modulo $2q$ for which $r^2 \equiv D \pmod{4q}$; the dependence on the choice of $r$ is briefly discussed in \hyperref[sect:AtkinLehner]{Section \ref*{sect:AtkinLehner}}.

\subsection{Oriented Ideals}

Let $\aa$ be a nonzero fractional ideal of $E$. Let $(\alpha_1,\alpha_2) \in E^2$ be generators over $\Z$ of $\aa$, so that $\aa = \Z \alpha_1 + \Z \alpha_2$. The (absolute) norm of $\aa$ is
\[\Norm(\aa) = \frac{|\alpha_1 \sigma(\alpha_2) - \alpha_2 \sigma(\alpha_1)|}{\sqrt{D}},\]
where $\sigma$ denotes the nontrivial Galois automorphism of $E$. The ideal $\aa$ is said to be oriented with respect to the ordered pair of generators $(\alpha_1,\alpha_2) \in E^2$ if $\alpha_1 \sigma(\alpha_2) - \alpha_2 \sigma(\alpha_1) > 0$ and to be of level $q$ if
\[\frac{\Norm(\alpha_1)}{\Norm(\aa)} \equiv 0 \hspace{-.2cm} \pmod{q} \quad \text{and} \quad \frac{\Tr(\alpha_1 \sigma(\alpha_2))}{\Norm(\aa)} \equiv r \hspace{-.2cm} \pmod{2q}.\]
We denote by $[\aa;\alpha_1,\alpha_2]$ the oriented ideal $\aa$ with respect to the generators $(\alpha_1,\alpha_2)$.

The congruence subgroup $\Gamma_0(q) \ni \gamma$ acts on the set of such triples $[\aa;\alpha_1,\alpha_2]$ by acting trivially on the ideal $\aa$ and mapping the ordered pair of generators $(\alpha_1,\alpha_2) \in E^2$, viewed as a row vector, to $(\alpha_1,\alpha_2) \gamma$. This action preserves oriented ideals of level $q$.

Let $\OO_E$ denote the ring of integers of $E$. The set
\[\PP_E^+ \coloneqq \{(\alpha) = \alpha \OO_E \subset E : \alpha \in E, \ \alpha, \sigma(\alpha) > 0\}\]
of totally positive principal fractional ideals --- equivalently, the identity $I$ in the narrow class group $\Cl_D^+$ --- acts on an oriented ideal $[\aa; \alpha_1,\alpha_2]$ of level $q$ via the map $(\alpha) \cdot [\aa;\alpha_1,\alpha_2] \coloneqq [(\alpha)\aa; \alpha \alpha_1,\alpha \alpha_2]$, and this action commutes with the action of $\Gamma_0(q)$.

In this way, each narrow ideal class of $\Cl_D^+$ may be bijectively identified with equivalence classes of oriented ideals of level $q$ modulo the action of $\Gamma_0(q)$ and $\PP_E^+$, where we identify the identity narrow ideal class $I$ with the equivalence class of oriented ideals containing $[\Z q + \Z \frac{r - \sqrt{D}}{2}; q, \frac{r - \sqrt{D}}{2}]$ for each $r \in \Z$ for which $r^2 \equiv D \pmod{4q}$. The narrow ideal class $J \in \Cl_D^+$ containing the different $\df$ then corresponds to the equivalence class of oriented ideals containing $[\Z q\sqrt{D} + \Z \frac{D - r\sqrt{D}}{2}; q\sqrt{D}, \frac{D - r\sqrt{D}}{2}]$.

\subsection{Binary Quadratic Forms}
\label{sect:heegnerform}

For each trio of integers $(a,b,c) \in \Z^3$ having greatest common divisor equal to $1$ and satisfying $b^2 - 4ac = D$, $a \equiv 0 \pmod{q}$, and $b \equiv r \pmod{2q}$, we define the integral binary quadratic form
\begin{equation}
\label{eqn:Qxy}
Q(x,y) = \begin{pmatrix} x & y \end{pmatrix} \begin{pmatrix} a & \frac{b}{2} \\ \frac{b}{2} & c \end{pmatrix} \begin{pmatrix} x \\ y \end{pmatrix} = ax^2 + bxy + cy^2,
\end{equation}
which is a primitive form of level $q$ and discriminant $b^2 - 4ac = D$. We write $Q = [a,b,c]$ to denote this form and call such a form a Heegner form, following \cite{Dar94}; we denote the set of such forms by $\QQ_D(q)$.

The congruence subgroup $\Gamma_0(q) \ni \gamma$ acts on Heegner forms via
\[(\gamma \cdot Q)(x,y) \coloneqq Q((x,y) \prescript{t}{}{\gamma}) = \begin{pmatrix} x & y \end{pmatrix} \prescript{t}{}{\gamma} \begin{pmatrix} a & \frac{b}{2} \\ \frac{b}{2} & c \end{pmatrix} \gamma \begin{pmatrix} x \\ y \end{pmatrix},\]
where $\prescript{t}{}{\gamma}$ denotes the transpose of $\gamma$ and we view $(x,y)$ as a row vector. Moreover, this action preserves $\QQ_D(q)$.

To each equivalence class $\PP_E^+ \cdot [\aa;\alpha_1,\alpha_2]$ of oriented ideals of level $q$, we associate the Heegner form $Q = Q_{\PP_E^+ \cdot [\aa;\alpha_1,\alpha_2]} \in \QQ_D(q)$ given by
\[Q(x,y) \coloneqq \frac{\Norm(\alpha_1 x + \alpha_2 y)}{\Norm(\aa)}.\]
Conversely, associated to each Heegner form $Q = [a,b,c] \in \QQ_D(q)$ as in \eqref{eqn:Qxy} is the equivalence class of oriented ideals of level $q$ given by
\begin{equation}
\label{eqn:Qtoideals}
\begin{dcases*}
\PP_E^+ \cdot \left[\Z a + \Z \frac{b - \sqrt{D}}{2}; a,\frac{b - \sqrt{D}}{2}\right] & if $a > 0$,	\\
\PP_E^+ \cdot \left[\Z (-a\sqrt{D}) + \Z \frac{D - b\sqrt{D}}{2}; -a\sqrt{D},\frac{D - b\sqrt{D}}{2},\right] & if $a < 0$.
\end{dcases*}
\end{equation}
This map is a bijection between $\QQ_D(q)$ and equivalence classes of oriented ideals of level $q$. This association descends to a bijection between narrow ideal classes of the narrow class group $\Cl_D^+$ and equivalence classes of primitive integral binary quadratic forms of level $q$ and discriminant $D$ modulo the action of $\Gamma_0(q)$. In particular, for each $r \in \Z$ for which $r^2 \equiv D \pmod{4q}$, the equivalence class of elements of $\QQ_D(q)$ modulo $\Gamma_0(q)$ that contains $[q,r,\frac{r^2 - D}{4q}]$ corresponds to the principal narrow ideal class $I \in \Cl_D^+$, while the equivalence class containing $[-q,r,\frac{D - r^2}{4q}]$ corresponds to the narrow ideal class $J$ containing the different $\df$.

\subsection{Oriented Embeddings}
\label{sect:embeddings}

Again let $(a,b,c) \in \Z^3$ have greatest common divisor equal to $1$ and satisfy $b^2 - 4ac = D$, $a \equiv 0 \pmod{q}$, and $b \equiv r \pmod{2q}$. We define an embedding $\Psi : E \hookrightarrow \Mat_{2 \times 2}(\Q)$ by
\begin{equation}
\label{eqn:Psi}
\Psi(x + \sqrt{D} y) \coloneqq \begin{pmatrix} x + by & 2cy \\ -2ay & x - by \end{pmatrix}
\end{equation}
for $x,y \in \Q$. This satisfies
\[\Psi(E) \cap \left\{g \in \Mat_{2 \times 2}(\Z) : g_{2,1} \equiv 0 \hspace{-.25cm} \pmod{q}\right\} = \Psi(\OO_E);\]
that is, $\Psi$ is an oriented optimal embedding of level $q$. Conversely, every oriented optimal embedding of level $q$ arises from such a trio of integers $(a,b,c) \in \Z^3$.

The congruence subgroup $\Gamma_0(q)$ acts on the set of optimal oriented embeddings of level $q$ by conjugation, namely
\[(\gamma \cdot \Psi)(x + \sqrt{D} y) \coloneqq \gamma^{-1} \Psi(x + \sqrt{D} y) \gamma\]
for $\gamma \in \Gamma_0(q)$, and this action preserves optimal oriented embeddings of level $q$.

There is a natural bijection between oriented optimal embeddings $\Psi$ of level $q$ as in \eqref{eqn:Psi} and Heegner forms $Q = [a,b,c]$ as in \eqref{eqn:Qxy}, since these are both completed determined by $(a,b,c) \in \Z^3$; in turn, there is a bijection with equivalence classes of oriented ideals of level $q$ as in \eqref{eqn:Qtoideals}. Again, this descends to a bijection between narrow ideal classes $A$ of the narrow class group $\Cl_D^+$ and equivalence classes of oriented optimal embeddings of level $q$ modulo the action of $\Gamma_0(q)$. In particular, the equivalence classes of oriented ideals of level $q$ corresponding to $I$ and $J$ respectively contain the optimal embeddings of level $q$ given respectively by
\[\Psi_I(x + \sqrt{D} y) \coloneqq \begin{pmatrix} x + ry & \frac{r^2 - D}{2q} y \\ -2qy & x - ry \end{pmatrix}, \qquad \Psi_J(x + \sqrt{D} y) \coloneqq \begin{pmatrix} x + ry & \frac{D - r^2}{2q} y \\ 2qy & x - ry \end{pmatrix}.\]

\subsection{Closed Geodesics}
\label{sect:closedgeodesic}

Associated to a Heegner form $Q = [a,b,c] \in \QQ_D(q)$ as in \eqref{eqn:Qxy} is an oriented geodesic $S_Q$ in the upper half-plane connecting the two points $\frac{-b - \sqrt{D}}{2a}$ and $\frac{-b + \sqrt{D}}{2a}$, namely the Euclidean semicircle
\begin{equation}
\label{eqn:semicircle}
S_Q \coloneqq \left\{z \in \Hb : a|z|^2 + b\Re(z) + c = 0\right\}
\end{equation}
oriented anticlockwise if $a > 0$ and clockwise if $a < 0$.

Let $\epsilon_D > 1$ be the least unit with positive norm in $\OO_E$, so that $\epsilon_D = u + \sqrt{D} v$ with $u,v$ positive half-integers that satisfy Pell's equation $u^2 - D v^2 = 1$ and minimise $v$ among all such positive half-integral solutions. For the oriented optimal embedding $\Psi$ of level $q$ as in \eqref{eqn:Psi} associated to $Q$, define the matrix
\begin{equation}
\label{eqn:gammaQ}
\gamma_Q \coloneqq \Psi(\epsilon_D) = \begin{pmatrix} u + bv & 2cv \\ -2av & u - bv \end{pmatrix} \in \Gamma_0(q).
\end{equation}
Together with $\begin{psmallmatrix} -1 & 0 \\ 0 & -1 \end{psmallmatrix}$, this generates the group of automorphs of $Q$,
\[\Gamma_0(q)_Q \coloneqq \left\{\gamma \in \Gamma_0(q) : \gamma \cdot Q = Q\right\}.\]
Note that $\gamma_Q$ is a hyperbolic matrix, as $\Tr(\gamma_Q) = 2u = 2\sqrt{1 + Dv^2} > 2$.

Let $A$ be the narrow ideal class associated to the equivalence class of Heegner forms modulo $\Gamma_0(q)$ that contains $Q$. We let $\CC_A(q) \coloneqq \Gamma_0(q)_Q \backslash S_Q$ denote the oriented closed geodesic in $\Gamma_0(q) \backslash \Hb$ corresponding to $A$, which we may view explicitly as the reduction modulo $\Gamma_0(q)$ of the oriented geodesic segment from $z_Q$ to $\gamma_Q z_Q$, where
\begin{equation}
\label{eqn:closedgeodesic}
z_Q \coloneqq \begin{dcases*}
\frac{-b + i\sqrt{D}}{2a} & if $a > 0$,	\\
\frac{b + i\sqrt{D}}{-2a} & if $a < 0$,
\end{dcases*} \qquad \gamma_Q z_Q = \begin{dcases*}
\frac{-b(u^2 + D v^2) - 2Duv + i\sqrt{D}}{2a(u^2 + D v^2)} & if $a > 0$,	\\
\frac{b(u^2 + D v^2) + 2Duv + i\sqrt{D}}{-2a(u^2 + D v^2)} & if $a < 0$.
\end{dcases*}
\end{equation}

\subsection{Atkin--Lehner Operators}
\label{sect:AtkinLehner}

We return to our choice of $r$ modulo $2q$ for which $r^2 \equiv D \pmod{4q}$. There are $2^{\omega(q)}$ such choices, and for each choice, there is an associated collection of $h_D^+$ distinct oriented closed geodesics in $\Gamma_0(q) \backslash \Hb$. Each of these collections of oriented closed geodesics is permuted by the $2^{\omega(q)}$ Atkin--Lehner operators on $\Gamma_0(q)$. More precisely, for each divisor $q_1$ of $q$, so that $q = q_1 q_2$, we let
\begin{equation}
\label{eqn:ALop}
W_{q_1} \coloneqq \begin{pmatrix} a \sqrt{q_1} & \frac{b}{\sqrt{q_1}} \\ c q_2 \sqrt{q_1} & d\sqrt{q_1} \end{pmatrix} \in \SL_2(\R)
\end{equation}
be an Atkin--Lehner operator on $\Gamma_0(q)$ associated to $q_1$, where $a,b,c,d \in \Z$ are such that $adq_1 - bcq_2 = 1$. Any two such Atkin--Lehner operators associated to $q_1$ are equivalent modulo $\Gamma_0(q)$. These operators are elements of the normaliser of $\Gamma_0(q)$ and permute the cusps of $\Gamma_0(q) \backslash \Hb$. They act on oriented ideals, Heegner forms, oriented embeddings, and oriented closed geodesics; an Atkin--Lehner operator $W_{q_1}$, where $q_1 q_2 = q$, has the effect of replacing $r \in \Z/2q\Z$ with the element $r' \in \Z/2q\Z$ that satisfies $r' \equiv -r \pmod{2q_1}$ and $r' \equiv r \pmod{2q_2}$.

\section{Hyperbolic Orbifolds of Level \texorpdfstring{$q$}{q}}
\label{sect:hypq}

Let $D > 1$ be a positive fundamental discriminant and let $q$ be an odd prime\footnote{Much of what we prove below generalises in a straightforward manner to squarefree level $q$ (at least when there is no $3$-torsion), but we stick to the case of $q$ prime for simplicity.} that splits in $E = \Q(\sqrt{D})$; as in \hyperref[sect:closedgeodesic]{Section \ref*{sect:closedgeodesic}}, we fix once and for all a residue class $r$ modulo $2q$ for which $r^2 \equiv D \pmod{4q}$. Our goal for this section is to construct a hyperbolic orbifold whose boundary is the closed geodesic on $X_0(q)$ associated to an element of the narrow class group $\Cl_D^+$. Due to the possibility of nontrivial genus, these orbifolds have some new and interesting features that do not show up in the level $1$ case.

\subsection{Special Fundamental Domains}

By the Kurosh subgroup theorem, any subgroup of
\[\PSL_2(\Z) \cong \Z/2\Z\ast \Z/3\Z\]
is isomorphic to a free product of a number of copies of $\Z/2\Z$, $\Z/3\Z$, and $\Z$. In particular, if $\Gamma_0(q)$ is a torsion-free Hecke congruence subgroup, then it is a free group on $\mathrm{rank}_{\Z}\, \Gamma_0(q)^\mathrm{ab}$ generators. We now describe an explicit geometric way, following \cite{Kul91}, for constructing an independent generating set using so-called \emph{special fundamental polygons} of $\Gamma_0(q)$.

\subsubsection{Farey Symbols of Level \texorpdfstring{$q$}{q}}

Here and below, we denote by $g$ the genus of $X_0(q)$ (suppressing $q$ from the notation), and by $e_2\in \{0,2\}$ and $e_3\in \{0,2\}$ the number of conjugacy classes of subgroups in $\Gamma_0(q)$ of order $2$ and of order $3$ respectively (see \cite[(2.12) and (2.13)]{Iwa02}). We write
\[T \coloneqq \begin{pmatrix}1 & 1 \\ 0 & 1 \end{pmatrix}, \qquad S \coloneqq \begin{pmatrix}0& -1 \\ 1 & 0 \end{pmatrix}\]
for the standard generators of $\PSL_2(\Z)$.

\begin{definition}[\cite{Kul91}]
A \emph{Farey symbol of level $q$} is a sequence of reduced fractions,
\[\frac{0}{1} = \frac{a_{0}}{b_{0}}<\frac{a_1}{b_1}<\ldots<\frac{a_{n-1}}{b_{n-1}}< \frac{a_{n}}{b_{n}}=\frac{1}{1},\]
where $n \coloneqq 4g + e_2 + e_3$, such that
\begin{itemize}
\item $a_{i+1}b_{i}-a_{i}b_{i+1}=1$ for all $i \in \{0,\ldots,n\}$,
\item there are $e_2$ \emph{even indices} $i$ such that
\[b_i^2+b_{i+1}^2 \equiv 0 \hspace{-.2cm} \pmod{q},\]
\item there are $e_3$ \emph{odd indices} $i$ such that 
\[b_i^2+b_ib_{i+1}+b_{i+1}^2 \equiv 0 \hspace{-.2cm} \pmod{q},\]
\item for the remaining $4g$ \emph{free indices}, there is a pairing $i\leftrightarrow i^\ast$ satisfying 
\[b_ib_{i^\ast}+b_{i+1}b_{i^\ast+1}\equiv 0 \hspace{-.2cm} \pmod{q}.\]
\end{itemize}
Here we consider the indices $i$ above modulo $n + 2$ by setting $a_{-1} = -1$, $b_{-1} = b_{n + 1} = 0$, and $a_{n + 1} = 1$.
\end{definition}

Such a Farey symbol of level $q$ always exists; moreover, one can even find one that is symmetric around $\frac{1}{2}$, so that $a_{n - i} = 1 - a_i$ and $b_{n - i} = b_i$ for each index $i$ \cite[Section 13]{Kul91}. Dooms, Jespers, and Konovalov \cite{DJK10} have described an algorithm for determining Farey symbols of arbitrary level.

It is easily seen that \emph{any} Farey symbol of level $q$ satisfies the bound $b_i\ll e^{O(q)}$ for the denominators; on the other hand, we have the lower bound $\max_i |b_i| \gg \sqrt{q}$. It has recently been shown by Doan, Kim, Lang, and Tan that furthermore there exist Farey symbols of prime level that are minimal in the sense that this lower bound is essentially sharp.

\begin{theorem}[{\cite[Theorem 1.1]{DKLT25}}]
\label{thm:boundentries}
Let $q$ be an odd prime. There exists a Farey symbol of level $q$ that is symmetric about $\frac{1}{2}$ with $b_i \leq \lfloor \sqrt{\frac{4q}{3}}\rfloor$ for each index $i \in \{0,\ldots,n\}$, and such that
\begin{align*}
b_i^2 + b_{i + 1}^2 & = q \quad \text{for all even indices $i$,}	\\
b_i^2 + b_i b_{i + 1} + b_{i + 1}^2 & = q \quad \text{for all odd indices $i$, and}	\\
b_i b_{i^\ast} + b_{i + 1} b_{i^\ast + 1} & = q \quad \text{for all free indices $i$.}
\end{align*}
\end{theorem}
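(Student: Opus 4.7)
The plan is to construct the Farey symbol explicitly via a greedy mediant procedure, exploiting the bijective correspondence between Farey symbols of level $q$ and special fundamental polygons for $\Gamma_0(q)$ assembled from Farey triangles in the standard Farey tessellation of $\Hb$. Under this correspondence, the cusps $a_i/b_i$ are the vertices of the polygon's boundary on $\R$, while the three index types record how $\Gamma_0(q)$ glues the sides of the polygon: an even index corresponds to a side folded in half by an order-$2$ elliptic element of $\Gamma_0(q)$, an odd index to a side folded by an order-$3$ elliptic element, and a pair of free indices $i \leftrightarrow i^*$ to two sides identified by a hyperbolic generator.

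The first observation that makes the argument go is that once the upper bound $b_i \ll \sqrt{q}$ is established for some sufficiently small implicit constant, the three claimed equalities follow from the corresponding congruences in the definition of a Farey symbol: each of $b_i^2 + b_{i+1}^2$, $b_i^2 + b_i b_{i+1} + b_{i+1}^2$, and $b_i b_{i^*} + b_{i+1} b_{i^*+1}$ is then a positive integer of size $O(q)$ that is divisible by $q$. Combined with the coprimality relation $\gcd(b_i, b_{i+1}) = 1$ coming from $a_{i+1} b_i - a_i b_{i+1} = 1$, and the analogous unimodular condition for the paired free indices, one argues that each such quantity is strictly less than $2q$ and therefore equals $q$. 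This reduces the theorem to constructing a Farey symbol with denominators bounded by $\sqrt{q}$ that is symmetric about $1/2$.

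I would carry out this construction recursively by mediant insertion: starting from the pair $0/1$ and $1/1$, one inserts the mediant $(a+c)/(b+d)$ between consecutive fractions $a/b < c/d$ whenever $b + d \leq \sqrt{q}$, and otherwise leaves the pair as a final edge of the Farey symbol. The involution $z \mapsto 1 - \bar{z}$, which normalises $\Gamma_0(q)$ and preserves the Farey tessellation, allows the procedure to be carried out symmetrically and thereby maintain $a_{n-i} = 1 - a_i$ and $b_{n-i} = b_i$. Once the procedure terminates, each remaining edge is classified as even, odd, or free according to which of the three congruences the pair $(b, d)$ satisfies modulo $q$, and the pairing between free edges is defined by pushing one edge to the other along the corresponding hyperbolic generator of $\Gamma_0(q)$.

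The main obstacle is verifying the combinatorial count: the procedure must produce exactly $n = 4g + e_2 + e_3$ edges split into the prescribed numbers of each type, and every free edge must admit a partner. This is ultimately an area and Euler-characteristic argument, combining the index identity $[\PSL_2(\Z) : \Gamma_0(q)] = q + 1$ with the Gauss--Bonnet formula for $\vol(\Gamma_0(q) \backslash \Hb)$, supplemented by a direct analysis of how $\Gamma_0(q)$ acts on the Farey triangles cut off by the selected edges, so as to guarantee that the two free edges in any orbit of hyperbolic generators are paired by the procedure and that no edge is left unaccounted for. I would expect the trickiest piece to be showing that the greedy stopping rule $b + d > \sqrt{q}$ never misclassifies an elliptic pair as free (or vice versa), which boils down to a careful case analysis of how many representations of $q$ and its small multiples are admitted by the forms $X^2 + Y^2$, $X^2 + XY + Y^2$, and the indefinite form $X Y' + X' Y$ on pairs of coprime positive integers below $\sqrt{q}$.
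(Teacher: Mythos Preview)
The paper does not prove this theorem; it is stated as a citation of \cite[Theorem 1.1]{DKLT22} and used as a black box. There is therefore no proof in the paper to compare your proposal against.

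As for your sketch itself: the reduction in your second paragraph is sound --- once all $b_i \le C\sqrt{q}$ with a small enough constant, the congruences force the equalities --- and the symmetry about $1/2$ via $z \mapsto 1-\bar z$ is the standard mechanism. The genuine content, as you acknowledge, is in the last two paragraphs, and there your outline is more a list of desiderata than an argument. In particular, the greedy stopping rule ``insert the mediant while $b+d \le \sqrt q$'' does not by itself guarantee that the resulting polygon is a fundamental domain for $\Gamma_0(q)$: you must show that every terminal edge actually carries one of the three congruence types, and that free edges pair up. Your proposed case analysis via representation counts of $q$ by $X^2+Y^2$, $X^2+XY+Y^2$, and the bilinear form is the right shape of argument, but it is exactly where the work lies and you have not indicated how it goes through (for instance, why a terminal pair $(b,d)$ with $b+d>\sqrt q$ but both $b,d\le\sqrt q$ must satisfy one of the congruences). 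If you want to supply a proof rather than cite \cite{DKLT22}, that step needs to be filled in.
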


From here onward, we let $q$ be an odd prime and we fix a Farey symbol of level $q$ as in \hyperref[thm:boundentries]{Theorem \ref*{thm:boundentries}}. Following \cite[Section 2]{Kul91}, we denote by $\PP(q)$ the hyperbolic polygon in $\Hb$ with the following $ e_2+e_3+n+2$ vertices:
\begin{itemize}
\item $\infty$;
\item the $n + 1$ fractions $\frac{a_i}{b_i}$ of the Farey symbol for each index $i \in \{0,\ldots,n\}$;
\item the $e_2$ $\PGL_2(\Z)$-translates of $\sqrt{-1}$ on the infinite geodesic  connecting $\frac{a_i}{b_i}$ and $\frac{a_{i + 1}}{b_{i + 1}}$ for each even index $i \in \{1,\ldots,n - 1\}$; and
\item the $e_3$ $\PGL_2(\Z)$-translates of $\frac{1 + i\sqrt{3}}{2}$ lying just below the infinite geodesic connecting $\frac{a_i}{b_i}$ and $\frac{a_{i + 1}}{b_{i + 1}}$ for each odd index $i \in \{1,\ldots,n - 1\}$.
\end{itemize}
As in \cite[Section 3]{Kul91}, we define a side pairing on $\PP(q)$ by pairing the following edges:
\begin{itemize}
\item the two vertical geodesics sharing the cusp $\infty$;
\item the two geodesic segments connecting $\frac{a_i}{b_i}$ and $\frac{a_{i + 1}}{b_{i + 1}}$ with the $\PGL_2(\Z)$-translate of $\sqrt{-1}$ lying between them for each of the $e_2$ even indices $i$;
\item the two geodesic segments connecting $\frac{a_i}{b_i}$ and $\frac{a_{i + 1}}{b_{i + 1}}$ with the $\PGL_2(\Z)$-translate of $\frac{1+\sqrt{-3}}{2}$ lying between them for each of the $e_3$ odd indices $i$; and
\item the infinite geodesic between the cusps $\frac{a_{i}}{b_{i}}$ and $\frac{a_{i+1}}{b_{i+1}}$ with the infinite geodesic between the cusps $\frac{a_{i^\ast+1}}{b_{i^\ast+1}}$ and $\frac{a_{i^\ast}}{b_{i^\ast}}$ for each of the $4g$ free indices $i$.
\end{itemize}
\begin{definition}
We define a \emph{side} of $\PP(q)$ to be a maximal geodesic segment on the boundary of $\PP(q)$ (note that there is only one side for each even index). Given a side $\LL$ of $\PP(q)$ that is paired with a corresponding side $\LL'$ (which might be $\LL$ itself), we define the \emph{label of $\LL$} as the element $\alpha$ of $\PSL_2(\Z)$ that maps $\LL'$ to $\LL$ as defined by Kulkarni (see below). The set of all labels is called the set of \emph{side pairing elements} of $\PP(q)$.
\end{definition}

By Poincar\'{e}'s polygon theorem \cite{Mas71}, the side pairing elements define a finite index subgroup of $\PSL_2(\Z)$ that has $\PP(q)$ as a fundamental domain; by \cite[Theorem 13.2]{Kul91}, this subgroup is the congruence subgroup $\Gamma_0(q)$. Furthermore, since $\PP(q)$ has a minimal number of sides (which follows from \cite[Proposition 2.6]{Iwa02}), the labels define an independent set of generators (upon forgetting inverses) of $\Gamma_0(q)$.

We denote by
\[\{iy: y>0\} \eqqcolon \LL_1,\quad \LL_2, \quad \ldots \quad \LL_{N - 1}, \quad \LL_{N} \coloneqq \{ 1+iy: y>0\},\]
the $N \coloneqq 4g+2+e_2+2e_3$ oriented sides of $\PP(q)$ taken in the positive orientation starting from the side containing $0$ and $\infty$. We refer to the vertex of $\LL_j$ shared with $\LL_{j - 1}$ as the \emph{left-most} or \emph{first} vertex of $\LL_j$ (which is $\infty$ for $j = 1$) and the vertex of $\LL_j$ shared with $\LL_{j + 1}$ as the \emph{right-most} or \emph{second} vertex (which is $\infty$ for $j = N$).

We denote by $\alpha_j$ the label in $\Gamma_0(q)$ associated to the oriented side $\LL_j$ of $\PP(q)$, so that $\alpha_1 = T^{-1}$ and $\alpha_N = T$. From \cite[Theorem 6.1]{Kul91}, these labels are explicitly as follows:
\begin{itemize}
\item for a side $\LL_j$ associated to an even index $i$,
\[\alpha_j = \begin{pmatrix} a_{i+1}b_{i+1}+a_{i}b_i & -a_i^2-a_{i+1}^2 \\ b_i^2+b_{i+1}^2 & -a_{i+1}b_{i+1}-a_{i}b_{i}\end{pmatrix};\]
\item for a side $\LL_j$ associated to an odd index $i$,
\[\alpha_j = \begin{pmatrix} a_{i+1}b_{i+1}+a_{i}b_{i+1}+a_{i}b_i & -a_i^2-a_ia_{i+1}-a_{i+1}^2 \\ b_i^2+b_ib_{i+1}+b_{i+1}^2 & -a_{i+1}b_{i+1}-a_{i+1}b_i-a_{i}b_{i}\end{pmatrix}^{\pm 1},\]
where $\pm = +$ if $\frac{a_i}{b_i}$ is to the left of the associated $\PGL_2(\Z)$-translate of $\frac{1 + \sqrt{-3}}{2}$, while $\pm = -$ if $\frac{a_i}{b_i}$ is to the right;
\item for a side $\LL_j$ associated to a free index $i$,
\[\alpha_j = \begin{pmatrix} a_{i^\ast+1}b_{i+1}+a_{i^\ast}b_i & -a_ia_{i^\ast}-a_{i+1}a_{i^\ast+1} \\ b_ib_{i^\ast}+b_{i+1}b_{i^\ast+1} & -a_{i+1}b_{i^\ast+1}-a_{i}b_{i^\ast}\end{pmatrix}.\]
\end{itemize}
We say that a side $\LL_j$ is \emph{hyperbolic, elliptic}, or \emph{parabolic} if the associated label $\alpha_j$ is a hyperbolic, elliptic, or parabolic matrix, respectively; in particular, sides associated to even or odd indices are elliptic. We note that since $0 \leq a_i \leq b_i$ and $b_i\ll \sqrt{q}$, the Frobenius norm $\sqrt{\Tr(\alpha_j \prescript{t}{}{\alpha_j})}$ of any label $\alpha_j$ of $\PP(q)$ is bounded by $O(q)$.

Here and below, we consider the indices $j$ of $\LL_j$ and $\alpha_j$ modulo $N$. By a slight abuse of notation, we denote by $\ast:\Z/N\Z\to \Z/N\Z$ the side pairing, so that for $j \in \Z/ N\Z$,
\[\alpha_{j}\LL_{j^\ast} = \LL_j, \qquad \alpha_{j^\ast} \LL_j = \LL_{j^{\ast}}.\]

\subsubsection{Orbits of Consecutive Polygons}

It is key for us to understand the orbits of the map $t:\Z/N\Z\to \Z/N\Z$ given by
\[t(j) \coloneqq j^\ast - 1.\]
To see why this map is relevant, let $\PP_0, \PP_1, \PP_2$ be three consecutive $\Gamma_0(q)$-translates of $\PP(q)$ that all contain some cusp $\bb \in \Pb^1(\Q)$ (ordered in positive orientation). Let $\alpha_{j_1}$ be the label of the side of $\PP_0$ shared with $\PP_1$ and let $\alpha_{j_2}$ be the label of the side of $\PP_1$ shared with $\PP_2$. Then we have the equality
\[j_2 = j_1^\ast - 1 = t(j_1).\]
In other words, the map $t$ encodes labels of consecutive translates of $\PP(q)$.

\begin{lemma}
\label{lem:ast}
Let $\EE\subset \Z/N\Z$ denote the subset of indices $j$ for which $\alpha_j$ is either elliptic of order $3$ or parabolic and additionally $\LL_j$ is to the left of $\LL_{j^{\ast}}$. The map $t:\Z/N\Z\to \Z/N\Z$ has $e_3+2$ orbits explicitly given by
\[\{j\} \text{ for each $j \in \EE$ and }(\Z/N\Z)\backslash \EE.\]
Furthermore, for any $t$-orbit $\mathcal{S}\subset \Z/N\Z$ and for all $j\in \mathcal{S}$, the element of $\Gamma_0(q)$ given by
\[\alpha_{j}\alpha_{t(j)}\cdots \alpha_{t^{(|\mathcal{S}|-1)}(j)}\]
is either elliptic or parabolic.
\end{lemma}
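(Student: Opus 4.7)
The plan is to analyze $t$ via the combinatorial structure of the tessellation of $\Hb$ by $\Gamma_0(q)$-translates of $\PP(q)$, together with Poincar\'{e}'s polygon theorem. The key observation, foreshadowed by the motivating discussion preceding the lemma, is that $t$ encodes the ``next label'' rule for consecutive polygon translates meeting at a common cusp; consequently, each orbit of $t$ corresponds to a vertex class of $\PP(q)$ under the side pairing. By Poincar\'{e}'s theorem, these vertex classes are in bijection with the conjugacy classes of stabilizers of elliptic fixed points and cusps in $\Gamma_0(q)$---for $q$ an odd prime, this amounts to $e_2$ order-$2$ elliptic classes, $e_3$ order-$3$ elliptic classes, and the two cusps $0$ and $\infty$, giving the orbit count $2 + e_2 + e_3$.

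The next step is to characterize the singleton orbits, i.e., the fixed points of $t$. Such a fixed point is an index $j$ with $j^\ast = j + 1 \pmod{N}$, which geometrically means that the cyclically consecutive sides $\LL_j$ and $\LL_{j + 1}$ of $\PP(q)$ are paired to each other by $\alpha_j$. Such a pairing forces $\alpha_j$ to fix the vertex shared by these two sides, so $\alpha_j$ must be either elliptic (if the shared vertex lies in the interior of $\Hb$) or parabolic (if it is a cusp). Conversely, the requirement that $\alpha_j$ be elliptic or parabolic together with ``$\LL_j$ lies to the right of $\LL_{j^\ast}$''---which unpacks to saying that the right-most (second) vertex of $\LL_j$ is the point shared with $\LL_{j^\ast}$, equivalently that $\LL_{j^\ast} = \LL_{j + 1}$---gives $j^\ast = j + 1 \pmod{N}$. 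Thus $\EE$ coincides with the fixed-point set of $t$, yielding $1 + e_2 + e_3$ singleton orbits: one for each elliptic point of order $2$ or $3$, and the index $j = N$ corresponding to the parabolic pairing $T : \LL_1 \leftrightarrow \LL_N$ at the cusp $\infty$.

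By the orbit count from the first step, there is exactly one remaining orbit of $t$, which must correspond to the last vertex class, the cusp class of $0$. It therefore consists precisely of $(\Z/N\Z) \setminus \EE$, yielding the decomposition claimed in the lemma. Finally, the statement that $\alpha_{j} \alpha_{t(j)} \cdots \alpha_{t^{(|S|-1)}(j)}$ is elliptic or parabolic for each orbit $S$ is the cycle condition of Poincar\'{e}'s theorem: composing side-pairing labels around a vertex cycle yields a generator of the stabilizer of the corresponding vertex in $\Gamma_0(q)$. For singleton orbits the product is simply $\alpha_j$ itself, elliptic or parabolic by construction; for the big orbit it is a parabolic element fixing $0$.

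The main obstacle should be the orientation bookkeeping underlying the identification $\EE = \{j : t(j) = j\}$. One must trace carefully how the orientation-reversing side pairing $\alpha_j : \LL_{j^\ast} \to \LL_j$ interchanges first and second vertices of sides, verify that the formula $t(j) = j^\ast - 1$ really does compute the next side encountered when rotating around a common cusp in positive orientation, and justify the translation of ``$\LL_j$ to the right of $\LL_{j^\ast}$'' into the clean algebraic condition $j^\ast = j + 1 \pmod{N}$. Once this bookkeeping is in place, the remaining verifications and the appeal to Poincar\'{e}'s cycle condition are largely routine.
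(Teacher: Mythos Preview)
Your approach is correct and captures the essential content of the lemma, but it differs in emphasis from the paper's proof. You invoke Poincar\'{e}'s theorem to establish abstractly that $t$-orbits are in bijection with vertex cycles of $\PP(q)$, then count the latter as $2+e_2+e_3$ (one for each elliptic class and each cusp of $\Gamma_0(q)$), identify the $1+e_2+e_3$ singleton orbits as $\EE$, and conclude that the remaining orbit must be the complement. The paper instead works concretely: after verifying directly that $t(j)=j$ for $j\in\EE$, it takes the orbit of $1$ and shows, by a cusp-width argument, that the product $\alpha_{t(j)}\cdots\alpha_j$ lies in the stabiliser of a cusp $\bb$ equivalent to $0$, that it in fact generates this stabiliser (because the cuspidal zones of the translates $\PP(q),\alpha_{t(j)}\PP(q),\ldots$ around $\bb$ are pairwise $\Gamma_0(q)$-inequivalent and the cusp has width $q$), and that consequently this single orbit sweeps up every index whose associated side has second vertex in the $\Gamma_0(q)$-class of $0$, i.e.\ all of $(\Z/N\Z)\setminus\EE$.

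Both routes work. Yours is cleaner conceptually but relies on the identification ``$t$-orbit $=$ vertex cycle'', which is not a direct restatement of Poincar\'{e}'s theorem and needs the orientation bookkeeping you flag at the end (showing that $j\mapsto \text{second vertex of }\LL_j$ induces a bijection from $t$-orbits to vertex classes). The paper's argument is more self-contained: it avoids the general cycle machinery by exploiting the specific fact that for $q$ prime the cusp $0$ has width $q$, which forces the big orbit to be as large as claimed.
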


\begin{proof}
First of all, for $j\in \EE$ we have that $t(j) =j+1-1= j$. Next, consider the orbit $\mathcal{S}$ of $1 \in \Z/N\Z$. Observe that for $j\in S$, the second vertex of the side $\LL_j$ of $\PP(q)$ is cuspidal and not equal to $\infty$. Pick an element $j \in \mathcal{S}$ and let $\bb \in \Pb^1(\Q)$ be the second vertex of the side $\LL_{j}$. Then the element
\[\alpha \coloneqq \alpha_{j} \alpha_{t(j)}\alpha_{t(t(j))}\cdots \alpha_{t^{(|\mathcal{S}|-1)}(j)}\in \Gamma_0(q)\]
lies in the stabiliser of $\bb$ as it maps the side $\LL_{j}$ of $\PP(q)$ containing $\bb$ to the side $\alpha\LL_{j}$ of $\alpha \PP(q)$ containing $\bb$ (using here that $t^{(|\mathcal{S}|)}(j)=j$). This implies that the width of the cusp between $\LL_{j}$ and $\alpha\LL_{j}$ is at least $q$ (as this is the width of the cusp $0$ in $\Gamma_0(q)$). On the other hand, all of the cuspidal zones near $\bb$ of
\[\alpha_j\PP(q),\quad  \alpha_j\alpha_{t(j)}\PP(q),\quad \ldots\quad \alpha_{j} \alpha_{t(j)}\cdots \alpha_{t^{(|\mathcal{S}|-2)}(j)}\PP(q),\quad \alpha \PP(q), \]
are $\Gamma_0(q)$-inequivalent. Thus we conclude that $\alpha$ is in fact a generator of the stabiliser of $\bb$ and furthermore that $\mathcal{S}$ must contain all indices not in $\EE$, since these are precisely the indices whose associated side has second vertex equal to a cusp in the $\Gamma_0(q)$-orbit of $0$.
\end{proof}

We note that the size of the orbit containing $1\in \Z/N\Z$ is equal to $4g+e_2+e_3+1$. In particular, it follows from the above that
\[\alpha_{1} \alpha_{t(1)}\cdots \alpha_{t^{(4g+e_2+e_3)}(1)} = \begin{pmatrix}1 & 0\\-q & 1 \end{pmatrix},\]
as the left-hand side is equal to the generator of the stabiliser in $\Gamma_0(q)$ of $0$ and maps $\infty$ to a negative rational. Here we recall that $\alpha_{t^{(4g+e_2+e_3+1)}(1)}=\alpha_1=T^{-1}$.

\begin{lemma}
\label{lem:boundfrobnorm}
For each $j \in \{0,\ldots,4g+e_2+e_3 \}$, the Frobenius norm of the matrix
\[\alpha_{1} \alpha_{t(1)}\cdots \alpha_{t^{(j)}(1)}\in\Gamma_0(q),\]
is bounded by $O(q^{\frac{3}{2}})$.
\end{lemma}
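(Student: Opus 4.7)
The key idea is to combine the geometric interpretation of the partial products $\beta_j \coloneqq \alpha_1 \alpha_{t(1)} \cdots \alpha_{t^{(j-1)}(1)} \in \Gamma_0(q)$ (which map $\PP(q)$ to the $j$-th polygon in the cyclic tiling of a cuspidal neighbourhood of $0$ by $4g$ translates of $\PP(q)$) with the unimodular structure of the Kulkarni Farey symbol from \hyperref[thm:boundentries]{Theorem \ref*{thm:boundentries}}. I would first conjugate to send the cusp $0$ to $\infty$: setting $W \coloneqq \begin{pmatrix} 0 & -1 \\ 1 & 0 \end{pmatrix}$ and $\tilde\beta_j \coloneqq W \beta_j W^{-1} \in W \Gamma_0(q) W^{-1}$, the polygons $\tilde\beta_j(W\PP(q))$ tile a fundamental strip of width $q$ at $\infty$ for the stabiliser $\langle T^q \rangle$. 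Since $W$ is orthogonal, $\|\beta_j\|_F = \|\tilde\beta_j\|_F$, so it suffices to bound the latter. Write $\tilde\beta_j = \begin{pmatrix} \tilde a & \tilde b \\ \tilde c & \tilde d \end{pmatrix}$, noting $\tilde b \equiv 0 \pmod{q}$.

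To bound the bottom row, observe that $\tilde\beta_j(W\PP(q))$ has $\infty$ as a cuspidal vertex, so $\tilde\beta_j$ maps some cuspidal vertex $\tilde w_j$ of $W\PP(q)$ to $\infty$. The degenerate case $\tilde w_j = \infty$ forces $\tilde\beta_j = \pm T^{mq}$ with $|m| \leq 1$ and hence $\|\tilde\beta_j\|_F = O(q)$; this occurs only at $j = 0$ and $j = 4g$. Otherwise $\tilde w_j = -b_i/a_i$ for some Farey index $i \geq 1$ with $a_i \geq 1$, and the equation $\tilde c \tilde w_j + \tilde d = 0$ combined with $\gcd(\tilde c, \tilde d) = 1$ forces $(\tilde c, \tilde d) = \pm (a_i, b_i)$, giving $|\tilde c|, |\tilde d| = O(\sqrt{q})$ by \hyperref[thm:boundentries]{Theorem \ref*{thm:boundentries}}.

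The bulk of the argument is bounding the top row via the cusp strip geometry. I would choose an adjacent cuspidal Farey-neighbour vertex $\tilde u_j = -b_l/a_l$ of $\tilde w_j$ in $W\PP(q)$ with $l \in \{i-1, i+1\}$; the defining unimodular relation of the Farey symbol yields $b_i a_l - a_i b_l = \epsilon \in \{\pm 1\}$. As $\tilde u_j$ is the endpoint of a side of $\tilde\beta_j(W\PP(q))$ at $\infty$, its image lies in the cusp strip, so $|\tilde\beta_j \cdot (-b_l/a_l)| = O(q)$; direct computation using $(\tilde c, \tilde d) = \pm (a_i, b_i)$ gives $\tilde\beta_j \cdot (-b_l/a_l) = \epsilon(-\tilde a b_l + \tilde b a_l)$, so $|\tilde a b_l - \tilde b a_l| = O(q)$. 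Combining with the determinant identity $\tilde a b_i - \tilde b a_i = 1$ yields the linear system
\[\begin{pmatrix} b_i & -a_i \\ b_l & -a_l \end{pmatrix} \begin{pmatrix} \tilde a \\ \tilde b \end{pmatrix} = \begin{pmatrix} 1 \\ \epsilon s \end{pmatrix}, \qquad |s| = O(q),\]
whose coefficient matrix has determinant $-\epsilon \in \{\pm 1\}$. Cramer's rule delivers $\tilde a = \epsilon a_l - a_i s$ and $\tilde b = \epsilon b_l - b_i s$, giving $|\tilde a|, |\tilde b| \leq O(\sqrt{q}) + O(\sqrt{q}) \cdot O(q) = O(q^{3/2})$. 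Hence $\|\beta_j\|_F^2 = O(q^3)$, so $\|\beta_j\|_F = O(q^{3/2})$.

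The main technical obstacle is the identification of a suitable Farey-neighbour vertex $\tilde u_j$ when the immediately adjacent vertex of $\tilde w_j$ in $W\PP(q)$ is instead elliptic (arising from even or odd indices in the Farey symbol). In this case, one uses the elliptic fixed point itself as $\tilde u_j$, computing its image under $\tilde\beta_j$ and extracting an analogous real-linear constraint; the unimodular structure of the Farey symbol again ensures the resulting system has bounded-below determinant. A similar verification covers the boundary indices $i = 1$ and $i = n$ (where one adjacent vertex is $\infty$ or $0$), in which case the corresponding constraints $|\tilde a/a_i| = O(q)$ or $|\tilde b/b_i| = O(q)$ may be used in place of the Farey-neighbour relation, combined with $\det \tilde\beta_j = 1$ to recover the $O(q^{3/2})$ bound.
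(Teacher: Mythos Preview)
Your approach is correct and shares the paper's key opening move: both proofs observe that each partial product $\gamma_j$ sends some Farey vertex $a_i/b_i$ of $\PP(q)$ to the cusp $0$, which pins down one row of $\gamma_j$ as $\pm(b_i,-a_i)=O(\sqrt q)$ via \hyperref[thm:boundentries]{Theorem \ref*{thm:boundentries}}. The divergence is in how the remaining row is controlled. The paper is more direct: among all $\gamma\in\Gamma_0(q)$ with top row $(b_i,-a_i)$, it singles out $\gamma_j$ by the tiling characterisation that $\gamma_j\infty$ is negative of maximal modulus, which forces the bottom row to be $(-qx_0,y_0)$ with $x_0\in\{1,\dots,b_i\}$ the minimal positive solution of $a_iqx_0\equiv -1\pmod{b_i}$ and $y_0=(1+a_iqx_0)/b_i$; then $|qx_0|\le qb_i=O(q^{3/2})$ and, using $a_i\le b_i$, $|y_0|\le 1/b_i+qb_i=O(q^{3/2})$. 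Your route---conjugating by $W$ to move the cusp to $\infty$, reading off that the image of a Farey neighbour $-b_l/a_l$ lands in the width-$q$ cusp strip, and solving the resulting unimodular $2\times 2$ system via Cramer's rule---also works, but one point can be streamlined: you do not actually need polygon adjacency of $-b_l/a_l$ to $-b_i/a_i$. Since $W\PP(q)$ is hyperbolically convex with a single vertex at $\infty$, \emph{every} real cuspidal vertex of $P_j=\tilde\beta_j(W\PP(q))$ lies between the two vertical sides of $P_j$, hence in the width-$q$ strip; so you may always take $l=i\pm 1$ and use the Farey relation $a_{l}b_i-a_ib_{l}=\pm 1$ directly, making your elliptic-vertex case distinction unnecessary. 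The paper's argument is shorter and yields an explicit matrix; yours is more geometric and makes the role of the cusp width $q$ more visible.
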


\begin{proof}
Put
\begin{equation}\label{eqn:gammaj2}\gamma_j \coloneqq \alpha_{1} \alpha_{t(1)}\cdots \alpha_{t^{(j)}(1)}.\end{equation}
Then the fundamental domain $\gamma_j\PP(q)$ contains the cusp $0$. This means that 
\begin{equation}
\label{eqn:gammaj}
\gamma_j \frac{a_i}{b_i} = 0
\end{equation}
for some vertex $\frac{a_i}{b_i}$ of $\PP(q)$. Since all the cuspidal zones near $0$ of 
\[\alpha_{1}\PP(q),\qquad \alpha_{1}\alpha_{t(1)}\PP(q),\qquad \ldots\quad \gamma_j\PP(q),\]
are $\Gamma_0(q)$-inequivalent, we see that $\gamma_j\in \Gamma_0(q)$ is characterised as the element of $\Gamma_0(q)$ satisfying \eqref{eqn:gammaj} for which $\gamma_j\infty$ is negative such that $|\gamma_j\infty|$ is maximised. This means explicitly that
\[\gamma_j=\begin{pmatrix} b_i & -a_i \\ -qx_0 & y_0 \end{pmatrix},\]
where $x_0 \in \N$ is the minimal positive solution to the congruence $a_i q x_0\equiv -1 \pmod{b_i}$ and $y_0 \coloneqq \frac{1 + a_i q x_0}{b_i}$. This gives the desired result since $b_i\ll \sqrt{q}$.
\end{proof}

\subsection{Orbifolds Associated to Real Quadratic Fields}

Let $\gamma = \begin{psmallmatrix} a & b \\ c & d \end{psmallmatrix} \in \Gamma_0(q)$ be a primitive hyperbolic matrix with axis
\begin{equation}
\label{eqn:semicircle2}
S_{\gamma} \coloneqq \left\{z \in \Hb : c|z|^2 + (d - a)\Re(z) - b = 0\right\}
\end{equation}
oriented anticlockwise if $c < 0$ and clockwise if $c > 0$ and intersecting the fundamental polygon $\PP(q)$ (which can always be arranged by conjugation). We denote by $\CC_{\gamma}(q)$ the corresponding oriented closed geodesic in $\Gamma_0(q) \backslash \Hb$, namely $\CC_{\gamma}(q) \coloneqq \Gamma_0(q)_{\gamma} \backslash S_{\gamma}$ with $\Gamma_0(q)_{\gamma} \coloneqq \{\gamma' \in \Gamma_0(q) : \gamma^{\prime -1} \gamma \gamma' = \gamma\}$ the associated group of automorphs.

Let $\alpha_{i(0)}$ be the label of the side containing the second intersection between $S_\gamma$ and $\PP(q)$, $\alpha_{i(1)}$ the label of the side containing the second intersection between $S_\gamma$ and $\alpha_{i(0)}\PP(q)$, and so on. As the projection of $S_{\gamma}$ onto $\PP(q)$ is a \emph{closed} geodesic, this defines a \emph{periodic} sequence of side pairing elements (see e.g.\ \cite{Kat96})
\[\alpha_{i(0)}, \alpha_{i(1)}, \alpha_{i(2)},\ldots.\]
Let $m_{\gamma}+1$ denote the period of this sequence. We then have the following  representation of $\gamma$ in terms of the labels of $\PP(q)$, namely the \emph{Morse code} of $\gamma$:
\begin{equation}
\label{eq:morsecode}
\gamma=\alpha_{i(0)} \alpha_{i(1)}\cdots \alpha_{i(m_{\gamma})}.
\end{equation}  
The representation of $\gamma$ in terms of the side pairing elements of $\PP(q)$ is unique up to the relation $\alpha_j^2=\alpha_{j^\ast}$ for the elliptic labels $\alpha_j$ of order $3$. We consider the index as a map $i:\Z/(m_{\gamma}+1)\Z\to \Z/N \Z$. For $i_1,i_2 \in \Z/N\Z$, we denote by $\arc(i_1,i_2)$ the indices that lie on the positively oriented open arc strictly between $i_1$ and $i_2$ when $\Z/N\Z$ is embedded into the unit circle $S^1$ via the homomorphism defined by
\[1 \hspace{-.2cm} \pmod{N} \mapsto e^{\frac{2\pi i}{N}}.\]
Thus if we view $i_1,i_2 \in \Z/N\Z$ as positive integers in $\{1,\ldots,N\}$, then
\[\arc(i_1,i_2) = \begin{dcases*}
\{i_1 + 1,\ldots,i_2 - 1\} & if $i_1 < i_2$,	\\
\emptyset & if $i_1 = i_2$,	\\
\{1,\ldots,i_2 - 1,i_1 + 1,\ldots,N\} & if $i_1 > i_2$.
\end{dcases*}\]

Given an integer $k \in \Z/N\Z$, we define  $\sigma_k \in \PSL_2(\Z)$ by
\[\sigma_k \coloneqq \begin{dcases*}
\alpha_k & if $\alpha_k$ is elliptic,	\\
\begin{pmatrix} a_{i+1} & a_{i}\\ b_{i+1}& b_i \end{pmatrix} S \begin{pmatrix} a_{i+1} & a_{i}\\ b_{i+1}& b_i \end{pmatrix}^{-1} & otherwise,
\end{dcases*}\]
where the index $i$ is such that the side $\LL_k$ associated to $\alpha_k$ is contained in the strip between $\frac{a_i}{b_i}$ and $\frac{a_{i + 1}}{b_{i + 1}}$. Note that if $i$ is a free index, then
\[\sigma_k = \begin{pmatrix} a_{i+1}b_{i+1}+a_{i}b_i & -a_i^2-a_{i+1}^2 \\ b_i^2+b_{i+1}^2 & -a_{i+1}b_{i+1}-a_{i}b_{i}\end{pmatrix},\]
which is not an element of $\Gamma_0(q)$ since by definition $i$ is not an even index. Note also that if $i$ is a free or even index, then $\alpha_k$ is a $\pi$-rotation around $\LL_k$.

We now construct a subgroup of $\PSL_2(\Z)$ from the Morse code of $\gamma$. Special care has to be given to labels of order $3$ since the geometry is different at the corresponding sides. We define
\begin{align*}
\delta_+(j) & \coloneqq \begin{dcases*}
1 & if $\alpha_{i(j)}^3 = 1$ and $i(j)^\ast\in\arc(i(j), i(j-1)^\ast)$, \\
0 & otherwise,
\end{dcases*}	\\
\delta_-(j) & \coloneqq \begin{dcases*}
1 & if $\alpha_{i(j-1)}^3 = 1$ and $i(j-1) \in\arc(i(j), i(j-1)^\ast)$, \\
0 & otherwise.
\end{dcases*}
\end{align*}
Let $\mathcal{I}_{3}\subset\{0,\ldots, m_\gamma\}$ denote the set of $j$ such that $\alpha_{i(j-1)}^3=1$, $i(j-1)\in\arc(i(j), i(j-1)^\ast)$ and $i(j-1)\neq i(j-2)$. For $j\notin \mathcal{I}_3$, we define
\begin{equation}
\label{eqn:Omega}
\Omega_{\gamma,j}(q) \coloneqq \left\{ \alpha_{i(0)}\cdots \alpha_{i(j-1)} \sigma_{k} \alpha_{i(j-1)}^{-1}\cdots \alpha_{i(0)}^{-1} :   k\in \arc(i(j)+\delta_+(j), i(j-1)^\ast-\delta_-(j))\right\},
\end{equation}
and for $j\in \mathcal{I}_3$ (in which case $\delta_-(j)=1$), we define
\begin{multline}
\label{eqn:Omega1}
\Omega_{\gamma,j}(q) \coloneqq \left\{ \alpha_{i(0)}\cdots \alpha_{i(j-1)} \sigma_{k} \alpha_{i(j-1)}^{-1}\cdots \alpha_{i(0)}^{-1} :   k\in \arc(i(j)+\delta_+(j), i(j-1)^\ast-\delta_-(j))\right\}\\
\cup \left\{ \alpha_{i(0)}\cdots \alpha_{i(j-2)}\alpha_{i(j-1)^\ast} \sigma_{k} \alpha_{i(j-1)^\ast}^{-1}\alpha_{i(j-2)}^{-1}\cdots \alpha_{i(0)}^{-1} :   k\in \arc(i(j-1)^\ast, i(j-1))\right\}.
\end{multline}
As we shall see in the next section, the additional set of matrices in the case $j\in \mathcal{I}_3$ corresponds geometrically to ``adding an extra copy of $\PP(q)$'' necessary to ensuring that the fundamental domain is convex.  

We put
$$\Omega_\gamma(q)\coloneqq \bigcup_{j=0}^{m_\gamma} \Omega_{\gamma,j}(q).$$
and define the following subgroup of $\PSL_2(\Z)$:
\[\Gamma_\gamma(q) \coloneqq \left\langle \Omega_\gamma(q), \gamma\right\rangle.\]
We shall see in the next section that (outside of one exceptional case) $\Gamma_\gamma(q)$ is a \emph{thin subgroup} of $\PSL_2(\Z)$, meaning Zariski dense and of infinite index. Note in particular that $\Omega_{\gamma}(q)$ is nonempty since $\gamma$ is hyperbolic. We denote by $\NN_\gamma(q)$ the \emph{Nielsen region} (or \emph{convex core}) for $\Gamma_\gamma(q)$, namely the smallest nonempty $\Gamma_{\gamma}(q)$-invariant open convex subset of $\Hb$. Notice that we have a well-defined map $\Gamma_\gamma(q)\backslash \NN_\gamma(q) \to X_0(1)$ arising from the inclusion $\Gamma_\gamma(q)\subset \PSL_2(\Z)$. We do not, however, have a canonical map from $\Gamma_\gamma(q) \backslash \NN_\gamma(q)$ to $X_0(q)$. In order to define such a map, we construct a canonical fundamental domain for $\Gamma_\gamma(q) \backslash \NN_\gamma(q)$ inside $\Hb$ and then consider the projection of this to $X_0(q)$.

\subsubsection{A Fundamental Domain for $\Gamma_\gamma(q)\backslash \NN_{\gamma}(q)$} 

We begin by constructing an explicit fundamental domain for $\Gamma_{\gamma}(q)\backslash \Hb$. This allows us to define a map $\Gamma_{\gamma}(q)\backslash \NN_{\gamma}(q)\to X_0(q)$ and furthermore to study the geometry of $\Gamma_{\gamma}(q)\backslash \NN_{\gamma}(q)$. There are geometrical obstacles if $\alpha_{i(m_\gamma)}^3=1$ and $i(m_\gamma)\in \arc(i(0),i(m_\gamma)^\ast)$. By conjugation, we can always arrange so that this is \emph{not} the case unless there is $3$-torsion and $\gamma\in\{\alpha_{k_1}\alpha_{k_2},\alpha_{k_2}\alpha_{k_1}\}$ with $1< k_1<k_2< N$ and $(\LL_{k_1},\LL_{k_1+1}),(\LL_{k_2},\LL_{k_2+1})$ the two pairs of sides of $\PP(q)$ with order $3$ labels (this follows by a quick case study recalling that $e_3\in \{0,2\}$). If the discriminant $D>0$ of $\alpha_{k_1}\alpha_{k_2}$ is fundamental, we can exclude it from our considerations by changing the sign of the orientation $r$ modulo $2q$ as in \hyperref[sect:levelq]{Section \ref*{sect:levelq}} (i.e.\  $r^2 \equiv D \pmod{4q}$). Thus we will henceforth assume that $\gamma\in \Gamma_0(q)$ is primitive hyperbolic and \emph{nonexceptional}, meaning that either $\alpha_{i(m_\gamma)}^3\neq 1$ or $i(m_\gamma)\notin \arc(i(0),i(m_\gamma)^\ast)$.

With this in mind, let $\tilde{\FF}_{\gamma}(q)$ be the hyperbolic polygon bounded by the following geodesic segments:
\begin{itemize}
\item the complete geodesics containing
\[\LL_{i(m_{\gamma})^\ast}, \quad \gamma\LL_{i(m_{\gamma})^\ast},\]
\item for each $j \in \{0,\ldots,m_{\gamma}\}$ and each $k\in \arc(i(j), i(j-1)^\ast)$, the geodesic segment
\[\alpha_{i(0)}\cdots \alpha_{i(j-1)}\LL_k,\]
\item for each $j \in \mathcal{I}_3$, as defined above equation \eqref{eqn:Omega}, and each $k\in \arc(i(j-1)^\ast, i(j-1))$, the geodesic segment
\[\alpha_{i(0)}\cdots \alpha_{i(j-2)}\alpha_{i(j-1)^\ast}\LL_k.\]
\end{itemize}
Note that all of the geodesic segments are disjoint by the assumption that $\gamma$ is nonexceptional. Moreover, $\tilde{\FF}_{\gamma}(q)$ is convex by construction. If $\alpha_k$ is hyperbolic, parabolic, or elliptic of order $2$, the geodesic segment $\alpha_{i(0)}\cdots \alpha_{i(j-1)}\LL_k$ is mapped to itself by the corresponding element of $\Omega_{\gamma,j}(q)$, whereas if $\alpha_k$ is of order $3$, the corresponding element of $\Omega_{\gamma,j}(q)$ maps this side to
\[\alpha_{i(0)}\cdots \alpha_{i(j-1)}\LL_{k\pm 1},\]
where $\pm$ depends on whether $\LL_k$ is the left-most or right-most side in the elliptic pair of sides. 

From the above, we define the following hyperbolic polygon
\begin{equation}
\label{eqn:surface}
\FF_{\gamma}(q) \coloneqq \tilde{\FF}_{\gamma}(q) \cap \NN_{\gamma}(q)\subset \Hb,
\end{equation}
which we shall soon see is a fundamental domain for $\Gamma_\gamma(q) \backslash \NN_\gamma(q)$, and in fact that $\FF_{\gamma}(q) = \tilde{\FF}_{\gamma}(q) \setminus \mathrm{int}(S_\gamma)$, where $\mathrm{int}(S_\gamma)\subset \Hb$ denotes the interior (relative to the orientation) of the axis $S_\gamma$ of $\gamma$ (see \hyperref[fig:F_A1]{Figures \ref*{fig:F_A1}} and \ref{fig:F_A2}).

\begin{example}
It is instructive to consider the case of $q=11$. The genus of $X_0(11)$ is $g = 1$, while there are no conjugacy classes in $\Gamma_0(11)$ of order $2$ or $3$, so that $e_2 = e_3 = 0$, and consequently $n = 4g + e_2 + e_3 = 4$ and $N = n + e_3  + 2 = 6$. \hyperref[fig:1]{Figure \ref*{fig:1}} shows a special fundamental domain $\PP(11)$ given by the hyperbolic polygon with vertices $\infty, 0, \tfrac{1}{3},\tfrac{1}{2},\tfrac{2}{3},1$ with corresponding labels
\begin{align*}
\alpha_1=T^{-1} & = \begin{pmatrix} 1 & -1 \\ 0 & 1 \end{pmatrix}, & \alpha_2 &= \begin{pmatrix} -3 & 2 \\ -11 & 7 \end{pmatrix}, & \alpha_3 & = \begin{pmatrix}4 & -3 \\ 11 & -8 \end{pmatrix},	\\
\alpha_4 = \alpha_2^{-1} & = \begin{pmatrix} 7 & -2 \\ 11 & -3 \end{pmatrix}, & \alpha_5 = \alpha_3^{-1} & = \begin{pmatrix} -8 & 3 \\ -11 & 4 \end{pmatrix}, & \alpha_6 = T & = \begin{pmatrix} 1 & 1 \\ 0 & 1 \end{pmatrix}.
\end{align*}

\hyperref[fig:F_A1]{Figure \ref*{fig:F_A1}} gives an example of a surface $F_{\gamma}(11)$ with $\gamma$ of reduced word length $1$ in the free generators $\{T, \alpha_2,\alpha_3\}$ (cf.\ \cite[Figures 2 and 4]{DIT16}). In this case, $\gamma = \alpha_2^{-1} = \begin{psmallmatrix} 7 & -2 \\ 11 & -3 \end{psmallmatrix}$ is equal to the matrix $\gamma_Q$ as in \eqref{eqn:gammaQ} associated to the Heegner form $Q = [-11,10,-2] \in \QQ_{12}(11)$. With $r = 10$, $Q$ corresponds to the narrow ideal class $J \in \Cl_{12}^+$.

\hyperref[fig:F_A2]{Figure \ref*{fig:F_A2}} gives an example of a surface $\FF_\gamma(11)$ with $\gamma$ of reduced word length $3$ in the free generators $\{T, \alpha_2,\alpha_3\}$. In this case, $\gamma = \alpha_3^{-1} \alpha_2^{-1} \alpha_3^{-1} = \begin{psmallmatrix} 107 & -41 \\ 154 & -59 \end{psmallmatrix}$. Note that the discriminant $D = 2300 = 5^2 \cdot 92$ of this hyperbolic matrix is not a fundamental discriminant. The boundary consists of $9$ semicircles (two of which are very small) as well as two incomplete geodesic segments connected by the closed geodesic associated to $\gamma$.

We are interested in the projections of surfaces $\FF_\gamma(11)$ to $X_0(11)$, where $\gamma$ corresponds to an ideal class in a real quadratic number field $E$ such that $11$ splits in $E$. Consider $E=\Q(\sqrt{92})$, which has narrow class number $2$ and wide class number $1$. \hyperref[fig:1]{Figure \ref*{fig:1}} shows the projection of $\FF_{\gamma}(11)$ to $\PP(11)$, where $\gamma = \alpha_3 \alpha_6 \alpha_2 \alpha_3^{-1} \alpha_6 = \begin{psmallmatrix} 19 & 10 \\ 55 & 29 \end{psmallmatrix}$ is equal to the matrix $\gamma_Q$ as in \eqref{eqn:gammaQ} associated to the Heegner form $Q = [-11,-2,2] \in \QQ_{92}(11)$. With $r = -2$, this corresponds to the narrow ideal class $J \in \Cl_{92}^+$ (cf.\ \cite[Figure 6]{DIT16}).
\end{example}

\begin{figure}[ht]
\centering
\begin{tikzpicture}[scale=14]
    \begin{scope}
      \clip (-0.6,-0.1) rectangle (0.6,0.4);
       \fill[black, opacity= 0.15]  (-0.6,-0.1) rectangle (0.6,0.4);

         \draw[ultra thin] (-0.083333,0) circle(0.08333333);
      \draw[ultra thin] (0.0833333,0) circle(0.083333333);
        \draw[ultra thin] (0.33333333,0) circle(0.16666666);
         \draw[ultra thin]  (-0.3333333,0) circle(0.166666666);
    
\draw[thick] (0.29709*0.5+0.61200*0.5-0.5,0)circle(-0.29709*0.5+0.61200*0.5);      
           \fill[white](0.29709*0.5+0.61200*0.5-0.5,0)circle(-0.29709*0.5+0.61200*0.5);

          \fill[white](-0.083333,0) circle(0.08333333);
      \fill[white](0.0833333,0) circle(0.083333333);
         \fill[white] (0.33333333,0) circle(0.16666666);
         \fill[white] (-0.3333333,0) circle(0.166666666);

                 \draw (-0.5,0) -- (-0.5,0.6);
        \draw  (0.5,0) -- (0.5,0.6);

     \fill[white](-0.5,0) rectangle  (-0.6,0.6);
        \fill[white] (0.5,0) rectangle  (0.6,0.6);
         \fill[white] (-0.6,-0.3) rectangle  (0.6,0);

        \node at (0,0) [below] {{\tiny $ \tfrac{1}{2}$}};
         \node at (0.166666,0) [below] {{\tiny $ \tfrac{2}{3}$}};
          \node at (-0.166666,0) [below] {{\tiny $ \tfrac{1}{3}$}};
           \node at (0.5,0) [below] {{\tiny $1$}};
           \node at (-0.5,0) [below] {{\tiny $0$}};
    \end{scope}
    
      \end{tikzpicture}
\caption{The fundamental polygon $\FF_\gamma(11)$ associated to the hyperbolic matrix $\gamma=\begin{psmallmatrix} 7 & -2 \\ 11 & -3 \end{psmallmatrix}$. The boundary consists of 4 semicircles as well as two incomplete arcs connected by the closed geodesic (thick) associated to $\gamma$.}
\label{fig:F_A1}
\end{figure}
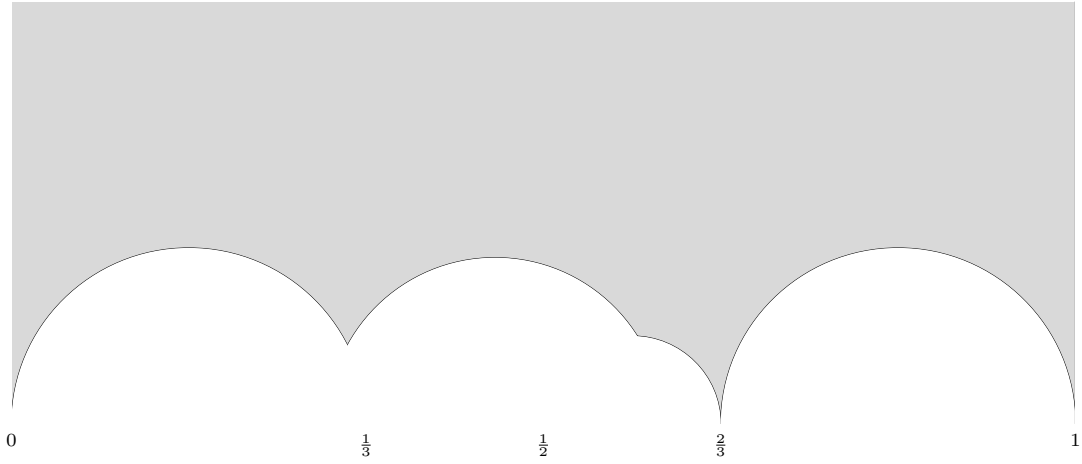
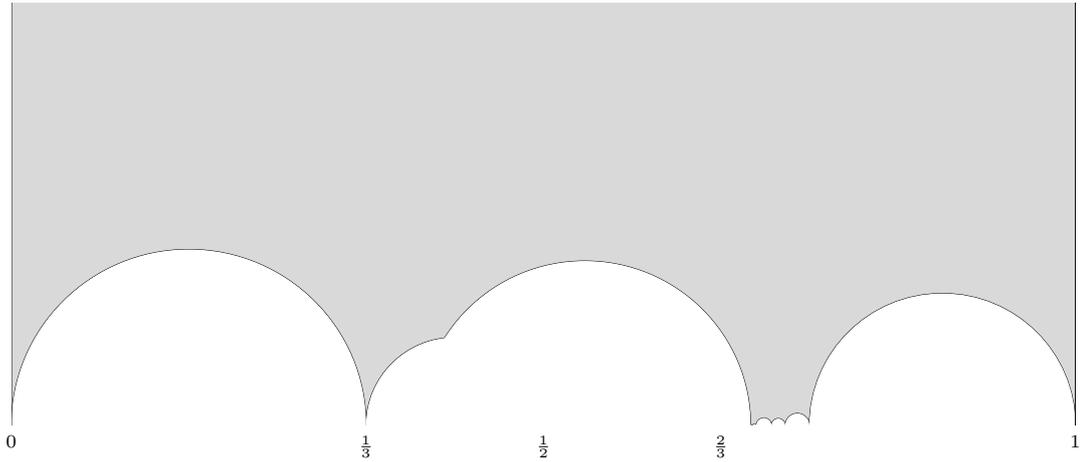
\begin{figure}[ht]
\centering
\begin{tikzpicture}[scale=14]
    \begin{scope}
      \clip (-0.6,-0.1) rectangle (0.6,0.4);
       \fill[black, opacity= 0.15]  (-0.6,-0.1) rectangle (0.6,0.4);

         \draw[ultra thin] (-0.083333,0) circle(0.08333333);
      \draw[ultra thin] (0.0833333,0) circle(0.083333333);
         \draw[ultra thin]  (-0.3333333,0) circle(0.166666666);
    
\draw[ultra thin] (0.6666666*0.5+0.6923076923*0.5-0.5,0) circle(0.6923076923*0.5-0.66666*0.5);
\draw[ultra thin] (0.6944444444*0.5+0.6923076923*0.5-0.5,0) circle(-0.6923076923*0.5+0.6944444444*0.5);
\draw[ultra thin] (0.6944444444*0.5+0.6956521739*0.5-0.5,0) circle(0.6956521739*0.5-0.6944444444*0.5);
\draw[ultra thin] (0.696969697*0.5+0.6956521739*0.5-0.5,0) circle(-0.6956521739*0.5+0.696969697*0.5);
\draw[ultra thin] (0.696969697*0.5+0.7*0.5-0.5,0) circle(0.7*0.5-0.696969697*0.5);

\draw[ultra thin] (0.7*0.5+0.7142857143*0.5-0.5,0)circle(-0.7*0.5+0.7142857143*0.5);
\draw[ultra thin] (0.7272727273*0.5+0.7142857143*0.5-0.5,0)circle(0.7272727273*0.5-0.7142857143*0.5);
\draw[ultra thin] (0.7272727273*0.5+0.75*0.5-0.5,0)circle(-0.7272727273*0.5+0.75*0.5);
\draw[ultra thin] (0.75*0.5+1*0.5-0.5,0) circle(0.5-0.75*0.5);

\draw[thick] (0.383252223269068*0.5+0.694669854653010*0.5-0.5,0)circle(-0.383252223269068*0.5+0.694669854653010*0.5);      
           \fill[white](0.383252223269068*0.5+0.694669854653010*0.5-0.5,0)circle(-0.383252223269068*0.5+0.694669854653010*0.5);

          \fill[white](-0.083333,0) circle(0.08333333);
      \fill[white](0.0833333,0) circle(0.083333333);
         \fill[white] (-0.3333333,0) circle(0.166666666);
\fill[white] (0.6666666*0.5+0.6923076923*0.5-0.5,0) circle(0.6923076923*0.5-0.66666*0.5);
\fill[white] (0.6944444444*0.5+0.6923076923*0.5-0.5,0) circle(-0.6923076923*0.5+0.6944444444*0.5);
\fill[white] (0.6944444444*0.5+0.6956521739*0.5-0.5,0) circle(0.6956521739*0.5-0.6944444444*0.5);
\fill[white] (0.696969697*0.5+0.6956521739*0.5-0.5,0) circle(-0.6956521739*0.5+0.696969697*0.5);
\fill[white] (0.696969697*0.5+0.7*0.5-0.5,0) circle(0.7*0.5-0.696969697*0.5);

\fill[white](0.7*0.5+0.7142857143*0.5-0.5,0)circle(-0.7*0.5+0.7142857143*0.5);
\fill[white] (0.7272727273*0.5+0.7142857143*0.5-0.5,0)circle(0.7272727273*0.5-0.7142857143*0.5);
\fill[white](0.7272727273*0.5+0.75*0.5-0.5,0)circle(-0.7272727273*0.5+0.75*0.5);
\fill[white] (0.75*0.5+1*0.5-0.5,0) circle(0.5-0.75*0.5);

                 \draw (-0.5,0) -- (-0.5,0.6);
        \draw  (0.5,0) -- (0.5,0.6);

     \fill[white](-0.5,0) rectangle  (-0.6,0.6);
        \fill[white] (0.5,0) rectangle  (0.6,0.6);
         \fill[white] (-0.6,-0.3) rectangle  (0.6,0);

        \node at (0,0) [below] {{\tiny $ \tfrac{1}{2}$}};
         \node at (0.166666,0) [below] {{\tiny $ \tfrac{2}{3}$}};
          \node at (-0.166666,0) [below] {{\tiny $ \tfrac{1}{3}$}};
           \node at (0.5,0) [below] {{\tiny $1$}};
           \node at (-0.5,0) [below] {{\tiny $0$}};
    \end{scope}
    
      \end{tikzpicture}
\caption{The fundamental polygon $\FF_\gamma(11)$ associated to the hyperbolic matrix $\gamma=\begin{psmallmatrix} 107 & -41 \\ 154 & -59 \end{psmallmatrix}$. The boundary consists of 9 semicircles (two of which are very small) as well as two incomplete geodesic segments connected by the closed geodesic (thick) associated to $\gamma$.}
\label{fig:F_A2}
\end{figure}
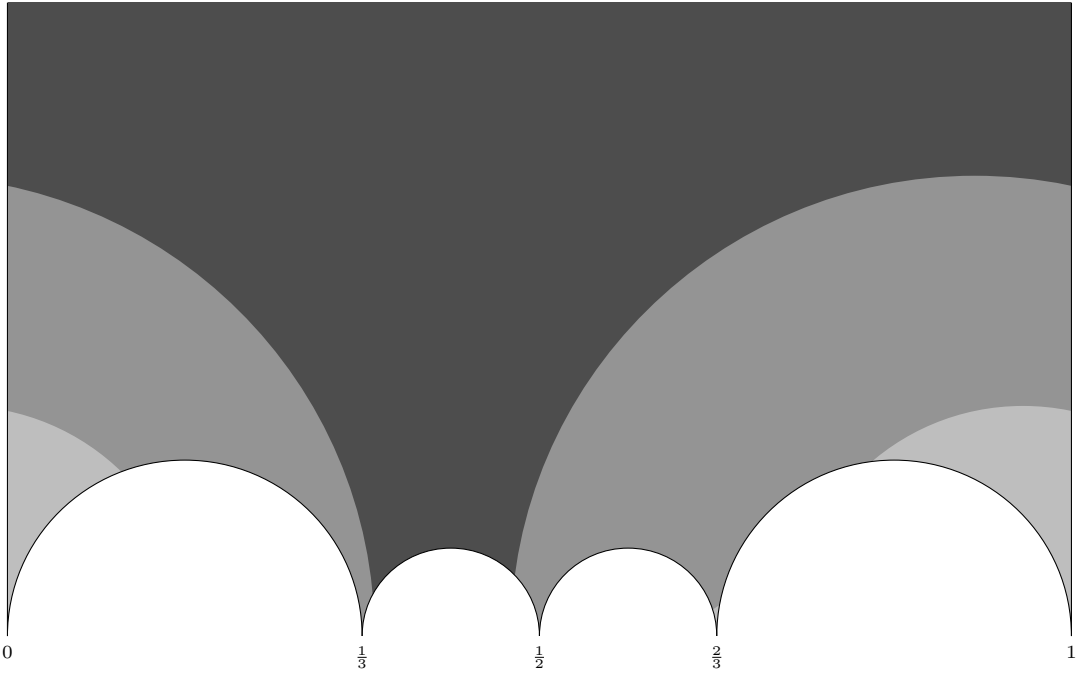
\begin{figure}[ht]
\centering
\begin{tikzpicture}[scale=14]
    \begin{scope}
      \clip (-0.6,-0.1) rectangle (0.6,0.6);
     \fill[black, opacity= 0.7]  (-0.59,-0.1) rectangle (0.59,0.6);
          \fill[white,opacity=0.4] (0.40909,0) circle(0.435984);
          
          \fill[white,opacity=0.4] (0.40909-1,0) circle(0.435984); 
          \fill[white,opacity=0.4] (1.17252139-0.5-0.2179841352,0) circle(0.2179841352); 
          \fill[white,opacity=0.4] (1.17252139-0.5-1-0.2179841352,0) circle(0.2179841352); 
          \fill[white,opacity=0.4] (0.718838463-0.5-0.03353053045,0) circle(0.03353053045);

          \fill[white](-0.083333,0) circle(0.08333333);
      \fill[white](0.0833333,0) circle(0.083333333);
         \fill[white] (0.33333333,0) circle(0.16666666);
         \fill[white] (-0.3333333,0) circle(0.166666666);
         \draw (-0.083333,0) circle(0.08333333);
      \draw (0.0833333,0) circle(0.083333333);
         \draw (0.33333333,0) circle(0.16666666);
         \draw  (-0.3333333,0) circle(0.166666666);
     \fill[white](-0.5,0) rectangle  (-0.6,0.6);
        \fill[white] (0.5,0) rectangle  (0.6,0.6);
         \fill[white] (-0.6,-0.3) rectangle  (0.6,0);
      \draw (-0.5,0) -- (-0.5,0.6);
        \draw  (0.5,0) -- (0.5,0.6);
        \node at (0,0) [below] {{\tiny $ \tfrac{1}{2}$}};
         \node at (0.166666,0) [below] {{\tiny $ \tfrac{2}{3}$}};
          \node at (-0.166666,0) [below] {{\tiny $ \tfrac{1}{3}$}};
           \node at (0.5,0) [below] {{\tiny $1$}};
           \node at (-0.5,0) [below] {{\tiny $0$}};
    
    \end{scope}
    
      \end{tikzpicture}
\caption{The projection onto $X_0(11)$ of the fundamental polygon $\FF_{\gamma}(11)$ associated to the hyperbolic matrix $\gamma = \begin{psmallmatrix} 19 & 10 \\ 55 & 29 \end{psmallmatrix}$, which corresponds to the narrow ideal class $J \in \Cl_{92}^+$.}\label{fig:1}
\end{figure}

We now show how to use these explicit fundamental domains to extract geometric information about the hyperbolic orbifold $\Gamma_{\gamma}(q)\backslash \NN_{\gamma}(q)$.
 
\begin{proposition}
\label{prop:lowerboundvol}
Let $\gamma \in \Gamma_0(q)$ be a (nonexceptional) primitive hyperbolic matrix such that the axis $S_{\gamma}$ given by \eqref{eqn:semicircle2} intersects the fundamental polygon $\PP(q)$.
\begin{enumerate}[leftmargin=*,label=\textup{(\arabic*)}]
\item The polygon $\tilde{\FF}_{\gamma}(q)$ is a fundamental domain for $\Gamma_{\gamma}(q)\backslash \Hb$.
\item The subgroup $\Gamma_\gamma(q)\leq \PSL_2(\Z)$ is thin.
\item The orbifold $\Gamma_{\gamma}(q)\backslash \NN_{\gamma}(q)$ has genus $0$, a unique cusp, and a unique boundary component, which projects to the oriented closed geodesic $\CC_\gamma(1)\subset X_0(1)$. Furthermore, the corresponding geodesic segment on the boundary of $\FF_{\gamma}(q)$ projects to $\CC_{\gamma}(q)\subset X_0(q)$.
\item We have that
\begin{equation}
\label{eqn:orbifoldvolume}
\vol (\Gamma_{\gamma}(q)\backslash \NN_{\gamma}(q))= \pi |\Omega_{\gamma}(q)|-\frac{2\pi}{3}e_{3,\gamma},
\end{equation}
where $e_{3,\gamma}$ denotes the number of conjugacy classes of order $3$ subgroups in $\Gamma_{\gamma}(q)$.
\end{enumerate}
\end{proposition}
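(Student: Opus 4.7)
Part (1) follows from Poincar\'{e}'s polygon theorem applied to $\FF^\ast_{\gamma}(q)$ with the side pairing induced by $\Omega_{\gamma}(q) \cup \{\gamma\}$. By construction, $\gamma$ pairs the two closing sides $\LL_{i(m_{\gamma})^\ast}$ and $\gamma\LL_{i(m_{\gamma})^\ast}$, while for each indexing pair $(j,k)$, writing $g_j \coloneqq \alpha_{i(j)}\cdots\alpha_{i(0)}$, the element $g_j \sigma_k g_j^{-1} \in \Omega_{\gamma}(q)$ either pairs the two halves of $g_j \LL_k$ via an order-$2$ rotation about its midpoint (when $\alpha_k$ is hyperbolic or parabolic) or pairs the two adjacent sides $g_j \LL_k, g_j \LL_{k \pm 1}$ meeting at an elliptic vertex (when $\alpha_k$ is elliptic). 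The elliptic vertex cycle conditions of Poincar\'{e}'s theorem are immediate from the orders of the $\sigma_k$; the cuspidal cycle conditions are where \hyperref[lem:ast]{Lemma \ref*{lem:ast}} plays the key role, since the $t$-orbits describe precisely how the sides of consecutive translates $g_j \PP(q)$ meet at cusps of $\FF^\ast_{\gamma}(q)$, and the lemma produces the parabolic stabilisers needed to close off these vertex cycles.

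For part (2), observe that $\gamma$ is the only hyperbolic generator of $\Gamma_{\gamma}(q) = \langle \Omega_{\gamma}(q), \gamma\rangle$, as all elements of $\Omega_{\gamma}(q)$ are elliptic, and hence $\partial \NN_{\gamma}(q)$ is the $\Gamma_{\gamma}(q)$-orbit of the axis $S_\gamma$. Consequently $S_\gamma \cap \FF^\ast_{\gamma}(q)$ is a single geodesic arc joining the two closing sides, which quotients to the unique geodesic boundary of the orbifold. Because $\gamma \in \Gamma_0(q)$, the projection of this arc under $\Hb \twoheadrightarrow X_0(q)$ traces out $\CC_{\gamma}(q)$; a further application of $\Gamma_{\gamma}(q) \subset \PSL_2(\Z)$ sends this boundary to $\CC_{\gamma}(1) \subset X_0(1)$. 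To establish genus $0$ and uniqueness of the cusp, one performs a vertex count on $\FF^\ast_{\gamma}(q)$ modulo the side identifications: the $t$-orbit structure of \hyperref[lem:ast]{Lemma \ref*{lem:ast}} implies that all cuspidal vertices lie in a single $\Gamma_{\gamma}(q)$-orbit, while the simply-connected polygon $\FF^\ast_{\gamma}(q)$ with these identifications does not produce any handles.

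For part (3), the volume formula follows from the orbifold Gauss--Bonnet theorem with geodesic boundary. After verifying that the elements of $\Omega_{\gamma}(q)$ form a complete set of non-conjugate representatives of the elliptic conjugacy classes of $\Gamma_{\gamma}(q)$ (with $\sigma_k$ of order $2$ when $k$ is an even or free index of the Farey symbol and of order $3$ when $k$ is an odd index), one obtains $|\Omega_{\gamma}(q)| = e_2(\gamma) + e_3(\gamma)$, where $e_2(\gamma)$ denotes the number of order-$2$ classes. Together with $g = 0$, $c = 1$, and $b = 1$ from part (2), this gives
\[
\chi_{\mathrm{orb}}(\Gamma_{\gamma}(q) \backslash \NN_{\gamma}(q)) = 2 - 2g - c - b - \frac{e_2(\gamma)}{2} - \frac{2 e_3(\gamma)}{3} = -\frac{e_2(\gamma)}{2} - \frac{2 e_3(\gamma)}{3},
\]
whence $\vol(\Gamma_{\gamma}(q) \backslash \NN_{\gamma}(q)) = -2\pi \chi_{\mathrm{orb}} = \pi e_2(\gamma) + \tfrac{4\pi}{3} e_3(\gamma) = \pi |\Omega_{\gamma}(q)| + \tfrac{\pi}{3} e_3(\gamma)$.

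The main obstacle is the combinatorial bookkeeping underlying parts (1) and (3). The cuspidal cycle conditions require a careful case analysis linking the Morse code of $\gamma$ to the $t$-orbits via \hyperref[lem:ast]{Lemma \ref*{lem:ast}}, and the claim that the elements of $\Omega_{\gamma}(q)$ are pairwise non-conjugate in $\Gamma_{\gamma}(q)$---and exhaust the elliptic classes---rests on the minimality of $\PP(q)$ as a fundamental polygon of $\Gamma_0(q)$, ensuring that no two distinct indexings $(j,k)$ yield conjugate side-pairing elements.
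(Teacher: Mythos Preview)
Your approach is essentially the same as the paper's: Poincar\'{e}'s theorem for part (1), a direct analysis of the boundary for part (2), and Gauss--Bonnet for part (3). You supply considerably more combinatorial detail than the paper does---in particular, the paper dispatches part (1) in a single sentence (``a simple consequence of Poincar\'{e}'s theorem'') without the vertex-cycle bookkeeping you sketch, and handles part (2) by observing that $\FF^\ast_{\gamma}(q)$ has exactly one free side on $\Pb^1(\R)$ (the interval between the right-most vertex $v$ of $\LL_{i(m_\gamma)^\ast}$ and $\gamma v$) and that the geodesic segment of $S_\gamma$ joining the two closing sides is the unique simple closed geodesic freely homotopic to this boundary circle.

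One discrepancy is worth flagging: you write $|\Omega_\gamma(q)| = e_2(\gamma) + e_3(\gamma)$, whereas the paper records $|\Omega_\gamma(q)| = e_2(\gamma) + \tfrac{1}{2} e_3(\gamma)$. With the interpretation $e_3(\gamma) = \#\{\text{order-}3\text{ cone points}\}$ used in the paper's proof, Gauss--Bonnet (genus $0$, one cusp, one geodesic boundary) gives $\vol = \pi e_2(\gamma) + \tfrac{4\pi}{3} e_3(\gamma)$, which matches the target $\pi|\Omega_\gamma(q)| + \tfrac{\pi}{3} e_3(\gamma)$ under \emph{your} relation, not the paper's; the paper's intermediate formula appears to be a typo. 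So your computation is the self-consistent one. That said, your justification ``the elements of $\Omega_\gamma(q)$ form a complete set of non-conjugate representatives of the elliptic conjugacy classes'' needs care: for an odd elliptic pair $\LL_k, \LL_{k^\ast}$ both lying in the same arc, the definition of $\Omega_\gamma(q)$ literally includes both $g_j\alpha_k g_j^{-1}$ and its inverse, which are distinct order-$3$ elements fixing the \emph{same} cone point. The correct count is one generator (up to inversion) per elliptic vertex of $\FF^\ast_\gamma(q)$, which is what your Gauss--Bonnet argument actually uses.
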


By comparing the formula \eqref{eqn:orbifoldvolume} to \cite[(3.3)]{DIT16}, we see that the cardinality of $\Omega_{\gamma}(q)$ plays the role of the length $\ell_A$ of the minus continuous fraction in the level $1$ case in \cite{DIT16}.

\begin{proof}[Proof of {\hyperref[prop:lowerboundvol]{Proposition \ref*{prop:lowerboundvol}}}]
It is a simple consequence of Poincar\'{e}'s polygon theorem \cite{Mas71} that $\tilde{\FF}_{\gamma}(q)$ is a fundamental domain for $\Gamma_{\gamma}(q)\backslash \Hb$ with side pairing elements $\Omega_\gamma(q)\cup\{\gamma,\gamma^{-1}\}$. Since the volume of $\tilde{\FF}_\gamma(q)$ is infinite and $\Gamma_\gamma(q)$ contains two hyperbolic elements with different axes (e.g. $\gamma$ and $\sigma\gamma\sigma^{-1}$ for any $\sigma\in\Omega_{\gamma}(q)$), we conclude that it is a thin subgroup of $\PSL_2(\Z)$. 

It follows from the explicit side pairing on $\tilde{\FF}_{\gamma}(q)$ that there is a unique cusp and that the genus is $0$. Furthermore, $\Gamma_{\gamma}(q)\backslash \Hb$ has exactly one boundary circle, namely corresponding on $\tilde{\FF}_{\gamma}(q)$ to the interval of $\Pb^1(\R)$ between the right-most vertex $v$ of (the complete geodesic containing) $\LL_{i(m_{\gamma})^\ast}$ and $\gamma v$ (which is the left-most vertex of the complete geodesic containing $\gamma\LL_{i(m_{\gamma})^\ast}$). Notice here that $v\neq \gamma v$ since $\gamma$ is hyperbolic. Consider the oriented geodesic $\CC$ connecting the intersections between the axis $S_\gamma\subset \Hb$ of $\gamma$ with $\LL_{i(m_{\gamma})^\ast}$ and $\gamma\LL_{i(m_{\gamma})^\ast}$ respectively. By construction, this is freely homotopic to the boundary circle of $\Gamma_{\gamma}(q)\backslash \Hb$. Since there is a unique closed geodesic with this property, we conclude that (the projection of) $\CC$ is exactly the boundary component of $\Gamma_\gamma(q)\backslash \NN_\gamma(q)$. In particular, the boundary component of the fundamental domain $\FF_{\gamma}(q)$ equals the geodesic $\CC$ which projects to $\CC_{\gamma}(q)\subset X_0(q)$ by construction. From this, it also follows that the boundary component of $\Gamma_{\gamma}(q)\backslash \NN_{\gamma}(q)$ projects to $\CC_{\gamma}(1)\subset X_0(1)$. 

As above, denote by $e_{2,\gamma}$ the number of elliptic points of order $2$ of $\Gamma_{\gamma}(q)\backslash \NN_{\gamma}(q)$ and by $e_{3,\gamma}$ the number of elliptic points of order $3$. Then clearly we have that
\[|\Omega_{\gamma}(q)|= e_{2,\gamma} + 2 e_{3,\gamma}.\]
Thus the Gau{\ss}--Bonnet theorem \cite[(2.8)]{DIT16} gives that
\[\vol (\Gamma_{\gamma}(q)\backslash \NN_{\gamma}(q))=2\pi\left(\frac{1}{2}e_{2,\gamma}+\frac{2}{3}e_{3,\gamma}\right)= \pi |\Omega_{\gamma}(q)|-\frac{2\pi}{3}e_{3,\gamma},\]
as desired.
\end{proof}

For $\gamma = \gamma_Q$ associated to a Heegner form $Q \in \QQ_D(q)$, we write
\[\Gamma_{Q}(q) \coloneqq \Gamma_{\gamma_Q}(q), \qquad \FF_{Q}(q) \coloneqq \FF_{\gamma_Q}(q), \qquad \Omega_{Q,j}(q) \coloneqq \Omega_{\gamma_Q,j}(q), \qquad \Omega_{Q}(q) \coloneqq \Omega_{\gamma_Q}(q).\]
Notice that different choices of $\Gamma_0(q)$-equivalent Heegner forms $Q \in \QQ_D(q)$ (all corresponding to the same narrow ideal class $A \in \Cl_D^+$) give rise to subgroups $\Gamma_{Q}(q)$ of $\PSL_2(\Z)$ that are $\Gamma_0(q)$-conjugates; furthermore, the projection of $\FF_{Q}(q)$ to $X_0(q)$ is independent of this choice. For $A \in \Cl_D^+$, we denote by $\Gamma_A(q) \backslash \NN_A(q)$ any such orbifold $\Gamma_{Q}(q)\backslash \NN_{Q}(q)$ and by $\Omega_A(q)$ and $\FF_A(q)$ the corresponding set of matrices and fundamental domain. As we are interested in the projection of $\FF_A(q)$ to $X_0(q)$, we may use this abuse of notation with impunity.

\begin{remark}
Recall that $Q = [a,b,c] \in \QQ_D(q)$ is such that $(a,b,c) = 1$, $a \equiv 0 \pmod{q}$, and $b \equiv r \pmod{2q}$, where $r$ is a fixed choice of residue class modulo $2q$ for which $r^2 \equiv D \pmod{4q}$. As $q$ is prime, there are two such possible choices of $r \pmod{2q}$. Thus there are two closed geodesics, and hence two hyperbolic orbifolds, associated to each narrow ideal class $A \in \Cl_D^+$: one to each choice of $r$. By fixing $r$, we are thereby fixing a choice of one of these two closed geodesics and one of these two hyperbolic orbifolds associated to $A$. Since we regard $r$ as being fixed, we suppress from the notation the dependence of $\Gamma_A(q) \backslash \NN_A(q)$ and of $\FF_A(q)$ on $r$.
\end{remark}

\subsubsection{A Lower Bound for the Volume}

In this section, we use the formula \eqref{eqn:orbifoldvolume} to obtain a lower bound for the hyperbolic volume of $\FF_A(q)$. In the level $1$ setting, this was obtained in \cite{DIT16} by relating the volume to minus continued fractions and using some explicit relations to the regulator $\log \epsilon_D$, while in \cite[Section 5.1]{NT25}, such a lower bound was obtained using geometric considerations relying on equidistribution of the closed geodesics. In the level $q$ setting, on the other hand, such routes do not seem to be readily available. We instead use a group-theoretic approach. 

For $\gamma \in \Gamma_0(q)$, let $\wl(\gamma)$ denote the word length of $\gamma$ in the independent generators defined from $\PP(q)$, namely the labels $\alpha_1,\ldots, \alpha_N$ (forgetting inverses). Given a Heegner form $Q\in \QQ_D(q)$ with associated hyperbolic matrix $\gamma_Q$, we first observe that the entries of $\gamma_Q$ are bounded by $q^{O(\wl(\gamma_Q))}$ since the Frobenius norms of the labels $\alpha_j$ are $O(q)$. In particular, we have that
\[\Tr(\gamma_Q) \ll q^{O( \wl(\gamma_Q))},\]
which implies that 
\[\wl(\gamma_Q)\gg \frac{\log \Tr(\gamma_Q)}{\log q} \gg \frac{\log \epsilon_D}{\log q}.\]
This is close to what we want. Alas, a lower bound for the word length does not quite give a lower bound for $|\Omega_Q(q)|$ in general. For a \emph{generic} hyperbolic matrix $\gamma\in \Gamma_0(q)$ of word length tending to infinity, it is not hard to see that in fact $|\Omega_\gamma(q)|\asymp q \wl(\gamma)$. Thus one might hope that for subgroups $H\subset \Cl_D^+$ of sufficiently small index (say, $[\Cl_D^+:H] \ll D^{\delta}$ for some small $\delta>0$), one has that
\[\sum_{A\in CH} \vol (\FF_A(q)) \gg q |H| \log \epsilon_D\]
for any coset $CH \subset \Cl_D^+$ (cf.\ \cite[Proposition 1]{DIT16}). We make progress towards by showing the following.

\begin{proposition}
For $A\in \Cl_D^+$, we have that
\begin{equation}
\label{eqn:volbound}
\vol(\FF_A(q)) \gg \frac{\log \epsilon_D}{\log qD}.
\end{equation}
\end{proposition}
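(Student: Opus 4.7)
The plan is to start from Proposition~\ref{prop:lowerboundvol}, which yields $\vol(\FF_A(q)) \geq \pi |\Omega_A(q)|$, thereby reducing the proof to the cardinality bound
\[
|\Omega_A(q)| \gg \frac{\log \epsilon_D}{\log(qD)}.
\]

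First, I would bound below the Morse-code length $m_{\gamma_Q}+1$ of $\gamma_Q$. Since $\gamma_Q = \alpha_{i(0)} \alpha_{i(1)} \cdots \alpha_{i(m_{\gamma_Q})}$ writes $\gamma_Q$ as a product of labels, each of which has Frobenius norm $O(q)$ by Theorem~\ref{thm:boundentries}, submultiplicativity of the operator norm (which is dominated by the Frobenius norm) gives $\|\gamma_Q\| \ll q^{C(m_{\gamma_Q}+1)}$ for an absolute constant $C$. Comparing with $\Tr(\gamma_Q) = 2u \gg \epsilon_D$ then yields
\[
m_{\gamma_Q}+1 \gg \frac{\log \epsilon_D}{\log q}.
\]

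The main step is then to pass from this bound on $m_{\gamma_Q}+1$ to the desired bound on $|\Omega_A(q)| = \sum_{j=0}^{m_{\gamma_Q}} |\arc(i(j), i(j-1)^{\ast})|$. A summand vanishes precisely when $i(j-1)^{\ast} = i(j)+1 \pmod{N}$, corresponding geometrically to the closed geodesic $S_{\gamma_Q}$ entering and exiting the $j$-th translate of $\PP(q)$ through a pair of counterclockwise-adjacent sides meeting at a common vertex. The idea is that such degenerate transitions arise predominantly during cuspidal excursions of $S_{\gamma_Q}$ in a specific orientation, during which the Morse code falls into a repetitive pattern of adjacent-side transitions. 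Since the maximal height of $S_{\gamma_Q}$ above $\R$ in $\Hb$ is $\sqrt{D}/(2|a|) = O(\sqrt{D})$, a careful combinatorial analysis of the Morse-code sequence near the cuspidal zones of $\PP(q)$ should bound the total number of ``empty-arc'' indices $j$ by a $(1 - 1/\log D)$ fraction of all indices, which gives $|\Omega_A(q)| \gg (m_{\gamma_Q}+1)/\log D$.

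Combining the two bounds yields $|\Omega_A(q)| \gg \log \epsilon_D/(\log q \cdot \log D) \gg \log \epsilon_D/\log(qD)$, as required. The principal obstacle is the combinatorial-geometric analysis in the last step, namely identifying cuspidal excursions as the main source of empty arcs and obtaining the quantitative $\log D$ control. This appears to require careful use of the explicit vertex structure of the Kulkarni polygon together with the explicit height bound for $S_{\gamma_Q}$; the other steps follow directly from the setup and from Theorem~\ref{thm:boundentries}.
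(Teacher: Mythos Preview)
Your plan shares its opening with the paper: reduce via Proposition~\ref{prop:lowerboundvol} to a lower bound on $|\Omega_A(q)|$. But the remainder has two genuine problems.

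First, the final arithmetic step is wrong. You obtain $|\Omega_A(q)|\gg \log\epsilon_D/(\log q\cdot\log D)$ and then claim this is $\gg\log\epsilon_D/\log(qD)$. That inequality goes the wrong way: for $q,D$ both large one has $\log q\cdot\log D\geq \log q+\log D=\log(qD)$, so your bound is \emph{weaker} than the target. This is not cosmetic, since the proposition must be uniform in $q$ to feed into Theorem~\ref{thm:level}. The paper avoids this by bounding the Frobenius norm of $\gamma_Q$ \emph{directly} by $(qD)^{O(\ell)}$, where $\ell$ counts the non-degenerate steps, so that taking traces gives $\ell\gg\log\epsilon_D/\log(qD)$ in one stroke rather than via a product of two separate logarithms.

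Second, your step~3 is where the real content lies, and the proposed mechanism (height of $S_{\gamma_Q}$ bounded by $\sqrt{D}/(2|a|)$, hence runs of empty arcs of length $\ll\log D$) does not work as stated. The apex height controls only the \emph{initial} semicircle, not its position after successive $\Gamma_0(q)$-translates back into $\PP(q)$; and empty-arc runs correspond not only to excursions near $\infty$ but also to excursions near the cusp $0$, where the relevant parabolic is $\begin{psmallmatrix}1&0\\q&1\end{psmallmatrix}$. The paper handles this precisely: by Lemma~\ref{lem:ast}, a maximal run of degenerate transitions $i(j)=t(i(j-1))$ produces a factor $\beta$ which is either $T^n$, an elliptic label, or $\gamma_{j_1}^{-1}\begin{psmallmatrix}1&0\\q&1\end{psmallmatrix}^n\gamma_{j_2}$. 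The exponent $n$ is bounded by the \emph{total word length} $\wl(\gamma_Q)$, which in turn is bounded by the level-$1$ quantity $\mathfrak m_A+\ell_A\ll D^{1/2+\e}$ via the tiling of $\PP(q)$ by copies of \eqref{eqn:fund} together with Eichler's bound $\ell_A\ll\log\epsilon_D$. Lemma~\ref{lem:boundfrobnorm} then controls the Frobenius norm of the conjugating matrices $\gamma_{j}$ by $O(q^{3/2})$. These three ingredients---the orbit structure of $t$, the $q^{3/2}$ norm bound, and the a~priori word-length bound $\ll D^{1/2+\e}$---are what make the argument go through, and none of them is captured by your height heuristic.
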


\begin{proof}
Let $Q \in \QQ_{D}(q)$ be a Heegner form corresponding to the narrow ideal class $A\in \Cl_D^+$ with corresponding hyperbolic matrix $\gamma_Q \in \Gamma_0(q)$ such that its axis $S_Q \subset \Hb$ (i.e.\ the infinite geodesic fixed by $\gamma_Q$) intersects $\PP(q)$. We start by bounding the word length $\mathrm{wl}(\gamma_Q)$ in terms of $D$. Recall from \cite{Kul91} that the special fundamental polygon $\PP(q)$ is tiled by 
\begin{equation}
\label{eqn:fund}
\left\{z\in \Hb: |z|\geq 1, \ 0 \leq \Re(z) < \frac{1}{2}\right\}.
\end{equation} 
The word length $\mathrm{wl}(\gamma_Q)$ of $\gamma_Q$ with respect to the independent generators of $\Gamma_0(q)$ defined from $\PP(q)$ (forgetting inverses) is precisely the number of intersections between the geodesic $\CC_A(q)$ from $z_Q$ to $\gamma_Q z_Q$ and the $\Gamma_0(q)$-translates of $\PP(q)$ and thus is bounded by the number of intersections with \eqref{eqn:fund}. We shall now relate this intersection number to continued fractions.

Consider the tessellation of $\Hb$ by the \emph{Farey triangle} $\triangle_\mathrm{Far}$ with vertices $\{0,1,\infty\}$. Note that $\triangle_\mathrm{Far}$ is tiled by six copies of the triangle \eqref{eqn:fund}. As explained in \cite[Theorem A and Corollary 3.3.5]{Ser85}, the \emph{left-right cutting sequence} of the axis $S_Q$ of $\gamma_Q$ with respect to this tessellation is periodic with period $a_1 + \cdots + a_{\ell'}$, where $[b_0,\ldots, b_r,\overline{a_1,\ldots, a_{\ell'}}]$ is the continued fraction expansion of the attracting endpoint of $S_Q$, which is eventually periodic with period $\ell'$. Consider the fundamental domain $\FF_0$ for $\PSL_2(\Z)$ with vertices $\{0,\frac{1 + \sqrt{-3}}{2},\infty\}$ and side pairing elements $\{S,TS, ST^{-1}\}$. Note that $\triangle_\mathrm{Far}$ is tiled by three copies of $\FF_0$ such that all the sides of $\triangle_\mathrm{Far}$ correspond to the side of $\FF_0$ containing $0$ and $\infty$ with label $S$. Furthermore, a ``left'' (respectively ``right'') in the cutting sequence corresponds, in terms of $\FF_0$, to the label $S$ followed by either the label $TS$ or twice its inverse (respectively $ST^{-1}$ or twice its inverse). Since $(ST^{-1})^2 = TS$, we conclude that the cutting sequence uniquely determines the representation of $\gamma_Q \in \PSL_2(\Z)$ in terms of the independent generators $S,TS$ of $\PSL_2(\Z)$ of order two and three respectively. In particular, the minimal period of the cutting sequence of $S_Q$ is at most a third of the length of the Morse code of $\gamma_Q$ with respect to $\FF_0$. Since $\FF_0$ is tiled by two copies of the triangle \eqref{eqn:fund}, we conclude that
\begin{equation}
\label{eq:wlbound}
\mathrm{wl}(\gamma_Q)\leq 6(a_1 + \cdots + a_{\ell'}) \ll D^{3/2}
\end{equation}
using the elementary bounds $a_i\ll\sqrt{D}$ and $\ell'\ll D$, which follow from the classical algorithm for continued fractions as described in \cite{Hic73}.

Let $0< j(1)<\ldots< j(\ell)< m_\gamma+1$ be the nonzero indices in the Morse code \eqref{eq:morsecode} of $\gamma_Q$ for which $\Omega_{Q,j(k)}(q)=\emptyset$ with $\Omega_{Q,j}(q)=\Omega_{\gamma_Q,j}(q)$ as defined in \eqref{eqn:Omega} and \eqref{eqn:Omega1}. Put $j(0)=0$ and $j(\ell+1)=m_\gamma+1$. We now argue that for all $0\leq k\leq \ell$, the product
\[\beta_k \coloneqq \alpha_{i(j(k))}\cdots \alpha_{i(j(k+1)-1)}\]
can be written as $\alpha_{h}\alpha_{t(h)}\cdots \alpha_{t^{(m)}(h)}$ for some $h\in\Z/N\Z$ and $m\geq0$. Assume first that $e_3=0$. Then  $\Omega_{Q,j}(q)=\emptyset$ means exactly that $i(j)=i(j-1)^\ast-1=t(i(j-1))$ and the claim follows. In the presence of $3$-torsion, $\Omega_{Q,j}(q)=\emptyset$ implies that $\arc(i(j),i(j-1)^\ast)\subset \{i(j-1),i(j)^\ast\}$. We need to consider the four possible geometric configurations corresponding to the nonempty subsets of $\{i(j-1),i(j)^\ast\}$. If $\arc(i(j),i(j-1)^\ast)=\emptyset$, we get as above that $i(j)=t(i(j-1))$. Next, assume that $\arc(i(j),i(j-1)^\ast)=\{i(j-1)\}$. Then since $\Omega_{Q,j}(q)=\emptyset$, we conclude that $\alpha_{i(j-1)}^3=1$, $i(j-1)<i(j-1)^\ast$ and $i(j)=i(j-1)-1$. Furthermore, since $j\notin \mathcal{I}_3$, we must have that $i(j-2)=i(j-1)$, implying that $i(j-1)=t(i(j-2))$. Since we have $\alpha_{i(j-2)}\alpha_{i(j-1)}=\alpha_{i(j-1)^\ast}$, $i(j-2)^\ast\in \arc(i(j-2),i(j-3)^\ast)$, and $i(j)=(i(j-1)^\ast)^\ast-1=t(i(j-1)^\ast)$, we see that we can alter the Morse code to get the desired shape. The two final cases can be treated similarly, which yields the claim.

We conclude by \hyperref[lem:ast]{Lemma \ref*{lem:ast}} that we may write
\[ \gamma_Q= \beta_0\beta_1\cdots \beta_\ell\]
where for all $0\leq k\leq \ell$ either
\begin{enumerate}
\item $\beta_k=\alpha_j$ is a label of $\PP(q)$, in which case the Frobenius norm is bounded by $O(q)$; 
\item $\beta_k=T^n$ for some $n\in \Z$, in which case the Frobenius norm is bounded by $O(|n|)$; or
\item we have that
\[\beta_k=\gamma_{j_1}^{-1} \begin{pmatrix}1 & 0 \\q & 1 \end{pmatrix}^n \gamma_{j_2},\]
for some $j_1,j_2 \in \{1,\ldots,4g+e_2+e_3+1\}$ (with $\gamma_{j_i}$ defined as in \eqref{eqn:gammaj2}) and $n\in \Z $, in which case the Frobenius norm is bounded by $O(q^{\frac{3}{2}}\cdot q|n| \cdot q^{\frac{3}{2}})$ via \hyperref[lem:boundfrobnorm]{Lemma \ref*{lem:boundfrobnorm}}.
\end{enumerate} 

Now clearly the power $n$ of the parabolic elements appearing in $\beta_k$ satisfies
\[|n|\leq \wl (\beta_k)\leq \wl(\gamma_Q) \ll D^{3/2},\]
using the bound \eqref{eq:wlbound}. Thus the entries of $\gamma_Q$ are bounded by
\[q^{O(\ell)} D^{O(\ell)}.\]
By computing the trace, we see that
\[\ell\gg \frac{\log \epsilon_D}{\log qD}.\]
Since trivially $|\Omega_Q(q)|\geq \ell$ and $e_{3,\gamma_Q} \leq \frac{|\Omega_Q(q)|}{2}$, \hyperref[prop:lowerboundvol]{Proposition \ref*{prop:lowerboundvol}} gives the desired result.
\end{proof}

\begin{remark}
If we consider an \emph{arbitrary} special polygon $\PP(q)$ instead of the one constructed based on \hyperref[thm:boundentries]{Theorem \ref*{thm:boundentries}}, then in place of the bound $b_i\ll \sqrt{q}$ for the denominators in the Farey sequence of level $q$, we would only have that $b_i\ll e^{O(q)}$. The above argument yields in this case the weaker lower bound
\[\vol(\Gamma_Q(q)\backslash \NN_Q(q)) \gg \frac{\log \epsilon_D}{q + \log D}.\]
\end{remark}

\begin{remark}
To prove a polynomial bound for $\wl(\gamma_Q)$, one might instead use the bound $\wl(\gamma_Q) \leq \mathfrak{m}_A + \ell_A$, where $\mathfrak{m}_A$ is as in \cite[(2.1)]{DIT16}. Via the identity $\mathfrak{m}_A = 2\ell_A + \ell_{AJ}$ \cite[(46) and (49)]{DIT18}, this reduces the problem to bounding $\ell_A $ in terms of $D$. It is claimed in \cite[p.~968]{DIT16} that one can apply a general argument of Eichler \cite{Eic65} to show that $\ell_A \ll \log \epsilon_D$ uniformly for $A \in \Cl_D^+$, which would yield the desired result. Unfortunately, however, Eichler's argument does not apply directly: Eichler's result gives a method for determining upper bounds for the word length of the \emph{geometric code} of a closed geodesic (in the sense of \cite{Kat96}), whereas $\ell_A$ is the word length of the \emph{arithmetic code} of a closed geodesic (in the sense of \cite{Kat96}), and in general these need not coincide \cite[Theorem 1]{Kat96}.
\end{remark}

\section{Ad\`{e}lisation of Maa\ss{} Cusp Forms}
\label{sect:adelisation}

We review some standard notions about Maa\ss{} cusp forms of weight $k$, level $q$, and principal nebentypus, with an emphasis on forms of weight $0$ and the action of raising and lowering operators on such forms. We then describe the relation between such classical automorphic forms and ad\`{e}lic automorphic forms, highlighting the correspondence between Whittaker functions of representations of $\GL_2(\R)$ and $\GL_2(\Q_p)$, the Whittaker expansion of an ad\`{e}lic automorphic form, and the Fourier expansion of a classical automorphic form. This explicit correspondence is invaluable in \hyperref[sect:Weylsums]{Section \ref*{sect:Weylsums}}, where we prove an identity between integrals of Maa\ss{} forms over hyperbolic orbifolds and period integrals of ad\`{e}lic automorphic forms. Useful references for this material include \cite[Section 4]{DFI02}, \cite[Chapters 3 and 4]{GH11}, \cite{Sch02}, and \cite{Pop08}.

\subsection{Maa\ss{} Cusp Forms}

Let $k$ be an integer, $q$ be a positive integer, and denote by $\Cscr_k(\Gamma_0(q))$ the vector subspace of $L^2(\Gamma_0(q) \backslash \Hb)$ spanned by Maa\ss{} cusp forms of weight $k$, level $q$, and principal nebentypus, in the sense of \cite[Section 4]{DFI02}. Such a Maa\ss{} cusp form is a real-analytic function $f : \Hb \to \C$ for which
\begin{itemize}
\item $f$ is an eigenfunction of the weight $k$ Laplacian
\[\Delta_k = -y^2 \left(\frac{\dee^2}{\dee x^2} + \frac{\dee^2}{\dee y^2}\right) + iky \frac{\dee}{\dee x},\]
so that $\Delta_k f(z) = \lambda_f f(z)$ for some $\lambda_f \in \C$ (and necessarily $\lambda_f \in [\frac{1}{4} - \left(\frac{7}{64}\right)^2,\infty)$),
\item $f$ is automorphic, so that $j_{\gamma}(z)^{-k} f(\gamma z) = f(z)$ for all $z \in \Hb$ and $\gamma \in \Gamma_0(q)$, where for $g = \begin{psmallmatrix} a & b \\ c & d \end{psmallmatrix} \in \GL_2^+(\R)$, the space of $2 \times 2$ matrices with real entries and positive determinant,
\begin{equation}
\label{eqn:gzjgz}
gz \coloneqq \frac{az + b}{cz + d}, \qquad j_g(z) \coloneqq \frac{cz + d}{|cz + d|},
\end{equation}
\item $f$ is of moderate growth, and
\item $f$ is cuspidal, so that for each cusp $\bb$ of $\Gamma_0(q) \backslash \Hb$,
\[\int_{0}^{1} j_{\sigma_{\bb}}(z)^{-k} f(\sigma_{\bb} z) \, dx = 0\]
for all $y > 0$, where $\sigma_{\bb} \in \SL_2(\R)$ is a scaling matrix for $\bb$.
\end{itemize}

\subsubsection{The Fourier Expansion and Hecke Eigenvalues of a Maa\ss{} Cusp Form}

The Fourier expansion at the cusp at infinity of a weight $0$ Maa\ss{} cusp form $f \in \Cscr_0(\Gamma_0(q))$ is
\begin{equation}
\label{eqn:Fourier}
f(z) = \sum_{\substack{n = -\infty \\ n \neq 0}}^{\infty} \rho_f(n) W_{0,it_f}(4\pi|n|y) e(nx),
\end{equation}
where $W_{\alpha,\beta}$ denotes the classical Whittaker function and $t_f \in \R \cup i[-\frac{7}{64},\frac{7}{64}]$ is the spectral parameter of $f$, so that $\lambda_f = \frac{1}{4} + t_f^2$. If $f$ is additionally a Hecke--Maa\ss{} \emph{newform}, namely an eigenfunction of the $n$-th Hecke operator $T_n$ for all $n \in \N$ as well as the reflection operator $X : \Cscr_0(\Gamma_0(q)) \to \Cscr_0(\Gamma_0(q))$ given by $(X f)(z) \coloneqq f(-\overline{z})$, the Fourier coefficients $\rho_f(n)$ and Hecke eigenvalues $\lambda_f(n)$ of $f$ satisfy 
\begin{itemize}
\item $\rho_f(1) \lambda_f(n) = \sqrt{n} \rho_f(n)$ for $n \in \N$,
\item $\rho_f(n) = \epsilon_f \rho_f(-n)$ for $n \in \Z$, where $\epsilon_f \in \{1,-1\}$ is the parity of $f$, so that $X f = \epsilon_f f$,
\item for all $m,n \in \N$, the Hecke eigenvalues satisfy the multiplicativity relations
\[\lambda_f(m) \lambda_f(n) = \sum_{\substack{d \mid (m,n) \\ (d,q) = 1}} \lambda_f\left(\frac{mn}{d^2}\right), \qquad \lambda_f(mn) = \sum_{\substack{d \mid (m,n) \\ (d,q) = 1}} \mu(d) \lambda_f\left(\frac{m}{d}\right) \lambda_f\left(\frac{n}{d}\right),\]
\item for each $p \nmid q$, there exists $\alpha_f(q) \in \C$ satisfying $p^{-\frac{7}{64}} \leq |\alpha_f(q)| \leq p^{\frac{7}{64}}$ such that for all $r \geq 1$,
\begin{equation}
\label{eqn:HeckeSatakeunram}
\lambda_f(p^r) = \sum_{m = 0}^{r} \alpha_f(q)^{m} \alpha_f^{-1}(q)^{r - m}.
\end{equation}
\item for each $p \parallel q$, there exists $\alpha_f(q) \in \{1,-1\}$ such that for all $r \geq 1$,
\begin{equation}
\label{eqn:HeckeSatakeram}
\lambda_f(p^r) = \frac{\alpha_f(q)^r}{p^{r/2}},
\end{equation}
\item for each prime $p$ for which $p^2 \mid q$, we have that $\lambda_f(p^r) = 0$ for all $r \geq 1$.
\end{itemize}
Furthermore, if $f \in \Cscr_0(\Gamma_0(q))$ is a Hecke--Maa\ss{} newform and $q$ is squarefree, then the $L^2$-norm of $f$ and the first Fourier coefficient $\rho_f(1)$ of $f$ satisfy the relation \cite[Lemma 4.6]{HK20}
\begin{equation}
\label{eqn:L2rho1}
\int_{\Gamma_0(q) \backslash \Hb} |f(z)|^2 \, d\mu(z) = \frac{2q |\rho_f(1)|^2 L(1,\ad f)}{\cosh \pi t_f}.
\end{equation}
Here we note that the measure $d\mu(z) = y^{-2} \, dx \, dy$ is such that
\begin{equation}
\label{eqn:Gamma0qvol}
\vol(\Gamma_0(q) \backslash \Hb) = \frac{\pi}{3} \nu(q), \qquad \nu(q) \coloneqq [\Gamma : \Gamma_0(q)] = q \prod_{p \mid q} \left(1 + \frac{1}{p}\right).
\end{equation}

\subsubsection{Raising and Lowering Operators}

The weight $k$ raising operator
\[R_k \coloneqq \frac{k}{2} + \left(z - \overline{z}\right) \frac{\dee}{\dee z} = \frac{k}{2} + iy \left(\frac{\dee}{\dee x} - i \frac{\dee}{\dee y}\right)\]
acts on $\Cscr_k(\Gamma_0(q))$ and raises the weight by $2$; that is, its image lies in $\Cscr_{k + 2}(\Gamma_0(q))$. Similarly, the weight $k$ lowering operator
\[L_k \coloneqq -\frac{k}{2} - \left(z - \overline{z}\right) \frac{\dee}{\dee \overline{z}} = -\frac{k}{2} - iy \left(\frac{\dee}{\dee x} + i \frac{\dee}{\dee y}\right)\]
maps $\Cscr_k(\Gamma_0(q))$ to $\Cscr_{k - 2}(\Gamma_0(q))$. From \cite[Proposition 3.9.13]{GH11}, if $f \in \Cscr_0(\Gamma_0(q))$ is a Hecke--Maa\ss{} newform of weight $0$ and level $q$ with Fourier expansion \eqref{eqn:Fourier}, then the Fourier expansion of the weight $k$ Maa\ss{} cusp form $F_k \in \Cscr_k(\Gamma_0(q))$ defined by
\begin{equation}
\label{eqn:Fk}
F_k \coloneqq \begin{dcases*}
R_{k - 2} \cdots R_0 f & if $k \in 2\N$,	\\
f & if $k = 0$,	\\
L_{k + 2} \cdots\LL_0 f& if $k \in -2\N$,
\end{dcases*}
\end{equation}
is given by
\begin{multline}
\label{eqn:Fkexppos}
F_k(z) = \frac{\Gamma\left(\frac{k + 1}{2} + it_f\right) \Gamma\left(\frac{k + 1}{2} - it_f\right)}{\Gamma\left(\frac{1}{2} + it_f\right) \Gamma\left(\frac{1}{2} - it_f\right)} \epsilon_f \rho_f(1) \sum_{n = -\infty}^{-1} \frac{\lambda_f(|n|)}{\sqrt{|n|}} W_{-\frac{k}{2},it_f}(4\pi|n|y) e(nx)	\\
+ (-1)^{\frac{k}{2}} \rho_f(1) \sum_{n = 1}^{\infty} \frac{\lambda_f(n)}{\sqrt{n}} W_{\frac{k}{2},it_f}(4\pi ny) e(nx)
\end{multline}
if $k \in 2\N \cup \{0\}$, while if $k \in -2\N$, the Fourier expansion is
\begin{multline}
\label{eqn:Fkexpneg}
F_k(z) = (-1)^{\frac{k}{2}} \epsilon_f \rho_f(1) \sum_{n = -\infty}^{-1} \frac{\lambda_f(|n|)}{\sqrt{|n|}} W_{-\frac{k}{2},it_f}(4\pi|n|y) e(nx)	\\
+ \frac{\Gamma\left(\frac{1 - k}{2} + it_f\right) \Gamma\left(\frac{1 - k}{2} - it_f\right)}{\Gamma\left(\frac{1}{2} + it_f\right) \Gamma\left(\frac{1}{2} - it_f\right)} \rho_f(1) \sum_{n = 1}^{\infty} \frac{\lambda_f(n)}{\sqrt{n}} W_{\frac{k}{2},it_f}(4\pi ny) e(nx).
\end{multline}

\subsubsection{Atkin--Lehner Operators}

Let $q$ be squarefree. For each divisor $q_1$ of $q$, so that $q = q_1 q_2$, we define an \emph{Atkin--Lehner operator} $W_{q_1} \in \SL_2(\R)$ as in \eqref{eqn:ALop}, namely
\[W_{q_1} \coloneqq \begin{pmatrix} a \sqrt{q_1} & \frac{b}{\sqrt{q_1}} \\ c q_2 \sqrt{q_1} & d\sqrt{q_1} \end{pmatrix},\]
where $a,b,c,d \in \Z$ are such that $adq_1 - bcq_2 = 1$. If $f \in \Cscr_k(\Gamma_0(q))$ is a Hecke--Maa\ss{} \emph{newform}, then it is an \emph{eigenfunction} of each Atkin--Lehner operator, in the sense that there exists some constant $\eta_f(q_1) \in \{1,-1\}$, dependent on $f$ and $q_1$ but not on $a,b,c,d$, such that
\[j_{W_{q_1}}(z)^{-k} f(W_{q_1} z) = \eta_f(q_1) f(z)\]
for all $z \in \Hb$, where $j_g(z)$ is as in \eqref{eqn:gzjgz}.

\subsection{Eisenstein Series}

We recall that the Eisenstein series $E(z,s)$ for $\Gamma \backslash \Hb$ is given for $\Re(s) > 1$ by the absolutely convergent series
\[E(z,s) \coloneqq \sum_{\gamma \in \Gamma_{\infty} \backslash \Gamma} \Im(\gamma z)^s,\]
where $\Gamma_{\infty} \coloneqq \{\pm \begin{psmallmatrix} 1 & n \\ 0 & 1 \end{psmallmatrix} : n \in \Z\}$ is the stabiliser of the cusp at infinity. The Eisenstein series extends meromorphically to $\C$ with a simple pole at $s = 1$ with residue $\frac{1}{\vol(\Gamma \backslash \Hb)}$, independently of $z$. It is an eigenfunction of the weight $0$ Laplacian with eigenvalue $s(1 - s)$ and is automorphic and of moderate growth, but it is \emph{not} cuspidal.

For $s = \frac{1}{2} + it$, the Laplacian eigenvalue of $E(z,\frac{1}{2} + it)$ is $\frac{1}{4} + t^2$, the parity is $\epsilon = 1$, while the Fourier coefficients $\rho(n,t)$ and Hecke eigenvalues $\lambda(n,t)$ satisfy
\begin{itemize}
\item $\rho(1,t) \lambda(n,t) = \sqrt{n} \rho(n,t)$ for $n \in \N$,
\item $\rho(n,t) = \rho(-n,t)$ for $n \in \Z$,
\item for all $m,n \in \N$, the Hecke eigenvalues satisfy the multiplicativity relations
\[\lambda(m,t) \lambda(n,t) = \sum_{d \mid (m,n)} \lambda\left(\frac{mn}{d^2},t\right), \qquad \lambda(mn,t) = \sum_{d \mid (m,n)} \mu(d) \lambda\left(\frac{m}{d},t\right) \lambda\left(\frac{n}{d},t\right),\]
\item for all $n \in \N$, the Hecke eigenvalues are given explicitly by
\[\lambda(n,t) = \sum_{ab = n} a^{it} b^{-it},\]
\item the first Fourier coefficient is given explicitly by $\rho(1,t) = 1/\xi(1 + 2it)$, where $\xi(s) \coloneqq \pi^{-\frac{s}{2}} \Gamma(\frac{s}{2}) \zeta(s)$.
\end{itemize}

The weight $0$ raising and lowering operators act on $E(\cdot,\frac{1}{2} + it)$ and raise and lower the weight by $2$. The Fourier expansion of $E(z,\frac{1}{2} + it)$ reads
\begin{equation}
\label{eqn:FourierEis}
\begin{split}
E\left(z,\frac{1}{2} + it\right) & = y^{\frac{1}{2} + it} + \frac{\xi(1 - 2it)}{\xi(1 + 2it)} y^{\frac{1}{2} - it} + \sum_{\substack{n = -\infty \\ n \neq 0}}^{\infty} \rho(n,t) W_{0,it_f}(4\pi|n|y) e(nx)	\\
& = y^{\frac{1}{2} + it} + \frac{\xi(1 - 2it)}{\xi(1 + 2it)} y^{\frac{1}{2} - it} + \frac{1}{\xi(1 + 2it)} \sum_{\substack{n = -\infty \\ n \neq 0}}^{\infty} \frac{\lambda(|n|,t)}{\sqrt{|n|}} W_{0,it_f}(4\pi|n|y) e(nx),
\end{split}
\end{equation}
while the Fourier expansion of the weight $2$ Eisenstein series $(R_0 E)(z,\frac{1}{2} + it)$ is given by
\begin{multline}
\label{eqn:FkexEis}
(R_0 E)\left(z,\frac{1}{2} + it\right) = \left(\frac{1}{2} + it\right) y^{\frac{1}{2} + it} + \left(\frac{1}{2} - it\right) \frac{\xi(1 - 2it)}{\xi(1 + 2it)} y^{\frac{1}{2} - it}	\\
+ \frac{\left(\frac{1}{4} + t^2\right)}{\xi(1 + 2it)} \sum_{n = -\infty}^{-1} \frac{\lambda(|n|,t)}{\sqrt{|n|}} W_{-1,it}(4\pi|n|y) e(nx)	\\
- \frac{1}{\xi(1 + 2it)} \sum_{n = 1}^{\infty} \frac{\lambda(n,t)}{\sqrt{n}} W_{1,it_f}(4\pi ny) e(nx).
\end{multline}
We additionally define
\begin{equation}
\label{eqn:tildeR0E}
(\widetilde{R_0 E}) \left(z,\frac{1}{2} + it\right) \coloneqq (R_0 E)\left(z,\frac{1}{2} + it\right) - \left(\frac{1}{2} + it\right) \Im(z)^{\frac{1}{2} + it} - \left(\frac{1}{2} - it\right) \frac{\xi(1 - 2it)}{\xi(1 + 2it)} \Im(z)^{\frac{1}{2} - it}.
\end{equation}
This is square-integrable but no longer automorphic.

Finally, we record here the following useful result concerning the behaviour of an Eisenstein series at a cusp.

\begin{lemma}
\label{lem:Eisderivdecay}
Let $q$ be squarefree. For a cusp $\bb$ of $\Gamma_0(q) \backslash \Hb$ with scaling matrix $\sigma_{\bb} \in \Gamma_0(q)$ and for $\ell \mid q$, we have that
\[\left. \frac{\dee}{\dee z}\right|_{z = x + iY} E\left(\sigma_{\bb} (\ell z),\frac{1}{2} + it\right) \ll_{q,t} \frac{1}{\sqrt{Y}}.\]
\end{lemma}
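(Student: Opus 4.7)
The plan is to reduce the derivative bound to an estimate on the weight-$2$ raised Eisenstein series $R_0 E$ and then exploit $\Gamma$-automorphy. Since $\sigma_\bb \in \Gamma_0(q) \subset \Gamma = \SL_2(\Z)$ and the level-one Eisenstein series $E(\cdot, 1/2 + it)$ is $\Gamma$-automorphic, we have $E(\sigma_\bb(\ell z), 1/2 + it) = E(\ell z, 1/2 + it)$. Applying the chain rule (using that $\ell z$ is holomorphic in $z$) together with the identity $R_0 = (z - \bar z)\tfrac{\dee}{\dee z} = 2iy\,\tfrac{\dee}{\dee z}$ on weight-$0$ automorphic forms yields
\[
\left.\frac{\dee}{\dee z}\right|_{z = x + iY} E(\ell z, 1/2 + it) = \frac{(R_0 E)(\ell z, 1/2 + it)}{2iY}.
\]
It therefore suffices to establish $(R_0 E)(\ell z, 1/2+it) \ll_{q, t} \sqrt{Y}$.

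Since $R_0 E$ is a weight-$2$ automorphic form for $\Gamma$, its modulus $|R_0 E|$ is $\Gamma$-invariant. I would choose $\gamma \in \Gamma$ so that $u'' \coloneqq \gamma(\ell z)$ lies in the standard fundamental domain, so that $\Im(u'') \geq \sqrt{3}/2$. The Fourier expansion \eqref{eqn:FkexEis} of $R_0 E$ at the cusp at infinity then yields $|(R_0 E)(u'', 1/2+it)| \ll_t 1 + \Im(u'')^{1/2}$: the Whittaker series contributes $O_t(1)$ thanks to the rapid decay $W_{\pm 1, it}(x) \ll_t x^{\pm 1} e^{-x/2}$ for large argument combined with the divisor bound $|\lambda(n, t)| \leq d(n)$, while the two constant terms contribute $O_t(\Im(u'')^{1/2})$. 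An elementary height analysis gives $\Im(\gamma \ell z) \leq \max(\ell Y, 1/(\ell Y)) \leq q \max(Y, 1/Y)$; in the regime $\ell Y \geq 1$ this yields the claim directly, producing $|(R_0 E)(\ell z, 1/2+it)| \ll_t \sqrt{q Y}$ and hence the desired bound $\ll_{q,t} 1/\sqrt{Y}$ upon dividing by $Y$.

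The main obstacle will be handling the remaining regime $\ell Y < 1$, where the naive height bound $\Im(\gamma \ell z) \leq 1/(\ell Y)$ is insufficient on its own. Here I would argue instead by reverting to the Fourier expansion \eqref{eqn:FourierEis} of $E(\ell z, 1/2+it)$ itself at infinity and analysing the two pieces separately: the two constant terms $(\ell Y)^{1/2 \pm it}$ contribute $\tfrac{\dee}{\dee z}$-derivatives of magnitude $\ll_t \sqrt{\ell/Y} \leq \sqrt{q/Y}$ uniformly in $Y$, while the remaining Whittaker part of the expansion is controlled by a second appeal to $\Gamma$-automorphy of $R_0 E$ applied to a well-chosen cusp representative that maps the small-$Y$ region to a region where the corresponding Whittaker sum is exponentially damped. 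Assembling these two contributions gives the desired uniform bound.
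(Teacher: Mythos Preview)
Your reduction is valid for the statement as literally written: since $\sigma_\bb \in \Gamma_0(q) \subset \SL_2(\Z)$ and the level-one Eisenstein series is $\SL_2(\Z)$-automorphic, one has $E(\sigma_\bb(\ell z),\tfrac12+it) = E(\ell z,\tfrac12+it)$, and then the bound $(R_0 E)(\ell z,\tfrac12+it) \ll_t (\ell Y)^{1/2}$ for $\ell Y \geq 1$ follows directly from the Fourier expansion~\eqref{eqn:FkexEis} (the detour through the fundamental domain is unnecessary). Since $\ell \geq 1$, the regime $\ell Y < 1$ only arises for $Y < 1$, and the lemma is invoked solely as $Y \to \infty$ in the proof of Lemma~\ref{lem:cycletoWeyl}; your sketch for small $\ell Y$ is both unneeded and flawed (the Whittaker tail is not itself $\Gamma$-automorphic, so it cannot be moved independently of the constant term, and in fact the bound $\ll Y^{-1/2}$ fails as $Y \to 0$).

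The paper proceeds differently: it writes $E(\ell z,\tfrac12+it)$ as a finite linear combination of the level-$q$ Eisenstein series $E_\aa(z,\tfrac12+it)$ via \cite[Theorem~7.1]{You19}, and then differentiates the Fourier expansion of each $E_\aa$ at the cusp $\bb$ term by term. This is heavier but more robust: it does not use the hypothesis $\sigma_\bb \in \Gamma_0(q)$ and applies equally to $E(\ell\,\sigma_\bb z,\tfrac12+it)$ for an arbitrary scaling matrix $\sigma_\bb \in \SL_2(\R)$. This matters because in the application the relevant matrix $\begin{psmallmatrix} a_i & a_{i-1} \\ b_i & b_{i-1} \end{psmallmatrix}$ lies in $\SL_2(\Z)$ but not in $\Gamma_0(q)$, and is applied \emph{before} rather than after multiplication by $\ell$; your trivialisation step depends on both the hypothesis $\sigma_\bb \in \Gamma_0(q)$ and the specific ordering $\sigma_\bb(\ell z)$, which appear to be misprints in the stated lemma rather than the intended content.
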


\begin{proof}
We begin by noting that $E(\ell z,\frac{1}{2} + it)$ may be written as a finite linear combination of Eisenstein series $E_{\aa}(z,\frac{1}{2} + it)$ associated to cusps $\aa$ of $\Gamma_0(q) \backslash \Hb$ \cite[Theorem 7.1]{You19}. From \cite[(6.18)]{Iwa02}, the Fourier expansion of $E_{\aa}(\sigma_{\bb} z, \frac{1}{2} + it)$ is of the form
\[E_{\aa}\left(\sigma_{\bb} z,\frac{1}{2} + it\right) = \delta_{\aa,\bb} y^{\frac{1}{2} + it} + \varphi_{\aa,\bb}\left(\frac{1}{2} + it\right) y^{\frac{1}{2} - it} + \sum_{\substack{n = -\infty \\ n \neq 0}} \rho_{\aa,\bb}(n,t) W_{0,it}(4\pi|n|y) e(nx),\]
where $\varphi_{\aa,\bb}(s)$ is an entry of the scattering matrix, while the Fourier coefficients $\rho_{\aa,\bb}(n,t)$ grow at most polynomially in $|n|$. The desired result now follows by differentiating term by term.
\end{proof}

\subsection{Ad\`{e}lic Automorphic Forms}

\subsubsection{The Ad\`{e}lic Lift of a Maa\ss{} Cusp Form}
\label{sect:adeliclift}

Following \cite[Sections 4.11 and 4.12]{GH11}, we describe the ad\`{e}lic lift of a Maa\ss{} cusp form $f \in \Cscr_k(\Gamma_0(q))$. We first lift $f \in \Cscr_k(\Gamma_0(q))$ to a function $\widetilde{f} : \GL_2^+(\R) \to \C$ defined via
\begin{equation}
\label{eqn:tildeftof}
\widetilde{f}(g) \coloneqq j_g(i)^{-k} f(gi).
\end{equation}
For all $g \in \GL_2^+(\R)$ and $\theta \in [0,2\pi)$, this satisfies
\[\widetilde{f}\left(g \begin{pmatrix} \cos \theta & \sin \theta \\ - \sin \theta & \cos \theta \end{pmatrix}\right) = e^{ik\theta} \widetilde{f}(g).\]

Next, we lift $\widetilde{f}$ to an ad\`{e}lic automorphic form $\phi = \phi_f$ on $\GL_2(\A_{\Q})$, where $\A_{\Q}$ denotes the ring of ad\`{e}les of $\Q$. To describe this lift, we first let $K_0(q) \ni k = (k_{\infty},k_2,k_3,k_5,\ldots)$ denote the congruence subgroup of $\GL_2(\A_{\Q})$ of the form
\begin{equation}
\label{eqn:K0(q)}
K_0(q) \coloneqq \left\{k \in \GL_2(\A_{\Q}) : k_{\infty} = 1_2, \ k_p = \begin{pmatrix} a_p & b_p \\ c_p & d_p \end{pmatrix} \in \GL_2(\Z_p) \text{ with } c_p \in p^r \Z_p \text{ if } p^r \parallel q\right\}.
\end{equation}
Here $1_2$ denotes the $2 \times 2$ identity matrix. We view $\GL_2^+(\R) \ni g_{\infty}$ as a subgroup of $\GL_2(\A_{\Q})$ via the embedding $g_{\infty} \mapsto (g_{\infty},1_2,1_2,\ldots)$. Finally, we view $\GL_2(\Q) \ni \gamma$ as a subgroup of $\GL_2(\A_{\Q})$ via the diagonal embedding $\gamma \mapsto (\gamma,\gamma,\gamma,\ldots)$. Then via the strong approximation theorem,
\begin{equation}
\label{eqn:strongapprox}
\GL_2(\A_{\Q}) = \GL_2(\Q) \GL_2^+(\R) K_0(q),
\end{equation}
so that every $g \in \GL_2(\A_{\Q})$ can be written (nonuniquely) as $g = \gamma g_{\infty} k$ for some $\gamma \in \GL_2(\Q)$, $g_{\infty} \in \GL_2^+(\R)$, and $k \in K_0(q)$. The ad\`{e}lic lift $\phi = \phi_f$ of a Maa\ss{} cusp form $f \in \Cscr_k(\Gamma_0(q))$ is then given by
\begin{equation}
\label{eqn:phistrongapprox}
\phi(g) = \phi(\gamma g_{\infty} k) \coloneqq \widetilde{f}(g_{\infty}).
\end{equation}
This is well-defined even though the decomposition $g = \gamma g_{\infty} k$ is not unique. In particular,
\begin{equation}
\label{eqn:phitof}
\phi(g) = f(x + iy)
\end{equation}
for $g = g_{\infty} = \begin{psmallmatrix} y & x \\ 0 & 1 \end{psmallmatrix} \in \GL_2^+(\R) \subset \GL_2(\A_{\Q})$ with $y > 0$ and $x \in \R$.

\subsubsection{The Whittaker Expansion of an Ad\`{e}lic Automorphic Form}

Let $\psi : \Q \backslash \A_{\Q} \to \C$ be the standard ad\`{e}lic additive character defined as in \cite[Definition 1.7.1]{GH11}, so that $\psi(u) = \psi_{\infty}(u_{\infty}) \prod_p \psi_p(u_p)$ for $u = (u_{\infty},u_2,u_3,\ldots) \in \A_{\Q}$ with $\psi_{\infty} : \R \to \C$ the additive character $\psi_{\infty}(u_{\infty}) \coloneqq e(u_{\infty}) \coloneqq e^{2\pi i u_{\infty}}$ and $\psi_p : \Q_p \to \C$ the standard unramified additive character defined in \cite[Definition 1.6.3]{GH11}. The Whittaker function $W_{\phi} : \GL_2(\A_{\Q}) \to \C$ of a cuspidal ad\`{e}lic automorphic form $\phi$ is
\[W_{\phi}(g) \coloneqq \int_{\Q \backslash \A_{\Q}} \phi\left(\begin{pmatrix} 1 & u \\ 0 & 1 \end{pmatrix} g\right) \overline{\psi}(u) \, du,\]
which satisfies
\begin{equation}
\label{eqn:Whittakerinvariance}
W_{\phi}\left(\begin{pmatrix} 1 & u \\ 0 & 1 \end{pmatrix} g\right) = \psi(u) W_{\phi}(g)
\end{equation}
for all $u \in \A_{\Q}$ and $g \in \GL_2(\A_{\Q})$. The automorphic form $\phi$ has the Whittaker expansion
\begin{equation}
\label{eqn:Whittakerexpansion}
\phi(g) = \sum_{\alpha \in \Q^{\times}} W_{\phi}\left(\begin{pmatrix} \alpha & 0 \\ 0 & 1 \end{pmatrix} g\right).
\end{equation}

\subsubsection{The Whittaker Expansion of an Ad\`{e}lic Lift}

Let $\phi = \phi_{F_k}$ be the ad\`{e}lic lift of a Maa\ss{} cusp form $F_k \in \Cscr_k(\Gamma_0(q))$ of weight $k$ associated to a Hecke--Maa\ss{} newform $f \in \Cscr_0(\Gamma_0(q))$ of weight $0$ as in \eqref{eqn:Fk}. Then $\phi$ is a pure tensor lying in the vector space of a cuspidal automorphic representation $\pi = \pi_f = \pi_{\infty} \otimes \bigotimes_p \pi_p$ of $\GL_2(\A_{\Q})$, where each $\pi_p$ is a generic irreducible admissible unitary representation of $\GL_2(\Q_p)$ and $\pi_{\infty}$ is a generic irreducible unitary Casselman--Wallach representation of $\GL_2(\R)$. In particular, for $g = (g_{\infty},g_2,g_3,\ldots) \in \GL_2(\A_{\Q})$, we have the factorisation
\begin{equation}
\label{eqn:Whittakerfact}
W_{\phi}(g) = c_{\phi} W_{\infty}(g_{\infty}) \prod_p W_p(g_p),
\end{equation}
where $c_{\phi}$ is a constant independent of $g$, each $W_p$ is a Whittaker function in the Whittaker model $\WW(\pi_p,\psi_p)$ of $\pi_p$, and similarly $W_{\infty} \in \WW(\pi_{\infty},\psi_{\infty})$.

We show in \hyperref[lem:nonarchimedeanWhittaker]{Lemma \ref*{lem:nonarchimedeanWhittaker}} that the Whittaker functions $W_p$ are such that for $\alpha \in \Q^{\times}$,
\[\prod_p W_p\begin{pmatrix} \alpha & 0 \\ 0 & 1 \end{pmatrix} = \begin{dcases*}
\frac{\lambda_f(|n|)}{\sqrt{|n|}} & if $\alpha = n \in \Z \setminus \{0\}$,	\\
0 & otherwise.
\end{dcases*}\]
In \hyperref[lem:archimedeanWhittaker]{Lemma \ref*{lem:archimedeanWhittaker}}, we show that for $n \in \Z \setminus \{0\}$ and $y > 0$,
\[W_{\infty}\begin{pmatrix} ny & 0 \\ 0 & 1 \end{pmatrix} = \begin{dcases*}
(-1)^{\frac{k}{2}} W_{\frac{k}{2},it}(4\pi ny) & for $n \in \N$ and $k \in 2\N \cup \{0\}$,	\\
\epsilon_f \frac{\Gamma\left(\frac{k + 1}{2} + it\right) \Gamma\left(\frac{k + 1}{2} - it\right)}{\Gamma\left(\frac{1}{2} + it\right) \Gamma\left(\frac{1}{2} - it\right)} W_{-\frac{k}{2},it}(4\pi |n|y) & for $n \in -\N$ and $k \in 2\N \cup \{0\}$,	\\
\frac{\Gamma\left(\frac{1 - k}{2} + it\right) \Gamma\left(\frac{1 - k}{2} - it\right)}{\Gamma\left(\frac{1}{2} + it\right) \Gamma\left(\frac{1}{2} - it\right)} W_{\frac{k}{2},it}(4\pi ny) & for $n \in \N$ and $k \in -2\N$,	\\
\epsilon_f (-1)^{\frac{k}{2}} W_{-\frac{k}{2},it}(4\pi |n|y) & for $n \in -\N$ and $k \in -2\N$.
\end{dcases*}\]
From this, we see that the constant $c_{\phi}$ in \eqref{eqn:Whittakerfact} is equal to $\rho_f(1)$ by taking $g = g_{\infty} = \begin{psmallmatrix} y & x \\ 0 & 1 \end{psmallmatrix} \in \GL_2^+(\R) \subset \GL_2(\A_{\Q})$ with $y > 0$ and $x \in \R$ in \eqref{eqn:Whittakerexpansion}, so that by \eqref{eqn:phitof} and \eqref{eqn:Whittakerinvariance},
\[F_k(x + iy) = \phi(g) = c_{\phi} \sum_{\alpha \in \Q^{\times}} W_{\infty}\begin{pmatrix} \alpha y & 0 \\ 0 & 1 \end{pmatrix} \prod_p W_p\begin{pmatrix} \alpha & 0 \\ 0 & 1 \end{pmatrix} e(\alpha x),\]
and comparing this ad\`{e}lic Whittaker expansion to the classical Fourier expansion at the cusp at infinity \eqref{eqn:Fkexppos} and \eqref{eqn:Fkexpneg}.

\subsubsection{Nonarchimedean Whittaker Functions}

Let $\phi = \phi_{F_k}$ be the ad\`{e}lic lift of a Maa\ss{} cusp form $F_k \in \Cscr_k(\Gamma_0(q))$ associated to a Hecke--Maa\ss{} newform $f \in \Cscr_0(\Gamma_0(q))$ as in \eqref{eqn:Fk} with $q$ squarefree. For each prime $p$, the local Whittaker functions $W_p \in \WW(\pi_p,\psi_p)$ are of a distinguished form. One can explicitly describe the values of the Whittaker function $W_p(g_p)$ for $g_p = \begin{psmallmatrix} a & 0 \\ 0 & 1 \end{psmallmatrix}$ with $a \in \Q_p^{\times}$; see \cite[Section 2.4]{Sch02}.
\begin{lemma}
\label{lem:nonarchimedeanWhittaker}
\hspace{1em}
\begin{enumerate}[leftmargin=*,label=\textup{(\arabic*)}]
\item For $p \nmid q$, the representation $\pi_p$ is a spherical principal series representation $\omega_p \boxplus \omega_p^{-1}$, where $\omega_p$ is an unramified character of $\Q_p^{\times}$ satisfying $p^{-\frac{7}{64}} \leq |\omega_p(q)|_p \leq p^{\frac{7}{64}}$ and $|\cdot|_p$ denotes the $p$-adic absolute value normalised such that $|p|_p = p^{-1}$. This character is such that $\omega_p(q)$ is equal to $\alpha_f(q)$ as in \eqref{eqn:HeckeSatakeunram}. For $a \in \Q_p^{\times}$, let $v(a) \in \Z$ be such that $|a|_p = p^{-v(a)}$. There is a distinguished Whittaker function, the spherical Whittaker function, that is right $\GL_2(\Z_p)$-invariant and satisfies
\[W_p\begin{pmatrix} a & 0 \\ 0 & 1 \end{pmatrix} = \begin{dcases*}
\sum_{m = 0}^{v(a)} \omega_p(q)^{m} \omega_p^{-1}(q)^{v(a) - m} |a|_p^{\frac{1}{2}} & if $0 < |a|_p \leq 1$, so that $v(a) \geq 0$,	\\
0 & if $|a|_p \geq p$, so that $v(a) \leq -1$.
\end{dcases*}\]
\item For $p \mid q$, the representation $\pi_p$ is a special representation $\omega_p \St_p$, where $\omega_p$ is an unramified unitary character of $\Q_p^{\times}$, so that $\omega_p(q) \in \{1,-1\}$. This character is such that $\omega_p(q)$ is equal to $\alpha_f(q)$ as in \eqref{eqn:HeckeSatakeram}. There is a distinguished Whittaker function, the Whittaker newform, that is right-invariant under the congruence subgroup of $\GL_2(\Z_p)$ consisting of elements $\begin{psmallmatrix} a & b \\ c & d \end{psmallmatrix}$ for which $c \in p\Z_p$ and satisfies
\[W_p\begin{pmatrix} a & 0 \\ 0 & 1 \end{pmatrix} = \begin{dcases*}
\omega_p(a) |a|_p & if $0 < |a|_p \leq 1$, so that $v(a) \geq 0$,	\\
0 & if $|a|_p \geq p$, so that $v(a) \leq -1$.
\end{dcases*}\]
\end{enumerate}
\end{lemma}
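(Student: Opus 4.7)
The plan has three steps: identify the local representation $\pi_p$, characterise the distinguished Whittaker function by its invariance, and evaluate it on diagonal matrices.

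\emph{Identification of $\pi_p$.} Since $f$ has trivial nebentypus, the central character of $\pi_f$ is trivial. Since $q$ is squarefree, the conductor of $\pi_p$ equals $p^{v_p(q)}$. For $p \nmid q$, $\pi_p$ is thus an unramified unitary generic representation of $\GL_2(\Q_p)$ with trivial central character, which forces $\pi_p \cong \omega_p \boxplus \omega_p^{-1}$ for an unramified character $\omega_p$ of $\Q_p^{\times}$; the bound $|\omega_p(p)|_p \leq p^{7/64}$ comes from the Kim--Sarnak estimate toward the Ramanujan conjecture. For $p \parallel q$, $\pi_p$ has conductor $p$, and the classification of irreducible unitary generic representations of $\GL_2(\Q_p)$ with this conductor and trivial central character forces $\pi_p \cong \omega_p \St_p$ with $\omega_p$ unramified and $\omega_p^2 = 1$, i.e.\ $\omega_p(p) \in \{\pm 1\}$.

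\emph{The distinguished Whittaker function.} By Casselman's theorem on new vectors in Whittaker models, in the spherical case (resp.\ in the twisted Steinberg case) the space of $\GL_2(\Z_p)$-fixed vectors (resp.\ of fixed vectors under the Iwahori-type subgroup appearing in the lemma) inside $\WW(\pi_p, \psi_p)$ is one-dimensional. Normalising the unique such vector so that $W_p(1_2) = 1$ fixes the distinguished Whittaker function. I would then evaluate it on $\begin{psmallmatrix} a & 0 \\ 0 & 1 \end{psmallmatrix}$: in the unramified case via the Casselman--Shalika formula, or equivalently via a direct Jacquet-integral computation on the induced model, producing the stated geometric progression in $\omega_p(p)$ and $\omega_p^{-1}(p)$ for $v(a) \geq 0$; in the twisted Steinberg case via the explicit newform calculation of \cite[Section 2.4]{Sch02}, producing $\omega_p(a) |a|_p$ for $v(a) \geq 0$. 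The vanishing for $v(a) \leq -1$ in both cases follows by combining \eqref{eqn:Whittakerinvariance} with the Iwahori--Bruhat decomposition.

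\emph{Matching parameters with Hecke eigenvalues.} The identification $\omega_p(p) = \alpha_f(p)$ is then forced by comparison with the classical Fourier expansion. Substituting the factorisation \eqref{eqn:Whittakerfact} into \eqref{eqn:Whittakerexpansion}, evaluating at $g = \begin{psmallmatrix} y & x \\ 0 & 1 \end{psmallmatrix} \in \GL_2^+(\R) \subset \GL_2(\A_{\Q})$, and comparing with the Fourier expansions \eqref{eqn:Fkexppos} and \eqref{eqn:Fkexpneg} (using the companion archimedean calculation of \ref*{lem:archimedeanWhittaker}) will yield $\prod_p W_p\begin{psmallmatrix} n & 0 \\ 0 & 1 \end{psmallmatrix} = \lambda_f(|n|)/\sqrt{|n|}$ for $n \in \Z \setminus \{0\}$ and $0$ for non-integral $\alpha \in \Q^{\times}$. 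The Hecke multiplicativity relations \eqref{eqn:HeckeSatakeunram} and \eqref{eqn:HeckeSatakeram} then pin down $\omega_p(p) = \alpha_f(p)$. The main bookkeeping subtlety, and the most likely source of sign or normalisation errors, will be to verify that the archimedean and nonarchimedean normalisations conspire to produce the global prefactor $c_{\phi} = \rho_f(1)$ asserted after the lemma.
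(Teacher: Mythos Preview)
Your proposal is correct and in fact gives considerably more detail than the paper does: the paper provides no proof of this lemma at all, simply pointing the reader to \cite[Section 2.4]{Sch02} in the sentence preceding the statement. Your sketch---identifying $\pi_p$ from the conductor and central character, invoking Casselman's newvector theorem and the Casselman--Shalika formula in the unramified case, and citing Schmidt for the special representation case---is exactly the standard route and is consistent with what that reference contains.
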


\subsubsection{Archimedean Whittaker Functions}

Let $\phi = \phi_{F_k}$ be the ad\`{e}lic lift of a Maa\ss{} cusp form $F_k \in \Cscr_k(\Gamma_0(q))$ associated to a Hecke--Maa\ss{} newform $f \in \Cscr_0(\Gamma_0(q))$ as in \eqref{eqn:Fk}. The local Whittaker function $W_{\infty} = W_{\infty}^{k} \in \WW(\pi_{\infty},\psi_{\infty})$ is again of a distinguished form. Since $f$ has weight $0$, the representation $\pi_{\infty}$ is a principal series representation of the form $\sgn^{\kappa} |\cdot|_{\infty}^{it} \boxplus \sgn^{\kappa} |\cdot|_{\infty}^{-it}$ with $\kappa = \kappa_f \in \{0,1\}$ and $t = t_f \in \R \cup i[-\frac{7}{64},\frac{7}{64}]$ such that $(-1)^{\kappa_f} = \epsilon_f$ is the parity of $f$ and $t_f$ is the spectral parameter of $f$, so that $\frac{1}{4} + t_f^2 = \lambda_f$ is the Laplacian eigenvalue of $f$. Here $|\cdot|_{\infty} = |\cdot|$ is the usual archimedean absolute value on $\R$.

The following claims are essentially implicit (albeit with some typographical errors) in the seminal work of Jacquet and Langlands \cite[Section 2.5]{JL70}, as further detailed by Godement \cite[Sections 2.3--2.6]{God18}; see also \cite{Pop08}. For the sake of completeness, we give explicit proofs.

\begin{lemma}
\label{lem:archimedeanWhittaker}
\hspace{1em}
\begin{enumerate}[leftmargin=*,label=\textup{(\arabic*)}]
\item For each $k \in 2\Z$, there exists a distinguished Whittaker function $W_{\infty}^{k} \in \WW(\pi_{\infty},\psi_{\infty})$, where $\pi_{\infty} = \sgn^{\kappa} |\cdot|_{\infty}^{it} \boxplus \sgn^{\kappa} |\cdot|_{\infty}^{-it}$, that is of weight $k$, so that for all $g_{\infty} \in \GL_2(\R)$ and $\theta \in [0,2\pi)$, it satisfies
\[W_{\infty}^{k}\left(g_{\infty} \begin{pmatrix} \cos \theta & \sin \theta \\ -\sin \theta & \cos \theta\end{pmatrix} \right) = e^{ik\theta} W_{\infty}^{k}(g_{\infty}).\]
\item For all $a \in \R^{\times}$, this distinguished Whittaker function satisfies
\begin{equation}
\label{eqn:Whittakeridentity}
W_{\infty}^{k}\begin{pmatrix} a & 0 \\ 0 & 1 \end{pmatrix} = \begin{dcases*}
(-1)^{\frac{k}{2}} W_{\frac{k}{2},it}(4\pi a) & for $a > 0$ and $k \in 2\N \cup \{0\}$,	\\
(-1)^{\kappa} \frac{\Gamma\left(\frac{k + 1}{2} + it\right) \Gamma\left(\frac{k + 1}{2} - it\right)}{\Gamma\left(\frac{1}{2} + it\right) \Gamma\left(\frac{1}{2} - it\right)} W_{-\frac{k}{2},it}(4\pi |a|) & for $a < 0$ and $k \in 2\N \cup \{0\}$,	\\
\frac{\Gamma\left(\frac{1 - k}{2} + it\right) \Gamma\left(\frac{1 - k}{2} - it\right)}{\Gamma\left(\frac{1}{2} + it\right) \Gamma\left(\frac{1}{2} - it\right)} W_{\frac{k}{2},it}(4\pi a) & for $a > 0$ and $k \in -2\N$,	\\
(-1)^{\kappa + \frac{k}{2}} W_{-\frac{k}{2},it}(4\pi |a|) & for $a < 0$ and $k \in -2\N$.
\end{dcases*}
\end{equation}
\item For $\kappa' \in \{0,1\}$ and $\Re(s) \geq \frac{1}{2}$, we have that
\begin{multline}
\label{eqn:W2int}
\int_{\R^{\times}} W_{\infty}^{2} \begin{pmatrix} a & 0 \\ 0 & 1 \end{pmatrix} \sgn^{\kappa'}(a) |a|^{s - \frac{1}{2}} \, d^{\times}a	\\
= \begin{dcases*}
- 2 \pi^{-s} \Gamma\left(\frac{s + 1 + it}{2}\right) \Gamma\left(\frac{s + 1 - it}{2}\right) & if $\kappa \equiv \kappa' + 1 \pmod{2}$,	\\
\left(\frac{1}{2} - s\right) \pi^{-s} \Gamma\left(\frac{s + it}{2}\right) \Gamma\left(\frac{s - it}{2}\right) & if $\kappa \equiv \kappa' \pmod{2}$,
\end{dcases*}
\end{multline}
where $d^{\times}a \coloneqq |a|^{-1} \, da$ denotes the multiplicative Haar measure on $\R^{\times}$.
\item We have that
\begin{equation}
\label{eqn:W2norm}
\int_{\R^{\times}} \left|W_{\infty}^{2} \begin{pmatrix} a & 0 \\ 0 & 1 \end{pmatrix}\right|^2 \, d^{\times}a = \left(\frac{1}{4} + t^2\right) \Gamma\left(\frac{1}{2} + it\right) \Gamma\left(\frac{1}{2} - it\right).
\end{equation}
\end{enumerate}
\end{lemma}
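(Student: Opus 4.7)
The plan splits the four parts into two groups. Parts (1) and (2) follow from standard representation theory combined with a direct comparison of classical and ad\`elic expansions, while Parts (3) and (4) reduce to classical Bessel/Whittaker integrals via the representation $W_{0, it}(y) = \sqrt{y/\pi} K_{it}(y/2)$ and the Whittaker recurrences.

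For Part (1), the principal series $\pi_{\infty}$ restricts to $K = \SO(2)$ with each even weight $k \in 2\Z$ occurring with multiplicity one; combined with the uniqueness of the Whittaker model this yields the distinguished $W_{\infty}^{k} \in \WW(\pi_{\infty}, \psi_{\infty})$ up to a scalar. For Part (2), fix the normalisation by requiring $c_{\phi_{F_k}} = \rho_f(1)$ in the factorisation \eqref{eqn:Whittakerfact}; the formula \eqref{eqn:Whittakeridentity} for $W_{\infty}^{k}\begin{psmallmatrix} a & 0 \\ 0 & 1 \end{psmallmatrix}$ then emerges by equating the ad\`elic Whittaker expansion of $\phi_{F_k}$ at $g_{\infty} = \begin{psmallmatrix} y & x \\ 0 & 1 \end{psmallmatrix}$ --- with the nonarchimedean factors supplied by \hyperref[lem:nonarchimedeanWhittaker]{Lemma \ref*{lem:nonarchimedeanWhittaker}} --- with the classical Fourier expansions \eqref{eqn:Fkexppos} and \eqref{eqn:Fkexpneg} coefficient by coefficient, tracking carefully the parity sign $\epsilon_f = (-1)^{\kappa}$ and the Gamma factors arising through the raising/lowering chain in \eqref{eqn:Fk}.

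For Part (3), the key input is the differential identity
\[W_{\infty}^{2} \begin{pmatrix} a & 0 \\ 0 & 1 \end{pmatrix} = a \frac{\dee}{\dee a} W_{\infty}^{0} \begin{pmatrix} a & 0 \\ 0 & 1 \end{pmatrix} - 2\pi a \, W_{\infty}^{0} \begin{pmatrix} a & 0 \\ 0 & 1 \end{pmatrix},\]
which follows from Part (2) together with the classical Whittaker recurrences
\[\left(z \frac{d}{dz} - \frac{z}{2}\right) W_{0, it}(z) = -W_{1, it}(z), \qquad \left(z \frac{d}{dz} + \frac{z}{2}\right) W_{0, it}(z) = \left(\tfrac{1}{4} + t^{2}\right) W_{-1, it}(z).\]
Substituting this into the integrand of \eqref{eqn:W2int}, splitting over the sign of $a$, and integrating by parts in the $a \dee_{a}$ term reduces the target integral to
\[\left(\tfrac{1}{2} - s\right) \mathcal{M}_{\kappa'}(s) - 2\pi \, \mathcal{M}_{\kappa' + 1}(s + 1), \qquad \mathcal{M}_{\kappa'}(s) \coloneqq \int_{\R^{\times}} W_{\infty}^{0} \begin{pmatrix} a & 0 \\ 0 & 1 \end{pmatrix} \sgn^{\kappa'}(a) |a|^{s - \frac{1}{2}} \, d^{\times} a,\]
where the parity shift $\kappa' \mapsto \kappa' + 1$ in the second term arises from $a\sgn^{\kappa'}(a) = |a|\sgn^{\kappa' + 1}(a)$. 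Using $W_{0, it}(y) = \sqrt{y/\pi} K_{it}(y/2)$ together with the standard Mellin integral $\int_{0}^{\infty} x^{2\sigma - 1} K_{it}(x) \, dx = 2^{2\sigma - 2} \Gamma(\sigma + it/2) \Gamma(\sigma - it/2)$, one obtains $\mathcal{M}_{\kappa'}(s) = \pi^{-s} \Gamma(\tfrac{s + it}{2}) \Gamma(\tfrac{s - it}{2})$ when $\kappa' \equiv \kappa \pmod{2}$ and zero otherwise; combining these produces the two cases of \eqref{eqn:W2int}.

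For Part (4), apply the same recurrences to expand $\bigl|W_{\infty}^{2}(\begin{smallmatrix} a & 0 \\ 0 & 1 \end{smallmatrix})\bigr|^{2}$; after summing the contributions from $a > 0$ and $a < 0$ the cross-terms $W_{0, it} W_{0, it}'$ cancel, leaving
\[\int_{\R^{\times}} \left|W_{\infty}^{2} \begin{pmatrix} a & 0 \\ 0 & 1 \end{pmatrix}\right|^{2} d^{\times} a = \frac{1}{2} \int_{0}^{\infty} y \, W_{0, it}(y)^{2} \, dy + 2 \int_{0}^{\infty} y \, W_{0, it}'(y)^{2} \, dy.\]
The first integral equals $(\tfrac{1}{4} + t^{2}) \Gamma(\tfrac{1}{2} + it) \Gamma(\tfrac{1}{2} - it)$ via a standard Bessel integral, while the second reduces to a multiple of $\int_{0}^{\infty} W_{0, it}(y)^{2} \, dy/y$ and $\int_{0}^{\infty} y W_{0, it}(y)^{2} \, dy$ by integration by parts combined with the Whittaker differential equation $y^{2} W_{0, it}''(y) = (\tfrac{y^{2}}{4} - (\tfrac{1}{4} + t^{2})) W_{0, it}(y)$, yielding $(\tfrac{1}{4} + t^{2}) \Gamma(\tfrac{1}{2} + it) \Gamma(\tfrac{1}{2} - it)/4$; summing produces \eqref{eqn:W2norm}. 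The main conceptual obstacle lies in Part (3): the distinct prefactors $-2$ and $(\tfrac{1}{2} - s)$ in the two cases of \eqref{eqn:W2int} arise precisely from the parity shift $\kappa' \mapsto \kappa' + 1$ induced by the $2\pi a$ term, which couples \emph{different} twisted Mellin transforms of $W_{\infty}^{0}$ that must be carefully disentangled; the remaining steps in Parts (3) and (4) are then routine evaluations of classical Bessel integrals.
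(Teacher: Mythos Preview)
Your argument is correct but follows a genuinely different route from the paper's proof.

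For Parts (1) and (2), the paper constructs $W_{\infty}^{k}$ \emph{explicitly} via a Godement section $\varphi_{\infty}^{k}$ built from $\Phi^{k}(x_1,x_2) = (x_2 - \sgn(k) i x_1)^{|k|} e^{-\pi(x_1^2+x_2^2)}$ and the Jacquet integral, then evaluates the diagonal values by direct manipulation of that integral (contour shifts, parabolic cylinder functions, and Gradshteyn--Ryzhik identities). Your approach instead invokes multiplicity one of $K$-types to get existence up to scalar, then \emph{pins down the scalar} by the comparison with the classical Fourier expansions \eqref{eqn:Fkexppos}--\eqref{eqn:Fkexpneg}. This is legitimate, but note that it implicitly assumes the parameters $(t,\kappa)$ arise from some automorphic form, whereas the paper's direct computation holds for an arbitrary principal series; for the paper's applications this distinction is harmless.

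For Parts (3) and (4), the paper again works from the Jacquet integral (for (3)) and applies integral tables directly to $W_{\pm 1,it}$ (for (4)). Your route through the raising--operator identity $W_{\infty}^{2}(a) = (a\partial_a - 2\pi a)W_{\infty}^{0}(a)$, reducing everything to Mellin transforms of $W_{0,it}$ and hence to the single Bessel integral $\int_0^{\infty} x^{s-1} K_{it}(x)\,dx$, is cleaner in that it avoids both the Jacquet--integral manipulations and the parabolic cylinder functions. The parity shift $\kappa' \mapsto \kappa'+1$ you identify is indeed the crux of (3): it explains neatly why the two cases of \eqref{eqn:W2int} have the distinct shapes $-2\pi^{-s}\Gamma(\cdot)\Gamma(\cdot)$ versus $(\tfrac12 - s)\pi^{-s}\Gamma(\cdot)\Gamma(\cdot)$. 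Your Part (4) computation checks out (interpreting ``first integral'' and ``second integral'' as $\int_0^{\infty} y\,W_{0,it}(y)^2\,dy$ and $\int_0^{\infty} y\,W_{0,it}'(y)^2\,dy$ without the prefactors $\tfrac12$ and $2$); you might state this more carefully, since with the prefactors the claimed values would not sum correctly.

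In short: the paper's approach is more self-contained at the local representation-theoretic level but heavier on special-function tables; yours leans on the global Fourier comparison and the raising operator, trading explicitness of construction for lighter computations.
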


\begin{proof}\hspace{1em}
\begin{enumerate}[leftmargin=*,label=\textup{(\arabic*)}]
\item Let $\pi_{\infty} = \sgn^{\kappa} |\cdot|_{\infty}^{t_1} \boxplus \sgn^{\kappa} |\cdot|_{\infty}^{t_2}$ be a principal series representation with $\kappa \in \{0,1\}$ and $t_1,t_2 \in \C$. We initially assume that $\Re(t_1) > \Re(t_2)$. For each $k \in 2\Z$ and $(x_1,x_2) \in \R^2$, let
\begin{equation}
\label{eqn:Phik}
\Phi^k(x_1,x_2) \coloneqq (x_2 - \sgn(k) ix_1)^{|k|} e^{-\pi(x_1^2 + x_2^2)} = \begin{dcases*}
(x_2 - ix_1)^k e^{-\pi(x_1^2 + x_2^2)} & if $k \in 2\N \cup \{0\}$,	\\
(x_2 + ix_1)^{-k} e^{-\pi(x_1^2 + x_2^2)} & if $k \in -2\N$.
\end{dcases*}
\end{equation}
Define the Godement section $\varphi_{\infty}^{k} : \GL_2(\R) \to \C$ by
\[\varphi_{\infty}^{k}(g_{\infty}) \coloneqq \pi^{\frac{|k|}{2}} \sgn^{\kappa}(\det g_{\infty}) \left|\det g_{\infty}\right|^{t_1 + \frac{1}{2}} \int_{\R^{\times}} |y|^{t_2 - t_1 - 1} \Phi^k((0,y^{-1}) g_{\infty}) \, d^{\times}y.\]
This Godement section converges absolutely and defines an element of the induced model of $\pi_{\infty}$ of weight $k$; that is,
\begin{align}
\label{eqn:varphiinftykP}
\varphi_{\infty}^{k}\left(\begin{pmatrix} a & b \\ 0 & d \end{pmatrix} g_{\infty}\right) & = \sgn^{\kappa}(a) |a|^{t_1 + \frac{1}{2}} \sgn^{\kappa}(d) |d|^{t_2 - \frac{1}{2}} \varphi_{\infty}^{k}(g_{\infty}),	\\
\label{eqn:varphiinftykK}
\varphi_{\infty}^{k}\left(g_{\infty} \begin{pmatrix} \cos \theta & \sin \theta \\ - \sin \theta & \cos \theta \end{pmatrix}\right) & = e^{ik\theta} \varphi_{\infty}^{k}(g_{\infty})
\end{align}
for all $a,d \in \R^{\times}$, $b \in \R$, $g_{\infty} \in \GL_2(\R)$, and $\theta \in [0,2\pi)$. Taking $g_{\infty}$ to be the identity, we have the normalisation
\begin{equation}
\label{eqn:varphiinftykI}
\varphi_{\infty}^{k}\begin{pmatrix} 1 & 0 \\ 0 & 1 \end{pmatrix} = \pi^{-\frac{1 + t_1 - t_2}{2}} \Gamma\left(\frac{1 + |k| + t_1 - t_2}{2}\right).
\end{equation}
The Jacquet integral
\begin{align}
\label{eqn:Jacquetintinduced}
W_{\infty}^{k}(g_{\infty}) & \coloneqq \int_{\R} \varphi_{\infty}^{k}\left(\begin{pmatrix} 0 & -1 \\ 1 & 0 \end{pmatrix} \begin{pmatrix} 1 & u \\ 0 & 1 \end{pmatrix} g_{\infty}\right) e(-u) \, du	\\
\label{eqn:Jacquetint}
& = \pi^{\frac{|k|}{2}} \sgn^{\kappa}(\det g_{\infty}) \left|\det g_{\infty}\right|^{t_1 + \frac{1}{2}} \int_{\R^{\times}} |y|^{t_2 - t_1} \int_{\R} \Phi^k((y^{-1},u) g_{\infty}) e^{-2\pi i yu} \, du \, d^{\times}y
\end{align}
converges absolutely and defines an element of the Whittaker model $\WW(\pi_{\infty},\psi_{\infty})$ of $\pi_{\infty}$ of weight $k$. By the Iwasawa decomposition, every $g_{\infty} \in \GL_2(\R)$ can be written in the form
\[g_{\infty} = \begin{pmatrix} z & 0 \\ 0 & z \end{pmatrix} \begin{pmatrix} 1 & x \\ 0 & 1 \end{pmatrix} \begin{pmatrix} a & 0 \\ 0 & 1 \end{pmatrix} \begin{pmatrix} \cos \theta & \sin \theta \\ - \sin \theta & \cos \theta \end{pmatrix}\]
with $x \in \R$, $a,z \in \R^{\times}$, and $\theta \in [0,2\pi)$. From \eqref{eqn:varphiinftykP}, \eqref{eqn:varphiinftykK}, and the change of variables $u \mapsto u - x$, we see that for this value of $g_{\infty}$,
\[W_{\infty}^{k}(g_{\infty}) = |z|^{t_1 + t_2} e(x) e^{ik\theta} W_{\infty}^{k}\begin{pmatrix} a & 0 \\ 0 & 1 \end{pmatrix}.\]
The Whittaker function $W_{\infty}^{k}$ extends holomorphically as a function of the complex variables $t_1,t_2 \in \C$ to $(t_1,t_2) = (it,-it)$ with $t \in \R \cup i[-\frac{7}{64},\frac{7}{64}]$. This holomorphic extension defines a weight $k$ element of the Whittaker model $\WW(\pi_{\infty},\psi_{\infty})$ of $\pi_{\infty} = \sgn^{\kappa} |\cdot|_{\infty}^{it} \boxplus \sgn^{\kappa} |\cdot|_{\infty}^{-it}$.

\item We again let $\pi_{\infty} = \sgn^{\kappa} |\cdot|_{\infty}^{t_1} \boxplus \sgn^{\kappa} |\cdot|_{\infty}^{t_2}$ and initially assume that $\Re(t_1) > \Re(t_2)$. Since
\[\begin{pmatrix} 0 & -1 \\ 1 & 0 \end{pmatrix} \begin{pmatrix} 1 & u \\ 0 & 1 \end{pmatrix} \begin{pmatrix} a & 0 \\ 0 & 1 \end{pmatrix} = \begin{pmatrix} \frac{1}{\sqrt{1 + \frac{u^2}{a^2}}} & -\frac{u}{a\sqrt{1 + \frac{u^2}{a^2}}} \\ 0 & a\sqrt{1 + \frac{u^2}{a^2}} \end{pmatrix} \begin{pmatrix} \frac{u}{a\sqrt{1 + \frac{u^2}{a^2}}} & -\frac{1}{\sqrt{1 + \frac{u^2}{a^2}}} \\ \frac{1}{\sqrt{1 + \frac{u^2}{a^2}}} & \frac{u}{a\sqrt{1 + \frac{u^2}{a^2}}} \end{pmatrix},\]
we have by \eqref{eqn:varphiinftykP}, \eqref{eqn:varphiinftykK}, \eqref{eqn:varphiinftykI}, \eqref{eqn:Jacquetintinduced}, and the change of variables $u \mapsto au$ that
\begin{multline*}
W_{\infty}^{k}\begin{pmatrix} a & 0 \\ 0 & 1 \end{pmatrix} = (-1)^{\frac{k}{2}} \pi^{-\frac{1 + t_1 - t_2}{2}} \Gamma\left(\frac{1 + |k| + t_1 - t_2}{2}\right) \sgn(a)^{\kappa} |a|^{\frac{1}{2} + t_2}	\\
\times \int_{\R} (1 + iu)^{-\frac{1 - k + t_1 - t_2}{2}} (1 - iu)^{-\frac{1 + k + t_1 - t_2}{2}} e(-au) \, du.
\end{multline*}
From \cite[3.384.9]{GR15}, this integral is equal to
\[\frac{\pi^{\frac{1 + t_1 - t_2}{2}} |a|^{-\frac{1 - t_1 + t_2}{2}}}{\Gamma\left(\frac{1 + \sgn(a)k + t_1 - t_2}{2}\right)} W_{\sgn(a)\frac{k}{2},\frac{t_1 - t_2}{2}}(4\pi|a|).\]
Analytically continuing to $t_1 = it$ and $t_2 = -it$, we obtain \eqref{eqn:Whittakeridentity}.

\item For $a \in \R^{\times}$, we have that
\begin{equation}
\label{eqn:Winftyk}
W_{\infty}^{k}\begin{pmatrix} a & 0 \\ 0 & 1 \end{pmatrix} = \pi^{\frac{|k|}{2}} \sgn^{\kappa}(a) |a|^{it + \frac{1}{2}} \int_{\R^{\times}} |y|^{-2it} \int_{\R} \Phi^k(y^{-1} a,u) e^{-2\pi i yu} \, du \, d^{\times}y
\end{equation}
from \eqref{eqn:Jacquetint}. We insert the identity \eqref{eqn:Winftyk} for $W_{\infty}^{2}$ into the left-hand side of \eqref{eqn:W2int}, make the change of variables $a \mapsto ya$ and $u \mapsto u + ia$, then shift the contour of integration back to the line $\Im(u) = 0$, yielding
\[\pi \int_{\R^{\times}} \sgn^{\kappa + \kappa'}(a) |a|^{s + it} \int_{\R^{\times}} \sgn^{\kappa + \kappa'}(y) |y|^{s - it} e^{2\pi ya} \int_{\R} u^2 e^{-\pi u^2} e^{-2\pi i (y + a)u} \, du \, d^{\times}y \, d^{\times}a.\]
The innermost integral may be evaluated via integration by parts, leading to
\begin{multline*}
\frac{1}{2} \int_{\R^{\times}} \sgn^{\kappa + \kappa'}(a) |a|^{s + it} e^{-\pi a^2} \, d^{\times}a \int_{\R^{\times}} \sgn^{\kappa + \kappa'}(y) |y|^{s - it} e^{-\pi y^2} \, d^{\times}y	\\
- \pi \int_{\R^{\times}} \sgn^{\kappa + \kappa'}(a) |a|^{s + it} e^{-\pi a^2} \, d^{\times}a \int_{\R^{\times}} \sgn^{\kappa + \kappa'}(y) |y|^{s + 2 - it} e^{-\pi y^2} \, d^{\times}y	\\
- \pi \int_{\R^{\times}} \sgn^{\kappa + \kappa'}(a) |a|^{s + 2 + it} e^{-\pi a^2} \, d^{\times}a \int_{\R^{\times}} \sgn^{\kappa + \kappa'}(y) |y|^{s - it} e^{-\pi y^2} \, d^{\times}y	\\
- 2\pi \int_{\R^{\times}} \sgn^{\kappa + \kappa' + 1}(a) |a|^{s + 1 + it} e^{-\pi a^2} \, d^{\times}a \int_{\R^{\times}} \sgn^{\kappa + \kappa' + 1}(y) |y|^{s + 1 - it} e^{-\pi y^2} \, d^{\times}y.
\end{multline*}
The first three expressions vanish if $\kappa \equiv \kappa' + 1 \pmod{2}$, while the last vanishes if $\kappa \equiv \kappa' \pmod{2}$. The result then follows via the recurrence relation $\Gamma(s + 1) = s\Gamma(s)$.

\item The left-hand side of \eqref{eqn:W2norm} is
\[\int_{0}^{\infty} \left(W_{1,it}(4\pi a)^2 + \left(\frac{1}{4} + t^2\right)^2 W_{-1,it}(4\pi a)^2\right) \, \frac{da}{a}\]
from \eqref{eqn:Whittakeridentity}. The desired identity then follows from the change of variables $a \mapsto \frac{a}{4\pi}$ together with \cite[7.611.4, 8.365.1, and 8.365.8]{GR15}.
\qedhere
\end{enumerate}
\end{proof}

\section{The Boundary of \texorpdfstring{$\FF_A(q)$}{FA(q)}}
\label{sect:boundary}

We turn our attention to the boundary of the canonical fundamental domain $\FF_A(q)$ of the hyperbolic orbifold $\Gamma_A(q)\backslash \NN_A(q)$. We give an explicit description of the boundary in terms of the homology of $X_0(q)$, which relies crucially on the fact that $\PP(q)$ has a minimal number of sides. For related arguments in the case of a general fundamental polygon, see \cite[Section 3]{NT25}. Furthermore, we introduce the notion of regularised integrals of Eisenstein series along geodesics. These two concepts are key in understanding the \emph{topological terms} in the Weyl sums for our equidistribution problem.

Recall that the boundary of $\FF_A(q)$ consists of two parts: the closed geodesic $\CC_A(q)$ and certain $\Gamma_0(q)$-translates of the sides of $\PP(q)$. When considering the boundary of $\FF_A(q)$ modulo $\Gamma_0(q)$, the sides of $\PP(q)$ that are paired together are indistinguishable. With this in mind, we let
\begin{equation}
\label{eqn:specialbasis}
\CC_1,\ldots, \CC_{2g+1+e_2+e_3}\subset \partial \PP(q)
\end{equation} 
be a sequence of edges of $\PP(q)$ such that for each pair of $\Gamma_0(q)$-equivalent edges $\LL,\LL'$ of $\PP(q)$ with $\LL$ to the left of $\LL'$, we have $\CC_i=\LL$ for some $1\leq i \leq 2g+1+e_2+e_3$ and such that $\CC_1,\ldots, \CC_{2g}$ are hyperbolic sides. Note that $\CC_i$ for $1\leq i\leq 2g$ define closed curves on $X_0(q)$ since the endpoints are all $\Gamma_0(q)$-equivalent to the cusp $0$.

\subsection{Homology of Modular Curves}

Let $q$ be an odd prime. The closed modular surface $X_0(q)$ of level $q$ is a compact Riemann surface of genus $g=\tfrac{q}{12}+O(1)$. The associated integral singular homology and cohomology groups $H_1(X_0(q),\Z)$ and $H^1(X_0(q),\Z)$ are free abelian groups of rank $2g$ that sit as lattices inside $H_1(X_0(q),\R)$ and $H^1(X_0(q),\R)$ respectively. By general principles, we have the cap product pairing between homology and cohomology
\begin{equation}
\label{eqn:cappair}
\langle \cdot , \cdot \rangle_\mathrm{cap}:H_1(X_0(q),\R) \times H^1(X_0(q),\R) \to \R,
\end{equation}
which is a perfect pairing \cite[Section 3.3]{Hat02}. The cap product pairing identifies the homology group with the (linear) dual of the cohomology group and vice versa. Furthermore, the Hecke operators as well as the Atkin--Lehner operator $W_q$ are self-adjoint with respect to the cap product pairing (see e.g.\ \cite[Section 3.2.1]{Nor23} for explicit formul\ae{} for the action of Hecke and Atkin--Lehner operators on homology and cohomology).

Given a closed curve $\CC$ inside $X_0(q)$, we denote by
\[[\CC]\in H_1(X_0(q),\Z)\subset H_1(X_0(q), \R)\]
the associated homology class. Recall that $H_1(X_0(q),\Z)$ is generated as an abelian group by such classes \cite[Chapter 2]{Hat02}. If $h$ is a holomorphic cusp form of weight $2$ and level $q$, then $h(z) \, dz$ and $\overline{h(z) \, dz}$ define complex-valued harmonic $1$-forms on $X_0(q)$ and such $1$-forms span the entire space of harmonic $1$-forms. Given any harmonic $1$-form $\omega$, we get an associated (complex-valued) cohomology class:
\[
[\CC]\mapsto \int_{\CC} \omega,
\]
for all closed curves $\CC$. By theorems of de Rham and Hodge, this association yields an isomorphism between the real cohomology group $H^1(X_0(q), \R)$ and the space of real-valued harmonic $1$-forms.

We consider two bases for the homology and cohomology groups.

\subsubsection{The Hecke Basis}
\label{sect:Heckebasis}

Let $\BB_2^{\hol}(\Gamma_0(q))$ be the orthogonal basis of weight $2$ and level $q$ holomorphic cusp forms of level $q$ consisting of Hecke eigenforms normalised so that the first Fourier coefficient of each cusp form is $1$; note that these are all newforms since there are no cusp forms of weight $2$ and level $1$. For each $h\in \BB_2^{\hol}(\Gamma_0(q))$,
\begin{equation}\label{eqn:omegafdefeq}\frac{1}{2} i^{\frac{1 \mp 1}{2}} \left(h(z) \, dz\pm \overline{h(z) \, dz}\right)\end{equation}
defines a real-valued closed $1$-form. We denote by $\omega_h^\pm\in H^1(X_0(q),\R)$ the associated cohomology class. It follows by dimensional considerations that
\begin{equation}
\label{eqn:basiscoh}
\{\omega_h^{\epsilon} : h \in \BB_2^{\hol}(\Gamma_0(q)), \ \epsilon \in \{+,-\} \}
\end{equation}
is a basis for $H^1(X_0(q),\R)$. We denote by 
\[\{v_h^{\epsilon} : h \in \BB_2^{\hol}(\Gamma_0(q)), \ \epsilon \in \{+,-\} \}\]
the basis of $H_1(X_0(q),\R)$ that is dual to \eqref{eqn:basiscoh} with respect to the cap product pairing \eqref{eqn:cappair}; by this, we mean that for $h_1,h_2\in \BB_2^{\hol}(\Gamma_0(q)) $ and $\epsilon_1,\epsilon_2\in \{+,-\}$,
\[\langle v_{h_1}^{\epsilon_1}, \omega_{h_2}^{\epsilon_2} \rangle_\mathrm{cap} = \begin{dcases*}
1 & if $h_1 = h_2$ and $\epsilon_1 = \epsilon_2$,	\\
0 & otherwise.
\end{dcases*}\]

\subsubsection{The Special Basis}

By general principles (since $\Hb$ is contractible), we have a sequence of surjective maps
\begin{equation}
\label{eqn:maps}
\Gamma_0(q)\twoheadrightarrow \mathrm{Conj}(\Gamma_0(q))\twoheadrightarrow \Gamma_0(q)^\mathrm{ab}\twoheadrightarrow H_1(Y_0(q), \Z)\twoheadrightarrow H_1(X_0(q), \Z),
\end{equation} 
where the composition $\Gamma_0(q)\twoheadrightarrow H_1(X_0(q), \Z)$ is given by
\[\gamma\mapsto [\CC_\gamma],\]
with $\CC_\gamma$ any piecewise geodesic curve connecting $z$ with $\gamma z$, where $z\in \Hb\cup \Q\cup \{\infty\}$ (as the homology class is independent of these choices). The surjectivity of the first two maps is evident and the composition of all of the maps can be thought of as the map from the orbifold fundamental group $\pi_1^\mathrm{orb}(Y_0(p))\cong \Gamma_0(p)$ (using that $\Hb$ is the universal orbifold covering space of the orbifold $Y_0(p)$) to the orbifold homology group $H_1^\mathrm{orb}(Y_0(p),\Z)\cong\Gamma_0(p)^\mathrm{ab}$ composed with the surjective map from $H_1^\mathrm{orb}(Y_0(p),\Z)$ to the homology of the underlying topological space of $X_0(p)$ (see e.g.\ \cite[Section 2.2]{Car19}). Now one sees directly that the kernel of the last map contains images of the parabolic and elliptic conjugacy classes in $\Gamma_0(q)$ and furthermore by dimension considerations that these classes even generate the kernel. Thus we conclude that the images of the hyperbolic labels $\alpha_j$ of $\PP(q)$ under \eqref{eqn:maps} generate the lattice $H_1(X_0(q), \Z)$ and so in particular, span $H_1(X_0(q), \R)$.

\begin{lemma}\label{lem:basisCi}
The cohomology classes $[\CC_1],\ldots, [\CC_{2g}]$ define an integral basis for $H_1(X_0(q),\Z)$. 
\end{lemma}

\begin{proof}
We start by noticing that the homology class associated to a hyperbolic label $\alpha_j$ with $j>j^\ast$ can be written as
\begin{equation}
\label{eqn:alphaLLsum}
\sum_{j^\ast<k<j}[\LL_k],
\end{equation}
where $[\LL_k]$ denotes the homology class of the side $\LL_k$ of $\mathcal{P}(q)$. To see this, observe that the left-most vertex of the side $\LL_{j}$ is equal to $\alpha_j v$, where $v$ is the right-most vertex of $\LL_{j^\ast}$. Thus if $j^\ast<j$ we see that the concatenation of the sides
\[\LL_{j^{\ast}+1},\LL_{j^{\ast}+2}\ldots, \LL_{j-1}\]
defines a curve connecting $v$ and $\alpha_j v$, which gives the claimed expression \eqref{eqn:alphaLLsum}.

This shows that the homology class associated to a hyperbolic label $\alpha_j$ lies in the $\Z$-span of $\{[\CC_i]: 1\leq i\leq 2g+1+e_2+e_3\}$. Since the classes associated to the hyperbolic labels via the map \eqref{eqn:maps} generate the lattice $H_1(X_0(q),\Z)$ and the classes of parabolic and elliptic sides vanish, we get the desired conclusion by dimension considerations.
\end{proof}
We denote by
\begin{equation}
\label{eqn:specialCoh}
\omega_1,\ldots, \omega_{2g}\in H^1(X_0(q), \Z)\subset H^1(X_0(q),\R),
\end{equation}
the dual basis of $[\CC_1], \ldots, [\CC_{2g}]$ with respect to the cap product pairing. We refer to these bases as the \emph{special basis} of homology and cohomology respectively.

\subsection{A Homological Description of the Boundary}
\label{sect:homdes}

In studying the projection of $\FF_A(q)$ to the closed modular surface $X_0(q)$, we need to understand the boundary of this projection. Unlike the level $1$ case, this is not simply the geodesic $\CC_A(q)$, for $\CC_A(q)$ may be nontrivial in homology (which corresponds to the fact that $\Gamma_A(q)$ need not be contained in $\Gamma_0(q)$).

First of all, denote by $\partial \FF_A(q)$ the image of the oriented boundary of $\FF_A(q)$ under the projection to $X_0(q)$. Formally, this is nothing but a singular $1$-chain (as in the theory of singular homology \cite[Chapter 2]{Hat02}), which is to say an element of
\[\Z[\{ \varphi: [0,1]\to X_0(q): \text{continuous} \}].\]
We write
\[\partial \FF_A(q)= \CC_A(q)\cup \bigcup_{i=1}^{2g+1+e_2+e_3} m_i(A,q)\CC_i,\]
where $m_i(A,q)\in \Z$ is the multiplicity of the edge $\CC_i$ in $\FF_A(q)$, namely the signed number of times the boundary of $\FF_A(q)$ contains ($\Gamma_0(q)$-translates of) the two oriented edges of $\PP(q)$ corresponding to $\CC_i$. Explicitly, this means that for a compactly supported $1$-form $\omega$ on $X_0(q)$,
\[\int_{\partial \FF_A(q)} \omega= \int_{\CC_A(q)} \omega+ \sum_{i=1}^{2g+1+e_2+e_3} m_i(A,q)\int_{\CC_i} \omega.\]

\begin{lemma}[Cf.\ {\cite[Lemma 3.1]{NT25}}]
\label{lem:multiplicity}
Let $\CC_i$ be an elliptic or parabolic edge of $\PP(q)$. Then we have that
\[m_i(A,q)=0.\]
\end{lemma}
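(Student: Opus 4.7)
The plan is to deduce the vanishing of $m_j(A,q)$ for elliptic and parabolic sides from the topological identity $\partial^2 \FF_A(q) = 0$, applied to $\FF_A(q)$ as a singular $2$-chain on $X_0(q)$. Since $\CC_A(q)$ is a closed loop with $\partial \CC_A(q) = 0$, only the terms $m_j(A,q) \CC_j$ can contribute endpoints to the induced singular $0$-chain $\partial(\partial \FF_A(q))$, and the distinct endpoint structures of elliptic and parabolic sides will then isolate each of these multiplicities.

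The first step is to compute the endpoints of each $\CC_j$ in $X_0(q)$. For $q$ prime, every Farey vertex of $\PP(q)$ is $\Gamma_0(q)$-equivalent to the cusp $[0]$, so a free (hyperbolic) $\CC_j$ satisfies $\partial \CC_j = 0$. An elliptic side of order $2$ or $3$ joins $[0]$ to the corresponding elliptic fixed point $[v_j] \in X_0(q)$, giving $\partial \CC_j = [v_j] - [0]$, while the parabolic side joins $[0]$ to $[\infty]$, giving $\partial \CC_j = [\infty] - [0]$. Since $\PP(q)$ is a fundamental polygon, each of the $e_2 + e_3$ elliptic fixed points of $X_0(q)$ admits a unique lift among the elliptic vertices of $\PP(q)$, so each $[v_j]$ is an endpoint of exactly one arc in the collection $\CC_1, \ldots, \CC_{N/2}$, and similarly $[\infty]$ is an endpoint only of the parabolic $\CC_j$.

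A short technical step is to verify that $\partial \FF_A(q) = \CC_A(q) + \sum_{j=1}^{N/2} m_j(A,q) \CC_j$ really holds as an integral identity of singular $1$-chains on $X_0(q)$. In $\Hb$, the two truncated special sides $\LL_{i(m_\gamma)^*} \cap \NN_\gamma(q)$ and $\gamma \LL_{i(m_\gamma)^*} \cap \NN_\gamma(q)$ appear in $\partial \FF_A(q)$ with opposite orientations along its positively oriented traversal and, under the identification $[\gamma w] = [w]$ in $X_0(q)$, project to complementary sub-arcs of $\CC_{i(m_\gamma)^*}$ whose combined signed contribution to $m_{i(m_\gamma)^*}(A,q)$ is integral; every remaining boundary segment is a full $\Gamma_0(q)$-translate of a side of $\PP(q)$ lying inside $\NN_\gamma(q)$, each projecting to a full copy of the corresponding $\CC_j$. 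This is the only step requiring geometric bookkeeping, and is the main (if mild) obstacle.

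Applying $\partial$ to this identity and invoking $\partial^2 \FF_A(q) = 0$ together with $\partial \CC_A(q) = 0$ then yields
\[\sum_{j \,:\, \CC_j \text{ elliptic}} m_j(A,q) \bigl([v_j] - [0]\bigr) + \sum_{j \,:\, \CC_j \text{ parabolic}} m_j(A,q) \bigl([\infty] - [0]\bigr) = 0\]
as a singular $0$-chain on $X_0(q)$. Since the points $[v_j]$, $[\infty]$, and $[0]$ are pairwise distinct, they are linearly independent in the free abelian group of singular $0$-chains, and the bijective correspondence from the second paragraph lets us equate the coefficient of each $[v_j]$ and of $[\infty]$ to zero, forcing $m_j(A,q) = 0$ for every elliptic and every parabolic $\CC_j$, as claimed. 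The virtue of this approach is that it replaces a delicate case-by-case analysis of how $\partial \FF_A(q)$ traverses the elliptic and cuspidal vertices of $\PP(q)$ with a purely homological computation.
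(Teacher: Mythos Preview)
Your argument is correct and is essentially the same as the paper's proof, just phrased homologically rather than combinatorially. The paper forms the quotient graph $\GG(q)=\partial\PP(q)/\Gamma_0(q)$ and observes that the weights $m_j(A,q)$ arise from a cycle traversal, so that at any degree-$1$ vertex (the elliptic points and the cusp $\infty$) the unique incident edge must carry weight zero; your identity $\partial(\partial\FF_A(q))=0$ and the isolation of the coefficients at $[v_j]$ and $[\infty]$ express precisely this same incidence condition.
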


\begin{proof}
Consider the boundary of the special fundamental polygon $\PP(q)$ as a (cyclic) graph with $4g+2+2e_2+2e_3$ vertices with its natural action of $\Gamma_0(q)$, and denote by 
\[\GG(q) \coloneqq \partial\PP(q)/\Gamma_0(q)\]
the quotient graph. The graph $\GG(q)$ has $2 + e_2 + e_3$ vertices and $2g+1+e_2+e_3$ edges. The vertices corresponding to the cusp $\infty$ and the elliptic points all have a single edge connected to the vertex corresponding to the cusp $0$, and the vertex $0$ has $2g$ self-edges (i.e.\ the degree of the vertex $0$ is $4g + 1 + e_2 + e_3$). 

The oriented boundary of $\FF_A(q)$ can be described as a weighted version of $\GG(q)$; for an edge $E$ of $\GG(q)$ corresponding to $\CC_i$, we associated the weight $m_i(A,q) \in \Z$. The key observation is now that by construction these weights are all obtained by taking a cycle in the signed graph and recording the signed number of times the cycle crosses each edge. The claim now follows since the vertices of $\GG(q)$ corresponding to $\infty$ and elliptic points have degree $1$ (i.e.\ there is only one way to go to the elliptic points and to $\infty$).
\end{proof}

\hyperref[lem:multiplicity]{Lemma \ref*{lem:multiplicity}} shows that the oriented boundary of $\FF_A(q)$, when projected to $X_0(q)$, consists only of hyperbolic sides of $\PP(q)$. This means that we can rewrite the boundary solely in terms of the hyperbolic sides $\CC_1,\ldots, \CC_{2g}$ defined above:
\[\partial \FF_A(q)= \CC_A(q)\cup \bigcup_{i=1}^{2g} m_i(A,q)\CC_i.\]

\begin{lemma}
\label{lem:multiplicity2}
For $i \in \{1,\ldots,2g\}$, we have that
\[m_i(A,q) = -\langle [\CC_A(q)], \omega_i \rangle_\mathrm{cap},\]
where $\omega_1,\ldots, \omega_{2g}\in H^1(X_0(q),\Z)$ is the special basis of cohomology defined in \eqref{eqn:specialCoh}.
\end{lemma}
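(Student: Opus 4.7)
The plan is to exploit the fact that the boundary of a singular $2$-chain vanishes in $H_1(X_0(q),\R)$. The surface $\FF_A(q)\subset \Hb$ is a $2$-dimensional region (possibly with some cusp punctures, which do not affect the argument since a cusp has no contribution to the boundary) and its projection to the compact Riemann surface $X_0(q)$ defines a singular $2$-chain with rational coefficients, which I will denote $[\FF_A(q)]\in C_2(X_0(q),\R)$. The oriented boundary $\partial \FF_A(q)$ introduced in \hyperref[sect:homdes]{Section \ref*{sect:homdes}} is by construction the image of $\partial [\FF_A(q)]$ under the boundary map $\partial:C_2(X_0(q),\R)\to C_1(X_0(q),\R)$. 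In particular, since $\partial\circ\partial=0$, the $1$-chain $\partial\FF_A(q)$ is a boundary and therefore represents the zero class in $H_1(X_0(q),\R)$.

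The next step is to rewrite this identity using the decomposition established via \hyperref[lem:multiplicity]{Lemma \ref*{lem:multiplicity}}, namely
\[
\partial \FF_A(q) = \CC_A(q)\cup \bigcup_{j=1}^{2g} m_j(A,q)\, \CC_j.
\]
Passing to homology classes in $H_1(X_0(q),\R)$ yields
\[
[\CC_A(q)] + \sum_{j=1}^{2g} m_j(A,q)\, [\CC_j] = 0.
\]
Now I pair both sides with the cohomology class $\omega_j$ for any $j\in\{1,\ldots,2g\}$, using bilinearity of the cap product pairing \eqref{eqn:cappair} and the duality relation $\langle [\CC_i],\omega_j\rangle_{\mathrm{cap}} = \delta_{ij}$ that defines the special basis \eqref{eqn:specialCoh}. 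This gives
\[
\langle [\CC_A(q)],\omega_j\rangle_{\mathrm{cap}} + m_j(A,q) = 0,
\]
from which the claimed identity $m_j(A,q)=-\langle [\CC_A(q)],\omega_j\rangle_{\mathrm{cap}}$ follows immediately.

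The only subtlety, and what I see as the main point requiring care rather than an obstacle, is justifying that $\FF_A(q)$ projects to a well-defined singular $2$-chain whose algebraic boundary in $X_0(q)$ coincides with the expression given in \hyperref[sect:homdes]{Section \ref*{sect:homdes}}. This is essentially a matter of unravelling the definition of $\partial\FF_A(q)$ in terms of the multiplicities $m_j(A,q)$: the side pairing on $\PP(q)$ cancels the contributions of paired sides unless the corresponding geodesic on $X_0(q)$ is traversed with a net nonzero signed count by the projection of $\FF_A(q)$, and this net count is by definition $m_j(A,q)$. Once this combinatorial bookkeeping is confirmed, the homological vanishing of $\partial[\FF_A(q)]$ together with the duality of the special basis gives the formula with essentially no further computation. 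Equivalently, one may phrase the argument via Stokes' theorem, observing that for any closed smooth $1$-form $\omega$ on $X_0(q)$ one has $\int_{\partial\FF_A(q)}\omega = \int_{\FF_A(q)} d\omega = 0$, which by de Rham duality forces $[\partial\FF_A(q)]=0$ in $H_1(X_0(q),\R)$ and leads to the same identity.
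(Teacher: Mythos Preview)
Your proof is correct and follows essentially the same approach as the paper: both arguments observe that $[\partial\FF_A(q)]=0$ in $H_1(X_0(q),\Z)$ (the paper phrases this as ``$\FF_A(q)\subset\Hb$ and $\Hb$ is contractible'', while you invoke the fact that a boundary of a $2$-chain is null-homologous), then substitute the decomposition $[\CC_A(q)]+\sum_j m_j(A,q)[\CC_j]=0$ and pair against the dual basis $\omega_j$.
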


\begin{proof}
We observe that in homology, we have that
\[[\CC_A(q)]+\sum_{i=1}^{2g} m_i(A,q)[\CC_i]=[\partial \FF_A(q)]=0\in H_1(X_0(q),\Z),\]
since $\FF_A(q)\subset \Hb$ and $\Hb$ is contractible. Now the claim follows since the coefficient of the basis element $[\CC_i]$  in the expansion of $[\CC_A(q)]$ with respect to the special basis from \hyperref[lem:basisCi]{Lemma \ref*{lem:basisCi}} is exactly
\[\langle [\CC_A(q)], \omega_i \rangle_\mathrm{cap},\]
by the definition of the dual basis.
\end{proof}

\subsection{Regularised Integrals}

For a cusp $\bb\in \Pb^1(\Q)$ and $Y > 0$, we define the cuspidal zone
\begin{equation}
\label{eqn:cuspidalzone}
\FF_{\bb}(Y) \coloneqq \{z \in \Hb : 0 < \Re(\sigma_{\bb}^{-1} z) < 1, \ \Im(\sigma_{\bb}^{-1} z) \geq Y\},
\end{equation}
where $\sigma_{\bb} \in \Gamma$ is a scaling matrix for $\bb$, so that $\sigma_{\bb} \infty = \bb$; note that $\FF_{\bb}(Y)$ is independent of the choice of such a scaling matrix. One can readily check that for $\gamma \in \Gamma$, this satisfies
\begin{equation}
\label{eqn:cuspidalregion}
\gamma \FF_{\bb}(Y) = \FF_{\gamma \bb}(Y).
\end{equation}
Furthermore, for $\ell\in \N$ with corresponding matrix $g_{\ell} \coloneqq \begin{psmallmatrix} \sqrt{\ell} & 0 \\ 0 & \frac{1}{\sqrt{\ell}} \end{psmallmatrix} \in \SL_2(\R)$, we have that
\begin{equation}
\label{eqn:cuspidalregion2}
g_{\ell} \FF_{\bb}(Y)= \FF_{g_{\ell} \bb}\left(\tfrac{(\ell,\den(\bb))^2}{\ell}\, Y\right),
\end{equation}
where $\den(\bb)$ denotes the denominator of $\bb \in \Pb^1(\Q)$ written in reduced form, with the convention that $\den(0) \coloneqq 1$ and $\den(\infty) \coloneqq 0$. 

\begin{lemma}[{Cf.\ \cite[Appendix]{BH12}}]
\label{lem:regint}
Let $\CC\subset \Hb \cup \Pb^1(\Q)$ be a geodesic with endpoints $\bb_1,\bb_2 \in \Pb^1(\Q)$ with $\bb_1 \neq \bb_2$. Then the limit
\begin{equation}
\label{eqn:regint}
\int^\ast_{\CC} (R_0 E)\left(z,\frac{1}{2} + it\right) \, \frac{dz}{\Im(z)} \coloneqq \lim_{Y\to \infty} \int\limits_{\CC\setminus(\FF_{\bb_1}(Y)\cup \FF_{\bb_2}(Y))} (R_0 E)\left(z,\frac{1}{2} + it\right) \, \frac{dz}{\Im(z)}
\end{equation}
exists.
\end{lemma}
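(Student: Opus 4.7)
My plan is to exploit the $\Gamma$-invariance of the 1-form $(R_0 E)(z, 1/2+it)\, dz/\Im(z)$. Since $R_0 E$ is automorphic of weight $2$, one checks that the factors $(cw+d)^2/|cw+d|^2$ picked up by $(R_0 E)(\gamma w)$ exactly cancel the factors $|cw+d|^2/(cw+d)^2$ coming from $d(\gamma w)/\Im(\gamma w)$, so the 1-form is invariant under $\Gamma = \SL_2(\Z)$. Since every cusp of $\Gamma$ is $\Gamma$-equivalent to $\infty$, I may conjugate the portion of $\CC$ near each cusp $\bb_j$ by $\sigma_{\bb_j}^{-1}$, reducing the analysis to the behaviour of $(R_0 E)(w, 1/2+it)\, dw/\Im(w)$ on a vertical line approaching $\infty$ in $w$-coordinates; under this conjugation, the cuspidal zone $\FF_{\bb_j}(Y)$ becomes $\{w : \Im w \geq Y\}$.

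On such a vertical line $w = x_0 + iv$, I would split $(R_0 E)(w, 1/2+it)$ via its Fourier expansion \eqref{eqn:FkexEis} into the constant term
\[\left(\tfrac{1}{2}+it\right) v^{1/2+it} + \left(\tfrac{1}{2}-it\right) \frac{\xi(1-2it)}{\xi(1+2it)} v^{1/2-it}\]
and the sum over nonzero Fourier modes. For the nonzero-mode part, the coefficients grow at most polynomially in $|n|$ while the Whittaker functions $W_{\pm 1, it}(4\pi|n|v)$ decay exponentially as $v \to \infty$; thus this part is absolutely integrable against $dw/v = i\, dv/v$ all the way to $v = \infty$ and poses no problem for the limit.

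The constant term integrates directly to $i\bigl[v^{1/2+it} + \frac{\xi(1-2it)}{\xi(1+2it)} v^{1/2-it}\bigr]$ up to bounded additive constants, which grows like $Y^{1/2}$ at the cutoff $v = Y$. The key observation is that the orientation of $\CC$ from $\bb_1$ to $\bb_2$ forces the two conjugated half-geodesics to be traversed with opposite orientation in $v$: near $\bb_2$, $v$ increases to $\infty$ and the antiderivative is evaluated at the upper limit $Y$, while near $\bb_1$ (after applying $\sigma_{\bb_1}^{-1}$), $v$ decreases from $\infty$, so in the analogous computation the antiderivative is evaluated at the lower limit $Y$, producing the opposite sign. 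Since the constant Fourier coefficient of the Eisenstein series is intrinsic to $E$ and independent of the cusp chosen for conjugation, the two divergent contributions $\pm i\bigl[Y^{1/2+it} + \frac{\xi(1-2it)}{\xi(1+2it)} Y^{1/2-it}\bigr]$ from the two endpoints cancel exactly, and the limit in \eqref{eqn:regint} exists.

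The only delicate point is the orientation bookkeeping at the two cusps. I would handle this cleanly by splitting $\CC$ at an interior point, writing each half explicitly as a segment of a vertical line in the appropriate conjugate coordinates, and tracking the sign of $dv$ under the parameterization induced by $\CC$'s orientation. The overall strategy mirrors the regularisation of cycle integrals of Eisenstein series in \cite{BH12}.
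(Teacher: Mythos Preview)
Your proposal is correct and follows essentially the same approach as the paper: conjugate near each cusp by a scaling matrix in $\Gamma$ to reduce to vertical half-lines truncated at height $Y$, insert the Fourier expansion \eqref{eqn:FkexEis}, and observe that the constant-term contributions cancel (both being $\pm i\bigl[Y^{1/2+it} + \tfrac{\xi(1-2it)}{\xi(1+2it)}Y^{1/2-it}\bigr]$) while the nonzero modes are absolutely integrable by the exponential decay of $W_{\pm 1, it}$. The paper's only cosmetic difference is that it first globally reduces to $\bb_1 = \infty$, so that $\CC$ is already the vertical line $\Re(z)=a/b$, before applying a second conjugation by $\sigma_{\bb_2}^{-1}$ to the portion near $\bb_2=a/b$; this makes the cancellation of constant terms explicit as a difference of two integrals over the same height range $[1/b,Y]$.
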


We may think of the limit \eqref{eqn:regint} as a regularised integral, akin to Zagier's regularisation of integrals on $\Gamma \backslash \Hb$ \cite{Zag82}.

\begin{proof}[Proof of {\hyperref[lem:regint]{Lemma \ref*{lem:regint}}}]
We let $\sigma_{\bb_1} \in \Gamma$ be a scaling matrix for $\bb_1$. Upon making the change of variables $z \mapsto \sigma_{\bb_1}^{-1} z$ and recalling \eqref{eqn:cuspidalregion}, we may reduce to the case that $\bb_1 = \infty$ and $\bb_2 = \frac{a}{b}$ with $a \in \Z$, $b \in \N$, and $(a,b)=1$. For sufficiently large $Y > 0$, we may write
\begin{multline*}
\int\limits_{\CC\setminus(\FF_{\infty}(Y)\cup \FF_{\bb_2}(Y))} (R_0 E)\left(z,\frac{1}{2} + it\right) \, \frac{dz}{\Im(z)}	\\
=\int_{\frac{a}{b} + \frac{i}{b}}^{\frac{a}{b} + iY} (R_0 E)\left(z,\frac{1}{2} + it\right) \, \frac{dz}{\Im(z)} + \int^{\frac{a}{b} + \frac{i}{b}}_{\frac{a}{b} + \frac{i}{b^2 Y}} (R_0 E)\left(z,\frac{1}{2} + it\right) \, \frac{dz}{\Im(z)}.
\end{multline*}
Let $\sigma_{\bb_2} \in \Gamma$ be a scaling matrix for $\bb_2$; in particular, we may choose $\sigma_{\bb_2} = \begin{psmallmatrix} 0 & -1 \\ 1 & 0 \end{psmallmatrix}$ if $a = 0$, while if $a \neq 1$, then there exists some $\overline{a} \in \Z$ such that $a \overline{a} \equiv 1 \pmod{b}$, in which case we may take $\sigma_{\bb_2} = \begin{psmallmatrix} a & \frac{a \overline{a} - 1}{b} & \\ b & \overline{a} \end{psmallmatrix}$. Making the change of variables $z \mapsto \sigma_{\bb_2}^{-1} z$ in the second integral above, we see that the right-hand side is equal to
\[\int_{\frac{a}{b} + \frac{i}{b}}^{\frac{a}{b} + iY} (R_0 E)\left(z,\frac{1}{2} + it\right) \, \frac{dz}{\Im(z)} - \int_{-\frac{\overline{a}}{b} + \frac{i}{b}}^{-\frac{\overline{a}}{b}+ iY} (R_0 E)\left(z,\frac{1}{2} + it\right) \, \frac{dz}{\Im(z)}\]
if $a \neq 0$, while the same holds for $a = 0$ with $\overline{a}$ replaced by $0$. Now we insert the Fourier expansion \eqref{eqn:FkexEis}. Since the constant terms cancel, we see that the limit exists as $Y$ tends to infinity due the rapid decay of the Whittaker function, namely $W_{\pm 1,it}(4\pi y) \ll y e^{-2\pi y}$ as $y$ tends to infinity.
\end{proof}

\begin{corollary}
Let $\ell\in \N$ be squarefree. Let $\CC$ be the geodesic connecting $\bb_1$ and $\bb_2$, where $\bb_1,\bb_2\in \Pb^1(\Q)$ are $\Gamma_0(\ell)$-equivalent. Then the limit
\begin{equation}
\label{eqn:regint2}
\lim_{Y\to \infty} \int\limits_{\CC\setminus(\FF_{\bb_1}(Y)\cup \FF_{\bb_2}(Y))} (R_0 E)\left(\ell z,\frac{1}{2} + it\right) \, \frac{dz}{\Im(z)}
\end{equation}
exists and is equal to
\[\int^\ast_{\CC} (R_0 E)\left(\ell z,\frac{1}{2} + it\right) \, \frac{dz}{\Im(z)}.\]
\end{corollary}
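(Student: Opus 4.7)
The plan is to reduce to \hyperref[lem:regint]{Lemma \ref*{lem:regint}} via the substitution $w = \ell z$. Because $dw = \ell \, dz$ and $\Im(w) = \ell \Im(z)$, the hyperbolic line element $dz/\Im(z)$ is invariant, so the integrand $(R_0 E)(\ell z, \tfrac{1}{2} + it) \, dz/\Im(z)$ is carried to $(R_0 E)(w, \tfrac{1}{2} + it) \, dw/\Im(w)$, the path $\CC$ to the geodesic $\ell \CC$ between $\ell \bb_1$ and $\ell \bb_2$, and by \eqref{eqn:cuspidalregion2} the cuspidal zones to $\ell \FF_{\bb_i}(Y) = \FF_{\ell \bb_i}(Y'_i)$ with $Y'_i \coloneqq (\ell, \den(\bb_i))^2 Y / \ell$. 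Note that $\ell \bb_1 \neq \ell \bb_2$ since multiplication by $\ell$ is injective on $\Pb^1(\Q)$, and that $Y'_i \to \infty$ as $Y \to \infty$.

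The main point to verify is that $Y'_1 = Y'_2$, so that the two transformed cuspidal zones share a common height parameter and \hyperref[lem:regint]{Lemma \ref*{lem:regint}} applies directly. I expect this to be the only nonformal step; it follows from the elementary fact that $\Gamma_0(\ell)$-equivalent cusps share the same $\gcd$ with $\ell$. Concretely, writing $\bb_i = a_i/c_i$ in lowest terms and choosing $\gamma = \begin{psmallmatrix} \alpha & \beta \\ \delta \ell & \epsilon \end{psmallmatrix} \in \Gamma_0(\ell)$ with $\gamma \bb_1 = \bb_2$, a short check using $\det \gamma = 1$ and $(a_1, c_1) = 1$ shows that the fraction $(\alpha a_1 + \beta c_1)/(\delta \ell a_1 + \epsilon c_1)$ is already in lowest terms, so $c_2 = \pm (\delta \ell a_1 + \epsilon c_1)$. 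Hence
\[(c_2, \ell) = (\epsilon c_1, \ell) = (c_1, \ell),\]
the latter equality because $(\epsilon, \ell) = 1$; thus $Y'_1 = Y'_2 \eqqcolon Y'$.

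Combining these observations, \hyperref[lem:regint]{Lemma \ref*{lem:regint}} applied to the transformed integral gives the existence of
\[\lim_{Y' \to \infty} \int_{\ell \CC \setminus (\FF_{\ell \bb_1}(Y') \cup \FF_{\ell \bb_2}(Y'))} (R_0 E)\left(w, \tfrac{1}{2} + it\right) \frac{dw}{\Im(w)} = \int^\ast_{\ell \CC} (R_0 E)\left(w, \tfrac{1}{2} + it\right) \frac{dw}{\Im(w)},\]
which by the substitution above equals the limit \eqref{eqn:regint2}; this common value is precisely what the statement denotes by $\int^\ast_{\CC} (R_0 E)(\ell z, \tfrac{1}{2} + it) \, dz/\Im(z)$.
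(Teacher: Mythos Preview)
Your proof is correct and follows the same approach as the paper: apply the substitution $w=\ell z$ (equivalently $g_\ell$), use \eqref{eqn:cuspidalregion2} to track the cuspidal zones, note that $\Gamma_0(\ell)$-equivalent cusps have denominators with the same $\gcd$ with $\ell$ so the transformed height parameters coincide, and invoke \hyperref[lem:regint]{Lemma \ref*{lem:regint}}. The paper's proof is a one-line sketch of exactly this; you have simply supplied the details, including the verification that $(c_1,\ell)=(c_2,\ell)$, which the paper merely asserts.
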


\begin{proof}
This follows directly from \hyperref[lem:regint]{Lemma \ref*{lem:regint}} combined with \eqref{eqn:cuspidalregion2} since $\frac{a_1}{b_1}, \frac{a_2}{b_2} \in \Pb^1(\Q)$ are $\Gamma_0(\ell)$-equivalent precisely when $(b_1,\ell)=(b_2,\ell)$.
\end{proof}

\section{Weyl Sums}
\label{sect:Weylsums}

\subsection{Weyl Sums for Newforms and \texorpdfstring{$L$}{L}-Functions}

Let $q$ be a positive squarefree integer for which every prime dividing $q$ splits in $E$, let $f \in \Cscr_0(\Gamma_0(q))$ be a Hecke--Maa\ss{} newform, and let $\chi$ be a narrow class character of $E$. Our goal is to relate the Weyl sum
\begin{equation}
\label{eqn:WeylMaass}
W_{\chi,f} \coloneqq \sum_{A \in \Cl_D^+} \chi(A) \int_{\FF_A(q)} f(z) \, d\mu(z)
\end{equation}
to a special value of the Rankin--Selberg $L$-function $L(s,f \otimes \Theta_{\chi})$, where $\Theta_{\chi}$ denotes the theta series associated to $\chi$, as in \cite[Appendix A.1]{HK20}, which is a newform of weight $0$, level $D$, nebentypus $\chi_D$, Laplacian eigenvalue $\lambda_{\Theta_{\chi}} = 1/4$, and parity $\epsilon_{\Theta_{\chi}} = \chi(J) \in \{1,-1\}$. The automorphic form $\Theta_{\chi}$ is a cusp form if and only if $\chi$ is complex; otherwise $\Theta_{\chi}$ is an Eisenstein series and $\chi$ is a genus character.

\begin{proposition}
\label{prop:MaassnewformWeyl}
Let $q$ be either $1$ or a prime that splits in $E$, let $f \in \Cscr_0(\Gamma_0(q))$ be a Hecke--Maa\ss{} newform normalised such that
\[\int_{\Gamma_0(q) \backslash \Hb} |f(z)|^2 \, d\mu(z) = 1,\]
and let $\chi$ be a narrow class character of $E$. Then there exist constants $c_{\chi,f} \in \C$ and $c_{\chi,h}^{\pm} \in \C$ for each $h \in \BB_2^{\hol}(\Gamma_0(q))$ such that
\begin{multline*}
W_{\chi,f} = c_{\chi,f} \frac{L\left(\frac{1}{2}, f \otimes \Theta_{\chi}\right)^{\frac{1}{2}}}{L(1,\ad f)^{\frac{1}{2}}} + \sum_{j=1}^{2g} \int_{\CC_j} (R_0 f)(z) \, \frac{dz}{\Im(z)} \sum_{h\in \BB_2^{\hol}(\Gamma_0(q))} \sum_{\pm} \langle v^\pm_{h}, \omega_j\rangle_\mathrm{cap}	\\
\times \left(c_{\chi,h}^+ L\left(\frac{1}{2}, h \otimes \Theta_{\overline{\chi}}\right)^{\frac{1}{2}} \pm c_{\chi,h}^- L\left(\frac{1}{2}, h \otimes \Theta_{\chi}\right)^{\frac{1}{2}} \right).
\end{multline*}
Here the sum over $j$ is empty if $q = 1$ (so that $g = 0$), $\CC_1,\ldots \CC_{2g}$ are the hyperbolic sides of $\PP(q)$ as in \eqref{eqn:specialbasis}, and $\omega_1,\ldots, \omega_{2g}$ is the special basis of cohomology as in \eqref{eqn:specialCoh}. Moreover, the constants $c_{\chi,f}$ and $c_{\chi,h}^{\pm}$ satisfy
\[|c_{\chi,f}|^2=\frac{(1 - \epsilon_f \chi(J)) \sqrt{D}}{q \left(\frac{1}{4} + t_f^2\right)^2} \frac{\Gamma\left(\frac{3}{4} + \frac{it_f}{2}\right)^2 \Gamma\left(\frac{3}{4} - \frac{it_f}{2}\right)^2}{\Gamma\left(\frac{1}{2} + it_f\right) \Gamma\left(\frac{1}{2} - it_f\right)},\qquad |c_{\chi,h}^\pm|^2=\frac{\sqrt{D}}{16\pi^2 \left(\frac{1}{4} + t_f^2\right)^2},\]
where $t_f \in \R \cup i[-\frac{7}{64},\frac{7}{64}]$ denotes the spectral parameter and $\epsilon_f \in \{1,-1\}$ denotes the parity of $f$.
\end{proposition}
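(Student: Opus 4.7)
The plan is to apply Stokes' theorem to convert the integral over $\FF_A(q)$ into a boundary integral, to identify the resulting cycle integrals with ad\`{e}lic period integrals, and then to invoke Waldspurger's formula in the explicit form of Martin--Whitehouse.

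Since $\Delta_0 f = \lambda_f f$ with $\lambda_f = \frac{1}{4} + t_f^2 \neq 0$, a direct computation in the coordinates $(z,\overline{z})$ yields the exact-form identity
\[f(z)\,d\mu(z) = \frac{1}{\lambda_f}\, d\!\left(\frac{(R_0 f)(z)}{\Im(z)}\,dz\right),\]
and the one-form $(R_0 f)(z)\,dz/\Im(z)$ is $\Gamma_0(q)$-invariant because $R_0 f$ has weight $2$. Stokes' theorem applied to the projection of $\FF_A(q)$ to $X_0(q)$, combined with \hyperref[lem:multiplicity]{Lemma \ref*{lem:multiplicity}} and \hyperref[lem:multiplicity2]{Lemma \ref*{lem:multiplicity2}}, then gives
\[W_{\chi,f} = \frac{1}{\lambda_f}\sum_{A\in \Cl_D^+}\chi(A)\int_{\CC_A(q)}(R_0 f)\,\frac{dz}{\Im z} \;-\; \frac{1}{\lambda_f}\sum_{j=1}^{2g}\int_{\CC_j}(R_0 f)\,\frac{dz}{\Im z}\sum_{A\in \Cl_D^+}\chi(A)\,\langle[\CC_A(q)],\omega_j\rangle_{\mathrm{cap}}.\]

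For the topological sum on the right, I expand $\omega_j = \sum_{h\in \BB_2^{\hol}(\Gamma_0(q))}\sum_{\pm}\langle v_h^\pm,\omega_j\rangle_{\mathrm{cap}}\,\omega_h^\pm$ in the Hecke basis; the inner sum $\sum_A\chi(A)\langle[\CC_A(q)],\omega_h^\pm\rangle_{\mathrm{cap}}$ unfolds, via the definition of $\omega_h^\pm$, to a linear combination of $\sum_A\chi(A)\int_{\CC_A(q)} h\,dz$ and $\overline{\sum_A \overline{\chi}(A)\int_{\CC_A(q)} h\,dz}$. The unifying observation is that for any weight-$2$ automorphic form $F$ on $\Gamma_0(q)\backslash\Hb$ (either $F = R_0 f$ or $F = h \in \BB_2^{\hol}(\Gamma_0(q))$), a classical-to-ad\`{e}lic computation in the vein of \hyperref[sect:adelisation]{Section \ref*{sect:adelisation}} identifies the weighted cycle sum with an ad\`{e}lic period integral,
\[\sum_{A\in \Cl_D^+}\chi(A)\int_{\CC_A(q)} F(z)\,dz \;=\; c_F \cdot \Pscr_\Omega(\phi_F), \qquad \Pscr_\Omega(\phi_F) \coloneqq \int_{\A_\Q^\times E^\times \backslash \A_E^\times}\phi_F(t)\,\Omega(t)\,dt,\]
where $\Omega$ is the id\`{e}lic lift of $\chi$, $\phi_F$ is the ad\`{e}lic lift of $F$, and $c_F$ is an explicit normalising constant arising from the comparison of Haar measures on the ad\`{e}lic torus with the cycle-length measure on $\CC_A(q)$.

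Waldspurger's formula, in the explicit Martin--Whitehouse form (with the minor modifications needed at the ramified finite places and at $\infty$), then yields $|\Pscr_\Omega(\phi_F)|^2 = (\text{product of local factors})\cdot L(\tfrac{1}{2},\pi_F\otimes\pi_\Omega)/L(1,\mathrm{ad}\,\pi_F)$, and the Rankin--Selberg identity $L(s,\pi_F\otimes\pi_\Omega) = L(s,F\otimes\Theta_\chi)$ produces the asserted $L$-values. The normalisation $\|f\|_2 = 1$ is converted into a factor of $|\rho_f(1)|^2$ via \eqref{eqn:L2rho1}, which is absorbed into $c_{\chi,f}$. The archimedean constants are pinned down by computing the local factor at $\infty$: the test vector on the $f$-side is $R_0 f$, whose archimedean Whittaker function is $W_\infty^2 \in \WW(\pi_\infty,\psi_\infty)$, so that \eqref{eqn:W2int} and \eqref{eqn:W2norm} produce the $\Gamma$-factors in $|c_{\chi,f}|^2$ and the normalising constant in $|c_{\chi,h}^\pm|^2$. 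The parity prefactor $(1 - \epsilon_f\chi(J))$ in $|c_{\chi,f}|^2$ arises from the vanishing of the archimedean period when the parity $\epsilon_f$ of $f$ is incompatible with $\chi(J)$, the latter being the parity of $\Theta_\chi$.

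The principal obstacle is bookkeeping: at every place one must verify that the test vectors produced by the classical-to-ad\`{e}lic correspondence — on the $F$-side, the ad\`{e}lic lift of $R_0 f$ or of $h$; on the $\Theta_\chi$-side, the canonical newvector dictated by the choice of integration cycles $\CC_A(q)$ — agree with the Martin--Whitehouse test vectors up to computable scalars, or else are related to them by prescribed mild modifications. A secondary but parallel calculation is required when $\chi$ is a genus character, so that $\Theta_\chi$ is an Eisenstein series and $\pi_\Omega$ is the isobaric sum $\chi_{D_1}\boxplus\chi_{D_2}$; here $L(s,\pi_F\otimes\pi_\Omega)$ factors as $L(s,F\otimes\chi_{D_1})L(s,F\otimes\chi_{D_2})$ and one must confirm that the constants produced by the ad\`{e}lic identity match those of the cuspidal case and are consistent with the genus-character formula of Duke--Imamo\u{g}lu--T\'oth.
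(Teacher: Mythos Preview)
Your proposal is correct and follows essentially the same route as the paper: Stokes' theorem on $\FF_A(q)$ (the paper's \hyperref[lem:cycletoWeyl]{Lemma \ref*{lem:cycletoWeyl}}), the homological bookkeeping of \hyperref[lem:multiplicity]{Lemmata \ref*{lem:multiplicity}} and \ref{lem:multiplicity2}, the classical-to-ad\`{e}lic identification of the weighted cycle sum (the paper's \hyperref[lem:adelictocycle]{Lemma \ref*{lem:adelictocycle}}, where the conjugation by $\gamma_\infty$ straightening the archimedean embedding is the main subtlety you gloss over), and then Martin--Whitehouse with the modified archimedean test vector (the paper's \hyperref[lem:MW]{Lemma \ref*{lem:MW}}). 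The only cosmetic difference is that for the holomorphic piece the paper quotes Popa's explicit formula \cite[Theorem 6.3.1]{Pop08} directly rather than rederiving it from Martin--Whitehouse, which immediately gives the constant $\sqrt{D}/4\pi^2$ and hence $|c_{\chi,h}^\pm|^2$.
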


\begin{remark}
In terms of the completed $L$-functions
\begin{align*}
\Lambda(s,f \otimes \Theta_{\chi}) & \coloneqq \pi^{-2(s + |\kappa_f - \kappa_{\chi}|)} \Gamma\left(\frac{s + |\kappa_f - \kappa_{\chi}| + it_f}{2}\right)^2 \Gamma\left(\frac{s + |\kappa_f - \kappa_{\chi}| - it_f}{2}\right)^2 L(s,f \otimes \Theta_{\chi}),	\\
\Lambda(s,\ad f) & \coloneqq \pi^{-\frac{3s}{2}} \Gamma\left(\frac{s}{2} + it_f\right) \Gamma\left(\frac{s}{2}\right) \Gamma\left(\frac{s}{2} - it_f\right) L(s,\ad f),
\end{align*}
where $\kappa_f, \kappa_{\chi} \in \{0,1\}$ are such that $(-1)^{\kappa_f} = \epsilon_f$ and $(-1)^{\kappa_{\chi}} = \epsilon_{\Theta_{\chi}} = \chi(J)$, we have that
\[|c_{\chi,f}|^2 \frac{L\left(\frac{1}{2}, f \otimes \Theta_{\chi}\right)}{L(1,\ad f)} = \frac{(1 - \epsilon_f \chi(J)) \pi^2 \sqrt{D}}{q \left(\frac{1}{4} + t_f^2\right)^2} \frac{\Lambda\left(\frac{1}{2},f \otimes \Theta_{\chi}\right)}{\Lambda(1,\ad f)}.\]
\end{remark}

\begin{remark}
Stirling's formula implies that
\begin{equation}
\label{eqn:Stirlingboundscchif}
c_{\chi,f} \ll D^{\frac{1}{4}} q^{-\frac{1}{2}} (1 + |t_f|)^{-\frac{3}{2}}, \qquad c_{\chi,h}^{\pm} \ll D^{\frac{1}{2}} (1 + |t_f|)^{-2}.
\end{equation}
\end{remark}

\begin{remark}
It is instructive to consider the case of $q = 1$ and $\chi$ a genus character associated to the pair of primitive quadratic Dirichlet characters $\chi_{D_1}$ and $\chi_{D_2}$ modulo $|D_1|$ and $|D_2|$ respectively, where $D_1$ and $D_2$ are fundamental discriminants for which $D_1 D_2 = D$. Then $\Theta_{\chi}$ is the Eisenstein newform associated to $\chi_{D_1}$ and $\chi_{D_2}$, as described in \cite{You19}, and so $L(s,f \otimes \Theta_{\chi}) = L(s,f \otimes \chi_{D_1}) L(s,f \otimes \chi_{D_2})$. Since $D > 1$, either $D_1,D_2 > 0$ or $D_1,D_2 < 0$; in the former case, we have that $\chi(J) = 1$, while $\chi(J) = -1$ in the latter case. \hyperref[prop:MaassnewformWeyl]{Proposition \ref*{prop:MaassnewformWeyl}} then gives the identity
\begin{equation}
\label{eqn:Weylsumgenus}
\left|W_{\chi,f}\right|^2 = \frac{(1 - \epsilon_f \chi(J)) \sqrt{D}}{q \left(\frac{1}{4} + t_f^2\right)^2} \frac{\Gamma\left(\frac{3}{4} + \frac{it_f}{2}\right)^2 \Gamma\left(\frac{3}{4} - \frac{it_f}{2}\right)^2}{\Gamma\left(\frac{1}{2} + it_f\right) \Gamma\left(\frac{1}{2} - it_f\right)} \frac{L\left(\frac{1}{2}, f \otimes \chi_{D_1}\right) L\left(\frac{1}{2}, f \otimes \chi_{D_2}\right)}{L(1,\ad f)}.
\end{equation}
We see that the Weyl sum $W_{\chi,f}$ vanishes if $f$ is even and $D_1,D_2 > 0$ or if $f$ is odd and $D_1,D_2 < 0$; additionally, $W_{\chi,f}$ vanishes if $f$ is odd and $D_1,D_2 > 0$, for then the root numbers of $f \otimes \chi_{D_1}$ and $f \otimes \chi_{D_2}$ are both equal to $-1$ \cite[Lemma A.2]{HK20}, and hence $L(s,f \otimes \chi_{D_1})$ and $L(s,f \otimes \chi_{D_2})$ both vanish at $s = 1/2$. These vanishing results and the identity \eqref{eqn:Weylsumgenus} when $f$ is even and $D_1,D_2 < 0$ are in exact accordance with the work of Duke, Imamo\={g}lu, and T\'{o}th \cite[Theorem 4 and (5.17)]{DIT16}.
\end{remark}

Similarly, letting $E(z,s)$ denote the Eisenstein series on $\Gamma \backslash \Hb$, which has parity $1$, we relate the Weyl sum
\begin{equation}
\label{eqn:WeylEis}
W_{\chi,t} \coloneqq \sum_{A \in \Cl_D^+} \chi(A) \int_{\FF_A} E\left(z,\frac{1}{2} + it\right) \, d\mu(z)
\end{equation}
to a special value of $L(s,\Theta_{\chi}) = L(s,\chi)$.

\begin{proposition}
\label{prop:EisnewformWeyl}
Let $\chi$ be a narrow class character of $E$. For $t \in \R$, we have that
\[W_{\chi,t} = \frac{(1 - \chi(J)) D^{\frac{1}{4} + \frac{it}{2}}}{\frac{1}{4} + t^2} \frac{\Gamma\left(\frac{3}{4} + \frac{it}{2}\right)^2}{\Gamma\left(\frac{1}{2} + it\right)} \frac{L\left(\frac{1}{2} + it,\Theta_{\chi}\right)}{\zeta(1 + 2it)}.\]
\end{proposition}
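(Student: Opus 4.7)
The plan mirrors the proof of \hyperref[prop:MaassnewformWeyl]{Proposition \ref*{prop:MaassnewformWeyl}}, but is substantially simpler on two counts. First, at level $q = 1$ the modular curve $X_0(1)$ has genus $g = 0$, so the sum of topological corrections over $\CC_1, \ldots, \CC_{2g}$ in \hyperref[prop:MaassnewformWeyl]{Proposition \ref*{prop:MaassnewformWeyl}} is empty. Second, the automorphic representation associated to $E(\cdot, 1/2 + it)$ is the isobaric sum $|\cdot|^{it} \boxplus |\cdot|^{-it}$, so the corresponding toric period integral factorises linearly and yields $W_{\chi,t}$ itself rather than only $|W_{\chi,t}|^2$.

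The first step is to apply Green's theorem to convert $\int_{\FF_A} E(z, 1/2 + it) \, d\mu(z)$ into a regularised cycle integral along $\CC_A$. Since $E(\cdot, 1/2 + it)$ is a $\Delta_0$-eigenfunction with eigenvalue $1/4 + t^2$, and since at level $1$ the oriented boundary of the projection of $\FF_A$ to $X_0(1)$ consists solely of the oriented closed geodesic $\CC_A$ together with a cuspidal piece that must be regularised, Stokes' theorem combined with \hyperref[lem:regint]{Lemma \ref*{lem:regint}} yields, schematically,
\[\left(\tfrac{1}{4} + t^2\right) \int_{\FF_A} E\bigl(z, \tfrac{1}{2} + it\bigr) \, d\mu(z) = \mathrm{const} \cdot \int_{\CC_A}^{\ast} (R_0 E)\bigl(z, \tfrac{1}{2} + it\bigr) \, \frac{dz}{\Im(z)}.\]
Summing against $\chi(A)$ and unfolding using $E(z,s) = \sum_{\gamma \in \Gamma_{\infty} \backslash \Gamma} \Im(\gamma z)^s$, the $\Gamma$-invariance of the weight-$2$ form $(R_0 E)(\cdot, s) \, dz/\Im(z)$, and the elementary identity $R_0 \Im(z)^s = s \Im(z)^s$, the weighted sum of cycle integrals reduces to a classical sum of arc integrals $\int_{S_Q} y^{s - 1} \, dz$ along individual Heegner semicircles $S_Q$ modulo $\Gamma_{\infty}$, weighted by $\chi$ of the class containing $Q$. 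Parametrising $S_Q$ as $z = -b/(2a) + (\sqrt{D}/(2|a|)) e^{i\theta}$ and using the Beta integral
\[\int_0^{\pi} \sin^{s - 1}\theta \, e^{i\theta} \, d\theta = i \sqrt{\pi} \, \frac{\Gamma\!\left(\tfrac{s + 1}{2}\right)}{\Gamma\!\left(\tfrac{s}{2} + 1\right)},\]
together with the bijection between $\Gamma_{\infty}$-classes of Heegner forms and ideals of $\OO_E$, the remaining Dirichlet series assembles into the Hecke $L$-function $L(s, \chi) = L(s, \Theta_\chi)$.

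Finally, one assembles the constants at $s = 1/2 + it$. The discriminant factor $D^{1/4 + it/2}$ arises from $(\sqrt{D}/(2|a|))^s$ combined with $\Norm(\mathfrak{a}) = a$; the factor $\zeta(1 + 2it)^{-1}$ arises from the normalisation $\rho(1, t) = 1/\xi(1 + 2it)$ in \eqref{eqn:FourierEis}; and the Gamma factors $\Gamma(3/4 + it/2)^2/\Gamma(1/2 + it)$ follow from the above Beta integral after applying the Legendre duplication formula to eliminate $\Gamma((s + 2)/2) = \Gamma(5/4 + it/2)$. The prefactor $(1 - \chi(J))$ arises from the involution $A \mapsto AJ$: geometrically, passing from $A$ to $AJ$ reverses the orientation of $\CC_A$, so $\int_{\CC_{AJ}} = -\int_{\CC_A}$, while $\chi(AJ) = \chi(A)\chi(J)$, producing the factor $(1 - \chi(J))$ after pairing $A$ with $AJ$. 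The principal technical obstacle is the cuspidal regularisation: because $E(z, 1/2 + it)$ has polynomial growth at $\infty$, the area integral over $\FF_A$ must be interpreted as an improper integral, and one must carefully track the constant-term contributions $y^{1/2 \pm it}$ on $\FF_A \cap \FF_{\infty}(Y)$ via the Fourier expansion \eqref{eqn:FourierEis} as $Y \to \infty$, verifying, as in the proof of \hyperref[lem:regint]{Lemma \ref*{lem:regint}}, that they cancel thanks to the $\Gamma$-equivalence of the two cuspidal ends of $\CC_A$.
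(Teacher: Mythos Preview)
Your two-step strategy---Stokes' theorem to pass from the orbifold integral to the geodesic cycle integral, then an unfolding of the Eisenstein series to evaluate $\sum_A\chi(A)\int_{\CC_A}(R_0E)(z,s)\,dz/\Im(z)$ in closed form---is exactly the paper's argument. The paper simply compresses both steps into citations: \hyperref[lem:cycletoWeyl]{Lemma \ref*{lem:cycletoWeyl}} (with $q=1$, hence $g=0$ and no topological terms) gives your Stokes identity with constant $(\tfrac14+t^2)^{-1}$, and \cite[(7.3)]{DIT16} is precisely the unfolding/Beta-integral computation you sketch, followed by analytic continuation to $s=\tfrac12+it$.

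A few inaccuracies in the write-up are worth correcting. The lemma governing the Stokes step is \hyperref[lem:cycletoWeyl]{Lemma \ref*{lem:cycletoWeyl}}, not \hyperref[lem:regint]{Lemma \ref*{lem:regint}} (the latter treats geodesics joining two cusps and plays no role at level $1$). Your phrase ``the two cuspidal ends of $\CC_A$'' is a slip: $\CC_A$ is a \emph{closed} geodesic with no cuspidal ends; the regularisation in Stokes takes place at the unique cusp of $\FF_A$, where the horocycle boundary term vanishes by \hyperref[lem:Eisderivdecay]{Lemma \ref*{lem:Eisderivdecay}}, and the area integral itself converges absolutely since $E(z,\tfrac12+it)\ll y^{1/2}$. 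Finally, your heuristic for the prefactor $(1-\chi(J))$ is not correct as stated: the orientation-reversing involution on closed geodesics is $A\mapsto JA^{-1}$, not $A\mapsto AJ$, and pairing those terms gives $\chi(A)-\chi(J)\overline{\chi(A)}$, which is not $(1-\chi(J))\chi(A)$ for complex $\chi$. The factor actually emerges from the unfolded sum over Heegner forms of both signs of $a$ (equivalently, from the parity of the archimedean integral, parallel to \eqref{eqn:Jpif} in the Maa\ss{} case).
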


\begin{remark}
This is
\[\frac{(1 - \chi(J)) \pi D^{\frac{1}{4} + \frac{it}{2}}}{\frac{1}{4} + t^2} \frac{\Lambda\left(\frac{1}{2} + it,\Theta_{\chi}\right)}{\xi(1 + 2it)}\]
in terms of the completed $L$-functions
\[\Lambda(s,\Theta_{\chi}) \coloneqq \pi^{-s - 1} \Gamma\left(\frac{s + 1}{2}\right)^2 L(s,\Theta_{\chi}), \qquad \xi(s) \coloneqq \pi^{-\frac{s}{2}} \Gamma\left(\frac{s}{2}\right) \zeta(s).\]
\end{remark}

The proofs of \hyperref[prop:MaassnewformWeyl]{Propositions \ref*{prop:MaassnewformWeyl}} and \ref{prop:EisnewformWeyl} are given in \hyperref[sect:proofsWeyl]{Section \ref*{sect:proofsWeyl}}. Our method is to first prove identities relating certain ad\`{e}lic period integrals to $L$-functions, then show that these ad\`{e}lic period integrals are equal to integrals over closed geodesics, and finally relate these integrals over closed geodesics to integrals over hyperbolic orbifolds.

In \hyperref[sect:oldforms]{Section \ref*{sect:oldforms}}, we extend \hyperref[prop:MaassnewformWeyl]{Propositions \ref*{prop:MaassnewformWeyl}} and \ref{prop:EisnewformWeyl} to oldforms; note that if $q > 1$ is squarefree, every Eisenstein series is an oldform \cite{You19}.

\subsection{Ad\`{e}lic Period Integrals and Choices of Test Vectors}

The first step towards proving \hyperref[prop:MaassnewformWeyl]{Proposition \ref*{prop:MaassnewformWeyl}} is to apply a formula due to Martin and Whitehouse \cite{MW09}, extending work of Waldspurger \cite{Wal85}, relating certain ad\`{e}lic period integrals to a ratio of special values of $L$-functions. We now describe in some detail how the results of Martin and Whitehouse apply in our specific case.

\subsubsection{Waldspurger-Type Formul\ae{}}

Following the pioneering work of Waldspurger \cite{Wal85}, there has been considerable work in obtaining explicit formul\ae{} relating ad\`{e}lic period integrals and central values of Rankin--Selberg $L$-functions, as in work of Gross \cite{Gro88}, Zhang \cite{Zha01}, Jacquet and Chen \cite{JC01}, Martin and Whitehouse \cite{MW09}, and File, Martin, and Pitale \cite{FMP17}, among others. The setting is as follows: let $\pi$ be an automorphic representation of $\GL_2(\A_F)$ for some number field $F$, let $E$ be a quadratic extension of $F$ embedded in a quaternion algebra $\Dgp$ defined over $F$, let $\Omega: E^{\times} \backslash \A_E^\times \to \C^\times$ be a unitary Hecke character for which $\Omega|_{\A_F^\times}$ is equal to the central character of $\pi$, and let $\phi$ be a test vector in the automorphic representation $\pi^{\Dgp}$ of $\Dgp^\times (\A_F)$ corresponding to $\pi$ via the Jacquet--Langlands correspondence. We then define the ad\`{e}lic period integral
\[\Pscr_{\Omega}^{\Dgp}(\phi) \coloneqq \int\limits_{\A_{F}^{\times} E^{\times} \backslash \A_E^{\times}} \phi(x) \Omega^{-1}(x) \, dx.\]
Note that implicitly this depends on a choice of embedding $\A_E^\times \hookrightarrow \Dgp^\times (\A_F)$, which is suppressed in the notation, as well as a choice of normalisation of the measure $dx$.

A remarkable result of Waldspurger \cite{Wal85} is the formula
\begin{equation}
\label{eqn:Waldspurger}
\frac{|\Pscr_{\Omega}^{\Dgp}(\phi)|^2}{\langle \phi,\phi \rangle} = c_{\Omega,\phi} \frac{\Lambda\left(\frac{1}{2}, \pi_E \otimes \Omega\right)}{\Lambda(1,\ad \pi)},
\end{equation}
where $\phi$ is \emph{any} nonzero test vector in $\pi^{\Dgp}$ and $c_{\Omega,\phi}$ is a finite product of local factors. Here $\pi_E$ denotes the base change of $\pi$ to an automorphic representation of $\GL_2(\A_E)$; alternatively, we may write $\Lambda(s,\pi_E \otimes \Omega) = \Lambda(s,\pi \otimes \pi_{\Omega})$, where $\pi_{\Omega}$ denotes the automorphic induction of the Hecke character $\Omega$ to an automorphic representation of $\GL_2(\A_F)$.

\subsubsection{An Explicit Formula}

For applications in analytic number theory, it is essential that we have at our disposal a completely explicit form of Waldspurger's formula \eqref{eqn:Waldspurger}. Building on the work of Jacquet and Chen \cite{JC01}, Martin and Whitehouse \cite[Theorem 4.1]{MW09} provide such a formula for a \emph{specific} choice of test vector $\phi\in \pi^{\Dgp}$ (under some local assumptions, which were slightly relaxed by File, Martin, and Pitale \cite{FMP17}). For this specific choice of test vector, the local factors (whose product we denoted by $c_{\Omega,\phi}$ in \eqref{eqn:Waldspurger}) are described in \cite[Section 4.2]{MW09}.

For our application, we can restrict to the case where $F = \Q$, $E = \Q(\sqrt{D})$ with $D$ a positive fundamental discriminant, $\pi = \pi_f = \pi_{\infty} \otimes \bigotimes_p \pi_p$ is a cuspidal automorphic representation of $\GL_2(\A_{\Q})$ associated to a Hecke--Maa\ss{} newform $f \in \Cscr_0(\Gamma_0(q))$ of weight $0$, principal nebentypus, and squarefree level $q$ for which every prime diving $q$ splits in $E$, and $\Omega$ is the id\`{e}lic lift of a narrow class character $\chi$; later, we specialise to $q$ either equal to $1$ or equal to an odd prime. With this choice of data, the quaternion algebra $\Dgp$ is simply the matrix algebra $\Mat_{2 \times 2}$, so that $\pi^{\Dgp} = \pi$. We shorten notation and write $\Pscr_{\Omega}(\phi) \coloneqq \Pscr_{\Omega}^{\Dgp}(\phi)$ in this case.

The choice of test vector $\phi \in \pi$ used in \cite{MW09} is characterised by some local compatibilities with the Hecke character $\Omega$ and thus implicitly depends on the choice of embedding $\Psi_{\A_{\Q}} : \A_E \hookrightarrow \Dgp(\A_{\Q})$. The properties that characterise the local test vectors $\phi_p$ and $\phi_{\infty}$ are described in \cite[p.~172]{MW09} and are as follows.
\begin{itemize}
\item At a nonarchimedean place $p$, Martin and Whitehouse pick $\phi_p \in \pi_p$ to be nonzero and invariant under the units $R^{\times}$ of a certain order $R$ in the local quaternion algebra (which determines $\phi_p$ up to scaling). In our setting, $R$ is simply the Eichler order in $\GL_2(\Q_p)$ of reduced discriminant $p^{c(\pi_p)}$ such that
\[R \cap \Psi_p(E_p) = \Psi_p(\OO_{E_p}),\]
where $c(\pi_p)$ denotes the conductor exponent of $\pi_p$ and $E_p \coloneqq E \otimes \Q_p$.
\item At the archimedean place, we let $K_{\infty} \cong \Ogp(2)$ be a maximal compact subgroup of $\GL_2(\R)$ such that $K_{\infty} \cap \Psi_{\infty}(E_{\infty}^{\times}) \cong (\Z/2\Z)^2$ is a maximal compact subgroup of $\Psi_{\infty}(E_{\infty}^{\times}) \cong (\R^{\times})^2$, where $E_{\infty} \coloneqq E \otimes \R \cong \R^2$. Martin and Whitehouse pick $\phi_{\infty}$ such that $K_{\infty} \cap \Psi_{\infty}(E_{\infty}^{\times})$ acts (via $\pi$) on $\phi_{\infty}$ in the same way as $\Omega_{\infty} : E_{\infty}^{\times} \to \C^{\times}$ and $\phi_{\infty}$ lies in the \emph{minimal} such $K_{\infty}$-type in the sense of Popa \cite[Theorem 1]{Pop08} (which also uniquely determines $\phi_{\infty}$ up to scaling).
\end{itemize}
In our application, we slightly modify the choice of test vector $\phi_{\infty}$.

In order to obtain an explicit formula, we must now specify an embedding $\Psi_{\A_{\Q}} : \A_E \hookrightarrow \Mat_{2 \times 2}(\A_{\Q})$ and then determine which choice of local test vectors $\phi_p$ the above described conditions imply.

\subsubsection{A Specific Test Vector}
\label{sect:specifictestvector}

We construct an embedding $\Psi_{\A_{\Q}}$ using an oriented optimal embedding $\Psi : E \hookrightarrow \Mat_{2 \times 2}(\Q)$ of level $q$ as described in \eqref{eqn:Psi} associated to a Heegner form $Q = [a,b,c] \in \QQ_D(q)$ as in \eqref{eqn:Qxy}. By tensoring with $\A_\Q$, we get an embedding
\[\Psi_{\A_\Q} = (\Psi_{\infty}, \Psi_2,\Psi_3,\Psi_5,\ldots) : \A_E \hookrightarrow \Mat_{2 \times 2}(\A_{\Q}).\]

Since $\Psi$ is an \emph{optimal} embedding, the Eichler order $R$ is exactly the standard order of level $p^{c(\pi_p)}$ for each prime $p$, so that 
\[\prod_p R^\times = K_0(q),\]
where $K_0(q)\subset \GL_2(\A_\Q)$ is the congruence subgroup of level $q$ as in \eqref{eqn:K0(q)}. This means that we can choose the local component of $\phi$ at each prime to be the same as those of the ad\`{e}lisation $\phi_{F_2}$ of our Maa\ss{} cusp form $F_2 = R_0 f$.

At the archimedean place, there is the slight complication that $\Psi_{\infty} : E_{\infty} \hookrightarrow \Mat_{2 \times 2}(\R)$ is \emph{not} the diagonal embedding. If, however, we conjugate $\Psi_{\infty}$ by the matrix $\gamma_{\infty} \in \GL_2^{+}(\R)$ given by
\[\gamma_{\infty} \coloneqq \begin{dcases*}
\begin{pmatrix} -b - \sqrt{D} & b - \sqrt{D} \\ 2a & -2a \end{pmatrix} & if $a > 0$,	\\
\begin{pmatrix} b + \sqrt{D} & b - \sqrt{D} \\ -2a & -2a \end{pmatrix} & if $a < 0$,
\end{dcases*}\]
where $(a,b,c) \in \Z^3$ are associated to $\Psi$ as in \hyperref[sect:embeddings]{Section \ref*{sect:embeddings}}, then we obtain the diagonal embedding, namely
\begin{equation}
\label{eqn:diagembedding}
(\gamma_{\infty} \cdot \Psi_{\infty})(x + \sqrt{D}y,x - \sqrt{D}y) \coloneqq \gamma_{\infty}^{-1} \Psi_{\infty}(x + \sqrt{D}y,x - \sqrt{D}y) \gamma_{\infty} = \begin{pmatrix} x + \sqrt{D} y & 0 \\ 0 & x - \sqrt{D} y \end{pmatrix}.
\end{equation}
Note that $\gamma_{\infty} i = z_Q$ and $\Psi_{\infty}(\epsilon_D,\epsilon_D^{-1}) \gamma_{\infty} i = \gamma_Q z_Q$, where $z_Q$ and $\gamma_Q z_Q$ are as in \eqref{eqn:closedgeodesic}.

With this in mind, Martin and Whitehouse choose the local component of $\phi$ at the archimedean place to be
\[\begin{dcases*}
\pi_{\infty}(\gamma_{\infty}) \phi_{F_0,\infty} & if $\epsilon_f = \chi(J)$,	\\
\pi_{\infty}(\gamma_{\infty}) \phi_{F_2,\infty} - \pi_{\infty}(\gamma_{\infty}) \phi_{F_{-2},\infty} & if $\epsilon_f = -\chi(J)$,
\end{dcases*}\]
where $\phi_{F_0,\infty}$, $\phi_{F_2,\infty}$, and $\phi_{F_{-2},\infty}$ are the local components of the ad\`{e}lisations $\phi_{F_0}$, $\phi_{F_2}$, and $\phi_{F_{-2}}$ of $F_0 = f$, $F_2 = R_0 f$, and $F_{-2} =\LL_0 f$ respectively. We instead merely take the local component of $\phi$ at the archimedean place to be $\pi_{\infty}(\gamma_{\infty}) \phi_{F_2,\infty}$.

Altogether, the above implies that when using the embedding $\Psi_{\A_\Q}: \A_E \hookrightarrow \Mat_{2 \times 2}(\A_{\Q})$, our test vector is
\[\phi = \pi(\gamma_{\infty}) \phi_{F_2},\]
where we view $\gamma_{\infty} \in \GL_2^{+}(\R)$ as an element of $\GL_2(\A_{\Q})$.

\subsection{A Formula for Certain Ad\`{e}lic Period Integrals}

Let $\phi_{F_2} : \GL_2(\A_{\Q}) \to \C$ denote the ad\`{e}lic lift of $F_2 \coloneqq R_0 f \in \Cscr_2(\Gamma_0(q))$, which is an element of the cuspidal automorphic representation $\pi = \pi_f$ of $\GL_2(\A_{\Q})$ associated to $f$. Let $\Omega \in \widehat{E^{\times} \backslash \A_E^1}$ be the id\`{e}lic lift of $\chi$, so that $\Omega$ is a unitary Hecke character that is unramified at every nonarchimedean place of $E$ and has local components at the two archimedean places of $E$ of the form $(\sgn^{\kappa_{\chi}},\sgn^{\kappa_{\chi}})$ with $\kappa_{\chi} \in \{0,1\}$ such that $(-1)^{\kappa_{\chi}} = \chi(J)$. We study the ad\`{e}lic period integral
\begin{equation}
\label{eqn:Pscrdef}
\Pscr_{\Omega}(\pi(\gamma_{\infty})\phi_{F_2}) \coloneqq \int\limits_{\A_{\Q}^{\times} E^{\times} \backslash \A_E^{\times}} \phi_{F_2}(\Psi_{\A_\Q}(x)\gamma_{\infty}) \Omega^{-1}(x) \, dx.
\end{equation}
The measure $dx$ is normalised such that $\A_{\Q}^{\times} E^{\times} \backslash \A_E^{\times}$ has volume $2\Lambda(1,\chi_D) = 2L(1,\chi_D)$, where
\[\Lambda(s,\chi_D) \coloneqq \pi^{-\frac{s}{2}} \Gamma\left(\frac{s}{2}\right) L(s,\chi_D).\]

\begin{lemma}
\label{lem:MW}
We have that
\[\left|\Pscr_{\Omega}(\pi(\gamma_{\infty})\phi_{F_2})\right|^2 = \frac{1 - \epsilon_f \chi(J)}{q\sqrt{D}} \frac{\Gamma\left(\frac{3}{4} + \frac{it_f}{2}\right)^2 \Gamma\left(\frac{3}{4} - \frac{it_f}{2}\right)^2}{\Gamma\left(\frac{1}{2} + it_f\right) \Gamma\left(\frac{1}{2} - it_f\right)} \frac{L\left(\frac{1}{2}, f \otimes \Theta_{\chi}\right)}{L(1,\ad f)}.\]
\end{lemma}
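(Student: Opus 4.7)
The plan is to deduce the identity from Waldspurger's formula in the explicit form of Martin--Whitehouse \cite[Theorem 4.1]{MW09}, as set up in \hyperref[sect:specifictestvector]{Section \ref*{sect:specifictestvector}}. Since our test vector $\pi(\gamma_\infty)\phi_{F_2}$ differs from the Martin--Whitehouse test vector only at the archimedean place, the proof reduces to (i) applying their identity verbatim for the Martin--Whitehouse vector, (ii) analyzing the difference at infinity, and (iii) matching constants.

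First I would handle the finite places. Because the oriented optimal embedding $\Psi$ arises from a Heegner form of level $q$, the Eichler order $R$ satisfying $R \cap \Psi_p(E_p) = \Psi_p(\OO_{E_p})$ at each prime $p$ coincides with the standard order $K_0(q) \cap \GL_2(\Q_p)$ used in \cite{MW09}, so the nonarchimedean local data of our test vector matches theirs exactly. The local factors tabulated in \cite[Section 4.2]{MW09} are then trivial at $p \nmid q$ (after absorbing unramified Euler factors into the global $L$-functions) and contribute $p^{-1}$ at each $p \mid q$, aggregating to the factor $1/q$ in the final formula.

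The archimedean analysis will be the heart of the argument. When $\epsilon_f = \chi(J)$, I would argue that the local period against $\Omega_\infty^{-1}$ vanishes by a direct parity check: after conjugation by $\gamma_\infty$, the intersection $K_\infty \cap \Psi_\infty(E_\infty^\times)$ contains the element $\begin{psmallmatrix} -1 & 0 \\ 0 & 1 \end{psmallmatrix}$, and the projection of $\phi_{F_2,\infty}$ onto the $\Omega_\infty$-eigencomponent in $\pi_\infty$ vanishes precisely in this case, accounting for the factor $(1 - \epsilon_f\chi(J))$. In the complementary case $\epsilon_f = -\chi(J)$, I would compare $\pi_\infty(\gamma_\infty)\phi_{F_2,\infty}$ to the Martin--Whitehouse archimedean vector $\pi_\infty(\gamma_\infty)(\phi_{F_2,\infty} - \phi_{F_{-2},\infty})$: by the symmetry relating weights $\pm 2$ in the induced model of $\pi_\infty$, their periods against $\Omega_\infty^{-1}$ agree up to a factor of $2$. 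The local inner product $\langle \phi_{F_2,\infty}, \phi_{F_2,\infty} \rangle$, which must be divided out when passing from the Martin--Whitehouse normalization to ours, is supplied by \hyperref[lem:archimedeanWhittaker]{Lemma \ref*{lem:archimedeanWhittaker}}(4) and yields precisely the Gamma quotient in the statement.

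The final step is bookkeeping: assembling the Martin--Whitehouse identity, the $1/q$ from the finite places, the parity factor $(1 - \epsilon_f\chi(J))$, the archimedean Gamma quotient, and the factor $1/\sqrt{D}$ arising from our normalization of the measure on $\A_{\Q}^{\times} E^{\times} \backslash \A_E^{\times}$ with total volume $2L(1,\chi_D)$ compared to the Tamagawa convention of \cite{MW09}. The hard part will be the archimedean local analysis above, particularly verifying the eigencomponent vanishing and the factor-of-$2$ claim, as well as reconciling the conventions of \cite{MW09}, \cite{Pop08}, and the classical Whittaker normalizations of \hyperref[sect:adelisation]{Section \ref*{sect:adelisation}}.
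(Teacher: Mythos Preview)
Your overall strategy---invoking the explicit Waldspurger formula of Martin--Whitehouse and isolating the archimedean place as the only subtlety---is the same as the paper's. However, the archimedean handling differs in a meaningful way, and your bookkeeping of the constants has gaps.

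The paper does \emph{not} apply \cite[Theorem 4.1]{MW09} with the Martin--Whitehouse archimedean vector and then transfer to $\pi_\infty(\gamma_\infty)\phi_{F_2,\infty}$ via a period-and-norm comparison. Instead, it observes that the Martin--Whitehouse identity holds for an arbitrary archimedean vector once one replaces the packaged constant $C_\infty(E,\pi,\Omega)$ by its definition as the local distribution
\[\widetilde{J}_{\pi_\infty}(f_\infty) = \frac{\left|\int_{\R^\times} W_\infty^2\begin{psmallmatrix}a&0\\0&1\end{psmallmatrix}\sgn^{\kappa_\chi}(a)\,d^\times a\right|^2}{\int_{\R^\times}\left|W_\infty^2\begin{psmallmatrix}a&0\\0&1\end{psmallmatrix}\right|^2\,d^\times a},\]
and then evaluates this ratio directly from \eqref{eqn:W2int} and \eqref{eqn:W2norm}. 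This sidesteps entirely your proposed ``factor-of-$2$'' comparison between $\phi_{F_2,\infty}$ and $\phi_{F_2,\infty}-\phi_{F_{-2},\infty}$, and both the vanishing when $\epsilon_f=\chi(J)$ and the Gamma quotient fall out of a single computation.

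Your finite-place accounting is also off. The nonarchimedean local factors from \cite[Section 4.2]{MW09} contribute $\prod_{p\mid q}(1-p^{-1})^{-1}$, not $p^{-1}$ each. The factor $1/q$ in the final formula instead arises from the global $L^2$-norm $\int_{\Zgp(\A_\Q)\GL_2(\Q)\backslash\GL_2(\A_\Q)}|\phi_{F_2}|^2\,dg$, computed via strong approximation and the identity $\|R_0 f\|^2 = (\tfrac14+t_f^2)\|f\|^2$. This step---which you omit---also produces a factor $(\tfrac14+t_f^2)$ that cancels against the same factor appearing in the denominator of $\widetilde{J}_{\pi_\infty}(f_\infty)$; your outline does not track this cancellation.
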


\begin{proof}
We apply \cite[Theorem 4.1]{MW09} with $F = \Q$, $E = \Q(\sqrt{D})$, $\varphi = \pi(\gamma_{\infty}) \phi_{F_2}$ (so that $\pi = \pi_f$), and $\Omega$ as above. With this choice of data, we have that $S'(\pi) = S(\Omega) = \emptyset$, $\Ram(\pi) = \{p : p \mid q\}$, $\Delta_F = 1$, $\Delta_E = D$, $c(\Omega) = 1$, and $\Sigma_F^{\infty} = \{\infty\}$ in the notation of \cite[Theorem 4.1]{MW09}.

There is a slight caveat; \cite[Theorem 4.1]{MW09} does not quite apply since although the automorphic form $\phi_{F_2}$ has the same local Whittaker functions $W_p \in \WW(\pi_p,\psi_p)$ at every nonarchimedean place to that appearing in \cite[Theorem 4.1]{MW09} (compare \hyperref[lem:nonarchimedeanWhittaker]{Lemma \ref*{lem:nonarchimedeanWhittaker}} to \cite[Section 2]{MW09}), the local Whittaker function $W_{\infty}^2 \in \WW(\pi_{\infty},\psi_{\infty})$ at the archimedean place, as described in \hyperref[lem:archimedeanWhittaker]{Lemma \ref*{lem:archimedeanWhittaker}}, has a slightly different form than that appearing in \cite[Theorem 4.1]{MW09}. This issue is readily circumvented: we replace the term $C_{\infty}(E,\pi,\Omega)$ appearing in \cite[Theorem 4.1]{MW09} with its definition in \cite[Section 4.2.2]{MW09} in terms of local archimedean $L$-functions and $\widetilde{J}_{\pi_{\infty}}(f_{\infty})$, where now
\[\widetilde{J}_{\pi_{\infty}}(f_{\infty}) = \frac{\displaystyle \left|\int_{\R^{\times}} W_{\infty}^{2}\begin{pmatrix} a & 0 \\ 0 & 1 \end{pmatrix} \sgn^{\kappa_{\chi}}(a) \, d^{\times}a\right|^2}{\displaystyle \int_{\R^{\times}} \left|W_{\infty}^{2}\begin{pmatrix} a & 0 \\ 0 & 1 \end{pmatrix}\right|^2 \, d^{\times}a}.\]
This local distribution is just as in \cite[Section 3.3]{MW09} except that we have projected onto the local Whittaker function $W_{\infty}^{2} \in \WW(\pi_{\infty},\psi_{\infty})$ associated to $\phi_{F_2}$ instead of the local Whittaker function $W_{\infty} \in \WW(\pi_{\infty},\psi_{\infty})$ for which the numerator is equal to the local archimedean $L$-function and additionally satisfying $W_{\infty}\begin{psmallmatrix} a & 0 \\ 0 & 1 \end{psmallmatrix} = (-1)^{\kappa_{\Omega}} W_{\infty}\begin{psmallmatrix} -a & 0 \\ 0 & 1 \end{psmallmatrix}$ for all $a \in \R^{\times}$.

With this minor modification at the archimedean place, we deduce from \cite[Theorem 4.1]{MW09} that
\begin{equation}
\label{eqn:PscrMW}
\left|\Pscr_{\Omega}(\pi(\gamma_{\infty}) \phi_{F_2})\right|^2 = \frac{\pi}{2\sqrt{D}} \prod_{p \mid q} \frac{1}{1 - p^{-1}} \widetilde{J}_{\pi_{\infty}}(f_{\infty}) \frac{L\left(\frac{1}{2}, \pi_f \otimes \pi_{\Omega}\right)}{L(1,\ad \pi_f)} \int\limits_{\Zgp(\A_{\Q}) \GL_2(\Q) \backslash \GL_2(\A_{\Q})} |\phi_{F_2}(g)|^2 \, dg,
\end{equation}
where $\Zgp(\A_{\Q})$ denotes the centre of $\GL_2(\A_{\Q})$ and the measure $dg$ is normalised to be the Tamagawa measure multiplied by $\xi^q(2)$, where
\[\xi^q(s) \coloneqq \pi^{-\frac{s}{2}} \Gamma\left(\frac{s}{2}\right) \zeta(s) \prod_{p \mid q} \left(1 - \frac{1}{p^s}\right),\]
so that
\begin{equation}
\label{eqn:adelicvolume}
\vol(\Zgp(\A_{\Q}) \GL_2(\Q) \backslash \GL_2(\A_{\Q})) = 2 \xi^q(2) = \frac{\pi}{3} \prod_{p \mid q} \left(1 - \frac{1}{p^2}\right)
\end{equation}
since the Tamagawa number of $\PGL_2$ is $2$. From \eqref{eqn:W2int} and \eqref{eqn:W2norm} with $\kappa = \kappa_f$, $\kappa' = \kappa_{\chi}$, $t = t_f$, and $s = 1/2$,
\begin{equation}
\label{eqn:Jpif}
\widetilde{J}_{\pi_{\infty}}(f_{\infty}) = \begin{dcases*}
\frac{4}{\pi \left(\frac{1}{4} + t_f^2\right)} \frac{\Gamma\left(\frac{3}{4} + \frac{it_f}{2}\right)^2 \Gamma\left(\frac{3}{4} - \frac{it_f}{2}\right)^2}{\Gamma\left(\frac{1}{2} + it_f\right) \Gamma\left(\frac{1}{2} - it_f\right)} & if $\kappa_f \equiv \kappa_{\chi} + 1 \pmod{2}$,	\\
0 & if $\kappa_f \equiv \kappa_{\chi} \pmod{2}$.
\end{dcases*}
\end{equation}
Furthermore,
\begin{equation}
\label{eqn:L2norm}
\begin{split}
\int\limits_{\Zgp(\A_{\Q}) \GL_2(\Q) \backslash \GL_2(\A_{\Q})} |\phi_{F_2}(g)|^2 \, dg & = \frac{1}{q} \prod_{p \mid q} \left(1 - \frac{1}{p}\right) \int_{\Gamma_0(q) \backslash \Hb} |(R_0 f)(z)|^2 \, d\mu(z)	\\
& = \frac{1}{q} \prod_{p \mid q} \left(1 - \frac{1}{p}\right) \left(\frac{1}{4} + t_f^2\right) \int_{\Gamma_0(q) \backslash \Hb} |f(z)|^2 \, d\mu(z),
\end{split}
\end{equation}
where the first equality holds via the strong approximation theorem, \eqref{eqn:strongapprox}, while the second equality follows from \cite[(4.38)]{DFI02}; to check that the normalisation of measures in the first equality is correct, we replace $\phi_{F_2}$ with the constant function $1$ and recall \eqref{eqn:adelicvolume} and \eqref{eqn:Gamma0qvol}. We obtain the result upon combining \eqref{eqn:PscrMW}, \eqref{eqn:Jpif}, and \eqref{eqn:L2norm} and noting that $L(s,\pi_f \otimes \pi_{\Omega}) = L(s, f \otimes \Theta_{\chi})$ and $L(s,\ad \pi_f) = L(s,\ad f)$, and finally that
\[1 - \epsilon_f \chi(J) = \begin{dcases*}
2 & if $\kappa_f \equiv \kappa_{\chi} + 1 \pmod{2}$,	\\
0 & if $\kappa_f \equiv \kappa_{\chi} \pmod{2}$.
\end{dcases*}\qedhere\]
\end{proof}

\subsection{From Ad\`{e}lic Period Integrals to Cycle Integrals}

We relate the ad\`{e}lic period integral \eqref{eqn:Pscrdef} to a certain sum of cycle integrals over oriented geodesics in $\Gamma_0(q) \backslash \Hb$ indexed by narrow ideal classes. We first define these cycle integrals and show that they are well-defined. Given $f \in \Cscr_0(\Gamma_0(q))$ and a Heegner form $Q \in \QQ_D(q)$, we consider the cycle integral
\begin{equation}
\label{eqn:cycleint}
\int_{z_Q}^{\gamma_Q z_Q} (R_0 f)(z) \, \frac{dz}{\Im(z)},
\end{equation}
where $z_Q$ and $\gamma_Q z_Q$ are as in \eqref{eqn:closedgeodesic} and the contour of integration is the geodesic segment between these two points.

\begin{lemma}
\label{lem:cycintinv}
For all $\gamma \in \Gamma_0(q)$, the cycle integral \eqref{eqn:cycleint} is invariant under replacing $Q$ by $\gamma \cdot Q$.
\end{lemma}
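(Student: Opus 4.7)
The plan is to show that the differential $1$-form $\omega \coloneqq (R_0 f)(z) \, dz/\Im(z)$ on $\Hb$ is $\Gamma_0(q)$-invariant and then to combine this with the equivariance of the $\Gamma_0(q)$-action on Heegner forms, embeddings, and closed geodesics recorded in \hyperref[sect:embeddings]{Sections \ref*{sect:embeddings}} and \ref{sect:closedgeodesic}.

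The $\Gamma_0(q)$-invariance of $\omega$ is immediate: since $R_0 f \in \Cscr_2(\Gamma_0(q))$ has weight two, for $\gamma = \begin{psmallmatrix} \alpha & \beta \\ \delta & \epsilon \end{psmallmatrix} \in \Gamma_0(q)$ one has $(R_0 f)(\gamma z) = j_\gamma(z)^2 (R_0 f)(z)$, while $d(\gamma z) = (\delta z + \epsilon)^{-2} dz$ together with $\Im(\gamma z) = \Im(z)/|\delta z + \epsilon|^2$ yields $d(\gamma z)/\Im(\gamma z) = j_\gamma(z)^{-2} \, dz/\Im(z)$. The two factors of $j_\gamma(z)^2$ cancel, giving $\gamma^\ast \omega = \omega$.

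From \hyperref[sect:embeddings]{Section \ref*{sect:embeddings}}, the $\Gamma_0(q)$-action on oriented optimal embeddings yields $\Psi_{\gamma \cdot Q} = \gamma^{-1} \Psi_Q \gamma$, whence $\gamma_{\gamma \cdot Q} = \gamma^{-1} \gamma_Q \gamma$ and consequently $S_{\gamma \cdot Q} = \gamma^{-1} S_Q$. Making the change of variables $z = \gamma w$ and using the $\Gamma_0(q)$-invariance of $\omega$, I would obtain
\[\int_{z_Q}^{\gamma_Q z_Q} (R_0 f)(z) \, \frac{dz}{\Im(z)} = \int_{\gamma^{-1} z_Q}^{\gamma^{-1} \gamma_Q z_Q} (R_0 f)(z) \, \frac{dz}{\Im(z)},\]
whose right-hand path lies along $S_{\gamma \cdot Q}$ and traverses one full period of $\gamma_{\gamma \cdot Q}$, since $\gamma_{\gamma \cdot Q}(\gamma^{-1} z_Q) = \gamma^{-1} \gamma_Q z_Q$.

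The final step is to observe that the integral $\int_{z_0}^{\gamma_{\gamma \cdot Q} z_0} \omega$ over one period of $\gamma_{\gamma \cdot Q}$ along $S_{\gamma \cdot Q}$ is independent of the starting point $z_0 \in S_{\gamma \cdot Q}$. For any two such points $z_0, z_1 \in S_{\gamma \cdot Q}$, a standard splitting of the contour together with the $\gamma_{\gamma \cdot Q}$-invariance of $\omega$ (which yields $\int_{\gamma_{\gamma \cdot Q} z_0}^{\gamma_{\gamma \cdot Q} z_1} \omega = \int_{z_0}^{z_1} \omega$) gives the equality of the two cycle integrals. Applying this with $z_0 = \gamma^{-1} z_Q$ and $z_1 = z_{\gamma \cdot Q}$ completes the proof. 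No step is particularly delicate; the only mildly subtle point is that $\gamma^{-1}$ need \emph{not} map the apex $z_Q$ of $S_Q$ to the apex $z_{\gamma \cdot Q}$ of $S_{\gamma \cdot Q}$, and it is precisely this discrepancy that the starting-point independence absorbs.
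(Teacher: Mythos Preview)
Your proof is correct and follows essentially the same approach as the paper: both first use the $\Gamma_0(q)$-invariance of the $1$-form $(R_0 f)(z)\,dz/\Im(z)$ to transport the contour onto $S_{\gamma\cdot Q}$, and then reduce to showing that the one-period integral along $S_{\gamma\cdot Q}$ is independent of the starting point. The only cosmetic difference is that the paper handles this last step by first rotating the contour by a suitable power of $\gamma_{Q'}$ so that it overlaps the segment $[z_{Q'},\gamma_{Q'}z_{Q'}]$ and then splitting, whereas your direct splitting argument using $\int_{\gamma_{Q'}z_0}^{\gamma_{Q'}z_1}\omega=\int_{z_0}^{z_1}\omega$ accomplishes the same thing in one stroke.
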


For this reason, we may write \eqref{eqn:cycleint} as
\[\int_{\CC_A(q)} (R_0 f)(z) \, \frac{dz}{\Im(z)}\]
without ambiguity, where $\CC_A(q)$ denotes the oriented geodesic in $\Gamma_0(q) \backslash \Hb$ associated to a narrow ideal class $A$ corresponding to $Q$ as in \hyperref[sect:closedgeodesic]{Section \ref*{sect:closedgeodesic}}, since this cycle integral is independent of the choice of Heegner form $Q$ associated to $A$.

\begin{proof}[Proof of {\hyperref[lem:cycintinv]{Lemma \ref*{lem:cycintinv}}}]
Suppose that $Q' = \gamma \cdot Q$ for some $\gamma \in \Gamma_0(q)$. We make the change of variables $z \mapsto \gamma^{-1} z$ in \eqref{eqn:cycleint}. The integrand remains unchanged since
\[(R_0 f)(\gamma z) = j_{\gamma}(z)^2 (R_0 f)(z), \qquad \frac{d}{dz}(\gamma z) = \frac{\Im(\gamma z)}{\Im(z)} j_{\gamma}(z)^{-2}.\]
It is easily checked that $\gamma^{-1} \gamma_Q \gamma = \gamma_{Q'}$, and so the new contour of integration is the geodesic segment from $\gamma z_Q$ to $\gamma_{Q'} \gamma z_Q$ on the semicircle \eqref{eqn:semicircle} associated to $Q'$. Further changes of variables by powers of $\gamma_{Q'}$ rotate the contour of integration along this semicircle while leaving the integrand intact, and so there is an appropriate power of $\gamma_{Q'}$ for which the resulting geodesic segment intersects nontrivially with the geodesic segment from $z_{Q'}$ to $\gamma_{Q'} z_{Q'}$. We then break up the integral into two parts, and for the part that does not intersect this geodesic segment, we make one last change of variables by either $\gamma_{Q'}$ or $\gamma_{Q'}^{-1}$ as appropriate; recombining, we obtain \eqref{eqn:cycleint} with $Q'$ in place of $Q$.
\end{proof}

With this in hand, we now write the ad\`{e}lic period integral $\Pscr_{\Omega}(\pi(\gamma_{\infty}) \phi_{F_2})$ defined in \eqref{eqn:Pscrdef} as a geodesic cycle integral.

\begin{lemma}
\label{lem:adelictocycle}
We have that
\begin{equation}
\label{eqn:adelictocycle}
\Pscr_{\Omega}(\pi(\gamma_{\infty}) \phi_{F_2}) = -\frac{i \overline{\chi}(A_\Psi)}{\sqrt{D}} \sum_{A \in \Cl_D^+} \overline{\chi}(A) \int_{\CC_A(q)} (R_0 f)(z) \, \frac{dz}{\Im(z)},
\end{equation}
where $A_\Psi \in \Cl_D^+$ is the element of the narrow class group associated to the oriented optimal embedding $\Psi$. In particular, $\Pscr_{\Omega}(\pi(\gamma_{\infty}) \phi_{F_2})$ is independent of the choice of oriented optimal embedding $\Psi$ of level $q$ within an equivalence class of embeddings modulo the action of $\Gamma_0(q)$.
\end{lemma}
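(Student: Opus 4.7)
The strategy would be to unfold the adèlic period integral by fibering $\A_\Q^\times E^\times \backslash \A_E^\times$ over the narrow class group $\Cl_D^+$ and then matching each fibre integral with a cycle integral via strong approximation. First I would fix representatives $\{a_A\}_{A \in \Cl_D^+} \subset \A_{E,f}^\times$ with trivial archimedean component. The archimedean part of each fibre is a fundamental domain for $E_\infty^{\times,+}/(\R_+ \cdot \langle \epsilon_D \rangle)$, which I would parametrize by $y = x_{\infty,1}/x_{\infty,2} \in [1, \epsilon_D^2]$ carrying the natural Haar measure $dy/y$ of total length $2\log\epsilon_D$. The normalisation $\vol(\A_\Q^\times E^\times \backslash \A_E^\times) = 2\Lambda(1,\chi_D) = 2L(1,\chi_D)$ together with the class number formula $L(1,\chi_D) = h_D^+ \log \epsilon_D/\sqrt{D}$ forces the adèlic Haar measure to restrict to $(1/\sqrt{D})\,dy/y$ on each of the $h_D^+$ archimedean fibres, which accounts for the factor $\sqrt{D}^{-1}$ in the target identity.

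For each $A$, strong approximation allows me to write $\Psi_f(a_A) = \gamma_A \cdot k_A$ with $\gamma_A \in \GL_2^+(\Q)$ and $k_A \in K_0(q)_f$ (after possibly modifying $a_A$ by a central element of $\A_\Q^\times$, which leaves $\Pscr_\Omega$ unchanged since $\Omega$ is trivial on $\A_\Q^\times$). Using the left $\GL_2(\Q)$-invariance and right $K_0(q)$-invariance of $\phi_{F_2}$, together with the diagonalisation \eqref{eqn:diagembedding}, one finds
\[\phi_{F_2}(\Psi_{\A_\Q}(a_A x_\infty)\gamma_\infty) = \widetilde{F_2}(g(y)), \qquad g(y) \coloneqq \gamma_A^{-1}\gamma_\infty \cdot \mathrm{diag}(\sqrt{y},1/\sqrt{y}).\]
As $y$ ranges over $[1,\epsilon_D^2]$, the point $g(y)i = \gamma_A^{-1}\gamma_\infty(yi)$ traces the geodesic arc on $\gamma_A^{-1} S_Q$ from $\gamma_A^{-1} z_Q$ to $\gamma_A^{-1}\gamma_Q z_Q$. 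Commutativity of $\A_E^\times$ yields $\gamma_A^{-1}\gamma_Q\gamma_A = k_A \gamma_Q k_A^{-1}$, which lies in $K_0(q)_f \cap \GL_2(\Q) = \Gamma_0(q)$, so this arc descends to a closed geodesic in $X_0(q)$; tracking the bijection between $\Cl_D^+$ and $\Gamma_0(q)$-equivalence classes of oriented optimal embeddings identifies it as $\CC_{A \cdot A_\Psi^{-1}}(q)$.

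Next I would perform the pullback calculation. A short direct computation using $\widetilde{F_2}(g) = j_g(i)^{-2} F_2(gi)$ together with the transformation behaviour of $dz/\Im(z)$ under $z = g(y)i$ reveals that the weight-$2$ automorphy factors $j_g(i)^{\pm 2}$ cancel completely, giving
\[(R_0 f)(z)\,\frac{dz}{\Im(z)} = i\,\widetilde{F_2}(g(y))\,\frac{dy}{y}.\]
Combining this with the measure normalisation and $\Omega^{-1}(a_A) = \overline{\chi}(A)$ produces
\[\Pscr_\Omega(\pi(\gamma_\infty)\phi_{F_2}) = \frac{-i}{\sqrt{D}} \sum_{A \in \Cl_D^+} \overline{\chi}(A) \int_{\CC_{A \cdot A_\Psi^{-1}}(q)} (R_0 f)(z)\,\frac{dz}{\Im(z)}.\]
Re-indexing by $B = A \cdot A_\Psi^{-1}$ and using $\overline{\chi}(A) = \overline{\chi}(A_\Psi)\overline{\chi}(B)$ then yields the claimed identity; independence of the choice of $\Psi$ within its $\Gamma_0(q)$-equivalence class follows at once from \hyperref[lem:cycintinv]{Lemma \ref*{lem:cycintinv}}.

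The hard part will be the measure normalisation: one has to verify that with the Tamagawa-type measure of total volume $2\Lambda(1,\chi_D)$, the pushforward to each archimedean fibre equals exactly $(1/\sqrt{D})\,dy/y$. This demands careful bookkeeping of local Haar measures at every finite place, matched against the global normalisation dictated by the analytic class number formula. A secondary subtlety is pinning down the direction of the class shift $A \mapsto A \cdot A_\Psi^{-1}$, which requires tracking the equivariance of the class-group action on adèlic representatives versus on oriented optimal embeddings modulo $\Gamma_0(q)$.
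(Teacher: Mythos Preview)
Your proposal is correct and follows essentially the same route as the paper: fibre the adèlic integral over $\Cl_D^+$ via strong approximation, normalise the archimedean measure using the class number formula, and match each fibre with a cycle integral after checking that the weight-$2$ automorphy factors cancel. The only notable difference is that where you argue directly from commutativity of $\A_E^\times$ that $\gamma_A^{-1}\gamma_Q\gamma_A \in \Gamma_0(q)$ and leave the precise direction of the class shift as a flagged subtlety, the paper instead invokes \cite[Theorem 6.2.2]{Pop06} to handle both the bijection between $A$ and conjugated embeddings and the identification of the resulting narrow ideal class.
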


A related result is proven by Popa in \cite[Section 6]{Pop06}, namely the identity
\[\Pscr_{\Omega}(\pi(\gamma_{\infty}) \phi) = \frac{i^k \overline{\chi}(A_\Psi)}{D^{\frac{k - 1}{2}}} \sum_{A \in \Cl_D^+} \overline{\chi}(A) \int_{\CC_A(q)} g(z) Q(z,1)^{k - 1} \, dz.\]
Here $g$ is a \emph{holomorphic} Hecke newform of weight $k$, level $q$, and trivial nebentypus and $\phi$ is the ad\`{e}lic lift of $g$. Popa additionally proves an identity relating $|\Pscr_{\Omega}(\pi(\gamma_{\infty}) \phi)|^2$ to $L(\frac{1}{2},g \otimes \Theta_{\chi})$ akin to \hyperref[lem:MW]{Lemma \ref*{lem:MW}} via the theta correspondence \cite[Theorem 5.4.1]{Pop06}. The proof of \hyperref[lem:adelictocycle]{Lemma \ref*{lem:adelictocycle}} given below closely follows that of Popa in \cite[Section 6]{Pop06}.

\begin{proof}[Proof of {\hyperref[lem:adelictocycle]{Lemma \ref*{lem:adelictocycle}}}]
Since $\Omega$ is the id\`{e}lic lift of a narrow class character, it is trivial on both $\widehat{\OO}_E^{\times}$ and $\A_{\Q}^{\times}$. Furthermore, we have the inclusion $\Psi_{\A_{\Q}}(\widehat{\OO}_E^{\times}) \subset K_0(q)$ since $\Psi$ is an optimal embedding. As the newform $f$ has level $q$, it follows that both $\pi(\gamma_{\infty})\phi_{F_2}$ and $\Omega$ are well-defined on the double quotient $\A_{\Q}^{\times} E^{\times} \backslash \A_E^{\times} / \widehat{\OO}_E^{\times}$. Since $\widehat{\OO}_E^{\times}$ has measure $1$, we deduce that
\[\Pscr_{\Omega}(\pi(\gamma_{\infty}) \phi_{F_2}) = \int\limits_{\A_{\Q}^{\times} E^{\times} \backslash \A_E^{\times} / \widehat{\OO}_E^{\times}} \phi_{F_2}(\Psi_{\A_{\Q}}(x) \gamma_{\infty}) \Omega^{-1}(x) \, dx.\]

Via the strong approximation theorem, we have the decomposition
\[\A_{\Q}^{\times} E^{\times} \backslash \A_E^{\times} / \widehat{\OO}_E^{\times} \cong \bigsqcup_{A \in \Cl_D^+} A \cdot \epsilon_D^{\Z} \backslash E_{\infty}^1,\]
where $A = (A_v) \in \A_E^{\times}$ runs through a set of finite id\`{e}le representatives of the narrow class group $\Cl_D^+$ (so that $A_v = 1$ if $v$ is an archimedean place of $E$), which we freely identify with elements of the narrow class group, while $E_{\infty}^1 \coloneqq \{(t,t^{-1}) \in E_{\infty} : t > 0\} \cong \R_{+}^{\times}$, which we view as a subgroup of $\A_E^{\times}$ via the embedding $(t,t^{-1}) \mapsto (t,t^{-1},1,1,\ldots)$, and $\epsilon_D^{\Z} \coloneqq \{(\epsilon_D^m, \epsilon_D^{-m}) \in E_{\infty} : m \in \Z\}$. Thus every $x \in \A_{\Q}^{\times} E^{\times} \backslash \A_E^{\times} / \widehat{\OO}_E^{\times}$ can be written as $x = A (t,t^{-1},1,1,\ldots)$ with $t \in [1,\epsilon_D)$. From this, we may write
\[\Pscr_{\Omega}(\pi(\gamma_{\infty}) \phi_{F_2}) = \frac{2}{\sqrt{D}} \sum_{A \in \Cl_D^+} \overline{\chi}(A) \int_{1}^{\epsilon_D} \phi_{F_2}(\Psi_{\A_{\Q}}(A) \Psi_{\infty}(t,t^{-1}) \gamma_{\infty}) \, d^{\times}t.\]
We can check that the normalisation of measures here is correct by replacing $\pi(\gamma_{\infty}) \phi_{F_2}$ with the constant function $1$ and taking $\chi$ to be the trivial character, noting that $\vol(\A_{\Q}^{\times} E^{\times} \backslash \A_E^{\times})$ is $2L(1,\chi_D)$, whereas $h_D^+ \log \epsilon_D = \sqrt{D} L(1,\chi_D)$ by the narrow class number formula.

Let $g_A$ denote the inverse of the $\GL_2^+(\R)$ component $g_{\infty}$ of the representation $\gamma g_{\infty} k$ of the id\`{e}le $\Psi_{\A_{\Q}}(A) \in \A_{\Q}^{\times}$, as in \eqref{eqn:strongapprox}. By the definition \eqref{eqn:phistrongapprox} and \eqref{eqn:tildeftof} of the ad\`{e}lic lift together with the fact that $\det \gamma_{\infty} > 0$, we have that
\begin{equation}
\label{eqn:Pscrstrongapprox}
\Pscr_{\Omega}(\pi(\gamma_{\infty}) \phi_{F_2}) = \frac{2}{\sqrt{D}} \sum_{A \in \Cl_D^+} \overline{\chi}(A) \int_{1}^{\epsilon_D} j_{g_A^{-1} \Psi_{\infty}(t,t^{-1}) \gamma_{\infty}}(i)^{-2} (R_0 f)(g_A^{-1} \Psi_{\infty}(t,t^{-1}) \gamma_{\infty} i) \, d^{\times}t.
\end{equation}

We make the change of variables $z = g_A^{-1} \Psi_{\infty}(t,t^{-1}) \gamma_{\infty} i = g_A^{-1} \gamma_{\infty} (i t^2)$; the contour of integration in \eqref{eqn:Pscrstrongapprox} then becomes the oriented geodesic segment from $g_A^{-1} \gamma_{\infty} i = g_A^{-1} z_Q$ to $g_A^{-1} \gamma_Q z_Q$ parametrised by $g_A^{-1} \Psi_{\infty}(t,t^{-1}) z_Q$, where $z_Q$ and $\gamma_Q z_Q$ are as in \eqref{eqn:closedgeodesic} with $Q = Q_{\Psi} = [a,b,c]$ the Heegner form associated to the optimal embedding $\Psi$. We have that
\[d^{\times}t = -\frac{i}{2} j_{\gamma_{\infty}^{-1} g_A}(z)^{-2} \, \frac{dz}{\Im(z)},\]
since $t^2 = -i \gamma_{\infty}^{-1} g_A z$ and
\[\frac{d}{dz} (gz) = \frac{\Im(gz)}{\Im(z)} j_g(z)^{-2}\]
for any $g \in \GL_2(\R)$, while the cocycle relation $j_{g_1 g_2}(z) = j_{g_2}(z) j_{g_1}(g_2 z)$ implies that
\[j_{g_A^{-1} \Psi_{\infty}(t,t^{-1}) \gamma_{\infty}}(i) j_{\gamma_{\infty}^{-1} g_A}(z) = j_{\gamma_{\infty}^{-1} \Psi_{\infty}(t,t^{-1}) \gamma_{\infty}}(i) = 1,\]
where the last equality follows upon recalling \eqref{eqn:gzjgz} and \eqref{eqn:diagembedding}. We deduce that \eqref{eqn:Pscrstrongapprox} is equal to
\[-\frac{i}{\sqrt{D}} \sum_{A \in \Cl_D^+} \overline{\chi}(A) \int_{g_A^{-1} z_Q}^{g_A^{-1} \gamma_Q z_Q} (R_0 f)(z) \, \frac{dz}{\Im(z)}.\]

It is shown in \cite[Theorem 6.2.2 (i)]{Pop06} that as $A$ runs through the narrow class group $\Cl_D^+$, $g_A^{-1} \Psi g_A$ runs through a set of representatives of equivalence classes of oriented optimal embeddings of level $q$ modulo the action of $\Gamma_0(q)$. Furthermore, letting $Q'$ denote the Heegner form of level $q$ associated to the oriented optimal embedding $g_A^{-1} \Psi g_A$, we have that the contour of integration from $g_A^{-1} z_Q$ and $g_A^{-1} \gamma_Q z_Q = \gamma_{Q'} g_A^{-1} z_Q$ is a geodesic segment of length $2\log \epsilon_D$ of the semicircle \eqref{eqn:semicircle} associated to $Q'$, and hence
\[\int_{g_A^{-1} z_Q}^{g_A^{-1} \gamma_Q z_Q} (R_0 f)(z) \, \frac{dz}{\Im(z)} = \int_{\CC_{A'}(q)} (R_0 f)(z) \, \frac{dz}{\Im(z)}\]
by the proof of \hyperref[lem:cycintinv]{Lemma \ref*{lem:cycintinv}}, where $A' \in \Cl_D^+$ is the element of the narrow class group associated to $Q'$. It remains to note that by \cite[Theorem 6.2.2 (ii)]{Pop06}, the oriented optimal embedding $g_A^{-1} \Psi g_A$ is associated to $A A_{\Psi} \in \Cl_D^+$, where $A_{\Psi}$ is the element of $\Cl_D^+$ corresponding to $\Psi$.
\end{proof}

\subsection{From Cycle Integrals to Weyl Sums for Newforms}

Having shown in \hyperref[lem:adelictocycle]{Lemma \ref*{lem:adelictocycle}} that the ad\`{e}lic period integral $\Pscr_{\Omega}(\pi(\gamma_{\infty}) \phi_{F_2})$ defined in \eqref{eqn:Pscrdef} may be reexpressed as a geodesic cycle integral defined in \eqref{eqn:adelictocycle}, we now show that this integral of a Maa\ss{} cusp form over a closed geodesic $\CC_A(q)$ is related to an integral over the hyperbolic orbifold $\Gamma_A(q) \backslash \NN_A(q)$ plus a certain \emph{topological contribution}. Furthermore, we show that a similar relation holds for Eisenstein series, with a much simpler proof. It is at this point that we specialise to the setting of $q$ being either equal to $1$ or equal to an odd prime.

\begin{lemma}[Cf.\ {\cite[Lemmata 1 and 2]{DIT16}}]
\label{lem:cycletoWeyl}
Let $f \in \Cscr_0(\Gamma_0(q))$ be a Maa\ss{} cusp form of weight $0$, level $q$, where either $q = 1$ or $q$ is an odd prime that splits in $E = \Q(\sqrt{D})$, and Laplacian eigenvalue $\lambda_f = \frac{1}{4} + t_f^2$. Then for each $A \in \Cl_D^+$, we have that
\[\int_{\FF_A(q)} f(z) \, d\mu(z) = \frac{1}{\frac{1}{4} + t_f^2} \left(\int_{\CC_A(q)} (R_0 f)(z) \, \frac{dz}{\Im(z)} + \sum_{j=1}^{2g} \int_{\CC_j} (R_0 f)(z) \, \frac{dz}{\Im(z)} \langle [\CC_A(q)], \omega_j \rangle_\mathrm{cap}\right).\]
Similarly, let $E(z,\frac{1}{2} + it)$ be the Eisenstein series on $\Gamma \backslash \Hb$ with Laplacian eigenvalue $\frac{1}{4} + t^2$, where $t \in \R$. Then for $\ell \mid q$, we have that
\begin{multline*}
\int_{\FF_A(q)} E\left(\ell z, \frac{1}{2} + it\right) \, d\mu(z) = \frac{1}{\frac{1}{4} + t^2} \left( \vphantom{ \sum_{j=1}^{2g}} \int_{\CC_A(q)} (R_0 E)\left(\ell z, \frac{1}{2} + it\right) \, \frac{dz}{\Im(z)} \right. \\
\left. + \sum_{j=1}^{2g} \int^\ast_{\CC_j} (R_0 E)\left(\ell z, \frac{1}{2} + it\right) \, \frac{dz}{\Im(z)} \langle [\CC_A(q)], \omega_j \rangle_\mathrm{cap}\right),
\end{multline*}
where $\int^\ast_{\CC_j}$ denotes the regularised integral defined in \eqref{eqn:regint2}.
\end{lemma}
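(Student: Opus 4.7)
The plan is to derive both statements from a single application of Stokes' theorem to an exact $1$-form. The key computation is that whenever $\Delta_0 f = (\tfrac{1}{4}+t_f^2) f$, one has
\[d\!\left(R_0 f \cdot \frac{dz}{\Im(z)}\right) = \left(\tfrac{1}{4}+t_f^2\right) f(z)\, d\mu(z).\]
I would verify this by direct expansion: writing $R_0 f = 2iy\,\partial_z f$, the left-hand side is $2i\, \partial_{\bar z}\partial_z f\, d\bar z\wedge dz$, and the identities $4\partial_z\partial_{\bar z} = -\Delta_0/y^2$ and $d\bar z\wedge dz = 2i\,dx\wedge dy$ finish the calculation. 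The same identity holds verbatim with $f$ replaced by $E(\ell z, \tfrac{1}{2} + it)$ and eigenvalue $\tfrac{1}{4} + t^2$, since the Eisenstein series is also a $\Delta_0$-eigenfunction of that eigenvalue.

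For the Maa\ss{} cusp form case, Stokes' theorem gives
\[\left(\tfrac{1}{4} + t_f^2\right)\int_{\FF_A(q)} f\,d\mu = \int_{\partial \FF_A(q)} R_0 f\, \frac{dz}{\Im z},\]
where $\partial \FF_A(q)$ is viewed as a singular $1$-chain in $X_0(q)$. The homological description in Section \ref{sect:homdes} decomposes this boundary as $\CC_A(q)$ together with the hyperbolic sides $\CC_j$ carrying multiplicities $m_j(A,q)$, since by \hyperref[lem:multiplicity]{Lemma \ref*{lem:multiplicity}} the elliptic and parabolic sides contribute trivially. Substituting the expression for $m_j(A,q)$ in terms of the cap product pairing from \hyperref[lem:multiplicity2]{Lemma \ref*{lem:multiplicity2}} produces the claim. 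Absolute convergence of the boundary integrals is automatic because $R_0 f$ inherits the rapid cuspidal decay of $f$.

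For the Eisenstein series, the same differential identity applies, but the integrand $R_0 E(\ell z, \tfrac{1}{2} + it)\cdot dz/\Im z$ no longer decays at cusps: the constant terms $(\tfrac{1}{2} \pm it)y^{\tfrac{1}{2} \pm it}$ in the Fourier expansion \eqref{eqn:FkexEis} produce a contribution of order $y^{-1/2}$ along a horizontal slice at height $y$. I would handle this by truncating $\FF_A(q)$ through removal of the cuspidal zone $\FF_{\bb}(Y)$ from \eqref{eqn:cuspidalzone} at each cusp $\bb$ meeting the boundary, applying Stokes' theorem to the truncated region, and sending $Y \to \infty$. The truncating horizontal arcs each have $x$-length $O(1)$ and integrand $O(Y^{-1/2})$ by \hyperref[lem:Eisderivdecay]{Lemma \ref*{lem:Eisderivdecay}}, so their contributions vanish in the limit. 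The remaining line integrals over the truncated sides converge to the regularised integrals $\int^{\ast}_{\CC_j}$ of \eqref{eqn:regint2}, since that regularisation is engineered precisely to cancel the divergent cuspidal constant-term contributions.

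The principal obstacle is the cuspidal bookkeeping in the Eisenstein case: one has to verify that the truncation can be performed consistently at cusps shared between several boundary segments (cf.\ the orbit analysis of \hyperref[lem:ast]{Lemma \ref*{lem:ast}}), so that when the truncated line integrals are reassembled side by side, the correct regularised integrals $\int^{\ast}_{\CC_j}$ appear without leftover cuspidal terms. Once this bookkeeping is settled, both halves of the lemma follow uniformly from the exact-form identity above combined with the homological description of $\partial \FF_A(q)$.
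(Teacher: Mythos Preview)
Your proposal is correct and follows essentially the same approach as the paper: both arguments rest on the exact-form identity $d\bigl(R_0\phi\cdot dz/\Im z\bigr)=\lambda_\phi\,\phi\,d\mu$, apply Stokes' theorem to (a truncation of) $\FF_A(q)$, kill the horocycle contributions via rapid decay in the cuspidal case and \hyperref[lem:Eisderivdecay]{Lemma \ref*{lem:Eisderivdecay}} in the Eisenstein case, and then invoke \hyperref[lem:multiplicity]{Lemmata \ref*{lem:multiplicity}} and \ref{lem:multiplicity2} for the boundary decomposition. The only cosmetic difference is that the paper truncates uniformly in both cases rather than appealing to absolute convergence for cusp forms, and records the horocycle terms explicitly as $\sum_i n_i(A,q)\int_{I_i(q)}\partial_z\phi\,dx$; this also disposes of your ``cuspidal bookkeeping'' concern, since the multiplicities $n_i(A,q)$ of the cuspidal zones are well-defined integers independent of $Y$ and the corresponding horocycle integrals vanish individually in the limit.
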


\begin{proof}
We show this for $q$ prime; the case of $q = 1$ follows in the same way except that $g = 0$, so that the sum over $j$ is empty. Let $\FF_A(q) \subset \Hb$ be the fundamental domain for $\Gamma_A(q) \backslash \NN_A(q)$ defined in \eqref{eqn:surface}. For each vertex $v$ of $\FF_A(q)$ and for $Y > 1$, let $\FF_{v}(Y)$ denote the cuspidal zone defined in \eqref{eqn:cuspidalzone}. Let $\phi$ be either a Hecke--Maa{\ss} cusp form or an Eisenstein series as above and denote by $\lambda_\phi$ the associated Laplace eigenvalue. By Stokes' theorem, we have that for all $Y$ sufficiently large,
\begin{multline}
\label{eqn:Stokes}
\int_{\FF_A(q) \setminus \bigcup_{v} \FF_{v}(Y)} \frac{\dee^2}{\dee \overline{z} \dee z} \phi(z) \, dz \, d\overline{z} \\
=- \int_{\dee \FF_A(q) \setminus \bigcup_{v} \FF_{v}(Y)} \frac{\dee}{\dee z} \phi(z) \, dz - \sum_{i=1}^{n+2} n_i(A,q)\int_{I_i(q)} \left. \frac{\dee}{\dee z}\right|_{z = x + iY} \phi\left(\begin{pmatrix}a_i & a_{i-1} \\ b_i & b_{i-1}\end{pmatrix} z\right) \, dx.
\end{multline}
Here $n_i(A,q)$ is the multiplicity in $\FF_A(q)$ of the cuspidal zone $\FF_{\frac{a_i}{b_i}}(Y)$ around the vertex $\frac{a_i}{b_i}$ (which is some number independent of $Y$), while $I_i(q)\subset \R$ is the horocycle interval at height $Y$ around the vertex $\frac{a_i}{b_i}$ of $\PP(q)$ defined by
\[\begin{pmatrix}a_i & a_{i-1} \\ b_i & b_{i-1} \end{pmatrix}^{-1} \PP(q) \cap \{x + iY : x \in \R\}= \{x + iY : x \in I_i(q)\}.\]

Since
\[\frac{\dee^2}{\dee \overline{z} \dee z} = \frac{1}{4} \left(\frac{\dee^2}{\dee x^2} + \frac{\dee^2}{\dee y^2}\right), \qquad dz \, d\overline{z} = -2iy^2 \, d\mu(z),\]
the left-hand side of \eqref{eqn:Stokes} is
\[\frac{i \lambda_\phi}{2} \int_{\FF_A(q) \setminus \bigcup_{v} \FF_{v}(Y)} \phi(z) \, d\mu(z).\]
As $Y$ tends to infinity, this converges to
\[\frac{i \lambda_\phi}{2} \int_{\FF_A(q)} \phi(z) \, d\mu(z).\]

The first term on the right-hand side of \eqref{eqn:Stokes} is equal to
\begin{multline*}
- \int_{\CC_A(q)} \frac{\dee}{\dee z} \phi(z) \, dz -\sum_{j=1}^{\frac{N}{2}}m_j(A,q)\int_{\CC_j\setminus \bigcup_{v} \FF_{v}(Y)}\frac{\dee}{\dee z} \phi(z) \, dz	\\
= \frac{i}{2} \int_{\CC_A(q)} (R_0 \phi)(z) \, \frac{dz}{\Im(z)}+\frac{i}{2}\sum_{j=1}^{\frac{N}{2}}m_j(A,q)\int_{\CC_j\setminus \bigcup_{v} \FF_{v}(Y)} (R_0 \phi)(z) \, \frac{dz}{\Im(z)},
\end{multline*}
where $m_j(A,q)$ denotes the multiplicity of the side $\CC_j$ of $\PP(q)$ in $\Gamma_A(q) \backslash \NN_A(q)$ as defined in \hyperref[sect:homdes]{Section \ref*{sect:homdes}}. The second term on the right-hand side of \eqref{eqn:Stokes} converges to zero as $Y$ tends to infinity; for $\phi$ cuspidal, this follows from the rapid decay at each cusp $\bb$, while for Eisenstein series, this follows from \hyperref[lem:Eisderivdecay]{Lemma \ref*{lem:Eisderivdecay}}. Thus upon taking the limit as $Y$ tends to infinity, we obtain the desired identity by \hyperref[lem:multiplicity]{Lemmata \ref*{lem:multiplicity}} and \ref{lem:multiplicity2}.
\end{proof}

\subsection{Proofs of \texorpdfstring{\hyperref[prop:MaassnewformWeyl]{Propositions \ref*{prop:MaassnewformWeyl}}}{Propositions \ref{prop:MaassnewformWeyl}} and \ref{prop:EisnewformWeyl}}
\label{sect:proofsWeyl}

We are finally in a position to prove \hyperref[prop:MaassnewformWeyl]{Proposition \ref*{prop:MaassnewformWeyl}}.

\begin{proof}[Proof of {\hyperref[prop:MaassnewformWeyl]{Proposition \ref*{prop:MaassnewformWeyl}}}]
\hyperref[lem:adelictocycle]{Lemmata \ref*{lem:adelictocycle}} and \ref{lem:cycletoWeyl} imply that the Weyl sum $W_{\chi,f}$ defined in \eqref{eqn:WeylMaass} satisfies the identity
\[W_{\chi,f} = -\frac{i \overline{\chi}(A_{\Psi}) \sqrt{D}}{\frac{1}{4} + t_f^2} \overline{\Pscr_{\Omega}(\pi(\gamma_\infty)\phi_{F_2})} + \frac{1}{\frac{1}{4} + t_f^2} \sum_{j=1}^{2g} \int_{\CC_j} (R_0 f)(z) \, \frac{dz}{\Im(z)} \sum_{A\in \Cl_D^+} \chi(A) \langle [\CC_A(q)], \omega_j \rangle_\mathrm{cap}.\]
Expressing the class $[\CC_A(q)]\in H_1(X_0(q),\R)$ in terms of the Hecke basis defined in \hyperref[sect:Heckebasis]{Section \ref*{sect:Heckebasis}} and recalling the definition \eqref{eqn:omegafdefeq} gives
\begin{multline*}
\sum_{A\in \Cl_D^+}\chi(A)\langle [\CC_A(q)], \omega_j \rangle_\mathrm{cap}	\\
= \frac{1}{2} \sum_{h\in \BB_2^{\hol}(\Gamma_0(q))} \sum_{\pm} i^{\frac{1 \mp 1}{2}} \langle v_h^\pm, \omega_j \rangle_\mathrm{cap} \sum_{A\in \Cl_D^+} \chi(A) \left(\int_{\CC_A(q)} h(z) \, dz \pm \int_{\CC_A(q)} \overline{h(z) \, dz}\right).
\end{multline*}
Combining this, the result follows from \hyperref[lem:MW]{Lemma \ref*{lem:MW}} as well as the explicit form of Waldspurger's formula for $h\in \BB_2^\hol(q)$ due to Popa \cite[Theorem 6.3.1]{Pop08}, namely
\[\left|\sum_{A\in \Cl_D^+} \chi(A) \int_{\CC_A(q)} h(z) \, dz \right|^2 = \frac{\sqrt{D}}{4\pi^2} L\left(\frac{1}{2},h\otimes \Theta_{\overline{\chi}}\right).\qedhere\]
\end{proof}

The proof of \hyperref[prop:EisnewformWeyl]{Proposition \ref*{prop:EisnewformWeyl}} is a little simpler, since we can circumvent the ad\`{e}lic formulation of this Weyl sum.

\begin{proof}[Proof of {\hyperref[prop:EisnewformWeyl]{Proposition \ref*{prop:EisnewformWeyl}}}]
From \cite[(7.3)]{DIT16}, we have that
\[\sum_{A \in \Cl_D^+} \chi(A) \int_{\CC_A} (R_0 E)(z,s) \, \frac{dz}{\Im(z)} = (1 - \chi(J)) D^{\frac{s}{2}} \frac{\Gamma\left(\frac{s + 1}{2}\right)^2}{\Gamma(s)} \frac{L(s,\chi)}{\zeta(2s)}\]
for $\Re(s) > 1$. Via analytic continuation, this identity extends to $s = \frac{1}{2} + it$. The result then follows via \hyperref[lem:cycletoWeyl]{Lemma \ref*{lem:cycletoWeyl}}.
\end{proof}

\subsection{Weyl Sums for Oldforms}
\label{sect:oldforms}

Finally, we show that Weyl sums for oldforms are essentially equal to Weyl sums for the associated newform of lower level. We define for a Hecke--Maa\ss{} newform $f$ of weight $0$ and level $1$ and a narrow class character $\chi$
\begin{equation}
\label{eqn:WeylMaassold}
W^{\ell,q}_{\chi,f} \coloneqq \sum_{A \in \Cl_D^+} \chi(A) \int_{\FF_A(q)} f(\ell z) \, d\mu(z),
\end{equation}
and similarly for the Eisenstein series $E(\cdot,\frac{1}{2} + it)$ for $\Gamma \backslash \Hb$
\begin{equation}
\label{eqn:WeylEisold}
W^{\ell,q}_{\chi,t} \coloneqq \sum_{A \in \Cl_D^+} \chi(A) \int_{\FF_A(q)} E\left(\ell z, \frac{1}{2} + it\right) \, d\mu(z).
\end{equation}
We also write $W^{1,q}_{\chi,f}=W_{\chi,f}$ for $f$ a Hecke--Maa{\ss} newform of level $q$.

\begin{lemma} 
\label{lem:oldforms}
Let $q$ be an odd prime and let $g$ denote the genus of $X_0(q)$. Let $\ell \mid q$ and let $A_{\lf} \in \Cl_D^+$ denote the narrow ideal class containing the oriented ideal
\begin{equation}
\label{eqn:lfdef}
\left[\lf; \ell, \frac{r - \sqrt{D}}{2}\right].
\end{equation}
Then for a Hecke--Maa\ss{} newform $f$ of weight $0$ and level $1$ and a narrow class character $\chi$,
\begin{multline}
\label{eqn:oldformsMaass}
W^{\ell,q}_{\chi,f} = \chi(A_{\lf}) \sum_{A \in \Cl_D^+} \chi(A) \int_{\FF_A} f(z) \, d\mu(z)\\
+ \delta_{\ell,q} \sum_{j=1}^{2g} \int_{\CC_j} (R_0 f)(\ell z) \, \frac{dz}{\Im(z)} \sum_{A \in \Cl_D^+} \chi(A) \langle [\CC_A(q)], \omega_j \rangle_\mathrm{cap}.
\end{multline}
Similarly, for the Eisenstein series $E(z,1/2+it)$ for $\Gamma \backslash \Hb$,
\begin{multline}
\label{eqn:oldformsEis}
W_{\chi,t}^{\ell,q} = \chi(A_{\lf}) \sum_{A \in \Cl_D^+} \chi(A) \int_{\FF_A} E\left(z,\frac{1}{2} + it\right) \, d\mu(z)\\
+ \delta_{\ell,q} \sum_{j=1}^{2g} \int^\ast_{\CC_j} (R_0 E)\left(\ell z,\frac{1}{2} + it\right) \, \frac{dz}{\Im(z)} \sum_{A \in \Cl_D^+} \chi(A) \langle [C_A(q)], \omega_j \rangle_\mathrm{cap}.
\end{multline}
\end{lemma}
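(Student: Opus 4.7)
The plan is to apply Lemma~\ref{lem:cycletoWeyl} at level $q$ to the oldform $g(z) \coloneqq f(\ell z) \in \Cscr_0(\Gamma_0(q))$, which has Laplacian eigenvalue $\lambda_f = 1/4 + t_f^2$. Since a direct computation gives $R_0(f(\ell z)) = (R_0 f)(\ell z)$, this expresses $W^{\ell,q}_{\chi,f}$ as $\lambda_f^{-1}$ times the sum of the cycle-integral piece $\sum_A \chi(A) \int_{\CC_A(q)}(R_0 f)(\ell z) \, dz/\Im(z)$ and the topological piece $\sum_{j=1}^{2g} \int_{\CC_j}(R_0 f)(\ell z) \, dz/\Im(z) \sum_A \chi(A) \langle [\CC_A(q)], \omega_j\rangle_\mathrm{cap}$. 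The remaining task is to identify the cycle-integral piece with $\chi(A_\lf) \lambda_f \sum_A \chi(A) \int_{\FF_A} f \, d\mu$ and to show that the topological piece contributes only when $\ell = q$.

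For the cycle-integral piece, we first perform the change of variables $w = \ell z$, reducing $\int_{\CC_A(q)}(R_0 f)(\ell z) \, dz/\Im(z)$ to $\int_{\ell \CC_A(q)}(R_0 f)(w) \, dw/\Im(w)$. Given a Heegner form $Q = [a, b, c]$ representing $A$ with $a > 0$, a direct computation from \eqref{eqn:gammaQ} and \eqref{eqn:closedgeodesic} shows that $\ell z_Q = z_{Q'}$ and $g_\ell \gamma_Q g_\ell^{-1} = \gamma_{Q'}$, where $Q' \coloneqq [a/\ell, b, c\ell]$ is a Heegner form of discriminant $D$ at level $q/\ell$. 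Comparing ideal generators then yields $\aa_Q = \lf \cdot \aa_{Q'}$ with matching orientations, so that the narrow class of $Q'$ is $A \cdot A_\lf^{-1}$; the case $a < 0$ is handled analogously via \eqref{eqn:Qtoideals}. Since $f$ is of level $1$, the cycle integral along a geodesic segment is intrinsic to that segment and does not depend on the level of the ambient quotient, so that $\int_{\ell \CC_A(q)}(R_0 f)(w) \, dw/\Im(w) = \int_{\CC_{A \cdot A_\lf^{-1}}}(R_0 f)(w) \, dw/\Im(w)$. Reindexing via $B = A A_\lf^{-1}$ and invoking Lemma~\ref{lem:cycletoWeyl} at level $1$ (where the genus term is absent) then yields the desired expression.

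For the topological piece, the key observation is that the $1$-form $(R_0 f)(z) \, dz/\Im(z)$ is $\PSL_2(\Z)$-invariant when $f$ is of level $1$. Every hyperbolic side $\CC_j$ of $\PP(q)$ is of the form $\gamma_0 \cdot i \R^+$ for $\gamma_0 \coloneqq \begin{psmallmatrix} a_{j+1} & a_j \\ b_{j+1} & b_j \end{psmallmatrix} \in \Gamma$, so when $\ell = 1$ this invariance reduces $\int_{\CC_j}(R_0 f)(z) \, dz/\Im(z)$ to $\int_{i \R^+}(R_0 f)(z) \, dz/\Im(z)$. Applying the involution $S \coloneqq \begin{psmallmatrix}0 & -1 \\ 1 & 0\end{psmallmatrix} \in \Gamma$, which fixes $i \R^+$ setwise but reverses its orientation, forces this integral to equal its own negative and therefore to vanish. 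This cancellation fails when $\ell = q$, because $(R_0 f)(qz) \, dz/\Im(z)$ is only $\Gamma_0(q)$-invariant and $S \notin \Gamma_0(q)$; in that case the topological piece survives, yielding the claimed identity.

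The Eisenstein identity \eqref{eqn:oldformsEis} is established by the same route, invoking the Eisenstein version of Lemma~\ref{lem:cycletoWeyl} and the regularized integrals $\int^\ast$. The vanishing at $\ell = 1$ again follows from $\PSL_2(\Z)$-invariance combined with the involution $S$; here one additionally observes that $S$ swaps the cuspidal zones $\FF_0(Y)$ and $\FF_\infty(Y)$ used in the regularization, so that $\int^\ast_{S \cdot i \R^+} = -\int^\ast_{i \R^+}$ and pullback by $S$ preserves the regularized integral. The main obstacle throughout is the bookkeeping for the ideal-class identification $[Q'] = A \cdot A_\lf^{-1}$, which requires tracking orientations of generators across the correspondences between narrow classes, Heegner forms $[a, b, c]$ of both signs of $a$, and oriented ideals of level $q/\ell$.
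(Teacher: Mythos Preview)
Your proposal is correct and follows essentially the same route as the paper: apply Lemma~\ref{lem:cycletoWeyl} at level $q$ to the oldform $z\mapsto f(\ell z)$, perform the substitution $w=\ell z$ (the paper writes this as $z\mapsto g_\ell^{-1}z$ with $g_\ell=\begin{psmallmatrix}\sqrt{\ell}&0\\0&1/\sqrt{\ell}\end{psmallmatrix}$), identify the resulting Heegner form $[a/\ell,b,c\ell]$ with the class $A_\lf^{-1}A$ via the ideal correspondence \eqref{eqn:Qtoideals}, and then use the level-$1$ case of Lemma~\ref{lem:cycletoWeyl} (implicitly in the paper) to pass back from cycle integrals to $\int_{\FF_B}f\,d\mu$. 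Your vanishing argument for the topological term at $\ell=1$ --- transporting $\CC_j$ to $i\R^+$ by an element of $\Gamma$ and then using $S$ --- is the unfolded form of the paper's one-line remark that ``$\CC_j$ is fixed by an order $2$ matrix in $\Gamma$'': that order-$2$ matrix is precisely $\gamma_0 S\gamma_0^{-1}$. The only cosmetic slip is that in $\gamma_0=\begin{psmallmatrix}a_{j+1}&a_j\\b_{j+1}&b_j\end{psmallmatrix}$ the subscripts should refer to the Farey-symbol index $i$ underlying the side $\CC_j$, not to $j$ itself.
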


Note that although $r$ is only defined modulo $2q$, the oriented ideal \eqref{eqn:lfdef} is well-defined since it contains $\ell$ and hence also $q$.

\begin{proof}[Proof of {\hyperref[lem:oldforms]{Lemma \ref*{lem:oldforms}}}]
We give the proof for \eqref{eqn:oldformsMaass}; the same method yields \eqref{eqn:oldformsEis}. Let $Q = [a,b,c] \in \QQ_D(q)$ be a Heegner form associated to the oriented closed geodesic $\CC_A(q)$. Since $\CC_A(q)$ is the reduction modulo $\Gamma_0(q)$ of the geodesic segment from $z_Q$ to $\gamma_Q z_Q$, as in \eqref{eqn:closedgeodesic}, \hyperref[lem:cycletoWeyl]{Lemma \ref*{lem:cycletoWeyl}} together with the change of variables $z \mapsto g_{\ell}^{-1} z$, where $g_{\ell} \coloneqq \begin{psmallmatrix} \sqrt{\ell} & 0 \\ 0 & \frac{1}{\sqrt{\ell}} \end{psmallmatrix} \in \SL_2(\R)$, imply that
\begin{multline*}
\sum_{A \in \Cl_D^+} \chi(A) \int_{\FF_A(q)} f(\ell z) \, d\mu(z) = \frac{1}{\frac{1}{4} + t_f^2} \sum_{A \in \Cl_D^+} \chi(A) \int_{g_{\ell} z_Q}^{g_{\ell} \gamma_Q z_Q} (R_0 f)(z) \, \frac{dz}{\Im(z)}\\
+ \sum_{j=1}^{2g} \int_{\CC_j} (R_0 f)(\ell z) \, \frac{dz}{\Im(z)} \sum_{A \in \Cl_D^+} \chi(A) \langle [\CC_A(q)], \omega_j \rangle_\mathrm{cap}.
\end{multline*}
The Heegner form associated to the geodesic segment from $g_{\ell} z_Q$ to $g_{\ell} \gamma_Q z_Q$, as in \hyperref[sect:closedgeodesic]{Section \ref*{sect:closedgeodesic}}, is $Q_{\ell} \coloneqq [\frac{a}{\ell},b,c\ell] \in \QQ_D(\frac{q}{\ell})$. We claim that the narrow ideal class associated to $Q_{\ell}$ is $A_{\lf}^{-1} A$, from which \eqref{eqn:oldformsMaass} follows. From \eqref{eqn:Qtoideals}, this is implied by the fact that
\[\left(\Z \frac{a}{\ell} + \Z \frac{b - \sqrt{D}}{2}\right) \left(\Z \ell + \Z \frac{r - \sqrt{D}}{2}\right) = \Z a + \Z \frac{b - \sqrt{D}}{2}\]
if $a > 0$, and similarly
\[\left(\Z \left(-\frac{a}{\ell}\sqrt{D}\right) + \Z \frac{D - b\sqrt{D}}{2}\right) \left(\Z \ell + \Z \frac{r - \sqrt{D}}{2}\right) = \Z (-a\sqrt{D}) + \Z \frac{D - b\sqrt{D}}{2}\]
if $a < 0$. Finally, if $\ell = 1$, then the fact that $\CC_j$ is fixed by an order $2$ matrix in $\Gamma$ implies that
\[\int_{\CC_j} (R_0 f)(z) \, \frac{dz}{\Im(z)} = 0.\qedhere\]
\end{proof}

\section{Sparse Equidistribution in the Level Aspect}

\subsection{Explicit Orthonormal Bases}

In order to prove \hyperref[thm:level]{Theorem \ref*{thm:level}}, we require the spectral decomposition of $L^2(\Gamma_0(q) \backslash \Hb)$. This decomposition involves three parts: the residual spectrum, consisting of the constant function; the cuspidal spectrum, spanned by Hecke--Maa\ss{} cusp forms; and the continuous spectrum, spanned by incomplete Eisenstein series.

For the cuspidal spectrum, we may choose an orthonormal basis of this subspace consisting of certain linear combinations of oldforms and newforms.

\begin{lemma}[{\cite[Lemma 3.1]{HK20}}]
\label{lem:Maasortho}
Let $q$ be squarefree. An orthonormal basis of $\Cscr_0(\Gamma_0(q))$ with respect to the inner product
\[\langle f_1,f_2\rangle_q \coloneqq \int_{\Gamma_0(q) \backslash \Hb} f_1(z) \overline{f_2(z)} \, d\mu(z)\]
is given by
\[\left\{f_{\ell} \in \Cscr_0(\Gamma_0(q)) : f \in \BB_0^{\ast}(\Gamma_0(q_1)), \ q_1 q_2 = q, \ \ell \mid q_2\right\},\]
where $\BB_0^{\ast}(\Gamma_0(q_1))$ denotes an orthonormal basis of Hecke--Maa\ss{} newforms $f$ of weight $0$ and level $q_1$ with respect to the inner product $\langle \cdot, \cdot \rangle_{q_1}$, while $f_{\ell}$ is associated to $f \in \BB_0^{\ast}(\Gamma_0(q_1))$ by
\[f_{\ell}(z) \coloneqq \left(L_{\ell}(1,\ad f) \frac{\varphi(\ell)}{\ell \nu(q_2)}\right)^{\frac{1}{2}} \sum_{vw = \ell} \frac{\nu(v)}{v} \frac{\mu(w) \lambda_f(w)}{\sqrt{w}} f(vz).\]
Here for $\ell \nmid q_1$,
\[L_{\ell}(s,\ad f) \coloneqq \prod_{p \mid \ell} \frac{1}{1 - \lambda_f(p^2) p^{-s} + \lambda_f(p^2) p^{-2s} - p^{-3s}}.\]
\end{lemma}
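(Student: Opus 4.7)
The plan is to combine Atkin--Lehner theory with a Rankin--Selberg computation of Gram matrices and then orthonormalise prime-by-prime using the squarefree assumption on $q$. As a first step, Atkin--Lehner theory gives an orthogonal decomposition
\[
\Cscr_0(\Gamma_0(q)) = \bigoplus_{q_1 q_2 = q} \bigoplus_{f \in \BB_0^{\ast}(\Gamma_0(q_1))} V_f, \qquad V_f \coloneqq \mathrm{span}_{\C}\{f(\ell z) : \ell \mid q_2\}.
\]
Orthogonality across distinct $V_f$ follows from strong multiplicity one: each $V_f$ is invariant under all Hecke operators $T_p$ with $p \nmid q$, any two distinct newforms differ in some such eigenvalue, and self-adjointness of $T_p$ on $\Cscr_0(\Gamma_0(q))$ then forces $V_{f_1} \perp V_{f_2}$ for $f_1 \neq f_2$. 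It therefore remains to orthonormalise the $\tau(q_2)$ functions $\{f(\ell z) : \ell \mid q_2\}$ inside each $V_f$.

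The next step is to compute the Gram matrix $G(v,w) \coloneqq \langle f(vz), f(wz) \rangle_q$ for $v, w \mid q_2$. Unfolding the Rankin--Selberg integral $\int_{\Gamma_0(q) \backslash \Hb} f(vz)\overline{f(wz)} E(z,s)\,d\mu(z)$ and extracting the residue at $s = 1$, using that $\gcd(q_1, q_2) = 1$ so that $\lambda_f(n)$ for $n \mid q_2^{\infty}$ is given by the unramified formula \eqref{eqn:HeckeSatakeunram}, yields a jointly multiplicative closed-form expression of the shape
\[
G(v,w) = \frac{\langle f, f \rangle_{q_1}}{\nu(q_2)} \cdot \frac{(v,w)^2}{vw} \cdot \frac{\lambda_f\bigl(vw/(v,w)^2\bigr)}{\sqrt{vw/(v,w)^2}},
\]
where the prefactor $\nu(q_2)^{-1} = [\Gamma_0(q_1) : \Gamma_0(q)]^{-1}$ records the change of fundamental domain. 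Since $q_2$ is squarefree, this Gram matrix factorises as a tensor product over the primes $p \mid q_2$ of explicit $2 \times 2$ matrices indexed by $\{1, p\}$.

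The orthonormalisation thereby reduces to a prime-by-prime problem. At each prime $p \mid q_2$, a direct Gram--Schmidt computation on the local $2 \times 2$ matrix, using the Hecke relation $\lambda_f(p)^2 = \lambda_f(p^2) + 1$ and the local Euler factor
\[
L_p(1,\ad f)^{-1} = (1 - p^{-1})\bigl(1 - (\lambda_f(p)^2 - 1) p^{-1} - p^{-2}\bigr),
\]
produces an explicit orthonormal local basis indexed by $\{1, p\}$ whose entries match the $\ell \in \{1,p\}$ components of the stated formula for $f_\ell$. Taking the tensor product over $p \mid q_2$ and unpacking the multiplicativities of $L_\ell(1,\ad f)$, $\varphi(\ell)$, $\nu(q_2)$, and the coefficient sum $\sum_{vw = \ell} \mu(w)\lambda_f(w) w^{-1/2} \nu(v)/v$ assembles these local bases into the global formula.

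The main technical obstacle will be verifying that the three multiplicative factors appearing in the definition of $f_{\ell}$ — namely $L_{\ell}(1,\ad f)$, the volume correction $\varphi(\ell)/(\ell\nu(q_2))$, and the Hecke-weighted coefficient sum — combine precisely to the local Gram--Schmidt coefficients at each prime. Once this matching is carried out, the final identity $\langle f_{\ell_1}, f_{\ell_2}\rangle_q = \delta_{\ell_1, \ell_2}$ for $\ell_1, \ell_2 \mid q_2$ reduces by the tensor-product structure to a short polynomial identity in $p^{-1}$ and $\lambda_f(p)$ at each prime $p \mid q_2$, which follows by direct substitution.
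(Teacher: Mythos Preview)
The paper does not supply its own proof of this lemma; it is quoted verbatim from \cite[Lemma 3.1]{HK20} and used as a black box. Your plan---Atkin--Lehner/newform decomposition into the blocks $V_f$, Rankin--Selberg computation of the Gram matrix $\langle f(v\,\cdot),f(w\,\cdot)\rangle_q$, tensor factorisation over $p\mid q_2$, and local $2\times 2$ Gram--Schmidt---is exactly the standard route (and is how the cited reference proceeds), so the strategy is correct.

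One concrete correction to flag before you execute the computation: the displayed Gram matrix formula is misnormalised. Passing from the level-$q_1$ inner product to the level-$q$ inner product \emph{multiplies} by $[\Gamma_0(q_1):\Gamma_0(q)]=\nu(q_2)$, so the prefactor should be $\nu(q_2)\langle f,f\rangle_{q_1}$, not $\langle f,f\rangle_{q_1}/\nu(q_2)$. Moreover, the factor $(v,w)^2/(vw)$ is not right: with $v'w'=vw/(v,w)^2$ your expression collapses to $\lambda_f(v'w')/(v'w')^{3/2}$, whereas the Gram--Schmidt coefficient you yourself back out from the target formula $f_p=\frac{p+1}{p}f(p\,\cdot)-\frac{\lambda_f(p)}{\sqrt{p}}f$ forces
\[
\frac{\langle f(p\,\cdot),f\rangle_q}{\langle f,f\rangle_q}=\frac{p}{p+1}\cdot\frac{\lambda_f(p)}{\sqrt{p}}=\frac{\lambda_f(p)}{\nu(p)\,p^{-1/2}},
\]
not $\lambda_f(p)/p^{3/2}$. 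The correct local shape is $\langle f(v\,\cdot),f(w\,\cdot)\rangle_q=\nu(q_2)\prod_{p\mid v'w'}\frac{\lambda_f(p)}{p^{1/2}(1+p^{-1})}$, which then matches the local Euler factor $L_p(1,\ad f)$ after the Hecke relation $\lambda_f(p)^2=\lambda_f(p^2)+1$ is applied. Once this is fixed, the tensor-product assembly you describe goes through without further difficulty.
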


A similar construction is valid for the continuous spectrum by choosing an orthonormal set of Eisenstein series.

\begin{lemma}[{\cite[Section 8.4]{You19}}]
\label{lem:Eisortho}
Let $q$ be squarefree. An orthonormal basis of the vector space of Eisenstein series of weight $0$ and level $q$ is given by
\[\left\{E_{\ell}\left(\cdot,\frac{1}{2} + it\right) : \ell \mid q \right\}\]
where
\[E_{\ell}\left(z,\frac{1}{2} + it\right) \coloneqq \left(\frac{\zeta_{\ell}(1 + 2it) \zeta_{\ell}(1 - 2it)}{\nu(q)}\right)^{\frac{1}{2}} \sum_{vw = \ell} \frac{\nu(v)}{v} \frac{\mu(w) \lambda(w,t)}{\sqrt{w}} E\left(vz, \frac{1}{2} + it\right).\]
Here
\[\zeta_{\ell}(s) \coloneqq \prod_{p \mid \ell} \frac{1}{1 - p^{-s}}.\]
\end{lemma}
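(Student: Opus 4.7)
The plan is to mirror the proof of Lemma \ref{lem:Maasortho} for Hecke--Maa\ss{} cusp forms. Since $q$ is squarefree, the space of Eisenstein series of weight $0$ and level $q$ with spectral parameter $t$ is spanned by the oldforms $\{E(vz, 1/2+it) : v \mid q\}$, a basis of size equal to the number of cusps of $\Gamma_0(q)\backslash \Hb$. The goal is to construct, for each $\ell\mid q$, an explicit linear combination of these oldforms forming an orthonormal system with respect to the natural inner product on the continuous spectrum of $L^2(\Gamma_0(q) \backslash \Hb)$.

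First, I would compute the Gram matrix $G = (G_{v,w})_{v,w \mid q}$ of the oldforms $\{E(vz, 1/2+it)\}_{v \mid q}$ with respect to a regularised Petersson inner product on $\Gamma_0(q) \backslash \Hb$ (in the sense of Zagier or via Maa\ss{}--Selberg truncation). Since the Fourier expansion of $E(vz, 1/2+it)$ at the cusp at infinity is multiplicative in $v$ and $q$ is squarefree, the Gram matrix factors as a tensor product over primes $p \mid q$, so it suffices to treat the case $q = p$ prime, where $G$ is a $2 \times 2$ matrix whose entries can be evaluated explicitly in terms of $\zeta_p(1\pm 2it)$, $\nu(p) = p+1$, and $\lambda(p,t)$.

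Next, I would explicitly diagonalise this local Gram matrix. The coefficients $\mu(w)\lambda(w,t)/\sqrt{w}$ appearing in the statement are precisely those inverting the Hecke-operator transition between oldforms of level $p$ and the underlying level $1$ Eisenstein series, mirroring the combinatorics already seen for newforms in Lemma \ref{lem:Maasortho}. The scalar $(\zeta_\ell(1+2it) \zeta_\ell(1-2it)/\nu(q))^{1/2}$ is then fixed by the requirement that each $E_\ell$ has unit norm, which can be verified by computing the local factor of $(\det G_p)^{-1}$ for each $p\mid q$ and checking that the product over primes combines correctly to give the claimed normalisation.

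The main technical obstacle is making precise the meaning of the inner product, since Eisenstein series are not square-integrable. One either works with Zagier's regularised integral, or with Arthur's truncation, or interprets orthonormality directly via the Plancherel measure on the continuous spectrum of $L^2(\Gamma_0(q) \backslash \Hb)$. In all three frameworks, the needed local computations have been carried out by Young in \cite[Section 8.4]{You19}, which may be invoked directly to verify the stated formula.
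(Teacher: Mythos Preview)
The paper does not supply a proof of this lemma at all: it is stated with a bracketed citation to \cite[Section 8.4]{You19} and no further argument. Your proposal is a reasonable sketch of the proof found in that reference --- factor the Gram matrix of the oldforms $E(vz,1/2+it)$ over primes dividing $q$, diagonalise the $2\times 2$ local blocks, and read off the normalising constants --- and you correctly note at the end that the local computations are carried out by Young. So there is nothing to compare: your outline is essentially the argument in \cite{You19}, and the paper itself simply quotes the result.
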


\subsection{Averaged Bounds for Regularised Integrals}

We require bounds in terms of $q$ for the integrals of both Maa{\ss} cusp forms and Eisenstein series over the sides of $\PP(q)$. In order to bound the regularised integrals that show up in the Weyl sums, we start by showing a useful lemma: all the hyperbolic sides of $\PP(q)$ can be brought to a standard form under the action of the Atkin--Lehner operator $W_q = \begin{psmallmatrix} 0 & -\frac{1}{\sqrt{q}} \\ \sqrt{q} & 0 \end{psmallmatrix}$, as in \eqref{eqn:ALop}, and $\Gamma_0(q)$.

\begin{lemma}
\label{lem:canonicalform}
Let $\LL$ be a hyperbolic side of $\PP(q)$ with endpoints $\frac{a_i}{b_i}$ and $\frac{a_{i + 1}}{b_{i + 1}}$. There exists a unique element $\gamma_\LL \in \Gamma_0(q)$ such that
\[\gamma_\LL W_q \LL = \left\{z \in \Hb : \Re(z) = \frac{v}{q}\right\}\]
for $v \in \{1,\ldots,q - 1\}$ satisfying $vb_{i+1}\equiv b_{i} \pmod{q}$. Moreover, $v$ uniquely determines $\LL$.
\end{lemma}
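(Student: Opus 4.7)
The approach is to construct $\gamma_\LL$ explicitly from the endpoints of $\LL$ and then argue uniqueness using rigidity of the $\Gamma_0(q)$-stabiliser of $\infty$ together with the sharp denominator bound of \hyperref[thm:boundentries]{Theorem \ref*{thm:boundentries}}. Since $W_q z = -1/(qz)$, I would first observe that $W_q\LL$ has endpoints $-b_i/(qa_i)$ and $-b_{i+1}/(qa_{i+1})$ in $\Pb^1(\Q)$, with the convention $-b/0 = \infty$. Because $b_{i+1} < \sqrt{q}$ strictly (as $q$ is prime and hence not a perfect square) and $\gcd(a_{i+1}, b_{i+1}) = 1$, we have $\gcd(b_{i+1}, qa_{i+1}) = 1$, so I can solve $ab_{i+1} - bqa_{i+1} = 1$ for integers $a, b$ and set
\[\gamma_\LL \coloneqq T^{-t} \begin{pmatrix} a & b \\ qa_{i+1} & b_{i+1} \end{pmatrix} \in \Gamma_0(q)\]
for a suitable integer $t$. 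A direct calculation using the Farey identity $a_{i+1}b_i - a_ib_{i+1} = 1$ shows that $\gamma_\LL$ sends $-b_{i+1}/(qa_{i+1})$ to $\infty$ and $-b_i/(qa_i)$ to $ab_i/q \pmod{1}$; choosing $t$ appropriately places the latter in $[0,1)$. Reducing $ab_{i+1} \equiv 1 \pmod{q}$ then yields $v \equiv ab_i \equiv b_i/b_{i+1} \pmod{q}$, which is the stated congruence $vb_{i+1} \equiv b_i \pmod{q}$. Since $0 < b_i < \sqrt{q} < q$, we have $v \not\equiv 0 \pmod{q}$, so $v \in \{1, \ldots, q-1\}$.

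For uniqueness of $\gamma_\LL$, I would note that any two elements of $\Gamma_0(q)$ mapping $W_q(a_{i+1}/b_{i+1})$ to $\infty$ differ (modulo $\pm 1_2$) by left multiplication by a power of $T$, since the stabiliser of $\infty$ in $\PSL_2(\Z)$ equals $\langle T \rangle$. The requirement that the image of $W_q \LL$ be the specific vertical line $\{\Re(z) = v/q\}$ with $v \in \{1,\ldots,q-1\}$ pins down this power uniquely.

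The hard part is the injectivity of $\LL \mapsto v$. Suppose two hyperbolic sides, associated to free indices $i$ and $i'$, yield the same $v$; then $b_i b_{i'+1} \equiv b_{i+1} b_{i'} \pmod{q}$. Here the strict inequality $b_\bullet < \sqrt{q}$ from \hyperref[thm:boundentries]{Theorem \ref*{thm:boundentries}} is essential: it forces both products to lie in $\{1, \ldots, q-1\}$, so $|b_i b_{i'+1} - b_{i+1} b_{i'}| < q$, and divisibility by $q$ upgrades the congruence to the equality $b_i b_{i'+1} = b_{i+1} b_{i'}$. Since the Farey identity gives $\gcd(b_i, b_{i+1}) = 1$, the rational $b_i/b_{i+1}$ is in lowest terms, so $(b_i, b_{i+1}) = (b_{i'}, b_{i'+1})$. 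The Farey relation then determines $(a_i, a_{i+1})$ uniquely subject to $0 \le a_i \le b_i$ and $0 \le a_{i+1} \le b_{i+1}$, and the strict ordering of Farey fractions in $[0,1]$ forces $i = i'$, whence $\LL = \LL'$.
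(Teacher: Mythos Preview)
Your construction of $\gamma_\LL$ and the argument for its uniqueness via the stabiliser of $\infty$ match the paper's proof; the paper simply writes down the explicit matrix
\[\gamma_{\LL} = \begin{pmatrix} a_{i+1}v-a_{i} & (b_{i+1}v-b_{i})/q \\ a_{i+1}q & b_{i+1} \end{pmatrix}\]
directly rather than solving a B\'{e}zout relation and then adjusting by $T^{-t}$, but the content is the same.

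Where you diverge is the injectivity of $\LL \mapsto v$. You upgrade the congruence $b_i b_{i'+1} \equiv b_{i+1} b_{i'} \pmod{q}$ to an equality via the strict bound $b_\bullet < \sqrt{q}$, and then appeal to uniqueness of Farey neighbours with prescribed denominators. The paper instead argues structurally: if $\gamma_{\LL} W_q \LL = \gamma_{\LL'} W_q \LL'$, then applying $W_q^{-1}$ and using that $W_q$ normalises $\Gamma_0(q)$ shows that $\LL$ and $\LL'$ are $\Gamma_0(q)$-equivalent sides of $\PP(q)$, hence either equal or paired. Assuming they are paired, the congruence from equal $v$ combined with the free-index pairing relation $b_i b_{i^*} + b_{i+1} b_{i^*+1} \equiv 0 \pmod{q}$ yields $b_i^2 + b_{i+1}^2 \equiv 0 \pmod{q}$, forcing $\LL$ to be an \emph{even} side --- a contradiction.

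Your route is pleasantly elementary, but it rests on the strict inequality $b_i < \sqrt{q}$ for \emph{all} free indices $i$. \hyperref[thm:boundentries]{Theorem \ref*{thm:boundentries}} as stated here only records $b_i \ll \sqrt{q}$, and the exact free-index identity $b_i b_{i^*} + b_{i+1} b_{i^*+1} = q$ bounds products of \emph{paired} denominators below $q$, not products $b_i b_{i'+1}$ for unrelated free indices $i,i'$. So unless you can extract the sharp constant from the cited result, there is a gap. The paper's argument sidesteps this entirely, requiring only that $(b_i,q)=1$, and exploits the side-pairing structure of $\PP(q)$ instead of any size estimate.
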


\begin{proof}
Let $v \in \{1,\ldots,q - 1\}$ be such that $vb_{i+1}\equiv b_{i} \pmod{q}$ (recalling that $(b_{i+1},q)=1$) and define
\[\gamma_{\LL} \coloneqq \begin{pmatrix} a_{i+1}v-a_{i} & \tfrac{b_{i+1}v-b_{i}}{q} \\ a_{i+1}q & b_{i+1} \end{pmatrix}\in \Gamma_0(q).\]
Then the fact that $a_{i+1} b_i - a_i b_{i+1} = 1$ implies that
\[\gamma_{\LL} W_q \frac{a_{i+1}}{b_{i+1}} = \infty, \quad \gamma_{\LL} W_q \frac{a_{i}}{b_{i}} = \frac{v}{q},\]
as desired.

To show uniqueness, we suppose that $\gamma_{\LL}' W_q$ also maps $a_{i + 1}/b_{i + 1}$ to $\infty$ and $\frac{a_i}{b_i}$ to $\frac{v}{q}$. Then $\gamma_{\LL}' \gamma_{\LL}^{-1}\infty=\infty$, so that $\gamma_{\LL}' \gamma_{\LL}^{-1} \in \Gamma_{\infty}$, and hence there exists some $n \in \Z$ such that $\gamma_{\LL}' \gamma_{\LL}^{-1} = \begin{psmallmatrix} 1 & n \\ 0 & 1 \end{psmallmatrix}$. Since $\gamma_{\LL}' \gamma_{\LL}^{-1} \frac{v}{q} \in (0,1)$, we must have that $n = 0$, so that $\gamma_{\LL}' = \gamma_{\LL}$.

Finally, if we have two hyperbolic sides $\LL$ and $\LL'$ such that
\[\gamma_{\LL} W_q \LL = \gamma_{\LL}' W_q \LL',\]
then by applying $W_q^{-1}$ and using that $W_q$ normalises $\Gamma_0(q)$, we conclude that $\LL$ and $\LL'$ are $\Gamma_0(q)$-equivalent. This implies that $b_{i}b_{i^\ast+1}\equiv b_{i+1}b_{i^\ast} \pmod{q}$. But we also have $b_{i}b_{i^\ast}\equiv -b_{i+1}b_{i^\ast+1} \pmod{q}$, which implies $b_{i}^2+b_{i+1}^2\equiv 0 \pmod{q}$, and thus $\LL=\LL'$ is an even side, contrary to the assumption. 
 \end{proof}

We start by bounding the cuspidal case.

\begin{lemma}
\label{lem:bndintCusp}
Let $q$ be an odd prime and let $\LL$ be a hyperbolic side of $\PP(q)$. Then for $q_1 q_2 = q$, we have that
\begin{equation}
\label{eqn:sideL2boundMaass}
\frac{1}{q_2} \sum_{\substack{f \in \BB_0^{\ast}(\Gamma_0(q_1)) \\ |t_f|\leq T}} \left|\int_{\LL} (R_0 f)(q_2 z) \, \frac{dz}{\Im(z)}\right|^2 \ll_{\e} q^{\e} T^{3 + \e}.
\end{equation}
\end{lemma}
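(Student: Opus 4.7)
First, I would use \hyperref[lem:canonicalform]{Lemma \ref*{lem:canonicalform}} to find $\gamma_\LL \in \Gamma_0(q)$ such that $\gamma_\LL W_q \LL$ is the vertical line $\{z \in \Hb : \Re(z) = v/q\}$ for some $v \in \{1,\ldots,q-1\}$. In Case 1 ($q_1 = q$, $q_2 = 1$), the newform $f$ is an Atkin--Lehner eigenform, $f(W_q z) = \eta_f(q) f(z)$; since the $1$-form $(R_0 f)(z)\, dz/\Im(z)$ is $\Gamma_0(q)$-invariant and transforms under $W_q$ by the scalar $\eta_f(q)$, the substitution $z = W_q^{-1}\gamma_\LL^{-1} w$ yields
\[
\int_\LL (R_0 f)(z)\, \frac{dz}{\Im(z)} = \eta_f(q)\, i\int_0^\infty (R_0 f)\!\left(\tfrac{v}{q}+iy\right)\frac{dy}{y}.
\]
In Case 2 ($q_1 = 1$, $q_2 = q$), I would first substitute $w = qz$ to rewrite the integral as $\int_{q\LL}(R_0 f)(w)\, dw/\Im(w)$; since $f$ has level $1$, this $1$-form is $\Gamma$-invariant, and choosing any $\gamma \in \Gamma$ sending one endpoint of $q\LL$ to $\infty$ reduces the integral to the same vertical-line shape with some $v'/d'$ (of denominator $d' \leq q$) in place of $v/q$.

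Second, I would unfold the Fourier expansion \eqref{eqn:Fkexppos} of $R_0 f$ and apply the Mellin transform of $W_{\pm 1,it_f}$ at $s = 0$ (justified by the analytic continuation of the ensuing additive-twist $L$-series) to obtain an identity
\[
\int_0^\infty (R_0 f)\!\left(\tfrac{v}{q}+iy\right)\frac{dy}{y} = \rho_f(1)\bigl(P^+(t_f)\, L_f^+(v/q) + P^-(t_f)\, L_f^-(v/q)\bigr),
\]
where $L_f^\pm(v/q) \coloneqq \sum_{n\geq 1}\lambda_f(n)\, n^{-1/2}\, e(\pm nv/q)$ denotes the Vorono\u{i} $L$-series at $s = 1/2$ and $P^\pm(t_f)$ are ratios of Gamma factors. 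By Stirling, $|P^\pm(t_f)|^2/\cosh\pi t_f \ll 1+|t_f|$, and combining this with the normalisation $|\rho_f(1)|^2 \cosh\pi t_f \asymp (q_1 L(1,\ad f))^{-1}$ from \eqref{eqn:L2rho1} yields
\[
\left|\int_\LL (R_0 f)(q_2 z)\, \frac{dz}{\Im(z)}\right|^2 \ll \frac{1+|t_f|}{q_1 L(1,\ad f)}\bigl(|L_f^+(v/q)|^2 + |L_f^-(v/q)|^2\bigr).
\]

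Third, I would truncate $L_f^\pm(v/q)$ via its approximate functional equation to a sum of length $N \asymp qT$, so that Iwaniec's spectral large sieve for level $q_1$, which after normalisation reads
\[
\sum_{\substack{f\in\BB_0^\ast(\Gamma_0(q_1))\\|t_f|\leq T}}\frac{1}{L(1,\ad f)}\left|\sum_{n\leq N} a_n \lambda_f(n)\right|^2 \ll (q_1 T^2 + N)\|a\|^2,
\]
gives $\sum_f |L_f^\pm(v/q)|^2/L(1,\ad f) \ll_\e (qT)^\e(q_1 T^2 + qT)$. Summing over the two signs and the $(1+|t_f|)$ prefactor (which contributes an extra factor of $T$), and dividing by $q_2$, the identity $q_1 q_2 = q$ yields $\ll_\e q^\e T^{3+\e}$ in both cases: in Case 1 the dominant contribution is $T \cdot q T^2 / q = T^3$, while in Case 2 it is $T \cdot (T^2 + qT)/q = T^3/q + T^2 \leq 2T^3$.

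The main obstacle is the rigorous justification of Step 2, as the termwise integration of the Fourier expansion does not converge as written: one must instead work with the Mellin transform of $R_0 f$ as a function of a complex parameter $s$, analytically continue to $s = 0$, and identify the outcome with the completed Vorono\u{i} $L$-series at the central point. One must also verify that the cancellation between the Gamma factors $P^\pm(t_f)$ and $\sqrt{\cosh \pi t_f}$ is sharp enough that at most a single power of $T$ is lost, as this exponent is what allows the spectral large sieve in Step 3 to yield $T^{3+\e}$ rather than a worse bound.
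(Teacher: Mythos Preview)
Your approach is essentially the same as the paper's: reduce to a vertical line via \hyperref[lem:canonicalform]{Lemma \ref*{lem:canonicalform}}, identify the integral as the central value of an additive-twist $L$-series, derive an approximate functional equation, and apply the spectral large sieve. The paper streamlines your case split by observing that $g_{q_2} W_q^{-1} W_{q_1} \in \Gamma_0(q_1)$ for both $(q_1,q_2) = (q,1)$ and $(1,q)$, so the substitution $z \mapsto W_q^{-1}\gamma_\LL^{-1} z$ lands uniformly on the vertical line at $v/q$ with Atkin--Lehner eigenvalue $\eta_f(q_1)$; your separate treatment of Case~2 via $w = qz$ reaches the same endpoint (your $d'$ is exactly $q$, as one checks from $a_{i+1}b_i - a_i b_{i+1} = 1$), just less directly.

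One minor slip: from \eqref{eqn:L2rho1} one has $|\rho_f(1)|^2 = \cosh(\pi t_f)/(2q_1 L(1,\ad f))$, so it is $|\rho_f(1)|^2/\cosh\pi t_f \asymp (q_1 L(1,\ad f))^{-1}$, and correspondingly $|P^\pm(t_f)|^2 \cosh\pi t_f \asymp 1+|t_f|$; you have the $\cosh$ factors on the wrong side in both intermediate statements, though the two errors cancel and your final bound $(1+|t_f|)/(q_1 L(1,\ad f))$ is correct. The paper avoids this bookkeeping by packaging the archimedean weights directly into the function $V_\pm(n/q,t_f)$ appearing in the approximate functional equation and bounding $\rho_f(1) V_\pm$ in one step.
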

 
\begin{proof}
We make the change of variables $z \mapsto W_q^{-1} \gamma_{\LL}^{-1} z$, where $\gamma_{\LL} \in \Gamma_0(q)$ is as in \hyperref[lem:canonicalform]{Lemma \ref*{lem:canonicalform}}. As $g_{q_2} W_q^{-1} W_{q_1} \in \Gamma_0(q_1)$, we have via \hyperref[lem:canonicalform]{Lemma \ref*{lem:canonicalform}} that
\[\int_{\LL} (R_0 f)(q_2 z) \, \frac{dz}{\Im(z)} = \eta_f(q_1) \int_{\frac{v}{q}}^{i\infty} (R_0 f)(z) \, \frac{dz}{\Im(z)},\]
where $\eta_f(q_1) \in \{1,-1\}$ is the Atkin--Lehner eigenvalue of $f$ under the action of $W_{q_1}$. So upon defining
\[\Lambda\left(s,f,\frac{v}{q}\right) \coloneqq q^{s - \frac{1}{2}} \int_{\frac{v}{q}}^{i\infty} (R_0 f)(z) \Im(z)^{s - \frac{1}{2}} \, \frac{dz}{\Im(z)}\]
for $s \in \C$ and recalling \hyperref[lem:Maasortho]{Lemma \ref*{lem:Maasortho}}, we deduce that the left-hand side of \eqref{eqn:sideL2boundMaass} is equal to
\begin{equation}
\label{eqn:sideL2bound2}
\frac{1}{q_2} \sum_{\substack{f \in \BB_0^{\ast}(\Gamma_0(q_1)) \\ |t_f|\leq T}} \left|\Lambda\left(\frac{1}{2},f,\frac{v}{q}\right)\right|^2.
\end{equation}

Let $\overline{v} \in \{1,\ldots,q - 1\}$ be such that $v\overline{v} \equiv 1 \pmod{q}$. Then $(R_0 f)(\frac{v}{q} + \frac{iy}{q}) = - (R_0 f)(-\frac{\overline{v}}{q} + \frac{i}{qy})$ via the automorphy of $R_0 f$, namely $(R_0 f)(\gamma z) = j_{\gamma}(z)^2 (R_0 f)(z)$ with $\gamma = \begin{psmallmatrix} v & \frac{v \overline{v} - 1}{q} \\ q & \overline{v} \end{psmallmatrix} \in \Gamma_0(q)$ and $z = -\frac{\overline{v}}{q} + \frac{i}{qy}$. This yields the functional equation
\begin{equation}
\label{eqn:funceq}
\begin{split}
\Lambda\left(s,f,\frac{v}{q}\right) & = i \int_{1}^{\infty} \left((R_0 f)\left(\frac{v}{q} + \frac{iy}{q}\right) y^{s - \frac{1}{2}} - (R_0 f)\left(-\frac{\overline{v}}{q} + \frac{iy}{q}\right) y^{\frac{1}{2} - s}\right) \, \frac{dy}{y}	\\
& = -\Lambda\left(1 - s,f,-\frac{\overline{v}}{q}\right),
\end{split}
\end{equation}
noting that the integral represents an entire function. Moreover, for $\Re(s) > 1$, we may insert the Fourier expansion \eqref{eqn:Fkexppos} for $R_0 f$ in order to see that
\begin{multline}
\label{eqn:LambdaDirseries}
\Lambda\left(s,f,\frac{v}{q}\right) = - i \rho_f(1) q^{s - \frac{1}{2}} \sum_{n = 1}^{\infty} \frac{\lambda_f(n)}{n^s} e\left(\frac{nv}{q}\right) \int_{0}^{\infty} W_{1,it_f}(4\pi y) y^{s - \frac{1}{2}} \, \frac{dy}{y}	\\
+ i \epsilon_f \rho_f(1) q^{s - \frac{1}{2}} \sum_{n = 1}^{\infty} \frac{\lambda_f(n)}{n^s} e\left(-\frac{nv}{q}\right) \int_{0}^{\infty} \left(\frac{1}{4} + t_f^2\right) W_{-1,it_f}(4\pi y) y^{s - \frac{1}{2}} \, \frac{dy}{y}.
\end{multline}
From \eqref{eqn:Whittakeridentity} and \eqref{eqn:W2int}, the first integral above is
\begin{equation}
\label{eqn:firstint}
\pi^{-s} \Gamma\left(\frac{s + 1 + it_f}{2}\right) \Gamma\left(\frac{s + 1 - it_f}{2}\right) + \frac{1}{2} \left(s - \frac{1}{2}\right) \pi^{-s} \Gamma\left(\frac{s + it_f}{2}\right) \Gamma\left(\frac{s - it_f}{2}\right),
\end{equation}
while the second is
\begin{equation}
\label{eqn:secondint}
\pi^{-s} \Gamma\left(\frac{s + 1 + it_f}{2}\right) \Gamma\left(\frac{s + 1 - it_f}{2}\right) - \frac{1}{2} \left(s - \frac{1}{2}\right) \pi^{-s} \Gamma\left(\frac{s + it_f}{2}\right) \Gamma\left(\frac{s - it_f}{2}\right).
\end{equation}

With this in hand, we derive the approximate functional equation for $\Lambda(\frac{1}{2},f,\frac{v}{q})$, namely the identity
\begin{equation}
\label{eqn:approxfunceq}
\Lambda\left(\frac{1}{2},f,\frac{v}{q}\right) = \frac{i \rho_f(1)}{\sqrt{\pi}} \sum_{\pm} \mp \epsilon_f^{\frac{1 \mp 1}{2}} \sum_{n = 1}^{\infty} \frac{\lambda_f(n)}{\sqrt{n}} \left(e\left(\pm \frac{nv}{q}\right) + e\left(\mp \frac{n\overline{v}}{q}\right)\right) V_{\pm}\left(\frac{n}{q},t_f\right),
\end{equation}
where for $x > 0$, $t \in \R \cup i[-\frac{7}{64},\frac{7}{64}]$, and $\sigma > 0$,
\begin{multline}
\label{eqn:Vpmxt}
V_{\pm}(x,t) \coloneqq \frac{1}{2\pi i} \int_{\sigma - i\infty}^{\sigma + i\infty} x^{-s} e^{s^2}	\\
\times \left(\Gamma\left(\frac{s}{2} + \frac{3}{4} + \frac{it}{2}\right) \Gamma\left(\frac{s}{2} + \frac{3}{4} - \frac{it}{2}\right) \pm \frac{s}{2} \Gamma\left(\frac{s}{2} + \frac{1}{4} + \frac{it}{2}\right) \Gamma\left(\frac{s}{2} + \frac{1}{4} - \frac{it}{2}\right)\right) \, \frac{ds}{s}.
\end{multline}
To see this, consider
\begin{equation}
\label{eqn:approxfunceqdifference}
\frac{1}{2\pi i} \int_{\sigma - i\infty}^{\sigma + i\infty} \Lambda\left(\frac{1}{2} + s,f,\frac{v}{q}\right) e^{s^2} \, \frac{ds}{s} - \frac{1}{2\pi i} \int_{-\sigma - i\infty}^{-\sigma + i\infty} \Lambda\left(\frac{1}{2} + s,f,\frac{v}{q}\right) e^{s^2} \, \frac{ds}{s}
\end{equation}
with $\sigma > \frac{1}{2}$. This is equal to the left-hand side of \eqref{eqn:approxfunceq} simply by shifting the first contour of integration from $\Re(s) = \sigma$ to $\Re(s) = -\sigma$, which picks up a residue at $s = 0$ equal to $\Lambda(\frac{1}{2},f,\frac{v}{q})$. On the other hand, this is equal to the right-hand side of \eqref{eqn:approxfunceq} by first using the functional equation \eqref{eqn:funceq} for the second integral and making the change of variables $s \mapsto -s$, and then inserting the expression \eqref{eqn:LambdaDirseries} for $\Lambda(\frac{1}{2} + s,f,\frac{v}{q})$ and $\Lambda(\frac{1}{2} + s,f,-\frac{\overline{v}}{q})$, interchanging the order of summation and integration, and recalling \eqref{eqn:firstint} and \eqref{eqn:secondint}.

Next, we note that by \eqref{eqn:L2rho1} and Stirling's formula, we have that for any $A > 0$,
\[\rho_f(1) V_{\pm}(x,t_f) \ll_{\e} \begin{dcases*}
q_1^{-\frac{1}{2} + \e} (1 + |t_f|)^{\frac{1}{2} + \e} & for $x \leq 1 + |t_f|$,	\\
q_1^{-\frac{1}{2} + \e} (1 + |t_f|)^{A + \frac{1}{2} + \e} x^{-A} & for $x \geq 1 + |t_f|$.
\end{dcases*}\]
Here we have used the fact that $L(1,\ad f) \gg_{\e} (q_1 (1 + |t_f|))^{-\e}$; additionally, for $x \leq 1 + |t_f|$, we have shifted the contour to $\Re(s) = -\frac{25}{64} + \e$, while we have shifted the contour to $\Re(s) = A$ for $x \geq 1 + |t_f|$. This bound shows that the portion of the sum over $n \in \N$ in \eqref{eqn:approxfunceq} for which $n \geq (q (1 + |t_f|))^{1 + \e}$ is negligibly small.

The result now follows by inserting the approximate functional equation \eqref{eqn:approxfunceq} into \eqref{eqn:sideL2bound2}, dividing the sum over $n \in \N$ in \eqref{eqn:approxfunceq} into dyadic ranges, and applying the spectral large sieve, namely the well-known bound
\[\sum_{\substack{f \in \BB_0^{\ast}(\Gamma_0(q_1)) \\ |t_f| \leq T}} \left|\sum_{n \leq N} a_n \lambda_f(n)\right|^2 \ll_{\e} (q_1 T N)^{\e} (q_1 T^2 + N) \sum_{n \leq N} |a_n|^2.\qedhere\]
\end{proof}

An analogous result holds for Eisenstein series.

\begin{lemma}
\label{lem:bndintEis}
Let $q$ be an odd prime and let $\LL$ be a hyperbolic side of $\PP(q)$. Then we have that
\[\frac{1}{q} \int_{-T}^{T} \left|\int^\ast_{\LL} (R_0 E)\left(qz,\frac{1}{2} + it\right) \, \frac{dz}{\Im(z)}\right|^2\,dt \ll_{\e} q^{\e} T^{3 + \e}.\]
\end{lemma}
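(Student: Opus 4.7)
My plan is to mirror the proof of \hyperref[lem:bndintCusp]{Lemma \ref*{lem:bndintCusp}}, with modifications to handle the Eisenstein series and the regularisation. Let $\LL_v\coloneqq\{z\in\Hb : \Re(z)=v/q\}$. First, I will apply the change of variables $z\mapsto W_q^{-1}\gamma_{\LL}^{-1}z$ with $\gamma_{\LL}\in \Gamma_0(q)$ as furnished by \hyperref[lem:canonicalform]{Lemma \ref*{lem:canonicalform}}. Since $g_q W_q^{-1}=\begin{psmallmatrix} 0 & 1 \\ -1 & 0\end{psmallmatrix}\in\Gamma$ and $(R_0 E)(\cdot,1/2+it)$ is weight-$2$ automorphic for $\Gamma$, the cocycle picked up by $(R_0 E)(qz,1/2+it)$ cancels against the Jacobian, giving
\[\int^{\ast}_{\LL}(R_0E)(qz,1/2+it)\,\frac{dz}{\Im(z)}= \int^{\ast}_{\LL_v}(R_0E)(w,1/2+it)\,\frac{dw}{\Im(w)}.\]
The two endpoints of $\LL$ are cusps of $\Gamma_0(q)\backslash\Hb$ of denominator coprime to $q$ by \hyperref[thm:boundentries]{Theorem \ref*{thm:boundentries}} and hence $\Gamma_0(q)$-equivalent to each other, just like the endpoints $v/q$ and $\infty$ of $\LL_v$; thus both sides are well-defined regularised integrals in the sense of the corollary to \hyperref[lem:regint]{Lemma \ref*{lem:regint}}.

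Next, I will define the completed integral
\[\Lambda^{\ast}(s,v/q,t)\coloneqq q^{s-1/2}\int^{\ast}_{\LL_v}(R_0E)(w,1/2+it)\,\Im(w)^{s-1/2}\,\frac{dw}{\Im(w)}.\]
For $v\overline{v}\equiv 1\pmod{q}$, the inverse of $\sigma_{v/q}=\begin{psmallmatrix} v & (v\overline{v}-1)/q \\ q & \overline{v}\end{psmallmatrix}\in\Gamma_0(q)\subset\Gamma$ sends $v/q+iy$ to $-\overline{v}/q+i/(q^2y)$ with cocycle $j_{\sigma_{v/q}}^2=-1$, yielding the pointwise identity $(R_0 E)(v/q+iy/q,1/2+it)=-(R_0 E)(-\overline{v}/q+i/(qy),1/2+it)$, in direct parallel with the cuspidal case. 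After observing that the non-decaying constant terms $y^{1/2\pm it}$ in the Fourier expansion \eqref{eqn:FkexEis} of $R_0 E$ cancel between the two cuspidal truncations exactly as in the proof of \hyperref[lem:regint]{Lemma \ref*{lem:regint}}, I obtain the functional equation $\Lambda^{\ast}(s,v/q,t)=-\Lambda^{\ast}(1-s,-\overline{v}/q,t)$. For $\Re(s)$ sufficiently large, inserting \eqref{eqn:FkexEis} and evaluating the Mellin transforms via \eqref{eqn:Whittakeridentity} and \eqref{eqn:W2int} gives a Dirichlet series expansion of $\Lambda^{\ast}(s,v/q,t)$ analogous to \eqref{eqn:LambdaDirseries}, with the Eisenstein Fourier coefficients $\lambda(n,t)/\xi(1+2it)$ in place of the cuspidal ones $\rho_f(1)\lambda_f(n)/\sqrt{n}$.

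The standard contour-shift argument then produces the approximate functional equation
\[\Lambda^{\ast}\!\left(\tfrac{1}{2},\tfrac{v}{q},t\right)=\frac{C}{\xi(1+2it)}\sum_{\pm}\pm\sum_{n=1}^{\infty}\frac{\lambda(n,t)}{\sqrt{n}}\left(e\!\left(\pm\tfrac{nv}{q}\right)+e\!\left(\mp\tfrac{n\overline{v}}{q}\right)\right)V_{\pm}\!\left(\tfrac{n}{q},t\right),\]
with the same weight functions $V_{\pm}$ from \eqref{eqn:Vpmxt} effectively localising to $n\ll (qT)^{1+\e}$. Combining the polynomial lower bound $|\xi(1+2it)|^{-1}\ll_{\e}(1+|t|)^{\e}$ with the mean-value estimate
\[\int_{-T}^{T}\left|\sum_{n\leq N}a_n\lambda(n,t)\right|^2\,dt\ll_{\e}(TN)^{\e}(T^2+N)\sum_{n\leq N}|a_n|^2,\]
which is an elementary consequence of the factorisation $\lambda(n,t)=\sum_{ab=n}(a/b)^{it}$ and Montgomery--Vaughan's mean-value theorem (corresponding morally to the fourth-moment bound for $\zeta(1/2+it)$ twisted by Dirichlet characters modulo $q$), a dyadic decomposition of the $n$-sum yields the claimed bound $q^{1+\e}T^{3+\e}$.

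The main technical obstacle is handling the regularisation cleanly: the constant terms $y^{1/2\pm it}$ in the Fourier expansion of $E$ do not decay at either cusp, so I must verify that they cancel between the two cuspidal truncations, and that this cancellation persists through both the derivation of the functional equation and the contour shifts needed for the approximate functional equation. Everything else is a routine adaptation of the cuspidal case.
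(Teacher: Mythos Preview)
Your proposal is correct and follows essentially the same route as the paper: reduce via \hyperref[lem:canonicalform]{Lemma \ref*{lem:canonicalform}} to a vertical line at $v/q$, set up a completed $L$-series with a functional equation from the automorphy under $\begin{psmallmatrix} v & (v\overline{v}-1)/q \\ q & \overline{v}\end{psmallmatrix}$, derive an approximate functional equation with the same weights $V_{\pm}$, and feed it into the spectral large sieve for Eisenstein series. One point where the paper is slightly more precise than your outline: rather than asserting that the constant terms ``cancel between the two cuspidal truncations'' to give a clean functional equation, the paper works with $\widetilde{R_0 E}$ (constant terms subtracted) from the outset, so that the functional equation for $\Lambda(s,E_t,v/q)$ picks up an explicit rational correction term $i\sum_{\pm}(\tfrac{1}{2}\pm it)\,\tfrac{\xi(1\pm 2it)}{\xi(1+2it)}\,q^{-1/2\mp it}\bigl(\tfrac{1}{s\pm it}-\tfrac{1}{1-s\pm it}\bigr)$ with simple poles at $s=1\pm it$ and $s=\mp it$ and a zero at $s=1/2$; in the contour shift these extra poles at $s=\tfrac{1}{2}\pm it$ and $s=-\tfrac{1}{2}\mp it$ are crossed but their residues cancel in pairs, so the approximate functional equation comes out exactly as you wrote. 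This is precisely the obstacle you flagged at the end, and your plan handles it correctly once made explicit.
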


\begin{proof}
For $\Re(s) > 1$, we define
\[\Lambda\left(s,E_t,\frac{v}{q}\right) \coloneqq q^{s - \frac{1}{2}} \int_{\frac{v}{q}}^{i\infty} (\widetilde{R_0 E})\left(z,\frac{1}{2} + it\right) \Im(z)^{s - \frac{1}{2}} \, \frac{dz}{\Im(z)},\]
where $(\widetilde{R_0 E})(z,\frac{1}{2} + it)$ is as in \eqref{eqn:tildeR0E}. The automorphy of $(R_0 E)(z,\frac{1}{2} + it)$ implies the functional equation
\begin{align*}
\Lambda\left(s,E_t,\frac{v}{q}\right) & = i \int_{1}^{\infty} \left((\widetilde{R_0 E})\left(\frac{v}{q} + \frac{iy}{q},\frac{1}{2} + it\right) y^{s - \frac{1}{2}} - (\widetilde{R_0 E})\left(-\frac{\overline{v}}{q} + \frac{iy}{q},\frac{1}{2} + it\right) y^{\frac{1}{2} - s}\right) \, \frac{dy}{y}		\\
& \qquad + i \sum_{\pm} \left(\frac{1}{2} \pm it\right) \frac{\xi(1 \pm 2it)}{\xi(1 + 2it)} q^{-\frac{1}{2} \mp it} \left(\frac{1}{s \pm it} - \frac{1}{1 - s \pm it}\right)	\\
& = -\Lambda\left(1 - s,E_t,-\frac{\overline{v}}{q}\right).
\end{align*}
Note that the first integral is entire, while the second term, which arises from the constant terms in the Fourier expansion of $(R_0 E)(z,\frac{1}{2} + it)$, is meromorphic with simple poles at $s = 1 \pm it$ and $s = \mp it$ and a zero at $s = 1/2$. Furthermore, for $\Re(s) > 1$, we have that
\begin{multline*}
\Lambda\left(s,E_t,\frac{v}{q}\right) = - i \frac{q^{s - \frac{1}{2}}}{\xi(1 + 2it)} \sum_{n = 1}^{\infty} \frac{\lambda(n,t)}{n^s} e\left(\frac{nv}{q}\right) \int_{0}^{\infty} W_{1,it}(4\pi y) y^{s - \frac{1}{2}} \, \frac{dy}{y}	\\
+ i \frac{q^{s - \frac{1}{2}}}{\xi(1 + 2it)} \sum_{n = 1}^{\infty} \frac{\lambda(n,t)}{n^s} e\left(-\frac{nv}{q}\right) \int_{0}^{\infty} \left(\frac{1}{4} + t^2\right) W_{-1,it}(4\pi y) y^{s - \frac{1}{2}} \, \frac{dy}{y};
\end{multline*}
these integrals are explicitly calculated in \eqref{eqn:firstint} and \eqref{eqn:secondint}. Finally, the approximate functional equation in this setting reads
\[\Lambda\left(\frac{1}{2},E_t,\frac{v}{q}\right) = \frac{i}{\sqrt{\pi} \xi(1 + 2it)} \sum_{\pm} \mp \sum_{n = 1}^{\infty} \frac{\lambda(n,t)}{\sqrt{n}} \left(e\left(\pm \frac{nv}{q}\right) + e\left(\mp \frac{n\overline{v}}{q}\right)\right) V_{\pm}\left(\frac{n}{q},t\right),\]
where $V_{\pm}(x,t)$ is as in \eqref{eqn:Vpmxt}. This follows by the same method as for Hecke--Maa\ss{} newforms, namely considering the integral \eqref{eqn:approxfunceqdifference} with $f$ replaced by $E_t$; although there are additional poles of the integrand at $s = 1/2 \pm it$ and $s = -1/2 \mp it$, these ultimately cancel out.

The same approach as for the Hecke--Maa\ss{} form case shows that the left-hand side of \eqref{eqn:sideL2boundMaass} is equal to
\[\frac{1}{q} \int_{-T}^{T} \left|\Lambda\left(\frac{1}{2},E_t,\frac{v}{q}\right)\right|^2\, dt.\]
The result once more follows from the spectral large sieve, which states that
\[\int_{-T}^{T} \left|\sum_{n \leq N} a_n \lambda(n,t)\right|^2 \, dt \ll_{\e} (TN)^{\e} (T^2 + N) \sum_{n \leq N} |a_n|^2.\qedhere\]
\end{proof}

\subsection{Proof of \texorpdfstring{\hyperref[thm:level]{Theorem \ref*{thm:level}}}{Theorem \ref{thm:level}}}

\begin{proof}[Proof of {\hyperref[thm:level]{Theorem \ref*{thm:level}}}]
Let $u : \Hb \to \R$ be a smooth compactly supported function such that no two points of the support of $u$ are $\Gamma$-equivalent. This gives rise to a smooth compactly supported function $U(z) : \Gamma \backslash \Hb \to \R$ given by
\[U(z) \coloneqq \sum_{\gamma \in \Gamma} u(\gamma z).\]
For each odd prime $q$, choose $\omega_q \in \Gamma / \Gamma_0(q)$ and define
\[U_q(z) \coloneqq \sum_{\gamma \in \Gamma_0(q)} u(\omega_q \gamma z) = \begin{dcases*}
U(z) & if $z \in \omega_q^{-1} \Gamma \backslash \Hb$,	\\
0 & otherwise.
\end{dcases*}\]
Via \hyperref[lem:Maasortho]{Lemmata \ref*{lem:Maasortho}} and \ref{lem:Eisortho}, the spectral expansion of $U_q(z)$ reads
\begin{multline*}
U_q(z) = \frac{1}{\vol(\Gamma_0(q) \backslash \Hb)} \int_{\Gamma \backslash \Hb} U(z) \, d\mu(z) + \sum_{q_1 q_2 = q} \sum_{\ell \mid q_2} \sum_{f \in \BB_0^{\ast}(\Gamma_0(q_1))} \left\langle U_q, f_{\ell}\right\rangle_q f_{\ell}(z)	\\
+ \frac{1}{4\pi} \sum_{\ell \mid q} \int_{-\infty}^{\infty} \left\langle U_q, E_{\ell}\left(\cdot,\frac{1}{2} + it\right)\right\rangle_q E_{\ell}\left(z,\frac{1}{2} + it\right) \, dt.
\end{multline*}
Then by character orthogonality and \hyperref[lem:oldforms]{Lemma \ref*{lem:oldforms}}, the quantity
\[\frac{\vol(\Gamma_0(q) \backslash \Hb)}{\sum_{A \in G_D} \vol(\FF_A(q))} \sum_{A \in G_D} \int_{\FF_A(q)} U_q(z) \, d\mu(z),\]
where $G_D = C (\Cl_D^+)^2 \in \Gen_D$ is a genus with $C \in \Cl_D^+$, is equal to the sum of the contribution from the residual spectrum,
\[\int_{\Gamma \backslash \Hb} U(z) \, d\mu(z),\]
the contribution from the cuspidal spectrum,
\begin{multline*}
\frac{\vol(\Gamma_0(q) \backslash \Hb)}{\sum_{A \in G_D} \vol(\FF_A(q))} \frac{1}{2^{\omega(D) - 1}} \sum_{\chi \in \widehat{\Gen_D}} \chi(C) \sum_{q_1 q_2 = q} \sum_{\ell_1\ell_2 \mid q_2} \frac{\sqrt{\varphi(\ell_1\ell_2)} \nu(\ell_1) \mu(\ell_2) \lambda_f(\ell_2)}{\ell_1^{3/2} \ell_2 \sqrt{\nu(q_2)}} \\
\times \sum_{f \in \BB_0^{\ast}(\Gamma_0(q_1))} \sqrt{L_{\ell_1\ell_2}(1,\ad f)} \left\langle U_q, f_{\ell_1\ell_2}\right\rangle_q W^{\ell_1,q}_{\chi,f},
\end{multline*}
and the contribution from the continuous spectrum,
\begin{multline*}
\frac{\vol(\Gamma_0(q) \backslash \Hb)}{ \sum_{A \in G_D} \vol(\FF_A(q))} \frac{1}{2^{\omega(D) - 1}}\sum_{\chi \in \widehat{\Gen_D}} \chi(C) \sum_{\ell_1\ell_2 \mid q} \frac{\nu(\ell_1) \mu(\ell_2) \lambda(\ell_2,t)}{\ell_1 \sqrt{\ell_2 \nu(q)}} \\
\times \frac{1}{4\pi} \int_{-\infty}^{\infty} \left|\zeta_{\ell_1\ell_2}(1 + 2it)\right| \left\langle U_q, E_{\ell_1\ell_2}\left(\cdot,\frac{1}{2} + it\right)\right\rangle_q W^{\ell_1,q}_{\chi,t} \, dt.
\end{multline*}
Here $\widehat{\Gen_D}$ is the group of genus characters, namely characters that are trivial on $(\Cl_D^+)^2$, while the Weyl sums $W^{\ell,q}_{\chi,f}$ and $W^{\ell,q}_{\chi,t}$ are as in \eqref{eqn:WeylMaassold} and \eqref{eqn:WeylEisold}.

We fix $\e > 0$ and break up the contributions from the cuspidal and continuous spectra into two parts: the contributions for which $|t_f|,|t| \leq D^{\e}$ and their complements. These complementary contributions are negligibly small, since for any nonnegative integer $A$, we have that
\begin{equation}
\label{eqn:intbypartsbounds}
\left\langle U_q, f_{\ell}\right\rangle_q \ll_{U,A} (1 + |t_f|)^{-A}, \qquad \left\langle U_q, E_{\ell}\left(\cdot,\frac{1}{2} + it\right)\right\rangle_q \ll_{U,A} (1 + |t|)^{-A},
\end{equation}
which follows by repeated integration by parts (see, for example \cite[(6.1)]{LMY13}). Thus from the Cauchy--Schwarz inequality,
\[\left|\frac{\vol(\Gamma_0(q) \backslash \Hb)}{\sum_{A \in G_D} \vol(\FF_A(q))} \sum_{A \in G_D} \int_{\FF_A(q)} U_q(z) \, d\mu(z) - \int_{\Gamma \backslash \Hb} U(z) \, d\mu(z)\right|^2\]
is bounded by the sum of two terms plus a negligibly small term. These two terms are those for which $|t_f| \leq D^{\e}$ and $|t| \leq D^{\e}$. By Bessel's inequality, these two terms are
\begin{equation}
\label{eqn:firstterm}
\ll_{\e} D^{\e} q^{\e} \left|\left\langle U, U\right\rangle_1\right|^2 \left(\frac{\vol(\Gamma_0(q) \backslash \Hb)}{\sum_{A \in G_D} \vol(\FF_A(q))}\right)^2 \sum_{\chi \in \widehat{\Gen_D}} \sum_{q_1 \ell \mid q} \sum_{\substack{f \in \BB_0^{\ast}(\Gamma_0(q_1)) \\ |t_f| \leq D^{\e}}} \frac{1}{\ell} \left|W^{\ell,q}_{\chi,f}\right|^2
\end{equation}
and
\begin{equation}
\label{eqn:secondterm}
\ll_{\e} D^{\e} q^{\e} \left|\left\langle U, U\right\rangle_1\right|^2 \left(\frac{\vol(\Gamma_0(q) \backslash \Hb)}{\sum_{A \in G_D} \vol(\FF_A(q))}\right)^2 \sum_{\chi \in \widehat{\Gen_D}} \sum_{\ell \mid q} \frac{1}{\ell} \int_{-D^{\e}}^{D^{\e}} \left|W^{\ell,q}_{\chi,t}\right|^2 \, dt
\end{equation}
respectively.

We recall that the set of genus characters $\chi \in \widehat{\Gen_D}$ is indexed by ordered pairs of fundamental discriminants $(D_1,D_2)$ such that $D_1 D_2 = D$; each genus character $\chi$ is then associated to the pair of primitive quadratic Dirichlet characters $\chi_{D_1}$ and $\chi_{D_2}$ modulo $|D_1|$ and $|D_2|$ respectively. Note that $\Theta_{\chi}$ is an Eisenstein series, so that $L(s,f \otimes \Theta_{\chi}) = L(s,f \otimes \chi_{D_1}) L(s,f \otimes \chi_{D_2})$ and $L(s,\Theta_{\chi}) = L(s,\chi_{D_1}) L(s,\chi_{D_2})$.

For $\ell q_1 \mid q$ and $f \in \BB_0^{\ast}(\Gamma_0(q_1))$, we deduce from \hyperref[prop:MaassnewformWeyl]{Proposition \ref*{prop:MaassnewformWeyl}} and Stirling's formula \eqref{eqn:Stirlingboundscchif} applied to the geodesic term, as well as the Cauchy--Schwarz inequality applied to the topological terms, that
\begin{multline}
\label{eqn:Wellqchifupperbound}
\left|W^{\ell,q}_{\chi,f}\right|^2 \ll_\e \frac{\sqrt{D}}{q_1 (1 + |t_f|)^3} \frac{L\left(\frac{1}{2},f \otimes \chi_{D_1}\right) L\left(\frac{1}{2}, f \otimes \chi_{D_2}\right)}{L(1,\ad f)}	\\
+\delta_{q_1\ell,q} \frac{\sqrt{D}}{(1 + |t_f|)^4} \sum_{j=1}^{2g} \left| \int_{\CC_j} (R_0 f)(\ell z) \, \frac{dz}{\Im (z)}\right|^2 \sum_{j=1}^{2g} \sum_{h\in \BB_2^{\hol}(\Gamma_0(q))} \sum_{\pm} |\langle v^\pm_{h}, \omega_j\rangle_\mathrm{cap}|^2 \\
\times \sum_{h\in \BB_2^{\hol}(\Gamma_0(q))} L\left(\frac{1}{2}, h \otimes \chi_{D_1}\right) L\left(\frac{1}{2}, h \otimes \chi_{D_2}\right),
\end{multline}
and similarly
\begin{multline}
\label{eqn:Wellqchitupperbound}
\left|W^{\ell,q}_{\chi,t}\right|^2 \ll_\e \frac{\sqrt{D}}{(1 + |t|)^3} \left|\frac{L\left(\frac{1}{2} + it,\chi_{D_1}\right) L\left(\frac{1}{2} + it,\chi_{D_2}\right)}{\zeta(1 + 2it)}\right|^2 \\
+ \delta_{\ell,q} \sqrt{D} \sum_{j=1}^{2g} \left| \int_{\CC_j} (R_0 E)\left(\ell z,\frac{1}{2} + it\right) \, \frac{dz}{\Im (z)}\right|^2 \sum_{j=1}^{2g} \sum_{h\in \BB_2^{\hol}(\Gamma_0(q))} \sum_{\pm} \left|\langle v^\pm_{h}, \omega_j\rangle_\mathrm{cap}\right|^2 \\
\times \sum_{h\in \BB_2^{\hol}(\Gamma_0(q))} L\left(\frac{1}{2}, h \otimes \chi_{D_1}\right) L\left(\frac{1}{2}, h \otimes \chi_{D_2}\right).
\end{multline}

We first bound the portions of \eqref{eqn:firstterm} and \eqref{eqn:secondterm} coming from the first terms on the right-hand sides of \eqref{eqn:Wellqchifupperbound} and \eqref{eqn:Wellqchitupperbound}, namely the geodesic terms. We use H\"{o}lder's inequality with exponents $(3,3,3)$ together with the Weyl law upper bound
\[\sum_{\substack{f \in \BB_0^{\ast}(\Gamma_0(q_1)) \\ |t_f| \leq T}} 1 \ll T^2 q_1\]
as well as the third moment bounds
\begin{align}
\label{eqn:cubicMaass}
\sum_{\substack{f \in \BB_0^{\ast}(\Gamma_0(q_1)) \\ T - 1 \leq |t_f| \leq T}} \frac{L\left(\frac{1}{2}, f \otimes \chi_D\right)^3}{L(1,\ad f)} \ll_{\e} D^{1 + \e} q_1^{1 + \e} T^{1 + \e},	\\
\label{eqn:cubicEis}
\int\limits_{T - 1 \leq |t| \leq T} \left|\frac{L\left(\frac{1}{2} + it,\chi_D\right)^3}{\zeta(1 + 2it)}\right|^2 \, dt \ll_{\e} D^{1 + \e} T^{1 + \e}
\end{align}
The sixth moment bound \eqref{eqn:cubicEis} is \cite[Theorem 1]{You17}. The third moment bound \eqref{eqn:cubicMaass} with $T^{1 + \e}$ replaced by $T^A$ for some unspecified large constant $A$ was proven for \emph{holomorphic} cusp forms by Petrow and Young \cite[Theorem 1]{PY19}; it was then improved to the form \eqref{eqn:cubicMaass} in \cite[Theorem 4.1]{AW23} and in a more general form in \cite[Theorem 11.1]{GHLN24}, which both also recover the bound \eqref{eqn:cubicEis}. Combining this with the fact that $|\widehat{\Gen_D}| = 2^{\omega(D) - 1} \ll_{\e} D^{\e}$, and that the lower bound for the volume $\vol(\FF_A(q))$ coming from \eqref{eqn:volbound}, we obtain the bounds $O_{\e}(D^{-\frac{1}{6} + \e} q^{2 + \e})$ for these terms.

We next bound the portions of \eqref{eqn:firstterm} and \eqref{eqn:secondterm} coming from the second terms on the right-hand sides of \eqref{eqn:Wellqchifupperbound} and \eqref{eqn:Wellqchitupperbound}, namely the topological terms. For the innermost sum over $h \in \BB_2^{\hol}(\Gamma_0(q))$, we use H\"{o}lder's inequality with exponents $(3,3,3)$ together with the dimension upper bound $|\BB_2^{\hol}(\Gamma_0(q))| \ll q$ and the third moment bound
\[\sum_{h \in \BB_2^{\hol}(\Gamma_0(q))} L\left(\frac{1}{2}, h \otimes \chi_D\right)^3 \ll_{\e} D^{1 + \e} q^{1 + \e}\]
due to Petrow and Young \cite[Theorem 1]{PY19} (see also \cite[Theorem 11.1]{GHLN24}). Next, we want to apply the $L^2$-bound for the cap product pairing derived by the second author \cite[Theorem 6.1]{Nor23} which applies to (dual bases of) \emph{basic bases} of homology in the terminology of \cite{Nor23}. These are bases of the vector space $H_1(Y_0(q), \R)$ consisting of classes containing the geodesic connecting $z$ and $\gamma z$ where $\gamma\in \Gamma_0(q)$ and $\gamma \infty=\tfrac{v}{q}$ with $0<v<q$. \hyperref[lem:canonicalform]{Lemma \ref*{lem:canonicalform}} implies that indeed
\[W_q[\CC_1],\ldots , W_q[\CC_{2g}]\]
is the projection to the homology $H_1(X_0(q),\R)$ of a basic basis, where $W_q$ denotes the Atkin--Lehner involution acting on homology. Since $W_q$ is self-adjoint with respect to the cap product pairing and $W_qh(z)=\eta_h(q) h(z)$, where $\eta_h(q) \in \{1,-1\}$ is the Atkin--Lehner eigenvalue of $h\in \mathcal{B}_2^\mathrm{hol}(\Gamma_0(q))$, we get
\[|\langle v_h^\pm, \omega_j \rangle_\mathrm{cap}|^2=|\langle v_h^\pm, W_q \omega_j \rangle_\mathrm{cap}|^2.\]
Since $W_q$ commutes with taking dual bases (using that $W_q$ is self-adjoint), we conclude by \cite[Theorem 6.1]{Nor23} the bound
\[\sum_{j = 1}^{2g} \sum_{h\in \BB_2^{\hol}(\Gamma_0(q))} \sum_{\pm} \left|\langle v^\pm_{h}, \omega_j\rangle_\mathrm{cap}\right|^2 \ll_{\e} q^{2 + \e}.\]
As discussed in \cite[Section 6]{Nor23}, this should be viewed as a homological version of the sup-norm problem.

Finally, we apply \hyperref[lem:bndintCusp]{Lemmata \ref*{lem:bndintCusp}} and \ref{lem:bndintEis} to bound the remaining second moment of integrals along the hyperbolic sides $\CC_j$. Combined, this yields the bounds $O_{\e}(D^{-\frac{1}{6} + \e} q^{6 + \e})$ for these terms.

We deduce that
\begin{equation}
\label{eqn:Uqintequidistribution}
\frac{\vol(\Gamma_0(q) \backslash \Hb)}{\sum_{A \in G_D} \vol(\FF_A(q))} \sum_{A \in G_D} \int_{\FF_A(q)} U_q(z) \, d\mu(z) = \int_{\Gamma \backslash \Hb} U(z) \, d\mu(z) + O_{U,\e}\left(D^{-\frac{1}{12} + \e} q^{3 + \e}\right).
\end{equation}
We improve the error term in \eqref{eqn:Uqintequidistribution} to $O_{\e}(D^{-\frac{1}{4} + \e} q^{3 + \e})$ under the assumption of the generalised Lindel\"{o}f hypothesis by invoking the conditional bounds
\begin{align*}
\frac{L\left(\frac{1}{2},f \otimes \chi_{D_1}\right) L\left(\frac{1}{2}, f \otimes \chi_{D_2}\right)}{L(1,\ad f)} & \ll_{\e} (D q_1 (1 + |t_f|))^{\e},	\\
L\left(\frac{1}{2},h \otimes \chi_{D_1}\right) L\left(\frac{1}{2}, h \otimes \chi_{D_2}\right) & \ll_{\e} (Dq)^{\e},	\\
\left|\frac{L\left(\frac{1}{2} + it,\chi_{D_1}\right) L\left(\frac{1}{2} + it,\chi_{D_2}\right)}{\zeta(1 + 2it)}\right|^2 & \ll_{\e} (D (1 + |t|))^{\e},
\end{align*}
and using the Weyl law instead of applying H\"{o}lder's inequality.

We are almost able to deduce \hyperref[thm:level]{Theorem \ref*{thm:level}} upon taking $u(z)$ to closely approximate a fundamental domain of $\Gamma \backslash \Hb$. There is one remaining obstacle, namely that we are assuming that $u$ is \emph{compactly supported}, and so we have not precluded the possibility of the escape of mass in this equidistribution problem.

We circumvent this by performing a dyadic partition of unity and choosing $u$ to be a function of the form $u(z) \coloneqq \delta_{\Re(z) \in [0,1]} \Psi(\frac{\Im(z)}{Y})$, with $Y > 1$ a large parameter and $\Psi : (0,\infty) \to \R$ a smooth bump function equal to $1$ on $[1,2]$ and vanishing outside $[\frac{1}{2},\frac{5}{2}]$. We use the same framework as above except that the bounds \eqref{eqn:intbypartsbounds} are replaced by
\[\left\langle U_q, f_{\ell}\right\rangle_q = 0, \qquad \left\langle U_q, E_{\ell}\left(\cdot,\frac{1}{2} + it\right)\right\rangle_q \ll_{\Psi} Y^{-\frac{1}{2}} (1 + |t|)^{-A}\]
for any $A > 0$. These follow by inserting the Fourier expansions \eqref{eqn:Fourier} and \eqref{eqn:FourierEis}: the cusp form term vanishes since there is no constant term in the Fourier expansion, while for the Eisenstein case, we make the change of variables $y \mapsto Yy$ and then repeatedly integrate by parts. Thus there is no contribution from the cuspidal spectrum, while we bound the contribution from the continuous spectrum using the same bounds as above. In this way, we find that
\[\frac{\vol(\Gamma_0(q) \backslash \Hb)}{\sum_{A \in G_D} \vol(\FF_A(q))} \sum_{A \in G_D} \int_{\FF_A(q)} U_q(z) \, d\mu(z) = \int_{\Gamma \backslash \Hb} U(z) \, d\mu(z) + O_{U,\e}\left(Y^{-1} D^{-\frac{1}{12} + \e} q^{\frac{3}{2} + \e}\right).\]
This yields the desired result.
\end{proof}

\begin{remark}
Instead of proceeding via H\"{o}lder's inequality together with bounds for the third moments of $L$-functions, it is tempting to try to directly bound the moments
\begin{gather*}
\sum_{\substack{f \in \BB_0^{\ast}(\Gamma_0(q_1)) \\ |t_f| \leq D^{\e}}} \frac{L\left(\frac{1}{2},f \otimes \chi_{D_1}\right) L\left(\frac{1}{2}, f \otimes \chi_{D_2}\right)}{L(1,\ad f)},	\\
\sum_{h \in \BB_2^{\hol}(\Gamma_0(q))} L\left(\frac{1}{2},h \otimes \chi_{D_1}\right) L\left(\frac{1}{2},h \otimes \chi_{D_2}\right),	\\
\int_{-D^{\e}}^{D^{\e}} \left|\frac{L\left(\frac{1}{2} + it,\chi_{D_1}\right) L\left(\frac{1}{2} + it,\chi_{D_2}\right)}{\zeta(1 + 2it)}\right|^2 \, dt,
\end{gather*}
via approximate functional equations, the Kuznetsov and Petersson formul\ae{}, and the Vorono\u{\i} summation formula (or equivalently two applications of the Poisson summation formula). Work of Holowinsky and Templier \cite[Theorem 1]{HT14} suggests that it should be possible via this approach to bound each of these by $O_{\e}(q^{1 + \e} + D^{\frac{1}{2} + \e})$ (cf.\ \cite[Remark 6.2]{HK20}). Unfortunately, this falls just shy of being sufficient for our purposes; we instead require a bound of the form $O(D^{\frac{1}{2} - \delta})$ for some $\delta > 0$. This is a level-aspect analogue of the obstacle discussed in \cite[Section 8]{HR22} connecting small-scale equidistribution to sub-Weyl subconvexity.
\end{remark}

\begin{remark}
Liu, Masri, and Young \cite[Theorem 1.4]{LMY13} have proven the equidistribution of level $q$ Heegner points in translates $\omega_q^{-1} \Gamma \backslash \Hb$ for odd negative fundamental discriminants $D$ and for primes $q$ that split in $\Q(\sqrt{D})$ as $q(-D)$ tends to infinity under the proviso that $q \leq (-D)^{\delta}$ for some fixed $\delta < \frac{1}{20}$. The method of proof of \hyperref[thm:level]{Theorem \ref*{thm:level}} may be used to strengthen this result to relax the condition on $q$ to be squarefree, $D$ to be any negative fundamental discriminant (not necessarily odd), and $q \leq (-D)^{\delta}$ for some fixed $\delta < \frac{1}{12}$. Moreover, an analogous result can also be shown to hold for the equidistribution of level $q$ closed geodesics. The input for this improvement is the usage of the Weyl-strength third moment bounds \eqref{eqn:cubicMaass} and \eqref{eqn:cubicEis} in place of Burgess-strength pointwise bounds
\[L\left(\frac{1}{2}, f \otimes \chi_D\right) \ll_{t_f,\e} D^{\frac{3}{8} + \e} q^{\frac{1}{2} + \e}, \qquad L\left(\frac{1}{2} + it,\chi_D\right) \ll_{t,\e} D^{\frac{3}{16} + \e}.\]
\end{remark}

\section{Sparse Equidistribution in the Subgroup Aspect}
\label{sect:sparseproof}

\begin{proof}[Proof of {\hyperref[thm:subgroup]{Theorem \ref*{thm:subgroup}}}]
Via the Weyl equidistribution criterion together with the lower bound \eqref{eqn:volbound} for $\vol(\FF_A(q))$ and the orthonormal bases for the cuspidal and continuous spectra given in \hyperref[lem:Maasortho]{Lemmata \ref*{lem:Maasortho}} and \ref{lem:Eisortho}, it suffices to show that for each $q_1 \mid q$, $\ell \mid \frac{q}{q_1}$, and $f \in \BB_0^{\ast}(\Gamma_0(q_1))$, and for each $t \in \R$,
\begin{align*}
\sum_{A \in CH} \int_{\FF_A(q)} f(\ell z) \, d\mu(z) & = o_{q,t_f}\left(\frac{|H|}{h_D^+} \frac{\sqrt{D} L(1,\chi_D)}{\log D}\right),	\\
\sum_{A \in CH} \int_{\FF_A(q)} E\left(\ell z,\frac{1}{2} + it\right) \, d\mu(z) & = o_{q,t}\left(\frac{|H|}{h_D^+} \frac{\sqrt{D} L(1,\chi_D)}{\log D}\right),
\end{align*}
where we have used the narrow class number formula $h_D^+ \log \epsilon_D = \sqrt{D} L(1,\chi_D)$. Via character orthogonality, these expressions are respectively equal to
\[\frac{|H|}{h_D^+} \sum_{\chi \in H^{\perp}} \overline{\chi}(C) W^{\ell,q}_{\chi,f}, \qquad \frac{|H|}{h_D^+} \sum_{\chi \in H^{\perp}} \overline{\chi}(C) W^{\ell,q}_{\chi,t},\]
where the Weyl sums $W^{\ell,q}_{\chi,f}$ and $W^{\ell,q}_{\chi,t}$ are as in \eqref{eqn:WeylMaassold} and \eqref{eqn:WeylEisold}. From \hyperref[lem:oldforms]{Lemma \ref*{lem:oldforms}} and \hyperref[prop:MaassnewformWeyl]{Propositions \ref*{prop:MaassnewformWeyl}} and \ref{prop:EisnewformWeyl}, we have that
\begin{align*}
\left|W^{\ell,q}_{\chi,f}\right|^2 & \ll_{q,t_f} \sqrt{D}\left( L\left(\frac{1}{2}, f \otimes \Theta_{\chi}\right) + \sum_{h\in \BB_2^\hol(q)} \left(L\left(\frac{1}{2}, h \otimes \Theta_{\chi}\right) + L\left(\frac{1}{2}, h \otimes \Theta_{\overline{\chi}}\right)\right)\right),	\\
\left|W^{\ell,q}_{\chi,t}\right|^2 & \ll_{q,t} \sqrt{D}\left(\left|L\left(\frac{1}{2} + it, \Theta_{\chi}\right)\right|^2 + \sum_{h\in \BB_2^\hol(q)} \left(L\left(\frac{1}{2}, h \otimes \Theta_{\chi}\right) + L\left(\frac{1}{2}, h \otimes \Theta_{\overline{\chi}}\right)\right)\right).
\end{align*}
\hyperref[thm:subgroup]{Theorem \ref*{thm:subgroup}} thereby holds unconditionally for $\delta < \frac{1}{2826}$ due to the subconvexity estimates
\begin{align}
\label{eqn:HMsubconvex}
L\left(\frac{1}{2},f \otimes \Theta_{\chi}\right) & \ll_{q_1,t_f,\e} D^{\frac{1}{2} - \frac{1}{1413} + \e},	\\
\label{eqn:Msubconvex}
L\left(\frac{1}{2},h \otimes \Theta_{\chi}\right) & \ll_{q} D^{\frac{1}{2} - \frac{1}{1057}}\\
\label{eqn:BHMsubconvex}
L\left(\frac{1}{2} + it, \Theta_{\chi}\right) & \ll_t D^{\frac{1}{4} - \frac{1}{1889}},
\end{align}
combined with the fact that $|H^{\perp}| = \frac{h_D^+}{|H|}$, and the (ineffective) Siegel bound $L(1,\chi_D) \gg_{\e} D^{-\e}$. When $\chi$ is not a genus character, so that $\Theta_{\chi}$ is a cusp form, the first bound above is due to Harcos and Michel \cite[Theorem 1]{HM06} (see additionally \cite[Theorem 1.3]{Har11}), the second is due to Michel \cite[Theorem 2]{Mic04}, while the third is due to Blomer, Harcos, and Michel \cite[Theorem 2]{BHM07}. When $\chi$ is a genus character, then stronger bounds are known via the work of Petrow and Young \cite[Theorem 1]{PY19}.

We obtain \hyperref[thm:subgroup]{Theorem \ref*{thm:subgroup}} for $\delta < \frac{1}{4}$ under the assumption of the generalised Lindel\"{o}f hypothesis, since this implies the stronger bounds $|W_{\chi,f}^{\ell,q}|^2 \ll_{q,t_f,\e} \ll D^{\frac{1}{2} + \e}$ and $|W_{\chi,t}^{\ell,q}|^2 \ll_{q,t,\e} D^{\frac{1}{2} + \e}$.
\end{proof}

\begin{remark}
Improvements of the bound \eqref{eqn:BHMsubconvex} exist in the literature (see, for example, \cite[Theorem 1]{BlKh19} for $D$ prime); the obstacles in unconditionally enlarging the range of $\delta$ in \hyperref[thm:subgroup]{Theorem \ref*{thm:subgroup}} are improvements of the bounds \eqref{eqn:HMsubconvex} and \eqref{eqn:Msubconvex}.
\end{remark}

In \cite[Section 4]{DIT16}, it is observed that \hyperref[thm:subgroup]{Theorem \ref*{thm:subgroup}} is \emph{trivial} when $q = 1$ and $H = \Cl_D^+$ is the whole narrow class group, for then for every fixed continuity set $B \subset \Gamma_0(q) \backslash \Hb$, we have the \emph{equality}
\begin{equation}
\label{eqn:equality}
\frac{\sum_{A \in CH} \vol(\FF_A(q) \cap \Gamma_0(q) B)}{\sum_{A \in CH} \vol(\FF_A(q))} = \frac{\vol(B)}{\vol(\Gamma_0(q) \backslash \Hb)}
\end{equation}
with \emph{no} error term. Moreover, if $q = 1$ and $H = (\Cl_D^+)^2$, so that $CH \in \Gen_D$ is a genus, then we also have the equality \eqref{eqn:equality} if $J$ lies in the principal genus $(\Cl_D^+)^2 \in \Gen_D$, where $J \in \Cl_D^+$ is the narrow ideal class containing the different $\df \coloneqq (\sqrt{D})$, so that $J^2$ is the principal narrow ideal class $I$; this occurs if and only if $D$ is not divisible by a prime congruent to $3$ modulo $4$.

The reason for the trivial equality \eqref{eqn:equality} is simply due to the fact that the pair $\Gamma_A \backslash \NN_A$ and $\Gamma_{JA^{-1}}(1) \backslash \NN_{JA^{-1}}(1)$ are \emph{complementary}, in the sense that their union covers $\Gamma \backslash \Hb$ evenly and the images of their boundary geodesics are the same as sets but with opposite orientations. As a consequence, we have the following result.

\begin{corollary}
\label{cor:trivial}
Let $q = 1$ and let $D$ be a positive fundamental discriminant. Let $CH$ be a coset of $\Cl_D^+$ with $H$ a subgroup of $\Cl_D^+$ and $C \in \Cl_D^+$. Then we have the equality \eqref{eqn:equality} for every fixed continuity set $B \subset \Gamma \backslash \Hb$ if $C^2 J \in H$.
\end{corollary}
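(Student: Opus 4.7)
The plan is to exploit the complementarity observation recalled just before the corollary: for every narrow ideal class $A \in \Cl_D^+$, the orbifolds $\Gamma_A \backslash \NN_A$ and $\Gamma_{JA^{-1}} \backslash \NN_{JA^{-1}}$ project onto $\Gamma \backslash \Hb$ with disjoint interiors and total union equal to the whole modular surface. Taking volumes inside $\Gamma B$ for any continuity set $B \subset \Gamma \backslash \Hb$, this yields the pointwise pair of identities
\[\vol(\FF_A \cap \Gamma B) + \vol(\FF_{JA^{-1}} \cap \Gamma B) = \vol(B), \qquad \vol(\FF_A) + \vol(\FF_{JA^{-1}}) = \vol(\Gamma \backslash \Hb),\]
the second of which comes from specialising the first to $B = \Gamma \backslash \Hb$. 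The strategy is to sum these identities over $A \in CH$, which requires checking that the involution $\tau : A \mapsto JA^{-1}$ preserves the coset $CH$.

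The key algebraic step is to show that $\tau(CH) = CH$ is precisely the hypothesis $C^2 J \in H$. I first observe that $J^2 = I$ in $\Cl_D^+$: the class $J^2$ is represented by $(\sqrt{D})^2 = (D)$, which is principal and totally positive since $D > 0$. Hence $J = J^{-1}$, so that $\tau$ is genuinely an involution of $\Cl_D^+$ by commutativity. For $A = Ch$ with $h \in H$, commutativity of $\Cl_D^+$ together with $J = J^{-1}$ gives
\[\tau(A) = JC^{-1}h^{-1} = C \cdot (C^{-2} J) \cdot h^{-1} = C \cdot (C^2 J)^{-1} \cdot h^{-1}.\]
Therefore $\tau(A) \in CH$ if and only if $(C^2 J)^{-1} h^{-1} \in H$, and since $h^{-1} \in H$, this happens if and only if $C^2 J \in H$.

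The corollary then follows by reindexing: since $\sum_{A \in CH} f(\tau A) = \sum_{A \in CH} f(A)$, the complementarity identity summed over $CH$ yields
\[2 \sum_{A \in CH} \vol(\FF_A \cap \Gamma B) = \sum_{A \in CH} \bigl(\vol(\FF_A \cap \Gamma B) + \vol(\FF_{\tau A} \cap \Gamma B)\bigr) = |CH|\, \vol(B),\]
and likewise $2 \sum_{A \in CH} \vol(\FF_A) = |CH|\, \vol(\Gamma \backslash \Hb)$; dividing these two identities produces exactly \eqref{eqn:equality}. There is no substantive obstacle in the argument, since all of the geometric content is supplied by the DIT complementarity already invoked by the authors; the only point worth flagging is the fixed-point case $A^2 = J$, which if it occurs in $CH$ still contributes exactly $\vol(B)$ per element to the sum via the self-complementarity identity $2 \vol(\FF_A \cap \Gamma B) = \vol(B)$, so the reindexing step is robust to the presence of such classes.
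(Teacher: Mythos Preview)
Your argument is correct and follows essentially the same route as the paper's proof: both rely on the complementarity of $\Gamma_A \backslash \NN_A$ and $\Gamma_{JA^{-1}} \backslash \NN_{JA^{-1}}$ and the observation that the involution $A \mapsto JA^{-1}$ preserves $CH$ precisely when $C^2 J \in H$. You have simply written out in more detail the algebraic verification and the summation step that the paper leaves implicit.
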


\begin{proof}
The oriented closed geodesics $\CC_A(1)$ and $\CC_{JA^{-1}}(1)$ are the same curve with opposite orientations, which means that $\Gamma_A(1) \backslash \NN_A(1)$ and $\Gamma_{JA^{-1}}(1) \backslash \NN_{JA^{-1}}(1)$ cover $\Gamma_0(1) \backslash \Hb$ evenly. Thus \eqref{eqn:equality} holds if $JA^{-1} \in CH$ for every $A \in CH$. This condition is met precisely when $C^2 J \in H$.
\end{proof}

For $q \neq 1$, on the other hand, it is no longer the case that $\Gamma_A(q) \backslash \NN_A(q)$ and $\Gamma_{JA^{-1}}(q) \backslash \NN_{JA^{-1}}(q)$ are complementary. Indeed, if a narrow ideal class $A$ is associated to a Heegner form $Q = [a,b,c] \in \QQ_D(q)$, where $q > 1$ and $b \equiv r \pmod{2q}$ for some fixed residue class $r$ modulo $2q$ such that $r^2 \equiv D \pmod{4q}$, then there does \emph{not} exist a Heegner form $Q' = [a',b',c'] \in \QQ_D(q)$ with $b' \equiv r \pmod{2q}$ such that $\Gamma_Q(q) \backslash \NN_Q(q)$ and $\Gamma_{Q'}(q) \backslash \NN_{Q'}(q)$ are complementary. Instead, the complementary Heegner form is $[-a,-b,-c] \in \QQ_D(q)$, which is such that $-b \equiv -r \pmod{2q}$, and hence does not correspond to a narrow ideal class $A \in CH$ appearing in the sums in \eqref{eqn:equality}. As a notable consequence, \hyperref[thm:subgroup]{Theorem \ref*{thm:subgroup}} is nontrivial if $q > 1$ and $H = \Cl_D^+$.

\section{Small Scale Equidistribution and Discrepancy Bounds}

\subsection{Automorphic Kernels and Selberg--Harish-Chandra Transforms}

For $z,w \in \Hb$, set
\[\rho(z,w) \coloneqq \log \frac{\left|z - \overline{w}\right| + |z - w|}{\left|z - \overline{w}\right| - |z - w|}, \qquad u(z,w) \coloneqq \frac{|z - w|^2}{4 \Im(z) \Im(w)} = \sinh^2 \frac{\rho(z,w)}{2}.\]
The function $u : \Hb \times \Hb \to [0,\infty)$ is a point-pair invariant for the symmetric space $\Hb \cong \SL_2(\R) / \SO(2)$; that is, $u(g z, g w) = u(z,w)$ for all $g \in \SL_2(\R)$ and $z,w \in \Hb$. From this, a function $k : [0,\infty) \to \C$ gives rise to a point-pair invariant $k(u(z,w))$ on $\Hb$.

We take $k(u(z,w)) = k_R(u(z,w))$ to be equal to the indicator function of a ball of radius $R$ centred at a point $w$,
\[B_R(w) \coloneqq \{z \in \Hb : \rho(z,w) \leq R\} = \left\{z \in \Hb : u(z,w) \leq \sinh^2 \frac{R}{2}\right\},\]
normalised by the volume of this ball,
\[\vol(B_R) = 4\pi \sinh^2 \frac{R}{2},\]
namely
\[k_R(u(z,w)) \coloneqq \begin{dcases*}
\dfrac{1}{\vol(B_R)} & if $u(z,w) \leq \sinh^2 \dfrac{R}{2}$,	\\
0 & otherwise.
\end{dcases*}\]
We additionally consider the convolution
\[k \ast k'(u(z,w)) = \int_{\Hb} k(u(z,\zeta)) k'(u(\zeta,w)) \, d\mu(\zeta)\]
of two point-pair invariants $k,k'$ on $\Hb$.

\begin{lemma}
\label{lem:kconv}
For $0 < \rho < R$, the convolution $k_R \ast k_{\rho}(u(z,w))$ is nonnegative, bounded by $\frac{1}{\vol(B_R)}$, and satisfies
\[k_R \ast k_{\rho}(u(z,w)) = \begin{dcases*}
\frac{1}{\vol(B_R)} & if $u(z,w) \leq \sinh^2 \frac{R - \rho}{2}$,	\\
0 & if $u(z,w) \geq \sinh^2 \frac{R + \rho}{2}$,
\end{dcases*}\]
so that
\[\frac{\vol(B_{R - \rho})}{\vol(B_R)} k_{R - \rho} \ast k_{\rho}(u(z,w)) \leq k_R(u(z,w)) \leq \frac{\vol(B_{R + \rho})}{\vol(B_R)} k_{R + \rho} \ast k_{\rho}(u(z,w))\]
for all $z,w \in \Hb$.
\end{lemma}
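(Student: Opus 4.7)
The plan is to view the convolution as a normalised volume of intersection of hyperbolic balls, then read off each assertion from the triangle inequality for the hyperbolic distance $\rho$.

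First, I would note that $k_R(u(z,\zeta)) = \vol(B_R)^{-1} \chi_{B_R(z)}(\zeta)$, so that
\[k_R \ast k_\rho (u(z,w)) = \frac{1}{\vol(B_R) \vol(B_\rho)} \vol\bigl(B_R(z) \cap B_\rho(w)\bigr).\]
Nonnegativity is then immediate, while the estimate $\vol(B_R(z) \cap B_\rho(w)) \leq \min(\vol(B_R), \vol(B_\rho)) = \vol(B_\rho)$ (using $\rho < R$) yields the bound $k_R \ast k_\rho(u(z,w)) \leq 1/\vol(B_R)$.

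Next, I would determine the two regimes of $\rho(z,w)$. If $\rho(z,w) \leq R - \rho$, then for every $\zeta \in B_\rho(w)$, the triangle inequality gives $\rho(z,\zeta) \leq \rho(z,w) + \rho(w,\zeta) \leq (R-\rho) + \rho = R$, so $B_\rho(w) \subseteq B_R(z)$ and the intersection has volume exactly $\vol(B_\rho)$, yielding $k_R \ast k_\rho(u(z,w)) = 1/\vol(B_R)$. Conversely, if $\rho(z,w) \geq R + \rho$, then for any $\zeta$ we cannot have both $\rho(z,\zeta) \leq R$ and $\rho(\zeta,w) \leq \rho$, else $\rho(z,w) \leq R + \rho$ with equality possible only on a set of measure zero; hence $\vol(B_R(z) \cap B_\rho(w)) = 0$ and the convolution vanishes.

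Finally, for the sandwich bounds I would split into cases according to whether $\rho(z,w) \leq R$ or $\rho(z,w) > R$. In the upper bound, if $\rho(z,w) \leq R$, then $\rho(z,w) \leq (R+\rho) - \rho$, so by the explicit evaluation above,
\[\frac{\vol(B_{R+\rho})}{\vol(B_R)} k_{R+\rho} \ast k_\rho(u(z,w)) = \frac{1}{\vol(B_R)} = k_R(u(z,w));\]
if $\rho(z,w) > R$, the left-hand side is $0$ and the right-hand side is nonnegative. In the lower bound, if $\rho(z,w) \leq R$, then $k_R(u(z,w)) = 1/\vol(B_R)$, and it suffices to observe $k_{R-\rho} \ast k_\rho(u(z,w)) \leq 1/\vol(B_{R-\rho})$, which follows as in the opening paragraph (noting $\vol(B_{R-\rho} \cap B_\rho) \leq \min(\vol(B_{R-\rho}), \vol(B_\rho))$, so in either order of $R-\rho$ versus $\rho$ the bound $1/\vol(B_{R-\rho})$ holds); if $\rho(z,w) > R = (R-\rho) + \rho$, the convolution on the left vanishes by the disjointness argument. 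There is no real obstacle here; the only point requiring mild care is the case analysis $R - \rho \lessgtr \rho$ when bounding $k_{R-\rho} \ast k_\rho$ above, which is resolved uniformly by using $1/\max(\vol(B_{R-\rho}), \vol(B_\rho)) \leq 1/\vol(B_{R-\rho})$.
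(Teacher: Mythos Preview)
Your proof is correct and takes essentially the same approach as the paper: the paper's proof is the one-liner ``This follows from the triangle inequality for the distance function $\rho(z,w)$,'' and you have simply unpacked this by writing the convolution as a normalised intersection volume of hyperbolic balls and applying the triangle inequality in each regime.
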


\begin{proof}
This follows from the triangle inequality for the distance function $\rho(z,w)$.
\end{proof}

Given $k : [0,\infty) \to \C$, we define the automorphic kernel $K : \Gamma \backslash \Hb \times \Gamma \backslash \Hb \to \C$ by
\[K(z,w) \coloneqq \sum_{\gamma \in \Gamma} k(u(\gamma z,w)).\]
We write
\[K_R(z,w) \coloneqq \sum_{\gamma \in \Gamma} k_R(u(\gamma z,w)).\]
This is the indicator function of an injective geodesic ball in $\Gamma \backslash \Hb$ provided that $k_R(u(\gamma_1 z,w)) = k_R(u(\gamma_2 z,w)) = 1$ if and only if $\gamma_1 = \gamma_2$. A necessary condition for this is the requirement that $2 \Ht(w) \sinh R \leq 1$, where $\Ht(w) \coloneqq \max_{\gamma \in \Gamma} \Im(\gamma w)$. For $0 < \rho < R$, we consider the convolution kernel
\[K_R \ast K_{\rho}(z,w) \coloneqq \sum_{\gamma \in \Gamma} k_R \ast k_{\rho}(u(\gamma z,w)).\]

\begin{corollary}
\label{cor:kconv}
For $0 < \rho < R$, the convolution kernel $K_R \ast K_{\rho}(z,w)$ is nonnegative and satisfies
\begin{equation}
\label{eqn:kconv}
\frac{\vol(B_{R - \rho})}{\vol(B_R)} K_{R - \rho} \ast K_{\rho}(z,w) \leq K_R(z,w) \leq \frac{\vol(B_{R + \rho})}{\vol(B_R)} K_{R + \rho} \ast K_{\rho}(z,w)
\end{equation}
for all $z,w \in \Gamma \backslash \Hb$ satisfying $2 \Ht(w) \sinh R \leq 1$.
\end{corollary}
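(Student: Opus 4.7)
The plan is to deduce Corollary \ref{cor:kconv} from Lemma \ref{lem:kconv} via a direct $\Gamma$-average of the pointwise inequalities already established on $\Hb \times \Hb$.

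First I would handle nonnegativity: since Lemma \ref{lem:kconv} asserts $k_R \ast k_\rho(u(z,w)) \geq 0$ pointwise on $\Hb \times \Hb$, the defining expression
\[K_R \ast K_\rho(z,w) = \sum_{\gamma \in \Gamma} k_R \ast k_\rho(u(\gamma z, w))\]
is a sum of nonnegative terms, and is in particular well-defined since $k_R \ast k_\rho$ vanishes off the compact set $\{u(\gamma z, w) \leq \sinh^2 \frac{R+\rho}{2}\}$, which meets only finitely many $\Gamma$-orbits.

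For the main inequality \eqref{eqn:kconv}, I would apply Lemma \ref{lem:kconv} to the point-pair $(\gamma z, w) \in \Hb \times \Hb$ for each $\gamma \in \Gamma$ to obtain the pointwise sandwich
\[\frac{\vol(B_{R-\rho})}{\vol(B_R)} k_{R-\rho} \ast k_\rho(u(\gamma z, w)) \leq k_R(u(\gamma z, w)) \leq \frac{\vol(B_{R+\rho})}{\vol(B_R)} k_{R+\rho} \ast k_\rho(u(\gamma z, w)),\]
and then sum over $\gamma \in \Gamma$. Since the $\Gamma$-sum of each side is, by construction, the corresponding automorphic kernel, this yields \eqref{eqn:kconv} at once.

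The role of the injectivity hypothesis $2\Ht(w) \sinh R \leq 1$ is not in the pointwise reduction above, which is essentially formal, but rather in ensuring that $K_R(z,w)$ really does coincide with $\vol(B_R)^{-1}$ times the indicator function of the injective geodesic ball $\Gamma B_R(w) \subset \Gamma \backslash \Hb$: under the hypothesis, at most one translate $\gamma z$ lies within hyperbolic distance $R$ of $w$, so there is no overcounting. This clean interpretation is what will actually be needed later in proving Theorem \ref{thm:smallscale}, where $K_R(z,w)$ is integrated against the measure induced by $\FF_A$. Since the corollary reduces to summing a pointwise inequality, there is no substantive obstacle beyond bookkeeping; the main sanity check is verifying finiteness of the $\Gamma$-sums and checking that all three point-pair invariants appearing in \eqref{eqn:kconv} are subjected to the same summation, which is immediate from their compact support.
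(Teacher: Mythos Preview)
Your proposal is correct and matches the paper's intended argument: the paper gives no separate proof of Corollary~\ref{cor:kconv}, treating it as an immediate consequence of Lemma~\ref{lem:kconv} obtained by summing the pointwise inequality over $\gamma \in \Gamma$. Your observation that the hypothesis $2\Ht(w)\sinh R \leq 1$ serves only to guarantee that $K_R(z,w)$ is the normalised indicator of an injective ball (rather than being needed for the inequality \eqref{eqn:kconv} itself) is accurate and consistent with the paper's discussion preceding the corollary.
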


The spectral expansion for the convolution kernel $K_{R \pm \rho} \ast K_{\rho}$ involves a sum over an orthonormal basis $\BB_0(\Gamma)$ of the space of Maa\ss{} cusp forms (which we may choose to consist of Hecke--Maa\ss{} eigenforms), where the inner product is
\[\langle f,g\rangle \coloneqq \int_{\Gamma \backslash \Hb} f(z) \overline{g(z)} \, d\mu(z),\]
and an integral over $t \in \R$ indexing the Eisenstein series $E(z,\frac{1}{2} + it)$. It also involves the Selberg--Harish-Chandra transforms $h_{R \pm \rho}$ of $k_{R \pm \rho}$ and $h_{\rho}$ of $k_{\rho}$. The Selberg--Harish-Chandra transform takes sufficiently well-behaved functions $k : [0,\infty) \to \C$ to functions $h : \R \cup i[-\frac{1}{2},\frac{1}{2}] \to \C$ via
\[h(t) \coloneqq 2\pi \int_{0}^{\infty} P_{-\frac{1}{2} + it}(\cosh \rho) k\left(\sinh^2 \frac{\rho}{2}\right) \sinh \rho \, d\rho,\]
where $P_{\lambda}^{\mu}(z)$ denotes the associated Legendre function. In particular,
\[h_R(t) = \frac{2\pi}{\vol(B_R)} \int_{0}^{R} P_{-\frac{1}{2} + it}(\cosh \rho) \sinh \rho \, d\rho.\]
The size of this transform is given by the following well-known bounds.

\begin{lemma}[{\cite[Lemma 2.33]{HR22}}]
\label{lem:hRupperbounds}
Suppose that $0 < R \leq 1/\e$ for some fixed $\e > 0$. For $t \in \R$,
\begin{equation}
\label{eqn:hRupperbounds}
h_{R}(t) \ll \begin{dcases*}
1 & for $|t| \leq \dfrac{1}{R}$,	\\
R^{-\frac{3}{2}} |t|^{-\frac{3}{2}} & for $|t| \geq \dfrac{1}{R}$.
\end{dcases*}
\end{equation}
\end{lemma}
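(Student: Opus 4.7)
{Lemma \ref*{lem:hRupperbounds}}]
The plan is to exploit the integral representation
\[h_R(t) = \frac{2\pi}{\vol(B_R)} \int_{0}^{R} P_{-\frac{1}{2} + it}(\cosh \rho) \sinh \rho \, d\rho\]
together with standard bounds and asymptotics for the conical Legendre function $P_{-\frac{1}{2}+it}(\cosh\rho)$, handling the two ranges of $|t|$ by different means. Since $0 < R \leq 1/\e$, we have $\vol(B_R) = 4\pi \sinh^2(R/2) \asymp_{\e} R^2$, so it suffices to bound the integral by $R^2$ in the first range and by $R^{1/2}|t|^{-3/2}$ in the second.

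For $|t| \leq 1/R$, I would use the Laplace-type integral representation
\[P_{-\frac{1}{2}+it}(\cosh\rho) = \frac{1}{\pi}\int_{0}^{\pi} (\cosh\rho + \sinh\rho \cos\phi)^{-\frac{1}{2}+it} \, d\phi,\]
which gives the pointwise estimate $|P_{-\frac{1}{2}+it}(\cosh\rho)| \leq P_{-\frac{1}{2}}(\cosh\rho)$, and the latter is uniformly bounded on $[0,1/\e]$. Consequently
\[\int_{0}^{R} P_{-\frac{1}{2}+it}(\cosh\rho) \sinh\rho \, d\rho \ll_{\e} \int_{0}^{R} \sinh\rho \, d\rho = \cosh R - 1 \ll R^2,\]
so dividing by $\vol(B_R) \asymp R^2$ yields $h_R(t) \ll 1$.

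For $|t| \geq 1/R$, I would split the integral at the crossover $\rho_0 \coloneqq 1/|t|$. On $[0,\rho_0]$ the uniform bound from the first step still gives a contribution of size $\ll \rho_0^2 = |t|^{-2}$ to the integral, which after division by $R^2$ contributes $\ll R^{-2} |t|^{-2} \leq R^{-3/2} |t|^{-3/2}$ using $|t| \geq 1/R$. On $[\rho_0,R]$ I would invoke the classical Mehler--Dirichlet asymptotic
\[P_{-\frac{1}{2}+it}(\cosh\rho) = \sqrt{\frac{2}{\pi t \sinh\rho}} \cos\left(t\rho - \frac{\pi}{4}\right) + O\left(\frac{1}{|t|^{3/2} (\sinh\rho)^{3/2}}\right),\]
valid uniformly for $\rho \geq \rho_0$ and $|t|$ large (see, e.g., the uniform asymptotic expansions of Olver). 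The error term contributes $\ll |t|^{-3/2}\int_{\rho_0}^{R} (\sinh\rho)^{-1/2}\,d\rho \ll R^{1/2} |t|^{-3/2}$, while the main term, after pulling out the slowly varying factor $\sqrt{\sinh\rho}/\sqrt{|t|}$, becomes an oscillatory integral
\[\sqrt{\frac{2}{\pi |t|}} \int_{\rho_0}^{R} \sqrt{\sinh\rho}\, \cos\left(t\rho - \tfrac{\pi}{4}\right) \, d\rho\]
to which I apply integration by parts once; the boundary and derivative terms are each $O(|t|^{-1} R^{1/2})$, giving a total contribution of $O(|t|^{-3/2} R^{1/2})$. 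Dividing by $\vol(B_R) \asymp R^2$ yields $h_R(t) \ll R^{-3/2} |t|^{-3/2}$, as desired.

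The main technical point is the uniform validity of the Mehler--Dirichlet asymptotic across the crossover $\rho \sim 1/|t|$; one avoids subtleties at small $\rho$ precisely by splitting at $\rho_0 = 1/|t|$ and using the crude bound below that threshold. Alternatively, one can circumvent this by appealing directly to Olver's uniform asymptotic expansion of $P_{-\frac{1}{2}+it}(\cosh\rho)$ in terms of Bessel functions $J_0(t\rho)$, for which the required bounds are immediate, though this is essentially equivalent to the split argument above.
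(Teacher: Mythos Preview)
The paper does not supply its own proof of this lemma; it simply quotes the result from \cite[Lemma 2.33]{HR22}. So there is nothing to compare against at the level of argument, only to assess whether your sketch is sound.

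Your approach is standard and essentially correct. The bound $|P_{-\frac{1}{2}+it}(\cosh\rho)| \leq P_{-\frac{1}{2}}(\cosh\rho) \ll_{\e} 1$ on $[0,1/\e]$ is exactly right for the first range, and the split at $\rho_0 = 1/|t|$ followed by the oscillatory estimate is the natural way to handle the second. The one place to be careful is precisely where you flag it: the Mehler--Dirichlet expansion in the stated form has an error term $O(|t|^{-3/2}(\sinh\rho)^{-3/2})$ that is only of size $O(1)$ at $\rho = \rho_0$, so the expansion is not genuinely ``asymptotic'' there; one must check that the stated bound for $P_{-\frac{1}{2}+it}(\cosh\rho)$ nonetheless holds uniformly down to $\rho \asymp 1/|t|$. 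Your suggested fix via Olver's uniform Bessel-type expansion $P_{-\frac{1}{2}+it}(\cosh\rho) \sim (\rho/\sinh\rho)^{1/2} J_0(t\rho)$ is the clean way to do this, and it does give the required bounds directly (the $J_0$ bound $|J_0(x)| \ll \min(1,x^{-1/2})$ immediately yields both regimes after integrating against $\sinh\rho$). With that caveat addressed, the sketch goes through.
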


The advantage of convolving is that it smooths the point-pair invariant and improves the decay of the Selberg--Harish-Chandra transform, since the Selberg--Harish-Chandra transform of the convolution $k_1 \ast k_2$ is the product $h_1(t) h_2(t)$ of the individual Selberg--Harish-Chandra transforms. This ensures that the convolution kernel has a spectral expansion on $L^2(\Gamma \backslash \Hb)$ that not only converges in $L^2$ but uniformly.

\begin{lemma}[{\cite[Theorems 1.14 and 7.4]{Iwa02}}]
\label{lem:kerneluniform}
The automorphic kernel $K_R$ satisfies
\begin{align*}
\int_{\Gamma \backslash \Hb} K_R(z,w) \, d\mu(z) & = h_R\left(\frac{i}{2}\right) = 1,	\\
\int_{\Gamma \backslash \Hb} f(z) K_R(z,w) \, d\mu(z) & = h_R(t_f) f(w),	\\
\int_{\Gamma \backslash \Hb} E\left(z,\frac{1}{2} + it\right) K_R(z,w) \, d\mu(z) & = h_R(t) E\left(w,\frac{1}{2} + it\right)
\end{align*}
for every $f \in \BB_0(\Gamma)$, $t \in \R$, and $w \in \Gamma \backslash \Hb$. Moreover, the convolved kernel $K_{R \pm \rho} \ast K_{\rho}$ has the spectral expansion
\begin{multline}
\label{eqn:KastKspectral}
K_{R \pm \rho} \ast K_{\rho}(z,w) = \frac{1}{\vol(\Gamma \backslash \Hb)} + \sum_{f \in \BB_0(\Gamma)} h_{R \pm \rho}(t_f) h_{\rho}(t_f) f(z) \overline{f(w)}	\\
+ \frac{1}{4\pi} \int_{-\infty}^{\infty} h_{R \pm \rho}(t) h_{\rho}(t) E\left(z,\frac{1}{2} + it\right) \overline{E\left(w,\frac{1}{2} + it\right)} \, dt,
\end{multline}
which converges absolutely and uniformly.
\end{lemma}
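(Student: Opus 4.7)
The plan is to verify each of the four assertions separately; the first three are elementary manipulations of the point-pair invariant structure, while the fourth hinges on combining the standard spectral decomposition with the decay estimates of \hyperref[lem:hRupperbounds]{Lemma \ref*{lem:hRupperbounds}} to promote an $L^2$-expansion to one that converges absolutely and uniformly.

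For the first identity, I would unfold the sum over $\Gamma$ to rewrite the integral over $\Gamma \backslash \Hb$ as $\int_{\Hb} k_R(u(z,w)) \, d\mu(z)$, apply an isometry sending $w$ to $i$, and exploit the $\SL_2(\R)$-invariance of the point-pair invariant, so that the integral collapses to $\vol(B_R)^{-1} \vol(B_R(i)) = 1$. The parallel identity $h_R(i/2) = 1$ follows by substituting $t = i/2$ into the Selberg--Harish-Chandra transform, using $P_0(\cosh \rho) = 1$, and computing $\int_0^R \sinh \rho \, d\rho = 2\sinh^2(R/2) = \vol(B_R)/(2\pi)$. The second and third identities reduce to the classical fact that convolution against a point-pair invariant kernel acts diagonally on each Laplacian eigenfunction, with scalar given by the Selberg--Harish-Chandra transform at the spectral parameter; one unfolds $\int_{\Gamma \backslash \Hb} f(z) K_R(z,w) \, d\mu(z) = \int_{\Hb} f(z) k_R(u(z,w)) \, d\mu(z)$ and invokes the fact that $P_{-1/2 + it}(\cosh \rho)$ is (up to normalisation) the spherical function of parameter $t$ on $\SL_2(\R)/\SO(2)$, so that integrating a $\Delta$-eigenfunction against a radial kernel simply multiplies it by the Selberg--Harish-Chandra transform evaluated at the spectral parameter. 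The same computation applied to $E(\cdot, 1/2 + it)$ in place of $f$ gives the third identity, since the Eisenstein series is also a Laplacian eigenfunction.

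For the spectral expansion, I would rewrite
\[
K_{R \pm \rho} \ast K_{\rho}(z,w) = \int_{\Gamma \backslash \Hb} K_{R \pm \rho}(z,\zeta) K_{\rho}(\zeta,w) \, d\mu(\zeta)
\]
(which follows by folding one of the two sums and using the $\Gamma$-invariance of $u$), expand $K_\rho(\cdot,w) \in L^2(\Gamma \backslash \Hb)$ in the orthonormal spectral basis consisting of the constant function, $\BB_0(\Gamma)$, and the Eisenstein integral, and apply the second and third identities to each basis element to collapse the inner integral. This produces the claimed expansion with coefficients $h_{R \pm \rho}(t) h_\rho(t)$. For absolute uniform convergence, combine the bound $h_{R \pm \rho}(t) h_\rho(t) \ll (1 + |t|)^{-3}$ from \hyperref[lem:hRupperbounds]{Lemma \ref*{lem:hRupperbounds}} with the local Weyl law $\sum_{|t_f| \leq T} |f(z)|^2 \ll_z T^2$ (uniform on compacta, via the pre-trace formula) and the analogous estimate for the continuous spectrum; the Cauchy--Schwarz inequality applied dyadically in $|t|$ then shows that the $(1 + |t|)^{-3}$ decay beats the $T^2$ spectral density, producing a uniformly convergent series.

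The principal technical point is precisely securing this uniform convergence. A single kernel $K_R$ has Selberg--Harish-Chandra transform decaying only like $|t|^{-3/2}$, which is borderline for pointwise convergence of its spectral expansion; the entire purpose of passing to the convolved kernel $K_{R \pm \rho} \ast K_\rho$ is to gain an extra factor of $|t|^{-3/2}$, thereby rendering the spectral expansion absolutely and uniformly convergent and suitable for the pointwise comparisons \eqref{eqn:kconv} required in the proof of \hyperref[thm:smallscale]{Theorem \ref*{thm:smallscale}}.
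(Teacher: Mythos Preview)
The paper does not give its own proof of this lemma; it is stated purely as a citation of \cite[Theorems 1.14 and 7.4]{Iwa02}. Your sketch is a correct and essentially complete outline of the standard argument found in that reference: unfolding gives the first three identities via the Selberg--Harish-Chandra eigenfunction property of point-pair invariants, and the convolution identity $K_{R \pm \rho} \ast K_{\rho}(z,w) = \int_{\Gamma \backslash \Hb} K_{R \pm \rho}(z,\zeta) K_{\rho}(\zeta,w) \, d\mu(\zeta)$ together with the $|t|^{-3}$ decay of the product transform and the local Weyl law yields the absolutely and uniformly convergent spectral expansion.
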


\subsection{Bounds for Fractional Moments of \texorpdfstring{$L$}{L}-Functions}

The proofs of \hyperref[thm:smallscale]{Theorems \ref*{thm:smallscale}} and \ref{thm:discrepancy} require the following bounds for fractional moments of certain $L$-functions.

\begin{lemma}
\label{lem:fractionalmoment}
Let $D_1$ and $D_2$ be odd squarefree fundamental discriminants and let $\chi_{D_1}$ and $\chi_{D_2}$ be the primitive quadratic Dirichlet characters modulo $|D_1|$ and $|D_2|$. Then for $T \geq 1$ and $w \in \Gamma \backslash \Hb$, we have that
\begin{multline}
\label{eqn:fractionalmoment}
\begin{rcases*}
\displaystyle \sum_{\substack{f \in \BB_0(\Gamma) \\ T \leq t_f \leq 2T}} \sqrt{\frac{L\left(\frac{1}{2},f \otimes \chi_{D_1}\right) L\left(\frac{1}{2},f \otimes \chi_{D_2}\right)}{L(1,\ad f)}} |f(w)|	\\
\displaystyle \int\limits_{T \leq |t| \leq 2T} \left|\frac{L\left(\frac{1}{2} + it, \chi_{D_1}\right) L\left(\frac{1}{2} + it, \chi_{D_2}\right)}{\zeta(1 + 2it)}\right| \left|E\left(w,\frac{1}{2} + it\right)\right| \, dt
\end{rcases*}	\\
\ll_\e \begin{dcases*}
T^{\frac{3}{2}} \left(T^{\frac{1}{2}} + \Ht(w)^{\frac{1}{2}}\right) D^{\frac{1}{6} + \e} & if $T \leq D^{\frac{1}{12}}$,	\\
T^{\frac{1}{2}} \left(T^{\frac{1}{2}} + \Ht(w)^{\frac{1}{2}}\right) D^{\frac{1}{4} + \e} & if $D^{\frac{1}{12}} \leq T \leq D^{\frac{1}{4}}$,	\\
T^{\frac{3}{2} + \e} \left(T^{\frac{1}{2}} + \Ht(w)^{\frac{1}{2}}\right) & if $T \geq D^{\frac{1}{4}}$.
\end{dcases*}
\end{multline}
Assuming the generalised Lindel\"{o}f hypothesis, we have the improved bounds
\begin{equation}
\label{eqn:fractionalmomentGLH}
\begin{rcases*}
\displaystyle \sum_{\substack{f \in \BB_0(\Gamma) \\ T \leq t_f \leq 2T}} \sqrt{\frac{L\left(\frac{1}{2},f \otimes \chi_{D_1}\right) L\left(\frac{1}{2},f \otimes \chi_{D_2}\right)}{L(1,\ad f)}} |f(w)|	\\
\displaystyle \int\limits_{T \leq |t| \leq 2T} \left|\frac{L\left(\frac{1}{2} + it, \chi_{D_1}\right) L\left(\frac{1}{2} + it, \chi_{D_2}\right)}{\zeta(1 + 2it)}\right| \left|E\left(w,\frac{1}{2} + it\right)\right| \, dt
\end{rcases*} \ll_{\e} T^{\frac{3}{2} + \e} \left(T^{\frac{1}{2}} + \Ht(w)^{\frac{1}{2}}\right) D^{\e}
\end{equation}
for all $T \geq 1$.
\end{lemma}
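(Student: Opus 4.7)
The plan is to decouple, via Cauchy--Schwarz, the $L$-value contribution from the spectral evaluation $|f(w)|$ (resp.\ $|E(w,\tfrac{1}{2}+it)|$) in each summand, then bound each piece using known moment estimates. The spectral factor is handled uniformly in all three regimes by the pre-trace-formula bound $\sum_{T\le t_f\le 2T}|f(w)|^2\ll T(T+\Ht(w))$ and its Eisenstein analogue $\int_{T\le|t|\le 2T}|E(w,\tfrac{1}{2}+it)|^2\,dt\ll T(T+\Ht(w))$, both of which follow from Iwaniec's spectral large sieve evaluated at a point. These contribute the factor $T^{1/2}(T^{1/2}+\Ht(w)^{1/2})$ present in each of the three claimed bounds.

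For the $L$-function side, two complementary arguments are needed. In the small-$T$ regime $T\le D^{1/12}$, I would use the pointwise Weyl-strength subconvex bound of Petrow and Young \cite{PY19}, $L(\tfrac{1}{2},f\otimes\chi_{D_i})\ll(|D_i|(1+|t_f|))^{1/6+\e}$, together with the Siegel lower bound $L(1,\ad f)^{-1}\ll_\e T^\e$, to pull out the maximum $L$-value from the sum and reduce to $\sum_{T\le t_f\le 2T}|f(w)|$, which by Cauchy--Schwarz and the $L^2$-moment above is $\ll T^{3/2}(T^{1/2}+\Ht(w)^{1/2})$; this yields the first claim (up to absorbing powers of $T^\delta$ with $\delta\le 1/12$ into $D^\e$). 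In the remaining regimes $T\ge D^{1/12}$, I would instead Cauchy--Schwarz directly,
\[
\sum_{T\le t_f\le 2T}\sqrt{\tfrac{L_1 L_2}{L(1,\ad f)}}\,|f(w)|\le\Bigl(\sum\tfrac{L_1 L_2}{L(1,\ad f)}\Bigr)^{\!1/2}\Bigl(\sum|f(w)|^2\Bigr)^{\!1/2},
\]
and bound the $L$-function second moment using
\[
\sum_{T\le t_f\le 2T}\tfrac{L_1 L_2}{L(1,\ad f)}\le\prod_{i=1,2}\Bigl(\sum_{T\le t_f\le 2T}\tfrac{L(\tfrac{1}{2},f\otimes\chi_{D_i})^2}{L(1,\ad f)}\Bigr)^{\!1/2}\ll\bigl(T^2+D^{1/2}\bigr)(DT)^\e,
\]
where the hybrid twisted second moment is standard via the Kuznetsov formula, the approximate functional equation, Vorono\u{\i} summation, and the Weil bound for Kloosterman sums. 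Taking a square root gives $T^{1+\e}+D^{1/4+\e}$, and multiplying by the spectral factor separates cleanly into the middle-$T$ contribution $D^{1/4+\e}T^{1/2}(T^{1/2}+\Ht(w)^{1/2})$ and the large-$T$ contribution $T^{3/2+\e}(T^{1/2}+\Ht(w)^{1/2})$.

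The Eisenstein integral is handled identically, with Young's Weyl subconvex bound \cite{You17} replacing Petrow--Young in the small-$T$ regime and the corresponding hybrid moment $\int_{T\le|t|\le 2T}|L(\tfrac{1}{2}+it,\chi_{D_i})|^2/|\zeta(1+2it)|^2\,dt\ll(T^2+D^{1/2})(DT)^\e$ replacing the cuspidal second moment. The conditional bound \eqref{eqn:fractionalmomentGLH} under the generalised Lindel\"{o}f hypothesis is immediate: GLH gives $L_i\ll(DT)^\e$ and $L(1,\ad f)^{-1}\ll T^\e$ pointwise, so Cauchy--Schwarz together with the pre-trace-formula bound alone yields $T^{3/2+\e}(T^{1/2}+\Ht(w)^{1/2})D^\e$ uniformly in $T$.

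The main obstacle is the first regime $T\le D^{1/12}$: naive application of the convexity bound $L_i\ll(Dt_f^2)^{1/4+\e}$ gives only a $D^{1/4}$-type exponent, so obtaining the $D^{1/6}$ exponent rests entirely on the deep Weyl-strength subconvex bounds of Petrow--Young (and the Eisenstein analogue of Young). Once those inputs are in hand, the rest of the argument is a routine combination of Cauchy--Schwarz, classical twisted second moments, and the pre-trace formula evaluated at the point $w$.
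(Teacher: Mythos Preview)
Your overall plan---Cauchy--Schwarz against the local Weyl law $\sum_{t_f\sim T}|f(w)|^2\ll T^2+T\Ht(w)$, then control of an $L$-function moment---is exactly what the paper does, and your GLH argument is correct. But the unconditional regime $T\le D^{1/12}$ has a gap.

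The Weyl-strength bound is $L(\tfrac12,f\otimes\chi_{D_i})\ll_\e(|D_i|(1+|t_f|))^{1/3+\e}$, not $(\cdot)^{1/6+\e}$: the analytic conductor is $\asymp D_i^2t_f^2$ and Weyl is its $1/6$-th power. With the corrected exponent your ``pull out the sup'' step gives $\sup\sqrt{L_1L_2/L(1,\ad f)}\ll D^{1/6+\e}T^{1/3}$, so after multiplying by $\sum_{t_f\sim T}|f(w)|\ll T^{3/2}(T^{1/2}+\Ht(w)^{1/2})$ you obtain $T^{11/6}D^{1/6+\e}(T^{1/2}+\Ht(w)^{1/2})$, off from the stated bound by a factor $T^{1/3}$; at $T=D^{1/12}$ this is a loss of $D^{1/36}$ that cannot be absorbed into $D^\e$. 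The paper instead applies Cauchy--Schwarz \emph{uniformly} in all three regimes and invokes the mixed moment bound
\[
\sum_{T\le t_f\le 2T}\frac{L(\tfrac12,f\otimes\chi_{D_1})L(\tfrac12,f\otimes\chi_{D_2})}{L(1,\ad f)}\ll_\e\begin{cases}T^2D^{1/3+\e},&T\le D^{1/12},\\D^{1/2+\e},&D^{1/12}\le T\le D^{1/4},\\T^{2+\e},&T\ge D^{1/4},\end{cases}
\]
from \cite[Proposition~2.14]{HR22} (together with the identical bound for $\int_{|t|\sim T}|L(\tfrac12+it,\chi_{D_1})L(\tfrac12+it,\chi_{D_2})/\zeta(1+2it)|^2\,dt$, established there via Kuznetsov and Vorono\u{\i}). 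The regime-$1$ estimate $T^2D^{1/3}$ genuinely saves $T^{2/3}$ over pointwise Weyl combined with the Weyl-law count $T^2\cdot D^{1/3}T^{2/3}=T^{8/3}D^{1/3}$, and it is precisely this saving that yields $T^{3/2}D^{1/6}$ after the square root. Finally, on the Eisenstein side note that the object appearing after Cauchy--Schwarz is $\int|L_1L_2/\zeta|^2\,dt$, a fourth-moment-type quantity, not the second moment $\int|L_i|^2/|\zeta|^2\,dt$ you wrote.
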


\begin{proof}
We apply the Cauchy--Schwarz inequality and use the local Weyl law \cite[Proposition 7.2]{Iwa02}
\[\begin{rcases*}
\displaystyle \sum_{t_f \leq T} |f(w)|^2	\\
\displaystyle \int_{-T}^{T} \left|E\left(w,\frac{1}{2} + it\right)\right|^2 \, dt
\end{rcases*}
\ll T^2 + T \Ht(w)\]
and the bound
\begin{equation}
\label{eqn:HRbounds}
\begin{rcases*}
\displaystyle \sum_{\substack{f \in \BB_0(\Gamma) \\ T \leq t_f \leq 2T}} \frac{L\left(\frac{1}{2},f \otimes \chi_{D_1}\right) L\left(\frac{1}{2},f \otimes \chi_{D_2}\right)}{L(1,\ad f)}	\\
\displaystyle \int\limits_{T \leq |t| \leq 2T} \left|\frac{L\left(\frac{1}{2} + it,\chi_{D_1}\right)^2 L\left(\frac{1}{2} + it,\chi_{D_2}\right)^2}{\zeta(1 + 2it)}\right|^2 \, dt
\end{rcases*} \ll_{\e} \begin{dcases*}
T^2 D^{\frac{1}{3} + \e} & if $T \leq D^{\frac{1}{12}}$,	\\
D^{\frac{1}{2} + \e} & if $D^{\frac{1}{12}} \leq T \leq D^{\frac{1}{4}}$,	\\
T^{2 + \e} & if $T \geq D^{\frac{1}{4}}$.
\end{dcases*}
\end{equation}
This is proven for $D_1 = 1$ and $D_2 = D$ in \cite[Proposition 2.14]{HR22}. The same method of proof yields the above bound, with the only notable difference being a slightly different application of the Vorono\u{\i} summation formula for
\[\sum_{m = 1}^{\infty} \frac{\lambda_{\chi_{D_1},\chi_{D_2}}(m,0) e\left(\frac{md}{c}\right)}{m^s},\]
where $\lambda_{\chi_{D_1,D_2}}(m,0) \coloneqq \sum_{ab = m} \chi_{D_1}(a) \chi_{D_2}(b)$, which is given in \cite[Appendix A]{LT05}. Assuming the generalised Lindel\"{o}f hypothesis, the Weyl law implies the improved bounds $O_{\e}(T^{2 + \e} D^{\e})$ for all $T \geq 1$ for \eqref{eqn:HRbounds}.
\end{proof}

\subsection{Proofs of \texorpdfstring{\hyperref[thm:smallscale]{Theorems \ref*{thm:smallscale}}}{Theorems \ref{thm:smallscale}} and \ref{thm:discrepancy}}

\begin{proof}[Proof of {\hyperref[thm:smallscale]{Theorem \ref*{thm:smallscale}}}]
For $w \in \Gamma \backslash \Hb$, the difference
\begin{equation}
\label{eqn:smallscalesqueeze}
\frac{\vol(\Gamma \backslash \Hb)}{\vol(B_R)} \frac{\sum_{A \in G_D} \vol(\FF_A \cap \Gamma B_R(w))}{\sum_{A \in G_D} \vol(\FF_A)} - 1
\end{equation}
is equal to
\[\frac{\vol(\Gamma \backslash \Hb)}{\sum_{A \in G_D} \vol(\FF_A)} \int_{\FF_A} K_R(z,w) \, d\mu(z) - 1.\]
By the upper bound \eqref{eqn:kconv}, the spectral expansion \eqref{eqn:KastKspectral}, and character orthogonality, we see that for any $0 < \rho < R$, \eqref{eqn:smallscalesqueeze} is bounded from above by the sum of the three terms
\begin{gather}
\label{eqn:firstterm2}
\frac{\vol(B_{R + \rho})}{\vol(B_R)} - 1,	\\
\label{eqn:secondterm2}
\frac{\vol(B_{R + \rho}) \vol(\Gamma \backslash \Hb)}{\vol(B_R) \sum_{A \in G_D} \vol(\FF_A)} \frac{1}{2^{\omega(D) - 1}} \sum_{\chi \in \widehat{\Gen_D}} \chi(G_D) \sum_{f \in \BB_0(\Gamma)} h_{R + \rho}(t_f) h_{\rho}(t_f) \overline{f(w)} W_{\chi,f},	\\
\label{eqn:thirdterm}
\frac{\vol(B_{R + \rho}) \vol(\Gamma \backslash \Hb)}{\vol(B_R) \sum_{A \in G_D} \vol(\FF_A)} \frac{1}{2^{\omega(D) - 1}} \sum_{\chi \in \widehat{\Gen_D}} \chi(G_D) \frac{1}{4\pi} \int_{-\infty}^{\infty} h_{R + \rho}(t) h_{\rho}(t) \overline{E\left(w,\frac{1}{2} + it\right)} W_{\chi,t} \, dt.
\end{gather}
Here the Weyl sums $W_{\chi,f}$ and $W_{\chi,t}$ are as in \eqref{eqn:WeylMaass} and \eqref{eqn:WeylEis}. Similarly, \eqref{eqn:smallscalesqueeze} is bounded from below by the sum of the same three terms except with $R + \rho$ replaced by $R - \rho$ at each instance. Thus to prove \hyperref[thm:smallscale]{Theorem \ref*{thm:smallscale}}, it suffices to show that for $R > D^{-\delta}$ with $\delta < \frac{1}{2}$, there exists some choice of $\rho \in (0,R)$ for which each of the three terms \eqref{eqn:firstterm2}, \eqref{eqn:secondterm2}, and \eqref{eqn:thirdterm} is $o(1)$.

The term \eqref{eqn:firstterm2} is $O(\rho R^{-1})$ since $\vol(B_R) \asymp R^2$. For the terms \eqref{eqn:secondterm2} and \eqref{eqn:thirdterm}, we begin by recalling that the set of genus characters $\chi \in \widehat{\Gen_D}$ is indexed by ordered pairs of fundamental discriminants $(D_1,D_2)$ such that $D_1 D_2 = D$; each genus character $\chi$ is then associated to the pair of primitive quadratic Dirichlet characters $\chi_{D_1}$ and $\chi_{D_2}$ modulo $|D_1|$ and $|D_2|$ respectively. From \hyperref[prop:MaassnewformWeyl]{Proposition \ref*{prop:MaassnewformWeyl}} and Stirling's formula, we deduce that
\begin{align*}
\left|W_{\chi,f}\right|^2 & \ll \frac{\sqrt{D}}{(1 + |t_f|)^3} \frac{L\left(\frac{1}{2},f \otimes \chi_{D_1}\right) L\left(\frac{1}{2}, f \otimes \chi_{D_2}\right)}{L(1,\ad f)},	\\
\left|W_{\chi,t}\right|^2 & \ll \frac{\sqrt{D}}{(1 + |t|)^3} \frac{L\left(\frac{1}{2} + it,\chi_{D_1}\right) L\left(\frac{1}{2} - it,\chi_{D_1}\right) L\left(\frac{1}{2} + it,\chi_{D_2}\right) L\left(\frac{1}{2} - it,\chi_{D_2}\right)}{\zeta(1 + 2it) \zeta(1 - 2it)}.
\end{align*}
We also use the bound $\sum_{A \in G_D} \vol(\FF_A) \gg_{\e} D^{\frac{1}{2} - \e}$ \cite[Proposition 1]{DIT16}. We then dyadically divide up the sum over $f \in \BB_0(\Gamma)$ and the integral over $t \in \R$ based on the size of $t_f,|t| \in [T,2T]$ and employ the bounds \eqref{eqn:hRupperbounds} for $h_{R \pm \rho}$ and $h_{\rho}$ and the bounds \eqref{eqn:fractionalmoment} for the ensuing fractional moment of $L$-functions. It remains to choose
\[\rho = \begin{dcases*}
R^{\frac{1}{2}} D^{-\frac{1}{12}} & if $R \geq D^{-\frac{1}{12}}$,	\\
R D^{-\frac{1}{24}} & if $D^{-\frac{5}{12}} \leq R \leq D^{-\frac{1}{12}}$,	\\
\frac{1}{2} R^{\frac{1}{2}} D^{-\frac{1}{4}} & if $D^{-\frac{1}{2}} \leq R \leq D^{-\frac{5}{12}}$,	\\
\frac{1}{2} R & if $0 < R \leq D^{-\frac{1}{2}}$.
\end{dcases*}\]
In this way, we find that
\begin{equation}
\label{eqn:smallscalebound}
\left|\frac{\vol(\Gamma \backslash \Hb)}{\vol(B_R)} \frac{\sum_{A \in G_D} \vol(\FF_A \cap \Gamma B_R(w))}{\sum_{A \in G_D} \vol(\FF_A)} - 1\right| \ll_{\e} \begin{dcases*}
R^{-\frac{1}{2}} D^{-\frac{1}{12} + \e} & if $D^{-\frac{1}{12}} \leq R \leq 1$,	\\
D^{-\frac{1}{24} + \e} & if $D^{-\frac{5}{12}} \leq R \leq D^{-\frac{1}{12}}$,	\\
R^{-\frac{1}{2}} D^{-\frac{1}{4} + \e} & if $R \leq D^{-\frac{5}{12}}$.
\end{dcases*}
\end{equation}
This is $o(1)$ provided that $R \geq D^{-\delta}$ for some $\delta < \frac{1}{2}$.
\end{proof}

\begin{proof}[Proof of {\hyperref[thm:discrepancy]{Theorem \ref*{thm:discrepancy}}}]
From \eqref{eqn:smallscalebound} and the fact that $\vol(B_r) \asymp R^2$ for $R \leq 1$, we have that for any $w \in \Gamma \backslash \Hb$,
\[\left|\frac{\sum_{A \in G_D} \vol(\FF_A \cap \Gamma B_R(w))}{\sum_{A \in G_D} \vol(\FF_A)} - \frac{\vol(B_R)}{\vol(\Gamma \backslash \Hb)} \right| \ll_{\e} \begin{dcases*}
R^{\frac{3}{2}} D^{-\frac{1}{12} + \e} & if $D^{-\frac{1}{12}} \leq R \leq 1$,	\\
R^2 D^{-\frac{1}{24} + \e} & if $D^{-\frac{5}{12}} \leq R \leq D^{-\frac{1}{12}}$,	\\
R^{\frac{3}{2}} D^{-\frac{1}{4} + \e} & if $R \leq D^{-\frac{5}{12}}$.
\end{dcases*}\]
In particular,
\[\sup_{B_R(w) \subset \Gamma \backslash \Hb} \left|\frac{\sum_{A \in G_D} \vol(\FF_A \cap \Gamma B_R(w))}{\sum_{A \in G_D} \vol(\FF_A)} - \frac{\vol(B_R)}{\vol(\Gamma \backslash \Hb)} \right| \ll_{\e} D^{-\frac{1}{12} + \e}.\]

This result may be strengthened under the assumption of the generalised Lindel\"{o}f hypothesis by conditionally improving the bounds \eqref{eqn:smallscalebound}. Instead of using the unconditional bounds \eqref{eqn:fractionalmoment}, we use the conditional bounds \eqref{eqn:fractionalmomentGLH}. Choosing
\[\rho = \begin{dcases*}
\frac{1}{2} R^{\frac{1}{2}} D^{-\frac{1}{4}} & if $D^{-\frac{1}{2}} \leq R \leq 1$,	\\
\frac{1}{2} R & if $R \leq D^{-\frac{1}{2}}$,
\end{dcases*}\]
we find that
\[\left|\frac{\sum_{A \in G_D} \vol(\FF_A \cap \Gamma B_R(w))}{\sum_{A \in G_D} \vol(\FF_A)} - \frac{\vol(B_R)}{\vol(\Gamma \backslash \Hb)} \right| \ll_{\e} R^{\frac{3}{2}} D^{-\frac{1}{4} + \e}\]
for all $R \leq 1$, which yields the conditional bound $O_{\e}(D^{-1/4 + \e})$ for the discrepancy.
\end{proof}

\section{Questions}

We end by raising some natural follow-up questions beyond the results contained in this paper.

\begin{question}
Can one prove \hyperref[thm:subgroup]{Theorem \ref*{thm:subgroup}} for $\delta = 0$ using techniques purely from arithmetic ergodic theory?
\end{question}

This question was raised by Duke, Imamo\={g}lu, and T\'{o}th \cite[Section 4]{DIT16}. Indeed, Duke's theorems on the equidistribution of closed geodesics and of Heegner points on the modular surface and additionally of lattice points on the sphere \cite{Duk88} were first proved under the additional hypothesis of a splitting condition by Linnik \cite{Lin68} using techniques from ergodic theory. Einsiedler, Lindenstrauss, Michel, and Venkatesh \cite{ELMV12} showed that in the case of closed geodesics, Linnik's ergodic method is valid even without the splitting condition hypothesis.

\begin{question}\hspace{1em}

\begin{enumerate}[leftmargin=*,label=\textup{(\arabic*)}]
\item For any fixed $\delta > 0$, does there exist a sequence of positive fundamental discriminants $D$ and \emph{subsets} $H$ of $\Cl_D^+$ that are not necessarily cosets with $\frac{|H|}{h_D^+} \ll D^{-\delta}$ for which the equidistribution result \eqref{eqn:subgroupequidistribution} in \hyperref[thm:subgroup]{Theorem \ref*{thm:subgroup}} \emph{fails} to hold?
\item Does the equidistribution result \eqref{eqn:subgroupequidistribution} in \hyperref[thm:subgroup]{Theorem \ref*{thm:subgroup}} hold for arbitrary \emph{subsets} $H$ of $\Cl_D^+$ that are not necessarily cosets provided that $\frac{|H|}{h_D^+} \log D$ tends to infinity with $D$?
\end{enumerate}
\end{question}

The condition that $CH$ be a coset of $\Cl_D^+$ in \hyperref[thm:subgroup]{Theorem \ref*{thm:subgroup}} may be thought of as imposing the requirement that we restrict to a subset of $\Cl_D^+$ with an \emph{algebraic} structure. By comparing to related results on the sparse equidistribution of closed geodesics, namely \cite[Theorem 4.1]{AE16} and \cite[Theorem 1.8]{BoKo17}, we expect that such an algebraic condition is necessary in order for equidistribution to hold if the cardinality of this subset is $O(D^{-\delta} h_D^+)$ for some $\delta > 0$. On the other hand, analogous results for closed geodesics, namely \cite[Theorem 1.2]{AE16}, lead us to expect that \hyperref[thm:subgroup]{Theorem \ref*{thm:subgroup}} holds for subsets $H$ of $\Cl_D^+$ \emph{without} any algebraic structure provided that $\frac{|H|}{h_D^+} \log D$ tends to infinity with $D$.

\begin{question}
Does the \emph{joint equidistribution} of hyperbolic orbifolds hold?
\end{question}

Here by joint equidistribution, we mean the following. For each positive fundamental discriminant $D$, choose a genus $G_D$ in the group of genera $\Gen_D$ and choose $C \in \Cl_D^+$ such that the minimal norm of any integral ideal representing $C$ tends to infinity as $D$ tends to infinity. Joint equidistribution of hyperbolic orbifolds is the statement that for each fixed continuity set $B \subset \Gamma \backslash \Hb \times \Gamma \backslash \Hb$,
\[\frac{\sum_{A \in G_D} \vol(\FF_A \times \FF_{AC} \cap (\Gamma \times \Gamma) B)}{\sum_{A \in G_D} \vol(\FF_A \times \FF_{AC})} = \frac{\vol(B)}{\vol(\Gamma \backslash \Hb \times \Gamma \backslash \Hb)} + o_{B}(1)\]
as $D$ tends to infinity. This is a natural analogue of the mixing conjecture of Michel and Venkatesh \cite{MV06} for Heegner points, which has been resolved by Khayutin \cite[Theorem 1.3]{Kha19} under the assumptions of a splitting hypothesis and of the nonexistence of Landau--Siegel zeroes. The first author resolved a toy model of this problem in the much simpler setting of modular inverses on the torus \cite[Theorem 1.6]{Hum22}.

Finally, we briefly mention that one can pose a closely related variant of joint equidistribution, namely \emph{simultaneous equidistribution}. Associated to each narrow ideal class of a quadratic field $\Q(\sqrt{D})$ are geometric invariants for \emph{each} quaternion algebra that is unramified at each place that splits in $E$. Michel and Venkatesh \cite{MV06} pose the question of whether such geometric invariants equidistribute \emph{simultaneously} in \emph{multiple} different quaternion algebras. For progress on this problem and variants thereof, we direct the reader to \cite{AES16,ALMW22,BB24,BBK22,EL19}.

\end{document}